\newcounter{smallroman}
\preto\env@matrix{}
\newcolumntype{C}[1]{>{\centering\let\newline\\\arraybackslash\hspace{0pt}}m{#1}}
\theoremstyle{theorem}
\newtheorem{theorem}{Theorem}[section]
\newtheorem{corollary}[theorem]{Corollary} 
\newtheorem{lemma}[theorem]{Lemma}
\newtheorem{proposition}[theorem]{Proposition}
\newenvironment{customthm}[1]
  {\innercustomthm}
  {\endinnercustomthm}
\numberwithin{equation}{section}
\theoremstyle{definition}
\newtheorem{example}[theorem]{Example}
\newtheorem{definition}[theorem]{Definition}
\newtheorem{remark}[theorem]{Remark}
\newtheorem{notation}[theorem]{Notation}
\newtheorem*{conventionstar}{Conventions}
\theoremstyle{definition}
\def\r{\rho}
\newcommand{\CC}{{\mathbb{C}}}
\newcommand{\RR}{{\mathbb{R}}}
\newcommand{\ZZ}{{\mathbb{Z}}}
\newcommand{\PP}{{\mathbb{P}}}
\newcommand{\GG}{{\mathbb{G}}}
\newcommand{\SL}{\operatorname{SL}}
\newcommand{\GL}{\operatorname{GL}}
\newcommand{\PGL}{\operatorname{PGL}}
\newcommand{\Lie}{\operatorname{Lie}}
\newcommand{\tr}{\operatorname{tr}}
\newcommand{\Mat}{{\rm Mat}}
\newcommand{\zmin}{Z_{\min}}
\newcommand{\xmino}{X_{\min}^0}
\newcommand{\gitq}{/\!/}
\newcommand{\hU}{\widehat{U}}
\newcommand{\hX}{\widehat{X}}
\newcommand{\Stab}{\operatorname{Stab}}
\newcommand{\vmin}{V_{\operatorname{min}}}
\newcommand{\omegamin}{\omega_{\operatorname{min}}}
\newcommand{\Proj}{\operatorname{Proj}}
\newcommand{\Spec}{\operatorname{Spec}}
\newcommand{\Lc}{\mathcal{L}}
\newcommand{\vmino}{V^0_{\operatorname{min}}}
\newcommand{\Rep}{\operatorname{Rep}}
\newcommand{\Aut}{\operatorname{Aut}}
\newcommand{\HMss}[2]{{#1}^{{#2}-HMss}}
\newcommand{\HMs}[2]{{#1}^{{#2}-HMs}}
\newcommand{\Iss}[2]{{#1}^{{#2}-Iss}}
\newcommand{\Qss}[2]{{#1}^{{#2}-Qss}}
\newcommand{\justss}[2]{{#1}^{{#2}-ss}}
\newcommand{\justs}[2]{{#1}^{{#2}-s}}
\newcommand{\justssors}[2]{{#1}^{{#2}-(s)s}}
\newcommand{\conv}{\operatorname{conv}}
\newcommand{\rar}{\rightarrow}
\newcommand{\Gm}{\mathbb{G}_m}
\newcommand{\mc}{\mathcal}
\title[Affine Non-Reductive GIT and quivers with multiplicities]{Affine Non-Reductive GIT and moduli of representations of quivers with multiplicities}
\author{Eloise Hamilton, Victoria Hoskins and Joshua Jackson}
\date{}
\begin{document}

\maketitle

\vspace{-0.7cm} 
\begin{abstract}
     We give an explicit approach to quotienting affine varieties by linear actions of linear algebraic groups with graded unipotent radical, using results from projective Non-Reductive GIT. Our quotients come with explicit projective completions, whose boundaries we interpret in terms of the original action. As an application we construct moduli spaces of semistable representations of quivers with multiplicities subject to certain conditions, which always hold in the toric case for a generic stability condition. 
\end{abstract}
\setcounter{tocdepth}{1}
\tableofcontents

\section{Introduction}
For an action of a linear algebraic group $H$ on an algebraic variety $Y$, Geometric Invariant Theory (GIT) is concerned with constructing a quotient variety. When $H$ is reductive and the action can be linearised, a solution is provided by Mumford's classical GIT \cite{Mumford1994}, which now lies at the heart of moduli theory in algebraic geometry. When $H$ is non-reductive, the problem of constructing a quotient variety is far more difficult. Nevertheless, a recent programme of research \cite{Berczi2016, Berczi2018,Berczi2023}, led principally by B\'{e}rczi, Doran, Hawes and Kirwan (BDHK), has shown that many desirable properties of Mumford's GIT can be extended to the non-reductive setting, provided $H$ has \emph{graded unipotent radical} (see \thref{def internally graded unipotent radical}) and $Y$ is projective. Under certain assumptions on the unipotent stabilisers, their work yields a projective quotient of an explicit open subset of $Y$; if these assumptions are not satisfied then they can be achieved by performing a sequence of blow-ups of $Y$.  This provides a viable Non-Reductive GIT (NRGIT), which has already led to many applications \cite{Bunnett2021,Jackson2021,Hoskins2021,Qiao2022b,Cooper2023,Berczi2024}. 

This paper is in part a response to a major conceptual difference between the reductive and non-reductive theories as they presently stand. While reductive GIT quotients are obtained locally by gluing affine GIT quotients, NRGIT quotients have only been constructed globally for projective varieties. There are good reasons for this: the challenges of defining non-reductive quotients (non-finite generation of invariants, non-separation of closed orbits and non-surjectivity of the quotient map) present themselves affine-locally, so affine non-reductive GIT quotients cannot be defined in the same way as affine reductive GIT quotients (i.e.\ as the spectrum of invariant functions). The strategy of BDHK is to assume that $Y$ is projective, and to use this global structure to overcome the obstacles. This approach means that NRGIT in its current form does not work if $Y$ is affine. 

There are two ways to develop an affine version of NRGIT: via an equivariant embedding into a projective space or via a more abstract relative affine approach. This paper takes the former approach, as it is more explicit and provides projective completions of our quotients, whereas in upcoming work \cite{HHJ} we develop the latter approach which applies more broadly, but does not yield natural projective completions. More precisely, for an $H$-action on an affine variety $Y$, we take equivariant embeddings $Y \hookrightarrow V \cong  \mathbb{A}^n \hookrightarrow X:=\mathbb{P}(V \oplus k)$ and construct a quotient, provided $H$ has graded unipotent radical and certain stabiliser assumptions are satisfied, by applying the results of BDHK. Implementing this idea involves carefully interpreting the relevant stabiliser conditions and semistable sets for $X$ in terms of the $H$-representation $V$. Moreover, there is a trichotomy in the results we obtain: the process looks different depending on whether the minimal weight in $V$ for the grading multiplicative group is positive, zero, or negative. As the linear action always fixes the origin $0 \in V \subset X$, in two of these cases one must blow-up this origin to achieve the unipotent stabiliser assumptions required by projective NRGIT. In each case, we obtain a quotient of an open semistable set in $Y$ with an explicit projective completion.

Our results also cover the case where a character $\rho$ of $H$ is used to twist the linearisation, as considered by King in the reductive setting for the purpose of constructing moduli spaces of quiver representations \cite{King1994}. In the reductive case, Halic \cite{Halic2004, Halic2010} produces a linearisation $\Lc_{\rho}$ on the projective space $X = \PP( V \oplus k)$ with the property that the corresponding semistable locus $X^{ss}(\Lc_{\rho}) \subseteq X$ intersected with $V$ is King's $\rho$-semistable locus $V^{ss}(\rho) \subseteq V$, i.e.\  $$ V^{ss}(\rho) = V \cap X^{ss}(\Lc_{\rho}). $$ Thus the $\rho$-twisted affine quotient of $V$ can be constructed as an open subset of the projective GIT quotient of $X$. In the non-reductive case, the analogous semistable locus $V^{ss}(\rho)$ can be defined invariant-theoretically, but this locus may not admit an explicit description or a good $H$-quotient. Our strategy for developing twisted affine NRGIT is to take the equation above as the \emph{definition} of $V^{ss}(\rho)$ for non-reductive groups, and to use projective NRGIT to obtain a quotient. The difficulty lies in describing the intersection and its quotient in terms of information solely about $V$.

To illustrate our results, we include some simple worked examples involving two-by-two matrices. We conclude with a novel application: the construction of moduli spaces of representations of quivers with multiplicities. These are representations of a quiver in the category of modules over a truncated polynomial ring, as considered in \cite{Ringel2011} for the dual numbers, and \cite{Geiss2014} 
for an arbitrary truncated polynomial ring to build realisations of symmetrisable Kac-Moody algebras. In \cite{Wyss2017,Hausel2018,Vernet2023}, the polynomial behaviour and positivity properties of counts of absolutely indecomposable representations of a quiver with multiplicities over finite fields are studied, which partially extend corresponding results for classical representations \cite{Kac80,Crawley2004,Hausel2013}.

~\\
\noindent \textbf{Main results.} As explained above, for an $H$-action on an affine variety $Y$, we construct a quotient by considering equivariant embeddings $Y \hookrightarrow V \cong \mathbb{A}^n \hookrightarrow X:= \PP(V \oplus k)$ and appropriate linearisations. As in projective NRGIT, we assume that the unipotent radical $U$ of $H$ is graded by a copy of the multiplicative group $\mathbb{G}_m$ (see \thref{def internally graded unipotent radical}) and construct a quotient in stages: 
first by the graded unipotent group $\hU:= U \rtimes \mathbb{G}_m$, then by the residual reductive group $H/\hU$. Our first two results are affine versions of the projective NRGIT results for quotienting by $\hU$ and by $H$. Here we simply state our results for quotients of the affine space $V$ and refer to Section \ref{subsec:fromvstoaffine} for the case of the affine variety $Y$; note that unlike in the reductive case, the extension from affine spaces to affine varieties is not immediate, as taking invariants is not in general exact for non-reductive groups. Fortunately, the BDHK approach to NRGIT yields a theory which is functorial with respect to equivariant closed embeddings that respect the linearisations.

\begin{customthm}{A}[$\hU$-Theorem for affine spaces, see \S \ref{subsec:fromvstoaffine} for affine varieties] \thlabel{firstresult} Let $\hU= U \rtimes \GG_m$ be a graded unipotent group acting linearly on a vector space $V$. Suppose that 
    $\Stab_U(v) = \{e\}$ for all non-zero vectors $v$ in the minimal $\GG_m$-weight space $\vmin$. Then there exists a quasi-projective geometric quotient $$ {V}^{\hU-ss} \to V \gitq \hU : =  {V}^{\hU-ss} / \hU$$ for the $\hU$-action on an explicit \emph{semistable locus} ${V}^{\hU-ss}$, and an explicit projective completion of  $V \gitq \hU$ constructed as a projective NRGIT quotient (see \thref{quotienting}).
  \end{customthm}  

  We illustrate \thref{firstresult} with two examples involving two-by-two matrices in Section \ref{sec:2by2ex}.

\begin{customthm}{B}[Twisted NRGIT for affine spaces, see \S \ref{subsec:fromvstoaffine} for affine varieties] \thlabel{secondresult} 
   Let $H = U \rtimes R$ be a linear algebraic group with graded unipotent radical acting linearly on a vector space $V$, and let $\rho$ be a character of $H$ that is trivial on the grading $\GG_m$. Suppose that the action of $H$ on $V$ satisfies the conditions \ref{Rcondaffine} and \ref{Usscondaffine}. Then there exists a quasi-projective geometric quotient $$ \justss{V}{H}(\rho) \to V \gitq_{\hspace{-2pt} \rho \hspace{2pt}} H  : = \justss{V}{H}(\rho) / H$$ for the action of $H$ on an explicit  $\rho$-twisted semistable locus $\justss{V}{H}(\rho)$, and an explicit projective completion of $V \gitq_{\hspace{-2pt} \rho \hspace{2pt}} H $ defined as a projective NRGIT quotient (see \thref{thmQsslocusforH,Hquotient}). 
\end{customthm}

The conditions \ref{Rcondaffine} and \ref{Usscondaffine} appearing in \thref{secondresult} correspond roughly to stabiliser assumptions for the actions of $R$ and of $U$ respectively. 
\thref{secondresult} generalises \thref{firstresult} in two ways: it enables the construction of $H$-quotients rather than just $\hU$-quotients, and allows for twisting by a character as in \cite{King1994}, which ultimately enables us to construct moduli spaces for representations of quivers with multiplicities.  

A representation of a quiver $Q$ with multiplicity $m \in \mathbb{N}$, also called a representation of $(Q,m)$, is a representation of $Q$ in the category of free finite rank modules over the truncated polynomial ring $k[\epsilon]/(\epsilon^{m+1})$. If $m=0$, then a representation of $(Q,m)$ is simply a classical quiver representation, i.e.\ a representation of $Q$ in the category of vector spaces over $k$. The isomorphism classes of representations of $(Q,m)$ of fixed rank vector $d$ are in bijection with the orbits of a non-reductive group acting on an affine space $V = \Rep(Q,m,d)$.  As in the classical case studied by King, we use a character to twist the linearisation, and there is a diagonally embedded group $\Delta_m$ acting trivially. The projection onto the minimal weight space $V \cong \Rep(Q,m,d) \rightarrow V_{\min} \cong \Rep(Q,d)$ sends a representation $\Phi$ to its \emph{classical truncation} $\Phi^0$, where $\Rep(Q,d):= \Rep(Q,0,d)$.

\begin{customthm}{C}[Moduli for representations of quivers with multiplicities] \thlabel{thirdresult}
   Fix a quiver $Q$, a multiplicity $m \in \mathbb{N}$ and a rank vector ${d} = (d_v)_{v \in Q_0}$. Let $\rho \in \mathbb{Z}^{Q_0}$ denote a non-zero stability condition. Suppose that the following two conditions are satisfied: \begin{enumerate}[(i)]
       \item any $\rho$-semistable classical representation of $Q$ of dimension $d$ is $\rho$-stable;    \label{assumption1thm} 
       \item any representation $\Phi$ of $(Q,m)$ of rank $d$ whose classical trunctation $\Phi^0$ is $\rho$-stable has the property that $\Aut_{(Q,m)}({\Phi^0}) / \Delta_m$ is reductive.  \label{assumption2thm}
    \end{enumerate} 
   Then there exists a quasi-projective coarse moduli space $\mathcal{M}^s_{\rho}(Q,m,d)$ for rank $d$ representations ${\Phi}$ of $(Q, m)$ such that ${\Phi}^0$ is $\rho$-stable as a representation of $Q$, with an explicit projective completion. Moreover, if $\rho$ is generic (see \thref{genericstab}) with respect to a toric rank vector ${d}=(1,\hdots, 1)$, then \eqref{assumption1thm} and \eqref{assumption2thm} are always satisfied, so there is a quasi-projective coarse moduli space for toric representations $\Phi$ of $(Q,m)$ such that $\Phi^0$ is $\rho$-stable, with an explicit projective completion.
\end{customthm}

Key to proving \thref{thirdresult} is showing that \eqref{assumption1thm} and \eqref{assumption2thm} ensure that \ref{Rcondaffine} and \ref{Usscondaffine} are satisfied in the corresponding NRGIT set-up for the construction of the moduli space. \thref{thirdresult} suggests a NRGIT notion of $\rho$-stability for representations of quivers with multiplicities (see \thref{stabwithmult}), which reduces to the classical notion of $\rho$-stability introduced in \cite{King1994} when $m = 0$, and moreover implies a natural Rudakov-type stability notion for representations of $(Q,m)$ that involves verifying an inequality for all subrepresentations (see \thref{lem comparing two stab defs for quivers}).

\subsection*{Structure of the paper.} We review GIT for reductive groups and show how affine GIT can be recovered from projective GIT in Section \ref{sec:redreview}. We review projective GIT for non-reductive groups in Section \ref{sec:projnrgit}. We build an affine version of this theory in the case of graded unipotent group actions in Section \ref{sec:affineuhat}. Section \ref{sec:2by2ex} illustrates this theory through two examples involving two-by-two matrices. We extend the theory to internally or externally graded linear algebraic group actions in Section \ref{sec:affinegitfromprojgitforH}, and we apply this theory to representations of quivers with multiplicities in Section \ref{sec:repqwm}.

\begin{conventionstar} 
We work throughout over an algebraically closed field $k$ of characteristic zero. By variety we mean a reduced scheme of finite type over $k$, not necessarily irreducible. Given a vector space $V$ and a subvariety $V' \subseteq V$, we define $ \PP(V') : = \{ [v] \in \PP(V) \ | \ v \in V' \setminus \{ 0 \} \}$  to be the image of $V' \setminus \{0\}$ under the quotient map $V \setminus \{0\} \to \PP(V)$.
\end{conventionstar}

\subsection*{Acknowledgments} EH thanks Emmanuel Letellier and Gergely B\'{e}rczi for useful discussions on representations of quivers with multiplicities.

\section{From affine GIT to projective GIT and back for reductive groups} \label{sec:redreview}

Throughout this section we let $G$ denote a reductive linear algebraic group. 

\subsection{Affine GIT} \label{subsec:untwistedaffine} 
If $G$ acts on an affine variety $Y = \operatorname{Spec} A$, then the algebra of invariants $A^G$ is finitely generated, as $G$ is reductive. The inclusion of the invariant ring $A^G \subseteq A$ induces a good quotient (see \cite{Newstead1978} for the definition of good and geometric quotient) to the \emph{affine GIT quotient} $$ \pi: Y \to Y \gitq G:= \operatorname{Spec} A^G,$$ which identifies any two orbits whose closures meet. The restriction of $\pi$ to the \emph{stable} locus $$\justs{Y}{G} : = \{ y \in Y\ | \ G \cdot y \text{ is closed and } \operatorname{Stab}_G(y) \text{ is finite} \}$$ is a geometric quotient $\justs{Y}{G}/G$, which is an open subvariety of $Y \gitq G$. 

We recall the definition of the null cone, which will play an important role from Section \ref{sec:affineuhat}. 

\begin{definition}[Null cone] \label{defn null cone}
      For a linear $G$-action on a vector space $V$, the \emph{null cone} $\mathcal{N}$ is the locus in $V$ where all non-constant invariants vanish, i.e.\ \[\mathcal{N} : = \pi^{-1}(\overline{0})\] where $\pi: V \rar V\gitq G$ is the affine GIT quotient and $\overline{0} : = \pi(0)$. \end{definition}

\subsection{Projective GIT} \label{subsec:projgit}
To construct a GIT quotient of an action of $G$ on a projective variety $X$, we need to assume that the $G$-action lifts to an ample line bundle $L$ on $X$; this is called a \emph{linearisation} and denoted $\mathcal{L}$ to indicate the data of the underlying line bundle $L$ together with its $G$-equivariant structure. By taking a tensor power of $L$ if necessary, we can assume that $L$ is very ample, so that there is a $G$-equivariant embedding $X \subseteq \PP(H^0(X,\mathcal{L})^{\ast})$, where here we use the notation $\mathcal{L}$ to emphasise that its $G$-equivariant structure determines the $G$-action on $H^0(X,\mathcal{L})$.
Conversely, a representation $G \to \GL(V)$ on a vector space $V$ such that $X \subseteq \PP(V)$ is $G$-invariant determines a linearisation $\mathcal{O}(1)|_{X}$ on $X$. For this reason, we may sometimes specify the  linearisation for a $G$-action on $X \subseteq \PP(V)$ by giving a representation $G \to \GL(V)$ instead.

Since $G$ is reductive, the algebra of invariants $\bigoplus_{k \geq 0} H^0(X, \mathcal{L}^{\otimes k})^G$ is finitely generated. The inclusion of the invariants induces a rational map to a projective variety \begin{equation} X \dashrightarrow X \gitq_{\hspace{-2pt} \mathcal{L} \hspace{2pt}} G : = \Proj  \bigoplus_{k \geq 0} H^0(X, \mathcal{L}^{\otimes k})^G,\label{projrationalmap}
\end{equation} called the \emph{projective GIT quotient} for the $G$-action on $X$ with linearisation $\Lc$. 

We now define three semistable loci, which all coincide for reductive GIT, but may not in the non-reductive case. One of the central theoretical questions of Non-Reductive GIT is to determine the circumstances under which the three loci coincide.

\begin{definition}[Semistable loci]
For a $G$-action on a projective variety $X$ with linearisation $\Lc$,  
\begin{enumerate}[(i)]
    \item the \emph{quotienting semistable locus} $\Qss{X}{H}(\Lc)$ is the domain of definition of the map in \eqref{projrationalmap}.
    \item the \emph{invariant-theoretic semistable locus} is $$\Iss{X}{G} (\mathcal{L}): = \{x \in X \ | \  \exists  \ \sigma \in H^0(X,\Lc^{\otimes k})^G \text{ for some $k > 0$ such that $\sigma(x) \neq 0$} \};$$ 
    \item the \emph{Hilbert-Mumford semistable locus} is $$\HMss{X}{G}(\mathcal{L}) : = \{x \in X \ | \ \forall \text{ one-parameter subgroups (1PSs) $\lambda$ of $G$, $\operatorname{wt}_{\lambda(\GG_m)} (\overline{x}) \leq 0$}\},$$ where $\operatorname{wt}_{\lambda(\GG_m)} (\overline{x})$ denotes the $\lambda(\GG_m)$-weight on the fibre of $L$ over $\overline{x} : = \lim_{t \to 0} \lambda(t) \cdot x$. The \emph{Hilbert-Mumford stable locus} is defined with a strict inequality $\operatorname{wt}_{\lambda(\GG_m)} (\overline{x}) < 0$ instead. 
    \end{enumerate} 
\end{definition}

Since the rational map in \eqref{projrationalmap} is undefined at $x$ if and only if all non-constant invariants vanish at $x$, we have  $\Qss{X}{G}(\mathcal{L}) = \Iss{X}{G}(\mathcal{L})$. The Hilbert-Mumford criterion implies that $\HMss{X}{G}(\mathcal{L}) = \Iss{X}{G}(\mathcal{L})$, so semistability can be described without computing invariants \cite[Thm 2.1]{Mumford1994}. 

\begin{proposition} [Mumford]\thlabel{projallequal}
    For a $G$-action on $X$ with linearisation $\Lc$, we have: $$ \Qss{X}{G}(\mathcal{L}) = \Iss{X}{G}(\mathcal{L}) = \HMss{X}{G}(\mathcal{L}).$$
\end{proposition}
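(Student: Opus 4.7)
The plan is to establish the chain of equalities in two steps: the first is essentially formal, while the second is the content of the Hilbert--Mumford criterion. For $\Qss{X}{G}(\mathcal{L}) = \Iss{X}{G}(\mathcal{L})$, I would unpack the construction of the rational map $X \dashrightarrow \Proj \bigoplus_{k \geq 0} H^0(X, \mathcal{L}^{\otimes k})^G$: by the definition of $\Proj$, a point $x$ lies in its domain precisely when the graded ideal of invariant sections vanishing at $x$ is not the irrelevant ideal, i.e.\ when some positive-degree $G$-invariant section does not vanish at $x$, which is exactly the defining condition of $\Iss{X}{G}(\mathcal{L})$.

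For the easy inclusion $\Iss{X}{G}(\mathcal{L}) \subseteq \HMss{X}{G}(\mathcal{L})$, given $\sigma \in H^0(X, \mathcal{L}^{\otimes n})^G$ with $\sigma(x) \neq 0$, the vanishing locus $V(\sigma) \subseteq X$ is closed and $G$-invariant, so the orbit $\lambda(\GG_m) \cdot x$ together with its limit $\bar{x}$ avoids it, giving $\sigma(\bar{x}) \neq 0$. The $G$-invariance of $\sigma$ combined with the fact that $\bar{x}$ is $\lambda(\GG_m)$-fixed forces $\sigma(\bar{x}) \in \mathcal{L}^{\otimes n}|_{\bar{x}}$ to be a non-zero $\lambda(\GG_m)$-invariant vector in a one-dimensional weight space; this forces the weight of $\lambda(\GG_m)$ on $\mathcal{L}^{\otimes n}|_{\bar{x}}$, and hence on $\mathcal{L}|_{\bar{x}}$, to vanish, which in particular is $\leq 0$.

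The converse inclusion $\HMss{X}{G}(\mathcal{L}) \subseteq \Iss{X}{G}(\mathcal{L})$ is the substance of the Hilbert--Mumford criterion. I would work in the affine cone: fix an embedding $X \hookrightarrow \PP(V)$ with $\mathcal{L} \cong \mathcal{O}(1)|_X$ and a lift $\tilde{x} \in V \setminus \{0\}$ of $x$; then $x \in \Iss{X}{G}(\mathcal{L})$ is equivalent to $0 \notin \overline{G \cdot \tilde{x}}$ in $V$, since a homogeneous $G$-invariant polynomial on $V$ separating $0$ from $\overline{G \cdot \tilde{x}}$ is precisely a $G$-invariant section of some $\mathcal{L}^{\otimes k}$ not vanishing at $x$, and reductivity of $G$ guarantees that disjoint closed $G$-invariant subvarieties of an affine variety can be separated by an invariant function. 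To rule out $0 \in \overline{G \cdot \tilde{x}}$ under the Hilbert--Mumford hypothesis, I invoke the Hilbert--Mumford criterion proper: if $0$ lies in this orbit closure, then some 1PS $\lambda$ of $G$ realises it as a limit $\lim_{t \to 0} \lambda(t) \cdot \tilde{x} = 0$, and for this $\lambda$ the weight of $\lambda(\GG_m)$ on $\mathcal{L}|_{\bar{x}}$ is strictly positive, contradicting the assumption that $x \in \HMss{X}{G}(\mathcal{L})$.

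The main obstacle is the Hilbert--Mumford criterion itself, which is a genuinely non-trivial theorem for general reductive $G$: standard proofs either reduce to a maximal torus via the Cartan decomposition and use convex geometry of weight polytopes, or proceed by a valuative-criterion argument exploiting completeness of $G/B$. This is where reductivity plays an essential role, and is precisely what fails in the non-reductive setting, thereby motivating the remainder of the paper.
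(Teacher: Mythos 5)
The overall shape of your argument matches the paper's, which itself simply cites Mumford's GIT book for the Hilbert--Mumford criterion and observes that $\Qss{X}{G}(\mathcal{L}) = \Iss{X}{G}(\mathcal{L})$ holds by construction of the rational map; you attempt to fill in a proof sketch, which is reasonable. Your treatment of $\Qss{X}{G}(\mathcal{L}) = \Iss{X}{G}(\mathcal{L})$ and of the hard inclusion $\HMss{X}{G}(\mathcal{L}) \subseteq \Iss{X}{G}(\mathcal{L})$ (via the affine cone, separation of disjoint closed invariant sets by invariant functions, and the Hilbert--Mumford reachability theorem) is sound, with the appropriate caveat that the last step is a black box.

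However, your argument for the ``easy'' inclusion $\Iss{X}{G}(\mathcal{L}) \subseteq \HMss{X}{G}(\mathcal{L})$ contains a genuine gap. You claim that since $V(\sigma)$ is closed and $\lambda(\GG_m)$-invariant and the orbit $\lambda(\GG_m)\cdot x$ avoids it, ``the orbit together with its limit $\bar{x}$ avoids it, giving $\sigma(\bar{x}) \neq 0$.'' This does not follow: a closed invariant set disjoint from an orbit need not be disjoint from the orbit's closure, and indeed the limit $\bar{x}$ frequently does lie in $V(\sigma)$. A concrete example: let $\GG_m$ act on $V = k^3$ with weights $(-1,0,1)$, so $\PP^2 = \PP(V)$ has the invariant $\sigma = x_0 x_2$. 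The point $x = [1:0:1]$ is semistable (even stable), and $\sigma(x) \neq 0$, but $\bar{x} = \lim_{t\to 0}\lambda(t)\cdot x = [1:0:0]$, where $\sigma(\bar{x}) = 0$; in fact $\bar{x}$ is unstable, so \emph{no} positive-degree invariant survives at $\bar{x}$. Your argument would conclude that the $\lambda$-weight at $\bar{x}$ vanishes, but here it is strictly negative. The correct argument is the one you already use in the other direction: pass to the affine cone, where $\sigma$ is a positive-degree homogeneous $G$-invariant polynomial on $V$ with $\sigma(\tilde{x}) \neq 0$ and $\sigma(0) = 0$; since $\sigma(\lambda(t)\tilde{x}) = \sigma(\tilde{x})$ is constant and nonzero, the limit $\lim_{t\to 0}\lambda(t)\tilde{x}$, if it exists in $V$, cannot be $0$. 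This forces the minimal $\lambda$-weight present in $\tilde{x}$ to be $\leq 0$, which is the Hilbert--Mumford condition; it does not force the weight to vanish. So the conclusion is right, but your justification has to happen at the level of the affine cone, not in $X$ itself.
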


In light of \thref{projallequal}, we make the following definition.

\begin{definition}[Semistable locus for reductive projective GIT]
     The \emph{semistable locus} for the action of $G$ on  a projective variety $X$ with linearisation $\Lc$ is $$ \justss{X}{G}(\Lc) : = \Qss{X}{G}(\Lc) = \Iss{X}{G}(\Lc) = \HMss{X}{G}(\Lc).$$ 
\end{definition}

The rational map \eqref{projrationalmap} restricts to a good quotient $ \justss{X}{G}(\mathcal{L}) \to X \gitq_{\hspace{-2pt} \mathcal{L} \hspace{2pt}} G$, as it is locally given by affine GIT quotients $X_f \to  X_f \gitq G$ for an invariant section $f$ of a positive power of $\mathcal{L}$.  Moreover, the restriction to the \emph{stable locus} $$ \justs{X}{G}(\Lc) := \left\{ x \in X  \ \middle\vert \begin{array}{c}  \exists  \ f \in H^0(X,\Lc^{\otimes k})^G \text{ for some $k > 0$ such that $f(x) \neq 0$,} \\
\Stab_G(x) \text{ is finite and the } G\text{-action on }  X_f \text{ is closed} \end{array} \right\}$$
is a geometric quasi-projective  quotient $ \justs{X}{G}(\Lc) / G$ with projective completion $X \gitq_{\hspace{-2pt} \Lc \hspace{2pt}} G$.

    If the linearisation is clear, we may write $\justss{X}{G}$ and  $X \gitq G$ instead of $\justss{X}{G}(\Lc) $ and $X \gitq_{\hspace{-2pt} \Lc \hspace{2pt}} G $. In particular, if $\mathcal{L}$ is pulled back from $\mathcal{O}(1)$ on $\mathbb{P}(V)$ for a representation $G \to \GL(V)$, we typically omit $\mathcal{L}$ from the notation. 
    If moreover the group $G$ is clear, we may also write $X^{ss}$ for $\justss{X}{G}$.

The Hilbert-Mumford criterion also gives an equality $\HMs{X}{G}(\Lc) = \justs{X}{G}(\Lc)$. In general the Hilbert-Mumford criterion is sometimes formulated for a maximal torus $T \subset G$ as an equality \begin{equation} \justssors{X}{G}(\Lc) = \bigcap_{g \in G} g \justssors{X}{T}(\Lc).\label{reducingtotorus} \end{equation}  
In turn, semistability and stability for the $T$-action can be interpreted combinatorially as follows. Assume $L$ is very ample and consider the $T$-representation $V : = H^0(X,\Lc)^{\ast}$ such that $X \subseteq \PP(V)$. Let $V = \bigoplus_{\chi \in \chi(T)} V_{\chi}$ denote its $T$-weight space decomposition indexed by the character group $\chi(T)$. For $x=[v] \in X$, the \emph{weight polytope of $v$}, denoted $\operatorname{conv}_T(v)$, is the convex hull in $\chi(T)_{\RR}$ of the set of weights $\chi$ such that $v_{\chi} \neq 0$, where $v_{\chi}$ denotes the component of $v$ in $V_{\chi}$. 

\begin{theorem}[Torus (semi)stability, {\cite[Thm 9.2]{Dolgachev2003}}] \thlabel{torussemistabilityproj}
Suppose that a torus $T$ acts on a projective variety $X \subseteq \PP(V)$ via a representation $T \to \GL(V)$. Then given $x =[v] \in X$, we have that $x \in \justss{X}{T}(\Lc)$ if and only if $0 \in \operatorname{conv}_T(v)$, and $x \in \justs{X}{T}(\Lc)$ if and only if $0$ is contained in the interior of $\operatorname{conv}_T(v)$.
\end{theorem}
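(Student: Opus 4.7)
The plan is to deduce the theorem from the Hilbert–Mumford criterion (\thref{projallequal}), which characterises semistability via weights on fibres at limits along one-parameter subgroups, and to convert this weight condition into the stated convex-geometric condition on $\operatorname{conv}_T(v)$ by a standard separating-hyperplane argument.

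First I would unpack the Hilbert–Mumford weight. Since $\Lc = \mathcal{O}(1)|_X$ comes from the given representation $T \to \GL(V)$, write $v = \sum_{\chi} v_\chi$ in the $T$-weight decomposition $V = \bigoplus V_\chi$, and denote by $S(v) := \{\chi : v_\chi \neq 0\}$ the support, so that $\operatorname{conv}_T(v) = \operatorname{conv} S(v)$. For a 1-PS $\lambda \in X_\ast(T)$, the action is $\lambda(t)\cdot v = \sum_{\chi} t^{\langle \chi,\lambda\rangle} v_\chi$. Setting $m(\lambda) := \min\{\langle \chi, \lambda \rangle : \chi \in S(v)\}$, the limit $\bar{x} = \lim_{t\to 0}\lambda(t)\cdot [v]$ exists in $X$ and equals the projectivisation of the sum of the $v_\chi$ with $\langle\chi,\lambda\rangle = m(\lambda)$. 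A direct computation on the tautological section shows that the $\lambda(\GG_m)$-weight on the fibre of $\mathcal{O}(1)$ at $\bar{x}$ is $-m(\lambda)$ (the sign comes from the duality between $V$ and $H^0(\PP(V),\mathcal{O}(1))$). Thus $\operatorname{wt}_{\lambda(\GG_m)}(\bar{x}) \leq 0$ becomes $m(\lambda) \geq 0$, and the strict variant becomes $m(\lambda) > 0$ for every non-trivial $\lambda$.

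Next I would translate these conditions on $m(\lambda)$ into a statement about $\operatorname{conv}_T(v)$. By \thref{projallequal}, $x \in \justss{X}{T}(\Lc)$ iff for every $\lambda \in X_\ast(T)$ the support $S(v)$ is not contained in the open half-space $\{\chi : \langle\chi,\lambda\rangle < 0\}$; equivalently, $\operatorname{conv}_T(v)$ meets the closed half-space $\{\langle\cdot,\lambda\rangle \geq 0\}$ for every $\lambda$. I would then invoke the separating hyperplane theorem in the real vector space $\chi(T)_\RR$: $0 \notin \operatorname{conv}_T(v)$ if and only if there exists a linear functional $\ell$ on $\chi(T)_\RR$ with $\ell > 0$ on $\operatorname{conv}_T(v)$. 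Since rational cocharacters are dense in $\Hom(\chi(T),\RR) \cong X_\ast(T)_\RR$ and $S(v)$ is finite, such a separating $\ell$ can be taken to come from a genuine 1-PS $\lambda \in X_\ast(T)$ (clearing denominators). This matches the semistability criterion exactly, giving the $\Leftrightarrow$ for semistability. The same argument with strict inequalities, applied to non-trivial $\lambda$, shows $x \in \justs{X}{T}(\Lc)$ iff $0$ lies in the relative interior of $\operatorname{conv}_T(v)$, which is the interior statement (interpreted in $\chi(T)_\RR$, with the standard convention that ``interior'' means relative interior when the weight polytope is lower-dimensional).

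The main obstacle I anticipate is purely bookkeeping: pinning down the sign of the fibre weight (the $-m(\lambda)$ above) correctly so that the inequality in the Hilbert–Mumford criterion matches the correct half-space, and ensuring that the separating hyperplane obtained over $\RR$ can be realised by an integral cocharacter. Both are standard: the sign is fixed once one notes that $\mathcal{O}(1)$ is obtained from the dual representation on $V^\ast$, and the rationality is automatic from finiteness of $S(v)$ and density of $X_\ast(T)_\QQ$ in $X_\ast(T)_\RR$.
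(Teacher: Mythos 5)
The paper does not prove this result; it cites it to Dolgachev. Your approach (Hilbert--Mumford criterion plus a separating-hyperplane argument) is exactly the standard one, and the final conclusion is correct, but there is a sign error in the middle that you partially anticipated, and a second gap in the stable case.

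\textbf{The sign inconsistency.} You set $m(\lambda) := \min\{\langle\chi,\lambda\rangle : \chi \in S(v)\}$, compute the weight on $\mathcal{O}(1)_{\overline{x}}$ to be $-m(\lambda)$, and then read the paper's criterion $\operatorname{wt}_\lambda(\overline{x})\leq 0$ as $m(\lambda)\geq 0$. That condition says \emph{every} weight in $S(v)$ pairs nonnegatively with $\lambda$; applied to both $\lambda$ and $-\lambda$ it would force $S(v)=\{0\}$, which is absurd, and it gives the wrong answer already for $T=\GG_m$ acting on $k^2$ with weights $\pm 1$ at the point $[1:1]$ (semistable, with $\operatorname{conv}_T(v)=[-1,1]\ni 0$, yet $m(\lambda)=-1<0$). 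The correct numerical criterion is $m(\lambda)\leq 0$ for all $\lambda$ (equivalently, via the topological criterion in Remark \ref{remark:nullconeinstability}, $v\notin\mathcal{N}$). You then translate your $m(\lambda)\geq 0$ as ``$S(v)$ is not contained in $\{\langle\cdot,\lambda\rangle<0\}$,'' but that statement is $\max_{\chi\in S(v)}\langle\chi,\lambda\rangle\geq 0$, i.e.\ $m(-\lambda)\leq 0$ --- not the same as $m(\lambda)\geq 0$. So you have silently switched from a $\min$ to a $\max$ condition; the two sign slips cancel and the separating-hyperplane step goes through with the correct criterion, but the displayed chain of equivalences is not valid as written. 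To repair it, either track the sign of the fibre weight so that it comes out to $+m(\lambda)$, or note that the paper's $\operatorname{wt}\leq 0$ must be read with a convention making it equivalent to $m(\lambda)\leq 0$, and phrase the half-space step accordingly.

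\textbf{The interior vs.\ relative interior.} Your parenthetical claiming ``interior'' should be read as ``relative interior'' when $\operatorname{conv}_T(v)$ is lower-dimensional is incorrect. If $\operatorname{conv}_T(v)$ lies in a proper linear subspace $W\subsetneq\chi(T)_\RR$, any 1-PS $\mu$ with $\langle\chi,\mu\rangle$ constant on $S(v)$ gives a positive-dimensional subtorus of $\Stab_T([v])$, so $[v]$ is not stable even if $0$ is in the relative interior. The statement as given in the theorem is correct with the genuine interior in $\chi(T)_\RR$; this automatically forces $\operatorname{conv}_T(v)$ to be full-dimensional, hence $\Stab_T([v])$ finite. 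Your strict-inequality version of the separating-hyperplane argument does establish the genuine-interior statement once the sign of $m(\lambda)$ is fixed, so the fix is just to drop the parenthetical.
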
 

\begin{remark}\label{remark:nullconeinstability}
    Via the topological Hilbert--Mumford criterion \cite[Prop 2.2]{Mumford1994}, for a representation $V$ of $G$, the associated linearised $G$-action on $\mathbb{P}(V)$ satisfies
    $ \mathbb{P}(V)^{ss} = \mathbb{P}(V) \setminus \mathbb{P}(\mathcal{N}) = \PP(V \setminus \mathcal{N})$
    where $\mathcal{N} = \pi^{-1}(\overline{0})$ is the nullcone for the affine GIT quotient $\pi : V \rightarrow V \gitq G$ (see Remark \ref{defn null cone}).
\end{remark}

\subsection{Affine GIT from projective GIT}  \label{subsec:affinefromproj} 

In this section we explain how affine GIT can be recovered from projective GIT. By \cite[Lem 1.1]{Kempf1978}, if a reductive group $G$ acts on an affine variety $Y$, then there is a $G$-representation $V$ and a $G$-equivariant inclusion $Y \hookrightarrow V$. Since $G$ is reductive, the affine GIT quotient $Y \gitq G$ can be recovered by restricting the good quotient $V \to V \gitq G$ to $Y$. Thus we can restrict our attention to linear actions on vector spaces without loss of generality. The following result is well-known (for example, see \cite[\S 1]{Schmitt1998}).

\begin{proposition}[Affine GIT from projective GIT] \thlabel{affinefromproj} 
Suppose that $G$ acts on a vector space $V$ via a representation $G \to \GL(V)$. Consider the induced linearised $G$-actions on $\PP(V)$ and $X:= \PP(V \oplus k)$, with the latter determined by the induced representation $G \to \GL(V \oplus k)$ where $G$ acts trivially on $k$. Let $q: X^{ss} \to X \gitq G$ denote the associated GIT quotient and identify $V$ as an open subvariety of $X$ via the map $v \mapsto [v:1]$. Then there is an equality $V \cap X^{ss} = V,   $ and $q$ restricts to  a good quotient $V \rightarrow q(V)$ that is affine, with $q(V) \cong V \gitq G$. Moreover, the complement of $q(V)$ in its projective completion $X \gitq G$ is isomorphic to the GIT quotient $\PP(V) \gitq G$.
\end{proposition}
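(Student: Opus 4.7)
The plan is to compute the invariant ring on $X$ explicitly and then identify the standard affine chart of the Proj construction with $V \gitq G$. Writing $t$ for the projection $V \oplus k \to k$ regarded as a linear coordinate on $V \oplus k$, the $G$-action on $k$ is trivial, so $t$ is a $G$-invariant linear form. Decomposing the symmetric algebra by bidegree gives
\begin{equation*}
   \bigoplus_{n \geq 0} H^0(X, \mathcal{O}(n))^G \;=\; \Bigl(\bigoplus_{n \geq 0} \operatorname{Sym}^n(V^{\ast} \oplus k^{\ast})\Bigr)^{\! G} \;=\; \bigl(\operatorname{Sym}(V^{\ast})^{G}\bigr)[t],
\end{equation*}
since $G$ acts trivially on the $t$-variable. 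This is the computation on which everything else rests.

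First I would use this to establish $V \cap X^{ss} = V$: the section $t \in H^0(X,\mathcal{O}(1))^G$ is $G$-invariant and its non-vanishing locus $X_t$ is precisely $V \subseteq X$ under the embedding $v \mapsto [v:1]$, so every point of $V$ is invariant-theoretically semistable, and by construction the embedding identifies $V$ with $X_t$, giving the reverse inclusion.

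Next I would identify $q(V)$ with $V \gitq G$ by the standard Proj recipe. Since $t$ is a degree-one homogeneous invariant, the standard affine open of $X \gitq G = \operatorname{Proj}\bigl((\operatorname{Sym}(V^{\ast})^{G})[t]\bigr)$ given by $D_+(t)$ is $\operatorname{Spec}$ of the degree-zero part of the localisation at $t$; computing this directly,
\begin{equation*}
\bigl((\operatorname{Sym}(V^{\ast})^{G})[t,t^{-1}]\bigr)_{0} \;=\; \operatorname{Sym}(V^{\ast})^{G},
\end{equation*}
so $D_+(t) \cong \operatorname{Spec}\operatorname{Sym}(V^{\ast})^{G} = V \gitq G$, and in particular is affine. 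Because $q \colon X^{ss} \to X \gitq G$ is a good quotient and $V = q^{-1}(D_+(t))$ (as $D_+(t)$ is the locus where $t$ does not vanish and $V = X_t$), the restriction $V \to D_+(t)$ is itself a good quotient, which must then agree with the affine GIT quotient $V \to V \gitq G$. Finally, the boundary $X \gitq G \setminus q(V) = V_+(t) = \operatorname{Proj}\bigl((\operatorname{Sym}(V^{\ast})^{G})[t]/(t)\bigr) = \operatorname{Proj}\operatorname{Sym}(V^{\ast})^{G} = \mathbb{P}(V) \gitq G$, giving the last claim.

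There is no real obstacle here: the whole argument is driven by the bidegree decomposition of the invariant ring, together with the observation that $t$ is a $G$-invariant section of $\mathcal{O}(1)$ whose vanishing locus is the hyperplane at infinity $\mathbb{P}(V) \subseteq X$. The only point requiring a little care is checking that the good quotient $X^{ss} \to X \gitq G$ restricts to a good quotient over the open affine $D_+(t)$, which follows formally since good quotients are local on the base and $D_+(t)$ is itself covered by the principal open associated to the invariant section $t$.
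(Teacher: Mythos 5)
Your proof is correct, but it follows a genuinely different route from the paper's. The paper's proof of $V \cap X^{ss} = V$ appeals to the \emph{topological} Hilbert--Mumford criterion ($[v:1]$ is semistable because the closure of $G\cdot(v,1)$ cannot contain the origin when $G$ fixes the last coordinate), and similarly uses it to identify $\mathbb{P}(V)\cap X^{ss}$ with $\mathbb{P}(V)^{ss}$; it then invokes the restriction property of good quotients over saturated opens together with uniqueness of good quotients to identify $q(V)$ and the boundary. You instead compute the full invariant ring of $X$ explicitly as $(\operatorname{Sym}(V^{\ast})^G)[t]$, observe that the $G$-invariant section $t$ (the paper's $w$) has non-vanishing locus exactly $V$, and then read everything off from $\operatorname{Proj}$: $D_+(t)=\operatorname{Spec}(\operatorname{Sym}(V^\ast)^G)=V\gitq G$ and $V_+(t)=\operatorname{Proj}(\operatorname{Sym}(V^\ast)^G)=\mathbb{P}(V)\gitq G$. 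Both are sound. Your invariant-theoretic computation is more self-contained and makes the identification of the boundary immediate, without a second appeal to the Hilbert--Mumford criterion; the paper's topological argument is lighter on ring theory and adapts more directly to the non-linear situation where one only has an affine $G$-variety rather than a representation. One small point you might make explicit: $q(V)=D_+(t)$ (not just $q(V)\subseteq D_+(t)$) because good quotients are surjective, which is what lets you write the boundary as $X\gitq G \setminus q(V) = V_+(t)$.
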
 

\begin{proof}
By the topological Hilbert-Mumford criterion \cite[Prop 2.2]{Mumford1994}, a point $[v:w] \in X$ is semistable if and only if $0 \notin \overline{G \cdot (v,w)}$. Since $G$ acts trivially on $k$, any point $[v:1]$ satisfies $0 \notin \overline{G \cdot (v,1)}$, so that all points in $V$ are in $X^{ss}$. Therefore $V \cap X^{ss} = V$. 

Since good quotients restrict to good quotients on saturated open subvarieties \cite[\S 1.1]{Birula1991}, $q$ restricts to a good quotient on the open saturated subvariety $V \cap X^{ss} = V$. The quotient $q(V)$ is affine as it is given by the non-vanishing locus of the $G$-invariant section $w$ on $X$ which picks out the coordinate at infinity. By uniqueness of good quotients,  $q(V)$ is isomorphic to $V \gitq G$. 

By writing $X$ as $X = V \sqcup \PP(V)$, we can identify the complement of $V \gitq G$ inside $X \gitq G$ as the image of $\PP(V) \cap X^{ss}$ under $q$, which is a good quotient for the action on $\PP(V) \cap X^{ss}$. For the linear $G$-action on $\PP(V)$ induced by the representation of $G$ on $V$, we have $ \PP(V) \cap X^{ss} =\PP(V)^{ss}$ by the topological Hilbert-Mumford criterion. Indeed, $\overline{G \cdot (v,0)} $ does not contain $(0,0)$ if and only if $\overline{G \cdot v}$ does not contain $0$. Therefore $\PP(V) \gitq G$ is a good quotient of $\PP(V)^{ss} = \PP(V) \cap X^{ss}$, so that $X \gitq G \setminus V \gitq G$ is isomorphic to $\PP(V) \gitq G$ by uniqueness of good quotients. 
\end{proof}

\subsection{Twisted affine GIT} \label{subsec:twistedaffine}  
Suppose that $G$ acts linearly on a vector space $V$ and let $\rho$ denote a character of $G$. As in \cite{King1994} we will use $\rho$ to twist the induced action of $G$ on the trival line bundle $ V \times k$ and produce a so-called twisted affine GIT quotient.

A function $f \in k[V]$ is a \emph{semi-invariant with respect to $\rho$} if there is an $n \in \mathbb{N}$ such that $f(g \cdot v) = \rho(g)^n f(v)$ for each $v \in V$ and $g \in G$. In this case $n$ is the \emph{weight} of $f$ and we denote it by $\operatorname{wt}_{\rho} ( f)$. 
The ring $k[V]^{G,\rho}$ of semi-invariants (with respect to $\rho$) is graded by the weight:
 $$k[V]^{G,\rho} = \bigoplus_{n \geq 0} k[V]^G_{\rho^n},$$ where $k[V]^G_{\rho^n}$ denotes semi-invariants of weight $n$.  Each graded piece can be identified as the space of invariant sections for a $G$-equivariant line bundle on $V$ in the following way. Let $\mathcal{L}_p$ denote the linearisation on the trivial line bundle $L:= V \times k$ with $G$-action $g \cdot (v,z) = (g \cdot v, \rho(g) z)$. Then an invariant section of ${\Lc}^{\otimes n}_{\rho}$ is a function $f z^n \in k[V \times k]=k[V][z]$ such that $f \in k[V]$ is a semi-invariant (with respect to $\rho$) of weight $n$. Note that there is another grading of $k[V]^{G, \rho}$, given by the degree of its elements as polynomials on $V$.  

Using the equality $k[V]^{G,\rho} = (k[V][z])^G$, where $G$ acts on $z$ via the character $\rho^{-1}$, the inclusion of invariants $k[V]^{G,\rho} \subseteq k[V][z]$ induces a rational morphism \begin{equation} \Proj k[V][z] = \Spec k[V] = V \dashrightarrow  V \gitq_{\hspace{-2pt} \rho} \hspace{1pt} G := \Proj k[V]^{G,\rho} \label{rationalmap}
\end{equation} 
to the \emph{twisted affine GIT quotient of $V$ by $G$ (with respect to $\rho$)}. By definition, $V \gitq_{\hspace{-2pt} \rho} \hspace{1pt} G$ admits a projective morphism to $\operatorname{Spec} k[V]_{\rho^0}^G = \operatorname{Spec} k[V]^G$. It follows that the twisted affine GIT quotient $V \gitq_{\hspace{-2pt} \rho \hspace{1pt}} G$ is projective over the `untwisted' affine GIT quotient $V \gitq G$. In particular, if $k[V]^G  = k$ so that $V \gitq G$ is a point, then the twisted affine GIT quotient is projective.

As in the projective case, we define three semistable loci for the action of $G$ on $V$ twisted by $\rho$. Since $G$ is reductive, we will see again that all three semistable loci coincide.

\begin{definition}[$\rho$-semistable loci in $V$] \thlabel{semistableloci}
For a linear $G$-action on a vector space $V$ twisted by $\rho$, 
\begin{enumerate}[(i)]
    \item  the \emph{quotienting $\rho$-semistable locus} is the domain of definition of the rational map in \eqref{rationalmap};
    \item the \emph{invariant-theoretic $\rho$-semistable locus} is $$\Iss{V}{G}(\rho) : = \{v \in V \ | \ \exists \ \sigma \in k[V]^{G}_{\rho^n} \text{ for some $n > 0$ such that $\sigma(v) \neq 0$} \};$$ 
    \item the \emph{Hilbert-Mumford $\rho$-semistable locus} is $$\HMss{V}{G}(\rho) : = \{ v \in V \ | \ \langle \rho, \lambda \rangle \geq 0 \text{ for all 1PSs $\lambda$ of $G$ such that $\lim_{t \to 0} \lambda(t) \cdot x$ exists}\},$$ where $\langle \rho, \lambda \rangle$ is the natural pairing between characters and co-characters of $G$, given by $\langle \rho, \lambda \rangle = r$ when $\rho(\lambda(t)) = t^r$.  The \emph{Hilbert-Mumford $\rho$-stable locus} is defined analogously, with a strict inequality $\langle \rho, \lambda \rangle > 0$ instead. 
\end{enumerate}
\end{definition}

It is clear that $\Qss{V}{G}(\rho) = \Iss{V}{G}(\rho)$, since the morphism in \eqref{rationalmap} is undefined at $v$ if and only if all semi-invariants of strictly positive weight vanish at $v$. Moreover, $\Iss{V}{G}(\rho) = \Qss{V}{G}(\rho)$ by King's Hilbert-Mumford criterion \cite{King1994}. To summarise, we have the following result.

\begin{proposition}[King] \thlabel{allequal}
    For a $G$-action on a vector space $V$ twisted by $\rho$, we have: $$ \Qss{V}{G}(\rho) = \Iss{V}{G}(\rho) = \HMss{V}{G}(\rho).$$
\end{proposition}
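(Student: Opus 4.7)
The equality $\Qss{V}{G}(\rho) = \Iss{V}{G}(\rho)$ is immediate from the definitions: the rational map \eqref{rationalmap} is given by $v \mapsto [f_0(v) : \cdots : f_N(v)]$ for homogeneous generators $f_0, \ldots, f_N$ of $\bigoplus_{n \geq 1} k[V]^G_{\rho^n}$, and is therefore defined precisely at those $v \in V$ where some $\rho^n$-semi-invariant of positive weight does not vanish. My plan is to spend the proof on the non-trivial equality $\Iss{V}{G}(\rho) = \HMss{V}{G}(\rho)$, which I would prove by reducing to Mumford's projective Hilbert--Mumford criterion (\thref{projallequal}).

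The key device is to realise the $\rho$-twisted affine theory as a piece of projective GIT on $X := \PP(V \oplus k)$. Let $G$ act on $V \oplus k$ via its given action on $V$ and by the character $\rho^{-1}$ on $k$, inducing a $G$-linearisation $\Lc_\rho := \mathcal{O}_X(1)$ on $X$. Under the embedding $V \hookrightarrow X$, $v \mapsto [v:1]$, the section $z \in H^0(X, \Lc_\rho)$ that cuts out $\PP(V) \subset X$ is a $G$-eigenvector of weight $\rho$, and more generally there is a natural identification of $G$-invariant sections of $\Lc_\rho^{\otimes n}$ with $\rho^n$-semi-invariants in $k[V]$ (multiplied by $z^n$). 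Consequently $V \cap \Iss{X}{G}(\Lc_\rho) = \Iss{V}{G}(\rho)$, and an analogous computation identifies $V \cap \HMss{X}{G}(\Lc_\rho)$ with $\HMss{V}{G}(\rho)$ (see below). By \thref{projallequal} applied to $(X, \Lc_\rho)$ these two loci agree, completing the proof.

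The translation of the projective Hilbert--Mumford condition into the affine one is where care is required, and is the step I expect to be the main obstacle. Fix a one-parameter subgroup $\lambda$ of $G$, decompose $v = \sum_w v_w$ according to $\lambda$-weights, and let $w_{\min}$ be the minimum weight appearing with $v_{w_{\min}} \neq 0$. One needs to split into two cases. If $w_{\min} \geq 0$ then $\lim_{t \to 0}\lambda(t) \cdot v = v_0$ exists in $V$, the limit of $[v:1]$ in $X$ is $[v_0 : 1]$, and a direct computation of the $\lambda$-weight on the fibre of $\Lc_\rho$ at $[v_0 : 1]$ gives $-\langle \rho, \lambda \rangle$, so the projective HM inequality $\operatorname{wt}_\lambda([v_0:1]) \leq 0$ becomes $\langle \rho, \lambda \rangle \geq 0$, matching the affine definition. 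If $w_{\min} < 0$ then the limit in $X$ escapes into $\PP(V)$ at $[v_{w_{\min}} : 0]$, and computing the fibre weight there gives $-w_{\min} > 0$, violating projective semistability; this correctly reflects the fact that such $\lambda$ should play no role in the affine definition, since $\lim_{t \to 0}\lambda(t) \cdot v$ does not exist in $V$, but here it instead gives an obstruction that must be ruled out by the existence of a semi-invariant non-vanishing at $v$. Running this analysis in both directions establishes $V \cap \HMss{X}{G}(\Lc_\rho) = \HMss{V}{G}(\rho)$, and combined with the identification of the invariant-theoretic loci above and \thref{projallequal}, yields the desired chain of equalities.
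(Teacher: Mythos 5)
The paper's own proof of \thref{allequal} is just a citation to \cite{King1994}, so your self-contained reduction to projective GIT on $X = \PP(V\oplus k)$ is a genuinely different route --- indeed it is the same strategy the paper itself uses in \S\ref{subsec:twistedaffinefromproj} to prove the stronger \thref{equalityonV}. However, the identification on which your argument turns, namely $V \cap \Iss{X}{G}(\Lc_\rho) = \Iss{V}{G}(\rho)$, is false, because of a degree/weight mismatch. A $G$-invariant section of $\Lc_\rho^{\otimes n}$ is a degree-$n$ homogeneous polynomial $F = \sum_{j=0}^n F_j z^j$ on $V\oplus k$ where each $F_j$ is a semi-invariant of weight $\rho^j$ and degree $n-j$; it is \emph{not} of the form $f z^n$ with $f$ a weight-$\rho^n$ semi-invariant (that object has degree $\deg f + n > n$ unless $f$ is constant). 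In particular the component $F_0$ is a positive-degree $G$-invariant of weight zero, which can certify $[v:1]\in \Iss{X}{G}(\Lc_\rho)$ while contributing nothing to $\Iss{V}{G}(\rho)$. Concretely, let $G = \GG_m^2$ act on $V = \mathbb{A}^2$ with weights $(1,1)$ and $(-1,-1)$ and take $\rho = (1,-1)$: every semi-invariant $x^ay^b$ has weight $(a-b)(1,1)$, which equals $n\rho$ only for $n=0$, so $\Iss{V}{G}(\rho) = \emptyset$; yet $xy$ is a non-zero $G$-invariant section of $\Lc_\rho^{\otimes 2}$, so $V \cap \Iss{X}{G}(\Lc_\rho) \supseteq \{xy\neq 0\} \neq \emptyset$. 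The same example defeats your Hilbert--Mumford case analysis. (There is also a smaller slip: your representation $g\cdot(v,w) = (gv,\rho(g)^{-1}w)$ restricts on $V\subset X$ to $g\cdot[v:1]=[\rho(g)gv:1]$, the $\rho$-twisted action rather than the original one; the representation matching \eqref{linearisationrho} is $g\cdot_\rho(v,w)=(\rho(g)^{-1}gv,\rho(g)^{-1}w)$, but the degree/weight obstruction persists for that choice too.)

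This is precisely the phenomenon the paper isolates in \S\ref{subsubsec:firstattempt}: \thref{inclusionvsequality} and \thref{weightdegree} show that $\mathcal{O}(1)_\rho$ yields only a possibly strict inclusion $V \cap X^{ss}(\mathcal{O}(1)_\rho) \subseteq V^{ss}(\rho)$. The remedy, carried out in \thref{equalityonV}, is to replace $\mathcal{O}(1)_\rho$ by $\mathcal{O}(d)_\rho$ for $d\gg 1$, whose weight polytope \eqref{compareweightpolytopes} is pushed in the $\rho$-direction so that the intersection with $V$ stabilises to exactly $V^{ss}(\rho)$. Making that replacement would let your reduction go through, but then you would essentially be reproving \thref{equalityonV} rather than deriving \thref{allequal} from \thref{projallequal}; note also that the paper's proof of \thref{equalityonV} invokes the torus reduction \eqref{hm}, which is part of what \thref{allequal} asserts, so care would be needed to keep the argument non-circular.
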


The above proposition justifies the following definition. 

\begin{definition}[Semistable locus for reductive twisted affine GIT]
    The \emph{$\rho$-semistable locus} for the linear action of a reductive group $G$ on a vector space $V$ twisted by a character $\rho$ of $G$ is $$\justss{V}{G}(\rho) : = \Qss{V}{G}(\rho) = \Iss{V}{G}(\rho) = \HMss{V}{G}(\rho).$$ 
\end{definition}

 If $\rho$ is the trivial character, then $\justss{V}{G}(\rho) = V$, which we can view as the `untwisted' semistable locus.  If the group is clear, we may write $V^{ss}(\rho)$ instead of $\justss{V}{G}(\rho)$.

The rational map in \eqref{rationalmap} restricts to a good quotient $\Qss{V}{G}(\rho) \to V \gitq_{\hspace{-2pt} \rho} \hspace{1pt} G$. Moreover, this good quotient restricts to a geometric quotient on the \emph{$\rho$-stable locus}  $$ \justs{V}{G}(\rho) := \left\{ v \in V \ \middle\vert \begin{array}{c}  \exists  \ \sigma \in k[V]^{G}_{\rho^n} \text{ for some $n > 0$ such that $\sigma(v) \neq 0$,} \\
\Stab_G(v) \text{ is finite and the } G\text{-action on }  V_{\sigma} \text{ is closed} \end{array} \right\},$$ where $V_{\sigma} = \{v' \in V \ | \ \sigma(v') \neq 0\}.$ Note that $\justs{V}{G}(\rho) = \justs{V}{G}$ if $\rho$ is trivial.

The Hilbert-Mumford criterion also gives an equality $\HMs{X}{G}(\rho) = \justs{V}{G}(\rho)$. The Hilbert-Mumford criterion is also sometimes formulated for a maximal torus $T \subseteq G$ as equalities \begin{equation} \justssors{V}{G}(\rho) = \bigcap_{g \in G} g \justssors{V}{T}(\rho). \label{hm} \end{equation} 
Note that the notation ${(s)s}$ in the the above equation means that the equality holds both for the semistable and stable loci, and we will use this notation  throughout the remainder of the paper where convenient. 
In turn $\rho$-(semi)stability for tori can be characterised geometrically using cones of allowable 1PSs (see \cite[Prop 2.7]{Hoskins2014}), which combined with \eqref{hm} gives a combinatorial characterisation of $\rho$-(semi)stability for any reductive group. In \thref{hmlemma} below we draw attention to another geometric characterisation of $\rho$-semistability for $T$-representations, which is closer in spirit to the characterisation of $T$-semistability in the projective case (see \thref{torussemistabilityproj}), in that it is also formulated using weight polytopes. We will use this characterisation in Section \ref{subsec:twistedaffinefromproj} to provide simpler proofs of Halic's results  \cite{Halic2004,Halic2010}.  

\begin{lemma} \thlabel{hmlemma} 
    Suppose that a torus $T$ acts linearly on a vector space $V$ and let $\rho$ be a character of $T$. Then $v \in \Qss{V}{T}(\rho)$ if and only if the weight polytope of $v$ contains a non-zero point along the positive ray through $\rho$. Moreover, $v \in \justs{V}{T}(\rho)$ if and only if the interior of the weight polytope of $v$ contains a non-zero point along the positive ray through $\rho$.
\end{lemma}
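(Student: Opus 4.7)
The plan is to deduce both characterisations from King's Hilbert--Mumford criterion (\thref{allequal}) via a standard cone duality argument, paralleling the proof of \thref{torussemistabilityproj}.

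First I would translate the limit condition geometrically. If $v = \sum_\chi v_\chi$ is the $T$-weight decomposition of $v$ and $\lambda$ is a 1PS of $T$, then $\lambda(t)\cdot v = \sum_\chi t^{\langle \chi, \lambda\rangle} v_\chi$, so $\lim_{t\to 0}\lambda(t)\cdot v$ exists if and only if $\langle \chi, \lambda\rangle \geq 0$ for every weight $\chi$ appearing in $v$, i.e.\ if and only if $\conv_T(v)$ is contained in the closed half-space $\{\psi : \langle \psi, \lambda\rangle \geq 0\}$. Letting $C_v := \mathbb{R}_{\geq 0}\cdot \conv_T(v)$ be the (closed, polyhedral) cone in the real character space spanned by the weights of $v$, this says exactly that $\lambda$ lies in the dual cone $C_v^\vee$ inside the real cocharacter space.

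Combining this translation with \thref{allequal}, $v \in \Qss{V}{T}(\rho)$ if and only if $\langle \rho, \lambda\rangle \geq 0$ for every lattice point $\lambda \in C_v^\vee$. By density of the rational (hence scaled lattice) points, this extends to every $\lambda \in C_v^\vee$, which means $\rho \in (C_v^\vee)^\vee$. Biduality for closed convex polyhedral cones gives $(C_v^\vee)^\vee = C_v$, and hence $\rho \in C_v = \mathbb{R}_{\geq 0}\cdot\conv_T(v)$. For $\rho \neq 0$ this is precisely the condition that the positive ray through $\rho$ meets $\conv_T(v)$ in a non-zero point. Running the same argument with the strict inequality from the definition of $\HMs{V}{T}(\rho)$ replaces containment in $C_v$ by containment in its (relative) interior, translating to the interior of $\conv_T(v)$ meeting the positive ray through $\rho$ non-trivially.

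The argument is essentially mechanical, so I do not expect a serious obstacle. The only point I would take care over is the interpretation of ``interior'' for the stable statement when $\conv_T(v)$ is not full-dimensional in the character space, where the correct notion is the relative interior (equivalently, the interior of $C_v$ within its linear span), in parallel with the situation in \thref{torussemistabilityproj}.
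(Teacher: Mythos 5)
Your route is genuinely different from the paper's, and it is worth noting the contrast before getting to the issue. The paper argues directly: for the semistable case it observes that a positive-weight $T$-semi-invariant nonvanishing at $v$ can be taken to be a monomial supported on $\mathrm{supp}(v)$, and rescales to get the convex-hull condition; for the stable case it runs a face-of-the-polytope analysis together with orbit degenerations under one-parameter subgroups. You instead feed King's Hilbert--Mumford criterion (\thref{allequal}) into polyhedral cone duality. Your semistable argument is clean and correct: translating ``the limit exists'' to $\lambda \in C_v^\vee$, reducing to lattice $\lambda$ by rationality of $C_v^\vee$, and biduality $(C_v^\vee)^\vee = C_v$ for the finitely generated (hence closed) cone $C_v = \RR_{\geq 0}\conv_T(v)$ give exactly the stated equivalence, and arguably more transparently than the paper's invariant-theoretic route.

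For stability there is a real gap, and your closing caveat points in the wrong direction. The strict-inequality form of duality you invoke gives $\rho \in \operatorname{int}(C_v)$, the \emph{topological} interior of $C_v$ in $\chi(T)_\RR$, not the relative interior: if $\mathrm{span}(C_v)$ is a proper subspace then any nonzero $\lambda \in \mathrm{span}(C_v)^\perp$ has $\pm\lambda \in C_v^\vee$, so the strict condition can never hold. Thus HM-stability automatically forces $\mathrm{supp}(v)$ to span $\chi(T)_\RR$, and $\operatorname{int}(C_v)=\operatorname{relint}(C_v)$ only under that extra hypothesis. Your claim that the correct reading of ``interior'' for $\conv_T(v)$ is the relative interior therefore yields a strictly weaker condition and the proposed equivalence fails in one direction. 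Concretely, take $T=\GG_m^3$ acting on $V=k^3$ with weights $e_1,e_2,e_3$, $v=(1,1,0)$ and $\rho=(1,1,0)$: the positive ray through $\rho$ meets the relative interior of $\conv_T(v)=\conv\{e_1,e_2\}$ at $(\tfrac12,\tfrac12,0)$, yet $v$ has a one-dimensional stabiliser and is not $\rho$-stable. The statement your duality argument actually establishes is the cleaner one ``$v$ is $\rho$-stable iff $\rho \in \operatorname{int}(\RR_{\geq 0}\conv_T(v))$''; translating this back to a condition on $\conv_T(v)$ itself requires retaining the full-dimensionality of the cone as an additional constraint, which is exactly what ``relative interior of $\conv_T(v)$'' silently discards. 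You are not alone in finding this corner delicate: the paper's own proof, at the step ``there is a cocharacter $\mu$ pairing to zero with all the weights on the face,'' also has to be read with care when the face in question is all of $\conv_T(v)$, since such a $\mu$ need not be nontrivial; but the paper resolves it by the face-and-degeneration argument rather than duality, and only applies the direction ``stable $\Rightarrow$ ray meets interior'' in its subsequent uses (\thref{equalityonV}, \thref{equalityonPV}).
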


\begin{proof} 
Choose coordinates on $V = \Spec k[x_1,\dots , x_n]$ which diagonalise the $T$-action, so $x_i$ has $T$-weight $\chi_i$. Given $v = (v_1,\dots v_n) \in V$ in these coordinates, let $I = \{i\mid v_i \neq 0\}$ be the support of $v$. Then there exists a semi-invariant $f$ with respect to $\rho$ of weight one with $f(v) \neq 0$ if and only if there is an expression $\rho =  \sum_{i\in I} \alpha_i \chi_i$ with $\alpha_i \in \ZZ_{\geq 0}$, because any such semi-invariant may without loss of generality be taken to be a monomial in the $x_i$'s. Hence, by appropriate scaling by a rational number, there exists a non-vanishing semi-invariant with respect to $\rho$ of weight $n$ for some $n>0$ at $v$ if and only if there exists an expression $q\rho = \sum_{i\in I} r_i\chi_i $ where $q,r_i \in \mathbb{Q}_{\geq 0}$ and $\sum_{i\in I} r_i = 1$. This is equivalent to the condition that some positive rational multiple of $\rho$ lies in the convex hull of the weights $\chi_i$ for $i \in I$, which is exactly the weight polytope of $v$.

To prove the statement for stability, let $v \in V$. If the ray through $\rho$ intersects the boundary of the weight polytope of $v$ and not its interior, then since the origin lies on that ray, there is a cocharacter $\mu: \Gm \rar T$ pairing to zero with all the weights on the face of the boundary of the weight polytope intersecting the ray through $\rho$. If this is not a proper face of the weight polytope, then $\mu(\Gm) \subseteq \Stab_T(v)$, in which case $v$ has infinite stabiliser so is not stable. We can therefore assume that this is a proper face of the weight polytope and that $\mu(\GG_m)$ does not fix $v$. Then possibly after replacing $\mu$ with $\mu^{-1}$, the limit point $\lim_{t \rar 0} \cdot \mu(t)v$ is a point whose weight polytope is exactly the aforementioned face. Such a point is $\rho$-semistable by the above paragraph,  which shows that the orbit of $v$ cannot be closed in the $\rho$-semistable locus. Therefore $v$ cannot be stable. Conversely, if the ray through $\rho$ intersects the weight polytope of $v \in V$ in some interior point, then in particular the interior is non-empty, which implies that the $T$-stabiliser of $v$ is finite. The closure of the torus orbit $Tv$ in $V$ consists of points whose weight polytopes are faces of the weight polytope (which can have any codimension) containing the origin. If the orbit is not closed in the $\r$-semistable locus, then there exists such a limit point which is $\rho$-semistable, i.e.\ a face of the weight polytope containing both the origin and a point on the ray. If this is the case, the intersection of the ray and the weight polytope must be contained in that face, and therefore cannot intersect the interior of the weight polytope, contradicting our assumption. \end{proof}

\subsection{Twisted affine GIT from projective GIT}  \label{subsec:twistedaffinefromproj}

In this section we show how twisted affine GIT can be recovered from projective GIT, providing new proofs of results in \cite{Halic2004,Halic2010}.

\subsubsection{The naive approach}  \label{subsubsec:firstattempt} 
A natural first attempt to recover twisted affine GIT from projective GIT is to use the same set-up as in Section \ref{subsec:affinefromproj} and consider a linearised $G$-action on $X : = \PP(V \oplus k)$ where $G$ acts trivially on $k$.  A reasonable guess for the linearisation is to twist the $G$-action on $\mathcal{O}(1)$ by the character $\rho$. This corresponds to the modified $G$-representation on $V \oplus k$ given by \begin{equation} g \cdot_{\rho} (v,w) := (\rho(g)^{-1} g \cdot v, \rho(g)^{-1} w). \label{linearisationrho} \end{equation}   We denote this linearisation by $\mathcal{O}(1)_{\rho}$. Contrary to the untwisted case where $V \cap X^{ss}(\mathcal{O}(1)) = V=V^{ss}(0)$ (here $0$ denotes the trivial character), we will see that the intersection $V \cap X^{ss}(\mathcal{O}(1)_{\rho})$ may not coincide with the $\rho$-semistable locus $V^{ss}(\rho)$.

\begin{proposition} \thlabel{inclusionvsequality} 
For a linear $G$-action on a vector space $V$ twisted by $\rho$, there are inclusions \[V \cap X^{(s)s}(\mathcal{O}(1)_{\rho}) \subseteq  V^{(s)s}(\rho).\] Moreover: \begin{enumerate}[(i)]
\item  if the inclusion in the semistable case is an equality then any $v \in V^{ss}(\rho)$ admits a non-vanishing semi-invariant $f$ with respect to $\rho$ such that $ \operatorname{deg}  f  \leq \operatorname{wt}_{\rho}(f)$; \label{inequalitydegree} 
\item \label{secondpart} if for each $v \in V^{ss}(\rho)$ there is a non-vanishing semi-invariant with respect to $\rho$ such that the inequality in (i) is strict, then the inclusion in the semistable case is an equality. 
\end{enumerate} 
\end{proposition}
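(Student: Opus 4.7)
The approach rests on an explicit dictionary between $G$-invariant sections of $\mathcal{O}(n)_\rho$ on $X = \PP(V \oplus k)$ and semi-invariants of weight $n$ on $V$. Writing a global section of $\mathcal{O}(n)$ as a polynomial $F(v,w) = \sum_{i=0}^n f_i(v) w^{n-i}$ of total degree $n$ with $f_i \in k[V]$ homogeneous of degree $i$, I would unwind the definition of the $\rho$-twisted action $g \cdot_\rho (v,w) = (\rho(g)^{-1} g v, \rho(g)^{-1} w)$ to verify directly that $F$ is $G$-invariant if and only if each $f_i$ is a semi-invariant of weight $n$ on $V$ with respect to $\rho$. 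The crucial consequence is that every invariant section of $\mathcal{O}(n)_\rho$ is built from semi-invariants $f_i$ satisfying $\deg f_i = i \leq n = \operatorname{wt}_\rho(f_i)$.

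Using this dictionary, the main inclusion $V \cap X^{(s)s}(\mathcal{O}(1)_\rho) \subseteq V^{(s)s}(\rho)$ would be proved as follows. If $[v:1] \in X^{ss}(\mathcal{O}(1)_\rho)$, then some $F \in H^0(X, \mathcal{O}(n)_\rho)^G$ with $n > 0$ satisfies $F(v,1) = \sum_i f_i(v) \neq 0$, so some $f_i$ is a non-vanishing semi-invariant at $v$ of weight $n > 0$, witnessing that $v \in V^{ss}(\rho)$. For the stable case I would use that $\Stab_G([v:1]) = \Stab_G(v)$ and that $\sigma(v') := F(v',1)$ is itself a (not necessarily homogeneous) semi-invariant of weight $n$ with $V_\sigma = V \cap X_F$; since $V$ is a $G$-invariant open of $X$, closedness of the $G$-action on $X_F$ descends to closedness on $V_\sigma$.

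Part (i) then follows immediately: if the inclusion is an equality and $v \in V^{ss}(\rho)$, then $[v:1] \in X^{ss}$, and one of the $f_i$ arising from a non-vanishing invariant section gives a semi-invariant at $v$ of degree $i \leq n = \operatorname{wt}_\rho(f_i)$. For part (ii), given $v \in V^{ss}(\rho)$ and a non-vanishing semi-invariant $f$ at $v$ with $d = \deg f < n = \operatorname{wt}_\rho(f)$, I would directly construct an invariant section by setting $F(v', w) := f(v') w^{n-d}$; this is a polynomial of total degree $n$ on $V \oplus k$ which the dictionary identifies as an element of $H^0(X, \mathcal{O}(n)_\rho)^G$, and $F(v,1) = f(v) \neq 0$ then yields $[v:1] \in X^{ss}(\mathcal{O}(1)_\rho)$.

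The principal technical step is the identification of $H^0(X, \mathcal{O}(n)_\rho)^G$ with sums of semi-invariants of weight $n$ and degree at most $n$; this is a direct but sign-sensitive computation with the $\rho$-twisted action, and once it is correctly established each of the three statements essentially reduces to manipulating the inequality $\deg f_i \leq \operatorname{wt}_\rho(f_i)$. The most delicate point is transferring the closed-orbit condition in the stable case of the main inclusion, which is handled by the fact that $V \hookrightarrow X$ is a $G$-invariant open immersion.
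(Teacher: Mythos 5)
Your approach is essentially the same as the paper's: the main inclusion reads a semi-invariant off a non-vanishing invariant section, part (i) follows from the degree comparison, and part (ii) runs the construction in reverse. Your organizing ``dictionary'' — decomposing $F = \sum_i f_i(v)\, w^{n-i}$ into homogeneous semi-invariants $f_i$ of weight $n$ and degree $i$ — is a slightly different packaging of what the paper does directly by setting $f(v) := F(v,1)$ (a possibly inhomogeneous semi-invariant of degree $\leq n$), but the content is the same.

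The one place your write-up has a small gap is part (ii). You set $F(v',w) := f(v')\,w^{n-d}$ with $d = \deg f$ and call it ``a polynomial of total degree $n$,'' but if $f$ is not homogeneous this $F$ is not a \emph{homogeneous} degree-$n$ polynomial, hence not a section of $\mathcal{O}(n)$, and the statement of part (ii) does not assume homogeneity. The paper handles this by first homogenizing $f$ in the extra variable and then multiplying by $z^{n - \deg f}$. Alternatively, since the $G$-action on $V$ is linear, the homogeneous components of $f$ are themselves weight-$n$ semi-invariants, so you may replace $f$ by a non-vanishing homogeneous component of degree $< n$. Either patch makes your argument go through, but as written the step fails for inhomogeneous $f$.
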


\begin{proof} 
If $[v_0:1] \in V \cap X^{ss}(\mathcal{O}(1)_{\rho})$, then there is a $G$-invariant section of $(\mathcal{O}(1)_{\rho})^{\otimes n}$ for some $n \geq 1$ which does not vanish at $[v_0:1]$. This section can be thought of as a homogeneous polynomial $F$ of degree $n$ such that $F(v_0,1) \neq 0$ and $F(g \cdot v, w) = \rho(g)^n F(v,w)$ for each $(v,w) \in V \oplus k$ and $g \in G$. From $F$, we define $f \in k[V]$ by $f(v) : = F(v,1)$ for each $v \in V$. Then \begin{equation} f(g \cdot v) = F( g \cdot v, 1) = \rho(g)^n F(v,1) = \rho(g)^n f(v) \label{equivariant} 
\end{equation}  for each $v \in V$ and $g \in G$, so $f$ is a weight $n$ semi-invariant, with $f(v_0) = \rho(g)^n F(v_0,1) \neq 0$ as $F(v_0,1) \neq 0$. Thus $v_0 \in V^{ss}(\rho)$. In fact, as $V_f = V \cap X_F$, we see that if $[v_0:1] \in X^{s}(\mathcal{O}(1)_\rho)$, then $v_0 \in V^s(\rho)$ as the stabilisers also coincide. Since $\operatorname{deg} f  \leq \operatorname{deg} F = \operatorname{wt}_{\rho} (f)$, we have also shown \eqref{inequalitydegree}: if $v_0 \in V^{ss}(\rho) = V \cap X^{ss}(\mathcal{O}(1)_{\rho})$ then we can choose $F$ and construct $f$ as above. 

To show \eqref{secondpart}, suppose that $v_0 \in V^{ss}(\rho)$ and $f \in k[V]^G_{\rho^n}$ satisfies $f(v_0) \neq 0$ and $\operatorname{deg} f < n$. Let $F'$ denote the homogenisation of $f$ with respect to the variable $z$, where $k[V \oplus k ] = k[V][z]$, and let $F : = F' z^{n - \operatorname{deg} f} \in k[V][z]$. Then $F$ satisfies \eqref{equivariant} and $F(v,0) = 0$ as $\deg f < n$. Thus $F$ gives a $G$-invariant section of $(\mathcal{O}(1)_{\rho})^{\otimes n}$ which does not vanish at $[v_0:1]$, so $[v_0:1] \in X^{ss}(\mathcal{O}(1)_{\rho})$. 
\end{proof}

We give an example of a non-vanishing semi-invariant $f$ with $\deg f > \operatorname{wt}_{\rho}(f)$, which by \thref{inclusionvsequality} \eqref{inequalitydegree} shows that the inclusion $V \cap X^{ss}(\mathcal{O}(1)_{\rho}) \subseteq  V^{ss}(\rho)$ is not always an equality. 
\begin{example} \thlabel{weightdegree}
Consider the $\GG_m^2$-action on $V = \operatorname{Spec} k[x,y]$ with weights $(1,0)$ and $(0,1)$. For $\rho = (1,1)$, the only $\rho$-semi-invariants are $(xy)^n$ for $n \geq 1$. As $\operatorname{deg} ((xy)^n) = 2n > \operatorname{wt}_{\rho}((xy)^n) = n$, the inclusion $V \cap X^{ss}(\mathcal{O}(1)_{\rho}) \subseteq V^{ss}(\rho)$ must be strict by \thref{inclusionvsequality}. This can be seen directly: $V^{ss}(\rho) = \mathbb{A}^2 \setminus \{xy=0\}$ while $X^{ss}(\mathcal{O}(1)_\rho)  = \emptyset$. 
\end{example}

\subsubsection{New proofs of Halic's approach} \label{subsec:secondattempt} 

We now show that the inclusions in \thref{inclusionvsequality}  become an equality if we linearise the $G$-action on $X$ with respect to a sufficiently high tensor power of $\mathcal{O}(1)$ rather than with respect to $\mathcal{O}(1)$. This recovers and extends results of Halic from \cite{Halic2004,Halic2010}. Thus the twisted affine GIT quotient of $V$ can be recovered from a projective GIT quotient of $X = \PP(V \oplus k)$. The paper \cite{Halic2010} considers only the case where $k[V]^G = k$, while the earlier unpublished paper \cite{Halic2004} also considers the case where $k[V]^G \neq k$. In this section we provide new simpler proofs in both cases using a geometric rather than invariant-theoretic perspective.  
We strengthen Halic's results, by showing that the first equality of \cite[Prop 2.4]{Halic2004} holds also for the stable loci. This strengthening is necessary for proving \thref{ss=sforzmin} in Section \ref{sec:affinegitfromprojgitforH}.  

Let $d \in \mathbb{N}$ and consider the line bundle $\mathcal{O}(d)$ on $X$, which we identify geometrically with the quotient of $ ( (V \times \mathbb{A}^1 ) \setminus \{0\} \times \mathbb{A}^1 ) $ by the $\GG_m$-action $ s \cdot (v,w,z) = (sv, sw, s^d z)$. We let $[v,w,z]_d$ denote the equivalence class of $(v,w,z) \in (V \times \mathbb{A}^1 ) \setminus \{0\} \times \mathbb{A}^1$ after taking the $\GG_m$-quotient. Let $\mathcal{O}(d)_{\rho}$ denote the $G$-linearisation on the line bundle $\mathcal{O}(d)$ given by the following $G$-action  \begin{equation} g \cdot [v,w,z]_d : = [g \cdot v, w, \rho(g) z]_d. \label{actiononod} \end{equation} Note that this is not the linearisation $(\mathcal{O}(1)_{\rho})^{\otimes d} = \mathcal{O}(d)_{\rho^d}$ induced from $G$-linearisation $\mathcal{O}(1)_\rho$ defined in Section \ref{subsubsec:firstattempt} above. Indeed the latter is given by $g \cdot [v,w,z]_d = [g \cdot v, w, \rho(g)^d z]_d = [\rho(g)^{-1} g \cdot v, \rho(g)^{-1} w, z]_d$. In particular, the weight polytopes for the linearisations $\mathcal{O}(1)$ and $\mathcal{O}(d)_{\rho}$ are related as follows:
\begin{equation}\label{compareweightpolytopes} \operatorname{conv}_T^{\mathcal{O}(d)_{\rho}}(v,w) = d \operatorname{conv}_T^{\mathcal{O}(1)}(v,w) - \rho. \end{equation}

\begin{proposition} \thlabel{equalityonV}
Writing $X = V \sqcup \PP(V)$, for $d \gg 1$ we have $$ V \cap X^{ss}(\mathcal{O}(d)_{\rho}) = V^{ss}(\rho) \quad \text{and} \quad  V \cap X^{s}(\mathcal{O}(d)_{\rho}) = V^{s}(\rho). $$ 
\end{proposition}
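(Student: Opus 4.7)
The plan is to reduce to the case of a maximal torus via the Hilbert-Mumford formulas \eqref{reducingtotorus} and \eqref{hm}, and then translate both sides into weight-polytope conditions using \thref{torussemistabilityproj} on the projective side and \thref{hmlemma} on the affine side.

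First I would observe that since both $V^{(s)s}(\rho) = \bigcap_{g \in G} g \cdot V^{(s)s}_T(\rho)$ and $X^{(s)s}(\mathcal{O}(d)_\rho) = \bigcap_{g \in G} g \cdot X^{(s)s}_T(\mathcal{O}(d)_\rho)$ (noting that the character $\rho$ restricts to a character of $T$ and that the linearisation $\mathcal{O}(d)_{\rho}$ restricts to the analogously defined linearisation for the $T$-action), it suffices to prove the equalities for the maximal torus $T$. Also, we may assume $\rho \neq 0$, since otherwise both sides equal $V$.

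Next, fix $v \in V$ and let $P_v \subseteq \chi(T)_\RR$ denote the convex hull of the $T$-weights $\chi_i$ of $V$ such that $v_i \neq 0$, i.e.\ the weight polytope of $v$ for the $T$-representation on $V$. The weight polytope of the point $(v,1) \in V \oplus k$ for the linearisation $\mathcal{O}(1)$ on $X$ (where $T$ acts trivially on $k$) is the convex hull $P_v' := \operatorname{conv}(P_v \cup \{0\})$, so by \eqref{compareweightpolytopes} the weight polytope of $(v,1)$ for the linearisation $\mathcal{O}(d)_\rho$ is $d P_v' - \rho$. Applying \thref{torussemistabilityproj}, the condition $[v:1] \in X_T^{ss}(\mathcal{O}(d)_\rho)$ is equivalent to $\rho/d \in P_v'$, i.e.\ to the existence of $t \in [0,1]$ and $p \in P_v$ with $\rho/d = tp$. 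On the other hand, by \thref{hmlemma}, $v \in V_T^{ss}(\rho)$ if and only if some strictly positive multiple of $\rho$ lies in $P_v$. The analogous equivalences for stability hold with $P_v$ replaced by its relative interior.

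Given these characterisations, the inclusion $V \cap X_T^{(s)s}(\mathcal{O}(d)_\rho) \subseteq V_T^{(s)s}(\rho)$ is immediate for every $d \geq 1$ and $\rho \neq 0$: if $\rho/d = tp$ with $t \in [0,1]$ and $p \in P_v$ (resp.\ in the relative interior of $P_v$), then necessarily $t > 0$ since $\rho \neq 0$, so $p = (td)^{-1}\rho$ is a strictly positive multiple of $\rho$ lying in $P_v$ (resp.\ in its relative interior). For the reverse inclusion, if $q\rho \in P_v$ with $q > 0$, then writing $\rho/d = (1/(qd)) \cdot (q\rho)$ one has $\rho/d \in P_v'$ as soon as $qd \geq 1$; the analogous statement for the stable case uses the fact that $(1/(qd)) \cdot (q\rho)$ lies in the relative interior of $P_v'$ whenever $q\rho$ lies in the relative interior of $P_v$ and $1/(qd) \in (0,1)$.

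The main point left is to exhibit a single $d$ that works uniformly for all $v$. This is where I would use the finiteness of the possible supports of $v$ in the weight decomposition of $V$: only finitely many polytopes $P$ arise as $P_v$, and for each such $P$ meeting the positive ray through $\rho$ there is a minimal $q_P > 0$ with $q_P \rho \in P$ (and similarly for the relative interior). Taking $d$ strictly larger than $\max_P 1/q_P$ over this finite collection yields the desired uniform bound, completing the proof. The only mild subtlety, and the place requiring care, is ensuring that the argument for stability respects relative interiors when the weight polytope is lower-dimensional.
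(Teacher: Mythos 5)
Your proof takes essentially the same route as the paper's: reduce to a maximal torus via \eqref{reducingtotorus} and \eqref{hm}, then translate each side of the desired equality into a weight-polytope condition (using \thref{torussemistabilityproj} on the projective side, \thref{hmlemma} on the affine side, and the shift formula \eqref{compareweightpolytopes}) and compare. The two small divergences are worth noting. First, for the ``easy'' inclusion $V \cap X^{(s)s}(\mathcal{O}(d)_\rho) \subseteq V^{(s)s}(\rho)$ (which, as you correctly observe, holds for every $d \geq 1$), the paper does not reprove this via polytopes but instead refers back to the algebraic argument of \thref{inclusionvsequality}, i.e.\ building a semi-invariant $f(v) := F(v,1)$ on $V$ from an invariant section $F$ of a power of $\mathcal{O}(d)_\rho$ and comparing stabilisers via $V_f = V \cap X_F$; your polytope version is a valid alternative, and arguably more symmetric with the other direction. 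Second, you make the uniformity in $d$ explicit via the finiteness of possible supports of $v$ in the weight decomposition of $V$; the paper leaves this implicit, but it amounts to the same finiteness (or, equivalently, the rational-polyhedral structure of the VGIT chambers). So this is a correct, slightly more self-contained rendering of the same argument.

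One point to tighten before this could be spliced in as a substitute proof: your phrasing repeatedly invokes the \emph{relative interior} of $P_v$ and $P_v'$, whereas \thref{torussemistabilityproj} and \thref{hmlemma} are both stated with ``interior''. These do not agree in general --- for example $P_v$ can be a lower-dimensional simplex while $P_v' = \operatorname{conv}(P_v \cup \{0\})$ is full-dimensional --- and the correct translation between the interior of $P_v'$ and the relative interior of $P_v$ in the stable case needs to be spelled out (and in fact requires care when $0$ lies in, or in the affine span of, $P_v$). This is precisely the ``mild subtlety'' you flag at the end; the paper itself glosses over the same point (and sidesteps it in one direction by invoking \thref{inclusionvsequality} rather than \thref{hmlemma}), so you should either follow that route for the stable ``easy'' direction, or carefully justify the polytope identity $\operatorname{int}(P_v') = \{tp : t \in (0,1),\ p \in \operatorname{relint} P_v\}$ in the cases you need.
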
 

\begin{proof}
 By the equalities  \eqref{reducingtotorus} and \eqref{hm},  both sides of each equality can be written as intersections over $G$-translates of (semi)stable loci for a maximal torus $T \subseteq G$, so we may assume $G=T$. 

For the semistable case, let $v \in V^{ss}(\rho)$. By \thref{hmlemma}, the weight polytope $\conv_T(v)$ of $v$ for the $T$-representation $V$ contains a point on the ray through $\rho$. Since $\conv^{\mathcal{O}(1)}_T(v,1)$ is the convex hull of $\{ 0 \} \cup \conv_T(v)$, there is some open segment of the ray through $\rho$, beginning at the origin, which is contained in $\conv_T(v,1)$. It follows that for $d \gg 1$ we have $\rho^{1/d} \in \conv^{\mathcal{O}(1)}_T(v,1)$, which by the equality of weight polytopes in \eqref{compareweightpolytopes} is exactly the condition of being semistable with respect to $\mathcal{O}(d)_{\rho}$. The opposite inclusion follows by the same argument as for \thref{inclusionvsequality}.

For the stable case, take $v\in V^{s}(\rho)$. Then by \thref{hmlemma}, the interior of the weight polytope $\conv_T(v)$ of $v$ for the representation $V$ must contain a point on the ray through $\rho$. The point $[v:1] \in X= \PP(V \oplus k)$ also has $0 \in \conv_T(v,1)$ for the induced $T$-representation on $V \oplus k$ (corresponding to the natural linearisation on $\mathcal{O}_X(1)$), again since $w$ is invariant. Hence there is a small open ray from the origin and in the direction of $\rho$ which is contained in the interior of the weight polytope of $(v,1)$. Thus if $d\gg 1$, we have $[v:1] \in V\cap X^{s}(\mc{O}(d)_{\rho})$. The reverse inclusion is provided by the same argument as for \thref{inclusionvsequality}, just as in the semistable case.\end{proof}

\thref{equalityonV} above describes the intersection of the (semi)stable loci in $X = V \sqcup \PP(V)$ with $V$. In \thref{equalityonPV} below we describe the intersection of the (semi)stable loci in $X$ with $\PP(V)$.  The argument is almost the same as \thref{equalityonV}: the difference is that since we now consider points $[v:0] \in X$ rather than $[v:1] \in X$, we have $\conv_T(v,0) = \conv_T(v)$. That is, we do not automatically have $0 \in \conv_T(v,0)$, which is why we must also delete the null cone. 

\begin{proposition} \thlabel{equalityonPV}
Writing $X = V \sqcup \PP(V)$, for $d \gg 1$ there are equalities $$\PP(V)  \cap X^{(s)s}(\mathcal{O}(d)_{\rho}) =  \PP \left( V^{(s)s}(\rho) \setminus \mathcal{N} \right),$$ where $\mathcal{N}$ is the null cone as defined in Definition \ref{defn null cone}.
\end{proposition}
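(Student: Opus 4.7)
The plan is to mirror the proof of \thref{equalityonV}, now applied to points of the form $[v:0] \in \PP(V) \subset X$. The key structural change is that the coordinate at infinity is now $0$, so that $\conv_T^{\mathcal{O}(1)}(v,0) = \conv_T(v)$ no longer automatically contains the origin, and this is precisely what will force the null cone condition on the right-hand side.

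First, reduce to the torus case $G = T$ via \eqref{reducingtotorus} and \eqref{hm}. For $[v:0]$ and $d > 0$, the weight polytope comparison \eqref{compareweightpolytopes} gives $\conv_T^{\mathcal{O}(d)_{\rho}}(v,0) = d\conv_T(v) - \rho$, so $[v:0]$ is (semi)stable for $\mathcal{O}(d)_{\rho}$ if and only if $\rho/d$ lies in $\conv_T(v)$ (respectively in its interior). On the other side, \thref{hmlemma} characterises membership in $V^{(s)s}(\rho)$ as the condition that the positive ray through $\rho$ meets $\conv_T(v)$, respectively its interior, at a nonzero point; while Remark \ref{remark:nullconeinstability} combined with \thref{torussemistabilityproj} identifies $v \notin \mathcal{N}$ with $0 \in \conv_T(v)$.

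The core step is to show that for $d$ sufficiently large (uniformly in $v$), the condition $\rho/d \in \conv_T(v)$ is equivalent to the conjunction of $v \in V^{ss}(\rho)$ and $0 \in \conv_T(v)$; the stable case follows by the same argument, using the standard fact that the half-open segment joining an interior point of a convex set to a closure point lies in its interior. The backward direction uses convexity: if $s\rho \in \conv_T(v)$ for some $s > 0$ and $0 \in \conv_T(v)$, the segment $[0, s\rho]$ lies in $\conv_T(v)$, hence $\rho/d \in \conv_T(v)$ for all $d \geq 1/s$. The forward direction uses that any weight polytope $P$ with $0 \notin P$ is at positive distance from the origin, so for $d$ large enough $\rho/d$ is too close to $0$ to belong to $P$; hence $\rho/d \in \conv_T(v)$ forces $0 \in \conv_T(v)$.

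The main obstacle I anticipate is the uniformity of the threshold $d_0$ across all $v$, needed for both the convexity and the distance arguments. This is resolved by observing that, as $v$ ranges over $V$, there are only finitely many possible weight polytopes $\conv_T(v)$, indexed by subsets of the finite $T$-weight set of $V$; taking the maximum of the relevant thresholds over these finitely many polytopes yields a uniform $d_0$, which justifies the phrasing ``for $d \gg 1$''.
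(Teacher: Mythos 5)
Your proof is correct and takes essentially the same route as the paper: reduce to a maximal torus, translate $\mathcal{O}(d)_\rho$-(semi)stability of $[v:0]$ into $\rho/d \in \conv_T(v)$ (resp.\ its interior) via \eqref{compareweightpolytopes}, and combine the null-cone criterion $0 \in \conv_T(v)$ with Lemma~\ref{hmlemma} by elementary convexity. The only cosmetic difference is how uniformity in $d$ is secured: you enumerate the finitely many possible weight polytopes and use their positive distances from the origin, whereas the paper invokes the standard VGIT fact that $X^{(s)s}(\mathcal{O}(d)_\rho)$ is independent of $d$ for $d \gg 1$ --- both are valid.
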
 

\begin{proof}
To begin with, assume that $G=T$ is a torus. By standard facts in reductive GIT, the left-hand side is independent of $d$ for $d\gg 1$, both in the stable and semistable cases. 

For the semistable case, if $[v:0] \in X^{ss}(\mc{O}(d)_{\rho})$ then we must have $\frac{\rho}{d}  \in \conv_T(v,0)$, and if this is true for all $d\gg 1$, then a small open-interval ray based at the origin pointing in the $\rho$ direction must be contained in $\conv_T(v,0)$. This implies that $0\in \conv_T(v,0)$, since otherwise an open ball around the origin would be disjoint from $\conv_T(v,0)$. By Remark \ref{remark:nullconeinstability}, it follows that $v \notin \mathcal{N}$. Additionally, the containment of this open-interval ray tells us that $v \in V^{ss}(\rho)$, by \thref{hmlemma}. This gives the forwards containment in the semistable case. Conversely, if $v \notin \mathcal{N}$ then $0 \in \conv_T(v,0)$, and if additionally $v \in V^{ss}(\rho)$, then a small open ray based at the origin and pointing in the $\r$ direction is contained in $\conv_T(v,0)$, meaning that for all $d\gg 1$ we have $v \in \PP(V)\cap X^{ss}(\mc{O}(d)_{\rho})$, which gives the opposite containment in the semistable case.

   For the stable case, if $[v:0] \in X^{s}(\mc{O}(d)_{\rho})$ then by \eqref{compareweightpolytopes} we see that $\frac{\rho}{d}$ lies in the interior of $\conv_T(v,0) = \conv_T(v)$ and we deduce $v \in V^{ss}(\rho)$, by \thref{hmlemma}. If this holds for any $d\gg 1$, the same argument as in the semistable case then tells us that $0\in \conv_T(v,0)$. This proves the forwards containment in the stable case. Conversely, if $v \notin \mathcal{N} $, then $0\in \conv_T(v) = \conv_T(v,0)$, and if additionally $v \in  V^{s}(\rho) $ then by \thref{hmlemma} some point $a\rho$ for $a\in \mathbb{Q}_{>0}$ of the positive ray pointing towards $\rho$ is contained in the interior of $\conv_T(v).$ Thus $\conv_T(v)$ contains some open ball $ B$ around $a \rho$, and $\conv(B \cup \{0\}) \subseteq \conv_T(v,0)$. As the interior of $\conv(B \cup \{0\})$ contains an open ray pointing from $0$ towards $\rho$, for $d \gg 1$, we conclude that $\frac{\rho}{d}$ lies in the interior of $\conv_T(v,0)$, which implies $[v:0] \in X^{s}(\mc{O}(d)_{\rho})$ and completes the proof for tori.

For an arbitrary reductive group $G$, observe that as in \thref{equalityonV} both sides of the desired equation are equal to the intersection over all maximal tori $T \subseteq G$ of the corresponding loci for the action of $T$, using Remark \ref{remark:nullconeinstability} for the part concerning the null cone for $T$ and $G$.\end{proof}

Combining \thref{equalityonV,equalityonPV}, we can recover the twisted affine GIT quotient $V \gitq_{\hspace{-2pt} \rho}  \hspace{2pt} G$ from a projective GIT quotient.

\begin{corollary}[Twisted affine GIT from projective GIT] \thlabel{summary} 
Suppose that $G$ acts linearly on a vector space $V$ and let $\rho$ denote a character $G$. 
Then for $d \gg 1$, the projective GIT quotient morphism $q: X^{ss}(\mathcal{O}(d)_{\rho}) \rightarrow X \gitq_{\hspace{-2pt} \mathcal{O}(d)_{\rho}} G$ restricts to a quotient 
\[ V^{ss}(\rho) = V \cap X^{ss}(\mathcal{O}(d)_{\rho}) \rightarrow q(V \cap X^{ss}(\mathcal{O}(d)_{\rho})) \cong  V \gitq_{\hspace{-2pt} \rho \hspace{2pt}} G   \]
with boundary $\mathbb{P}(V) \gitq_{\hspace{-2pt} \mathcal{O}(d)_{\rho}} G$.  If $k[V]^G=k$, then this boundary is empty and $V \gitq_{\hspace{-2pt} \rho}  \hspace{2pt} G$ is projective. 
\end{corollary}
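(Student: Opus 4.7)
The plan is to derive this corollary as a direct consequence of \thref{equalityonV,equalityonPV}, together with a standard saturation argument and uniqueness of good quotients; no new geometric input is needed beyond those two results.

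First I would invoke \thref{equalityonV} to obtain, for $d \gg 1$, the identification $V \cap X^{ss}(\mathcal{O}(d)_{\rho}) = V^{ss}(\rho)$. In the decomposition $X = V \sqcup \mathbb{P}(V)$, the affine chart $V$ is the non-vanishing locus of the $G$-invariant coordinate at infinity, so $\mathbb{P}(V)$ is a closed $G$-invariant subvariety of $X$ and $V^{ss}(\rho)$ is $G$-saturated inside $X^{ss}(\mathcal{O}(d)_{\rho})$. Since good quotients restrict to good quotients on $G$-saturated open subvarieties, the morphism $q$ restricts to a good quotient on $V^{ss}(\rho)$, just as in the untwisted case treated in \thref{affinefromproj}.

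Next, I would identify the image $q(V^{ss}(\rho))$ with $V \gitq_{\rho} G$ by appealing to uniqueness of good quotients: by \thref{allequal} together with the construction via semi-invariants, the twisted affine GIT quotient $V^{ss}(\rho) \to V \gitq_{\rho} G$ is also a good quotient, so the two must agree. To describe the boundary, observe that the complement of $q(V^{ss}(\rho))$ in $X \gitq_{\mathcal{O}(d)_{\rho}} G$ is the good quotient of $\mathbb{P}(V) \cap X^{ss}(\mathcal{O}(d)_{\rho})$. By \thref{equalityonPV} this intersection equals $\mathbb{P}(V^{ss}(\rho) \setminus \mathcal{N})$, which by Remark \ref{remark:nullconeinstability} coincides with the semistable locus for the natural linearised $G$-action on $\mathbb{P}(V)$. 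Applying uniqueness of good quotients once more identifies the boundary with $\mathbb{P}(V) \gitq G$.

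For the final projectivity claim, I would note that if $k[V]^G = k$ then $V \gitq G$ is a single point, so the null cone $\mathcal{N}$ equals all of $V$. Then $V^{ss}(\rho) \setminus \mathcal{N} = \emptyset$, whence by \thref{equalityonPV} the intersection $\mathbb{P}(V) \cap X^{ss}(\mathcal{O}(d)_{\rho})$ is empty, the boundary vanishes, and $V \gitq_{\rho} G \cong X \gitq_{\mathcal{O}(d)_{\rho}} G$ is projective. I do not anticipate any substantial obstacle here: the entire geometric content resides in \thref{equalityonV,equalityonPV}, and everything else is a purely formal bookkeeping exercise.
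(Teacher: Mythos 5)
Your overall strategy — invoke \thref{equalityonV,equalityonPV} and then apply saturation and uniqueness of good quotients — is the right one and matches the paper's (implicit) intent, but two of the steps are not correct as stated.

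\textbf{The saturation step.} You justify saturation of $V^{ss}(\rho)$ in $X^{ss}(\mathcal{O}(d)_\rho)$ by saying that $V$ is the non-vanishing locus of the $G$-invariant coordinate $w$ at infinity. This works in the untwisted case (\thref{affinefromproj}), where $w$ is genuinely a $G$-invariant section of $\mathcal{O}(1)$. But under the linearisation $\mathcal{O}(d)_\rho$ defined by $g\cdot[v,w,z]_d=[g\cdot v,w,\rho(g)z]_d$, an invariant section of $\mathcal{O}(d)_\rho^{\otimes k}$ is a degree-$dk$ form $F$ with $F(g\cdot v,w)=\rho(g)^kF(v,w)$; in particular $w^d$ is not an invariant section unless $\rho$ is trivial. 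So $V\cap X^{ss}(\mathcal{O}(d)_\rho)$ is not the non-vanishing locus of an invariant, and saturation needs a different argument. One works: if $\lim_{t\to0}\lambda(t)[v:1]=[v':0]$ for a 1PS $\lambda$ in a maximal torus $T$ and $v\in V$, then the maximal $\lambda$-weight $\mu$ of $v$ is strictly positive, $v'=v_\mu$ is supported entirely in $\lambda$-weight $\mu>0$, so $\lambda(t)v'\to0$ and $v'\in\mathcal{N}$; by \thref{equalityonPV}, $[v':0]\notin X^{ss}(\mathcal{O}(d)_\rho)$. Since the unique closed orbit in a fibre of $q$ is reached by a 1PS limit, the fibre through any point of $V\cap X^{ss}(\mathcal{O}(d)_\rho)$ stays inside $V$, giving saturation.

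\textbf{The boundary identification.} You claim that $\mathbb{P}(V)\cap X^{ss}(\mathcal{O}(d)_\rho)=\mathbb{P}(V^{ss}(\rho)\setminus\mathcal{N})$ ``coincides with the semistable locus for the natural linearised $G$-action on $\mathbb{P}(V)$'' via Remark \ref{remark:nullconeinstability}, and hence that the boundary is $\mathbb{P}(V)\gitq G$ in the untwisted sense. This is wrong: Remark \ref{remark:nullconeinstability} gives $\mathbb{P}(V)^{ss}(\mathcal{O}_{\mathbb{P}(V)}(1))=\mathbb{P}(V\setminus\mathcal{N})$, and $V^{ss}(\rho)\setminus\mathcal{N}$ is in general a \emph{proper} subset of $V\setminus\mathcal{N}$ (e.g.\ $G=\GG_m^2$ on $V=k^3$ with weights $(1,0),(-1,0),(0,1)$ and $\rho=(0,1)$ gives $V^{ss}(\rho)=\{z\neq0\}$ while $V\setminus\mathcal{N}=\{xy\neq0\}$). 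The correct and simpler observation, consistent with the corollary, is that since $\mathbb{P}(V)$ is closed and $G$-invariant in $X$, the intersection $\mathbb{P}(V)\cap X^{ss}(\mathcal{O}(d)_\rho)$ is the semistable locus for the \emph{restricted} linearisation $\mathcal{O}(d)_\rho|_{\mathbb{P}(V)}$, so the boundary is $\mathbb{P}(V)\gitq_{\mathcal{O}(d)_\rho}G$. There is no need to invoke Remark \ref{remark:nullconeinstability} or the untwisted linearisation here. Your final projectivity argument is correct as written.
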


\begin{remark}
In our version of Halic's results, we use the linearisation $\mathcal{O}(d)_\rho$ with $d$ sufficiently large -- how large $d$ needs to be can be determined from the weights for the action of a maximal torus on $V$. Halic works invariant-theoretically and instead considers a linearisation $\mathcal{O}(d)_{\rho^c}$, where first one replaces $\rho$ by $\rho^c$ for $c \gg 1$ so the semi-invariants for $\rho^c$ are generated by weight one semi-invariants, and then one takes $d$ larger than the degrees of a fixed set of generating semi-invariants.
\end{remark}

\section{Projective GIT for non-reductive groups} \label{sec:projnrgit}

This section summarises projective Non-Reductive GIT (NRGIT) as described in \cite{Berczi2023}.

\subsection{Graded unipotent groups} \label{subsec:gradedunip}

In this section we explain NRGIT for graded unipotent groups.

\begin{definition} \thlabel{gradedunip}
    A semi-direct product $\hU : = U \rtimes \GG_m$ of a unipotent group $U$ with the multiplicative group $\GG_m$ is \emph{positively graded} if the induced $\GG_m$-action on $\Lie U$ has strictly positive weights. In this case we call $\hU$ a \emph{graded unipotent group} and we call $\GG_m$ the \emph{grading} $\GG_m$.
\end{definition}

We fix a graded unipotent group $\hU$ throughout this section. If $\hU$ acting on a projective variety $X$ with respect to an ample linearisation, the $\hU$-invariants may not be finitely generated, so we cannot directly construct a rational map whose domain of definition should be the quotienting semistable locus. Nevertheless, we can define the invariant-theoretic and Hilbert-Mumford semistable loci.

\begin{definition}[Semistable loci for $\hU$-actions on projective varieties] \thlabel{hmdef}
For a $\hU$-action on a projective variety $X$ with linearisation $\Lc$, \begin{enumerate}[(i)]
     \item the  \emph{invariant-theoretic semistable locus} is $$\Iss{X}{\hU}(\Lc) : = \{x \in X \ | \ \exists \ \sigma \in H^0(X,\mathcal{L}^{\otimes k})^{\hU} \text{ for some $k > 0$ such that } \sigma(x) \neq 0\};$$ 
    \item the \emph{Hilbert-Mumford semistable locus} is $$ \HMss{X}{\hU}(\Lc) := \bigcap_{u \in U} u X^{\GG_m-ss}(\Lc).$$   
    \end{enumerate} 
\end{definition}

We define the following subvarieties of $X$ that appear in the NRGIT quotient by $\hU$. 

\begin{notation}[The varieties $\xmino$ and $\zmin$] \thlabel{xominandzmin}
    Suppose that $\hU$ acts on a projective variety $X$ with very ample linearisation $\Lc$, so that the action is induced via a representation $\hU \to \GL(V)$ where $X \subseteq \PP(V)$ and $V = H^0(X,L)^{\ast}$. 
    Let $\vmin$ denote the minimal weight space in $V$ for the restricted representation $\GG_m \to \GL(V)$, and define $$ \zmin := X \cap \PP(\vmin) \text{ and } \xmino := \{ x \in X \ | \ \lim_{t \to 0} t \cdot x \in \zmin\}.$$ 
    Let $p: \xmino \to \zmin$ denote the natural retraction map given by $x \mapsto \lim_{t \to 0} t \cdot x$.
\end{notation}

Assuming that $X$ is embedded in $\PP(V)$ via a very ample line bundle ensures that $\zmin$ and $\xmino$ are non-empty, as then $X$ cannot be contained in a proper linear subspace. In general if $\hU$ acts on $X \subseteq \PP(V)$ via a representation $\hU \to \GL(V)$, then $\zmin \cap \PP(\vmin)$ may be empty. 

\begin{remark} \thlabel{independent} 
    A priori $\xmino$ and $\zmin$ depend on the linearisation, but a posteriori they depend only on the $\GG_m$-action: the subvariety $\xmino$ is the open Bialynicki-Birula stratum for the $\GG_m$-action on $X$, and $\zmin$ its image under the map $x \mapsto \lim_{t \to 0} t \cdot x$.     
\end{remark}

While reductive GIT quotients can use any (ample) linearisation, in NRGIT one must  modify the linearisation to make it `well-adapted' and force it to lie in a certain VGIT chamber for $\GG_m$.

\begin{definition}[Adapted and well-adapted linearisations and characters]
Suppose that a multiplicative group $\GG_m$ acts linearly on a projective variety $X \subseteq \PP(V)$ via a representation $\GG_m \to \GL(V)$. Let $\omegamin < \omega_1 < \cdots$ denote the weights for this action. The linearisation is \emph{adapted} if $$\omegamin < 0 < \omega_1.$$  The linearisation is \emph{well-adapted} if $- \epsilon < \omegamin < 0 < \omega_1$ for $0 < \epsilon \ll 1.$ A $\hU$-linearisation is \emph{adapted} (resp.\ \emph{well-adapted}) if its restriction to the grading $\GG_m$ is adapted (resp.\ well-adapted). 
\end{definition}

How small $\epsilon$ needs to be for a linearisation to be deemed well-adapted depends on what property one wants to establish using well-adaptedness. To say that it is well-adapted means that there exists an $\epsilon$ such that the desired property holds. Any linear $\GG_m$-action on a projective variety $X$ can be made to be adapted or well-adapted by twisting the linearisation by a suitable character of $\GG_m$. The same is true for any linear $\hU$-action on a projective variety $X$, by choosing a suitable character of $\GG_m$ and extending it to a character of $\hU$.

The following $\hU$-quotient is constructed in stages (under suitable assumptions): first one obtains a geometric $U$-quotient $q_U : \xmino \rightarrow \xmino/U$ using the grading $\GG_m$ to locally construct slices, and then one takes a $\GG_m$-quotient of a projective completion $\overline{\xmino/U}$. The \emph{quotienting semistable locus} is the preimage under $q_U$ of the $\GG_m$-semistable locus in this projective completion.

\begin{theorem}[$\hU$-Theorem] \thlabel{uhatthm}
    Suppose that $\hU$ acts on a projective variety $X$ with well-adapted linearisation $\Lc$ such that \begin{equation} 
    \Stab_U(z) = \{e\} \text{ for all $z \in \zmin$}. \tag*{$[U]_{\mathrm{proj}}$} \label{Ucondproj} 
    \end{equation}  
    Then $\Qss{X}{\hU}(\Lc) = \xmino \setminus U \zmin$ and there is a projective geometric $\hU$-quotient $$ \Qss{X}{\hU}(\Lc)  \to X \gitq_{\hspace{-2pt} \Lc \hspace{2pt}}  \hU :=   \Qss{X}{\hU}(\Lc) / \hU.  $$ Moreover, if $U \zmin \neq \xmino$ so that $\Qss{X}{\hU}(\Lc)$ is non-empty, then for $n$ sufficiently large the algebra of invariants $\bigoplus_{l \geq 0} H^0(X, \Lc^{\otimes nl})^{\hU}$ is finitely generated and the quotient map is that induced by the inclusion of invariants, so that $$ X \gitq_{\hspace{-2pt} \Lc \hspace{2pt}} \hU \cong \Proj \bigoplus_{l \geq 0} H^0(X, \Lc^{\otimes nl})^{\hU}.$$  
\end{theorem}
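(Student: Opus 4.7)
The plan is to construct the $\hU$-quotient in two stages: first as a geometric $U$-quotient on $\xmino$, then as a $\GG_m$-quotient of a suitable projective completion. The stabiliser hypothesis $[U]_{\mathrm{proj}}$ is what drives the first stage, and well-adaptedness controls the second.

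First I would show that the $U$-action on $\xmino$ is free by propagating triviality of $U$-stabilisers from $\zmin$. The key limit identity is $\lim_{t \to 0} t u t^{-1} = e$, which holds because $\GG_m$ acts with strictly positive weights on $\Lie U$. Concretely, if $u \cdot x = x$ for some $x \in \xmino$ with $z = p(x) \in \zmin$, applying $p$ and using $\GG_m$-equivariance gives $z = p(u \cdot x) = (\lim_{t \to 0} t u t^{-1}) \cdot z = z$ trivially, and a more careful analysis of the orbit map shows $u$ must fix $z$, whence $u = e$. From this freeness, I would build a geometric $U$-quotient $q_U\colon \xmino \to \xmino/U$ using the grading to produce local slices: for each $z \in \zmin$ the Bialynicki–Birula fibre $p^{-1}(z)$ is an affine space containing the $U$-orbit $U\cdot z\cong U$, and one constructs a $U$-slice transverse to $Uz$ at $z$ and then spreads it over $\xmino$ using the $\GG_m$-flow. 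Gluing these slices produces the quasi-projective quotient $\xmino/U$, on which the grading $\GG_m$ still acts.

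Next I would analyse $\GG_m$-semistability on a projective $\GG_m$-equivariant completion $\overline{\xmino/U}$ of $\xmino/U$, equipped with a $\GG_m$-linearisation descended from $\Lc^{\otimes n}$ for sufficiently large $n$. For the well-adapted $\GG_m$-action on $X$ itself, with weights $\omegamin < 0 < \omega_1 < \cdots$, the Hilbert--Mumford criterion gives $X^{\GG_m\text{-}ss} = \xmino \setminus \zmin$: $\GG_m$-semistability demands $0$ in the weight polytope of a defining vector, which under well-adaptedness is equivalent to having both a nonzero $\vmin$-component (so $x \in \xmino$) and a nonzero component outside $\vmin$ (so $x \notin \zmin$). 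Passing to the quotient and tracking images, the $\GG_m$-semistable locus in $\overline{\xmino/U}$ meets $\xmino/U$ in $q_U(\xmino \setminus U\zmin)$. This gives the geometric $\hU$-quotient $\xmino \setminus U\zmin \to (\xmino \setminus U\zmin)/\hU$. To identify this with the quotienting semistable locus, one checks both inclusions: any non-vanishing $\hU$-invariant section on $U\zmin$ would descend to a non-vanishing $\GG_m$-invariant section on $\zmin$, which is impossible by well-adaptedness (all weights on $\vmin$ are the single value $\omegamin \neq 0$), while conversely every point of $\xmino \setminus U\zmin$ has a non-vanishing $\hU$-invariant section produced by pulling back a $\GG_m$-invariant section from $\overline{\xmino/U}$.

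Finally, for the finite generation statement (when $U\zmin \neq \xmino$), projective GIT on $\overline{\xmino/U}$ already produces a finitely generated $\GG_m$-invariant section ring, and for $n$ sufficiently large the $\hU$-invariant sections of $\Lc^{\otimes nl}$ on $X$ are in bijection with $\GG_m$-invariant sections of the descended line bundle on $\overline{\xmino/U}$, yielding the stated $\Proj$-description. The main obstacle lies in the first stage: producing $\xmino/U$ as a quasi-projective variety together with a projective $\GG_m$-equivariant completion carrying a linearisation descended from $\Lc^{\otimes n}$. The delicate issues are the descent of the $U$-linearisation (which forces taking a sufficiently large tensor power $n$, explaining the quantifier in the statement) and the interaction between the local slices used for the $U$-quotient and the global projective structure required to apply reductive GIT to the residual $\GG_m$-action.
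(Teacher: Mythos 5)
The paper does not prove this theorem: it is cited from the projective NRGIT literature (B\'erczi--Doran--Hawes--Kirwan, B\'erczi--Kirwan), with only the two-stage architecture --- a geometric $U$-quotient $q_U\colon \xmino \to \xmino/U$ built from local slices using the grading, followed by a $\GG_m$-quotient of a projective completion $\overline{\xmino/U}$ --- outlined in the paragraph preceding the statement. Your proposal follows that same architecture, which is the right one, and you correctly identify the $\GG_m$-equivariant projective completion of $\xmino/U$ and the descent of the linearisation as the genuinely hard part; that is indeed where most of the technical weight of the B\'erczi--Kirwan proof sits, and it is not realistic to dispatch it in a few sentences, but you do at least flag it as the obstacle.

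There is, however, a real gap in the one concrete argument you do offer, namely the propagation of trivial $U$-stabilisers from $\zmin$ to $\xmino$. The calculation $p(u\cdot x) = \lim_{t\to 0}(tut^{-1})\cdot(t\cdot x) = e\cdot p(x) = p(x)$ shows only that the retraction $p$ is $U$-\emph{invariant}; it says nothing about $u$ fixing $p(x)$, and the phrase ``a more careful analysis of the orbit map shows $u$ must fix $z$'' cannot be cashed out from $p$ alone. The correct argument is a weight computation in the linearisation. With $X\subseteq\PP(V)$, write $x=[v]$ with $v = v_{\min} + v'$, $v_{\min}\in\vmin\setminus\{0\}$ and $v'$ in higher $\GG_m$-weight spaces, and $u=\exp(\xi)$ with $\xi\in\Lie U$ decomposed into $\GG_m$-weights as $\xi=\sum_{i\geq k}\xi_i$, $\xi_k\neq 0$ the lowest nonzero piece (so $k\geq 1$ by the positivity of the grading). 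Since $U$ acts trivially on the associated graded of $V$, the projectivised stabiliser equation $u\cdot v = cv$ forces $c=1$, so $u\cdot v=v$. The weight-$(\omegamin+k)$ component of $u\cdot v-v$ is exactly $\xi_k\cdot v_{\min}$ (no other term can land there), so $\xi_k\cdot v_{\min}=0$, whence $\exp(\xi_k)$ is a nontrivial element of $\Stab_U(v_{\min})=\Stab_U(p(x))$, contradicting $\ref{Ucondproj}$. Without this argument, or an equivalent one, the first stage of your proposal does not get off the ground.
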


\begin{remark}[When \ref{Ucondproj} is not satisfied] \thlabel{uzmin=xmino} 
    If \ref{Ucondproj} is not satisfied for the $\hU$-action on $X$, a projective quotient can still be constructed by performing a sequence of $\hU$-equivariant blow-ups of $X$ to obtain a new variety $\hX$ with a $\hU$-action such that \ref{Ucondproj} holds. This construction is the non-reductive analogue of the partial desingularisation construction in classical GIT \cite{Kirwan1985}, but involves many more subtleties, see \cite{Hoskins2021,Qiao2022}. In this paper we have chosen to give an exposition of NRGIT under the assumption that unipotent stabilisers are trivial on $\zmin$, as this is the core of the theory and already involves many technicalities. Nevertheless, in Section \ref{subsubsec:setup} we have to blow-up $X=\PP(V \oplus k)$ at the origin, which is a special case of the blow-up construction.  
\end{remark}

Under the conditions of \thref{uhatthm}, the invariant-theoretic and Hilbert-Mumford semistable loci coincide with the quotienting semistable locus, as in the reductive case.

\begin{corollary}[Equality of semistable loci] \thlabel{equalityofssloci}
    Suppose that $\hU$ acts on a projective variety $X$ with well-adapted linearisation $\Lc$. Then if $U \zmin \neq \xmino$ and \ref{Ucondproj} holds, there are equalities  $$ \Qss{X}{\hU}(\Lc) = \HMss{X}{\hU}(\Lc) = \Iss{X}{\hU}(\Lc) = \xmino \setminus U \zmin.$$ 
\end{corollary}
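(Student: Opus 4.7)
The plan is to invoke \thref{uhatthm}, which under the standing hypotheses already gives $\Qss{X}{\hU}(\Lc) = \xmino \setminus U\zmin$ and realises the quotient $X \gitq_\Lc \hU$ as $\Proj$ of the finitely generated Veronese subring $\bigoplus_{l \geq 0} H^0(X,\Lc^{\otimes nl})^{\hU}$ for some $n$ large enough. The task is then to show that $\Iss{X}{\hU}(\Lc)$ and $\HMss{X}{\hU}(\Lc)$ both coincide with $\xmino \setminus U\zmin$.

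For $\Iss{X}{\hU}(\Lc) = \Qss{X}{\hU}(\Lc)$, the inclusion $\Qss \subseteq \Iss$ is immediate from the description of the quotient in \thref{uhatthm} via invariants. Conversely, given $\sigma \in H^0(X,\Lc^{\otimes k})^{\hU}$ with $\sigma(x) \neq 0$, the power $\sigma^n \in H^0(X,\Lc^{\otimes nk})^{\hU}$ lies in the Veronese subring and still does not vanish at $x$, so $x$ lies in the non-vanishing locus of some section in that finitely generated subring, placing it in $\Qss$.

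The main step is $\HMss{X}{\hU}(\Lc) = \xmino \setminus U\zmin$, which I would prove in two parts. First, I would identify $X^{\GG_m-ss}(\Lc) = \xmino \setminus \zmin$ by applying the Hilbert--Mumford criterion to the only 1PSs of $\GG_m$ (essentially $t \mapsto t^{\pm 1}$): this gives $x \in X^{\GG_m-ss}(\Lc)$ if and only if $\omega(\lim_{t \to 0} t \cdot x) \leq 0$ and $\omega(\lim_{t \to \infty} t \cdot x) \geq 0$. Since well-adaptedness forces $\omega_{\min}$ to be the unique non-positive $\GG_m$-weight on $V$, the first inequality is equivalent to $x \in \xmino$, while within $\xmino$ the second rules out precisely $\zmin$: for $x \in \xmino \setminus \zmin$, decomposing a lift of $x$ into $\GG_m$-eigenvectors shows that $\lim_{t \to \infty} t \cdot x$ lies in a fixed component of weight $\geq \omega_1 > 0$. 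Second, I would verify that $U$ preserves $\xmino$: for $u \in U$ and $x \in \xmino$, positive grading of $\hU$ gives $tut^{-1} \to e$ as $t \to 0$, so $\lim_{t \to 0} t \cdot (ux) = \lim_{t \to 0} (tut^{-1}) \cdot (tx) = \lim_{t \to 0} t \cdot x \in \zmin$. Hence $u \cdot X^{\GG_m-ss}(\Lc) = u\xmino \setminus u\zmin = \xmino \setminus u\zmin$, and intersecting over $u \in U$ yields $\HMss{X}{\hU}(\Lc) = \xmino \setminus U\zmin$. The main obstacle is the Hilbert--Mumford weight bookkeeping behind the first identification, where well-adaptedness is essential to guarantee that the $t \to \infty$ limit of a point in $\xmino \setminus \zmin$ has strictly positive weight.
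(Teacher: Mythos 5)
Your proof is correct and follows essentially the same route as the paper: cite the theorem for $\Qss{X}{\hU}(\Lc) = \xmino \setminus U\zmin$ and the finite generation of the Veronese invariant ring, identify $X^{\GG_m\text{-}ss}(\Lc) = \xmino \setminus \zmin$ via the Hilbert--Mumford criterion for $\GG_m$ (which uses adaptedness), and intersect over $U$-translates. The paper's proof is terser, but your added details (the explicit weight computation for the two 1PSs of $\GG_m$, the check that $U$ preserves $\xmino$ via $tut^{-1} \to e$, and the power trick to pass between $\Iss$ and the Veronese subring) are exactly the steps the paper leaves implicit.
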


\begin{proof}
    The Hilbert-Mumford criterion for $\GG_m$ gives $\justss{X}{\GG_m}(\Lc) = \xmino \setminus \zmin$ as $\Lc$ is adapted. By  \thref{hmdef,uhatthm}, $\HMss{X}{\hU}(\Lc) = \xmino \setminus U \zmin = \Qss{X}{\hU}$.  Finally, $\Qss{X}{\hU}(\Lc) = \Iss{X}{\hU}(\Lc)$ follows from the second part of \thref{uhatthm} and the observation that $\Iss{X}{\hU}(\Lc)$ is the domain of definition of the rational map $X \dashrightarrow X \gitq \hU$ induced by the inclusion of the invariants. 
\end{proof}

In light of \thref{equalityofssloci} and \thref{independent}, we introduce the following notation for $\xmino \setminus U \zmin$. 

\begin{definition}[$\hU$-semistable locus] \thlabel{uhatssdef}
    The \emph{semistable locus} for a $\hU$-action on a projective variety $X$ with well-adapted linearisation for which  \ref{Ucondproj} holds is $X^{\hU-ss}(\Lc) : = \xmino \setminus U \zmin$. 
\end{definition}

\subsection{Restricting $\hU$-quotients to closed subvarieties} \label{subsec:restrictiontoclosed}

Good quotients for reductive group actions restrict to good quotients on closed invariant subvarieties, by virtue of invariants extending for reductive groups.  
By contrast, invariants do not necessarily extend for non-reductive group actions (see \thref{invtsnotextending} for a counter-example). Nevertheless, the $\hU$-quotients from \thref{uhatthm} do have the property that they restrict to good quotients on closed invariant subvarieties, a crucial property for enabling us to pass from vector spaces to affine varieties in Section \ref{subsec:fromvstoaffine}.

\begin{proposition}[Restricting $\hU$-quotients to closed subvarieties] \thlabel{proprestriction}
    Suppose that a graded unipotent group $\hU$ acts on a projective variety $X$ with well-adapted linearisation $\Lc$ such that \ref{Ucondproj} holds. Then, for any closed $\hU$-invariant subvariety $Y \subseteq X$, the good $\hU$-quotient $q_X :\Qss{X}{\hU} \to X \gitq_{\hspace{-2pt}  \Lc \hspace{2pt}} \hU$ restricts to a good $\hU$-quotient $Y \cap \Qss{X}{\hU} \to q_X(Y)$ .   
\end{proposition}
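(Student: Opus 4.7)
The plan is to exploit the stagewise construction of the $\hU$-quotient from \thref{uhatthm}: writing $\hU = U \rtimes \GG_m$, the quotient factors as
\[ q_X : X^{\hU-ss} = \xmino \setminus U \zmin \xrightarrow{\ q_U\ } X^{\hU-ss}/U \xrightarrow{\ q_{\GG_m}\ } X \gitq_{\hspace{-2pt} \Lc \hspace{2pt}} \hU, \]
where $q_U$ is the restriction of a geometric $U$-quotient $\xmino \to \xmino/U$ (constructed from local slices provided by the grading $\GG_m$), $X^{\hU-ss}/U$ is the open $\GG_m$-semistable subvariety of a projective completion $\overline{\xmino/U}$, and $q_{\GG_m}$ is the induced reductive GIT quotient. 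The point of this factorisation is that each stage restricts well to closed invariant subvarieties, for distinct reasons: geometric quotients do so because their fibres are single orbits, while the $\GG_m$-quotient does so because $\GG_m$ is reductive and $\GG_m$-invariants extend.

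I would first restrict the $U$-stage. The closed $\hU$-invariant subvariety $Y \cap \xmino \subseteq \xmino$ inherits a geometric $U$-quotient $Y \cap \xmino \to q_U(Y \cap \xmino)$ whose image is closed in $\xmino/U$ and $\GG_m$-invariant. A direct computation using $U$-invariance of $Y$ yields
\[ Y \cap X^{\hU-ss} = (Y \cap \xmino) \setminus U \cdot (Y \cap \zmin), \]
since if $u \cdot z \in Y$ with $u \in U$ and $z \in \zmin$ then $z = u^{-1} \cdot (u \cdot z) \in Y$. Thus $q_U$ further restricts to a geometric $U$-quotient of $Y \cap X^{\hU-ss}$ onto an open subvariety of $q_U(Y \cap \xmino)$. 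Next, the closure $\overline{q_U(Y \cap \xmino)} \subseteq \overline{\xmino/U}$ is a closed $\GG_m$-invariant subvariety, so by reductive GIT the ambient $\GG_m$-quotient restricts to a good $\GG_m$-quotient of this closure. Intersecting with the ambient $\GG_m$-semistable locus then yields a good $\GG_m$-quotient of $q_U(Y \cap X^{\hU-ss})$ onto the image $q_X(Y)$.

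Finally, I would combine the two stages. The identity $(-)^{\hU} = ((-)^U)^{\GG_m}$, together with affineness of each stage and the fact that each stage sends disjoint closed invariant subsets to disjoint closed subsets, ensures that the composite $Y \cap X^{\hU-ss} \to q_X(Y)$ is a good $\hU$-quotient. The hard part conceptually is that invariants for non-reductive groups do not extend across closed embeddings in general, so a direct argument is unavailable; the stagewise factorisation bypasses precisely this, reducing the problem to an automatic geometric restriction for $U$ and a standard reductive restriction for $\GG_m$. Technically, the main point to verify carefully is that the ambient $\GG_m$-semistable locus of $\overline{\xmino/U}$ meets $\overline{q_U(Y \cap \xmino)}$ precisely in $q_U(Y \cap X^{\hU-ss})$, which follows from extension of $\GG_m$-invariants across the closed embedding of $\overline{q_U(Y \cap \xmino)}$ into $\overline{\xmino/U}$.
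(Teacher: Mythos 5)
Your overall strategy coincides with the paper's: factor the $\hU$-quotient through the geometric $U$-quotient $q_U : \xmino \to \xmino/U$ followed by a residual reductive $\GG_m$-quotient, handle the $\GG_m$-stage by reductivity, and push the whole weight onto the $U$-stage. The gap is that the key step --- that $q_U$ restricts to a geometric $U$-quotient on $Y \cap \xmino$ --- is asserted as ``automatic'' without justification, and this is exactly where the content of the proposition lies. As the paper stresses just before stating the proposition, nonreductive invariants do not extend across closed embeddings in general, so the claim is not formal for an arbitrary geometric quotient by a unipotent group; one needs a specific feature of the BDHK construction, namely that $q_U$ is Zariski-locally a trivial $U$-bundle built from slices. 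The paper's proof reduces to the local model where $\xmino = \Spec A$ and the $U$-quotient is trivial, citing \cite[Prop 4.25]{Berczi2023}, and observes that a trivial $U$-bundle --- equivalently, a slice $s \in A$ for a locally nilpotent derivation $D : A \to A$ with $D(s)=1$ --- manifestly restricts to one on any closed $U$-invariant subvariety. You should supply this step explicitly (or argue via the fact that $q_U$ is a $U$-torsor and $Y \cap \xmino$ is saturated, so $Y \cap \xmino \to q_U(Y \cap \xmino)$ is a base change of a torsor and hence a torsor). By contrast, your flagged ``main technical point'' about how $\overline{q_U(Y \cap \xmino)}$ meets the ambient $\GG_m$-semistable locus is routine once the $U$-stage is settled: $q_U(Y \cap \xmino)$ is closed and $\GG_m$-invariant in $\xmino/U$, which is open in $\overline{\xmino/U}$, so the intersection bookkeeping is an open/closed argument, and the extension of $\GG_m$-invariants is just standard reductivity.
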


\begin{proof} Recall from the discussion proceeding \thref{uhatthm} that the quotient $\Qss{X}{\hU}(\Lc) \to X \gitq \hU$ is constructed in stages, first by taking a quotient of $\xmino$ by $U$ and then by taking a quotient by the residual $\GG_m$. It suffices to show that the first quotient restricts to a geometric $U$-quotient on $Y \cap \xmino$, because the second quotient is reductive. The statement about the $U$-quotient is local, so we may assume that $X^0_{\min} = \Spec A$ and that the $U$-quotient is trivial \cite[Prop 4.25]{Berczi2023}. With these assumptions, we need only observe that the trivial $U$-bundle $q_U$ restricts to a trivial $U$-bundle on the closed subvariety $q_U(Y \cap \xmino)$. Alternatively, one can prove this by observing that the $U$-quotient is constructed as a sequence of $\GG_a$-quotients,  each constructed using slices, where a $\GG_a$-action on $X^0_{\min}=\Spec A$ is the same thing as a locally nilpotent derivation $D:A\rar A$  and a slice is an element $s \in A$ with $D(s)=1$ \cite{Freudenburg2017}. From this perspective it is clear that a slice restricts to a slice on a closed invariant subvariety.   \end{proof}

\begin{remark}[Exactness of $U$-invariants]
   The fact that slices restrict to slices on closed subvarieties implies that taking $U$-invariants for a unipotent group $U$ is exact in this setting. For a $\GG_a$-action on $\Spec A$ with derivation $D$ and an ideal $I \subseteq A$ with $D(I) \subseteq I$ corresponding to a $\GG_a$-invariant closed subscheme $\Spec A/I$, we have $A/I = (A/I)^{\GG_a}[s]$ if $s$ is a slice for $D : A \rightarrow A$. This can be iterated to obtain surjectivity of the natural map $A^{U} \rar (A/I)^{U}$ for any $U$.     
\end{remark}

\begin{remark}[When do $\hU$-quotients commute with closed embeddings?]
It is straightforward to show that in the setting of \thref{proprestriction}, the equality $Y \cap \Qss{X}{\hU}(\Lc) = \Qss{Y}{\hU}(\Lc|_{Y})$  holds if and only if $Y \cap \xmino \neq \emptyset$ and $U Z(Y)_{\min} \neq Y^0_{\min}.$ Otherwise, $Y \cap \Qss{X}{\hU}(\Lc)$ is empty. 
\end{remark}

\subsection{Unipotent groups} \label{subsec:unip}
\thref{uhatthm} can be used to construct quotients by unipotent groups rather than graded unipotent groups, under certain assumptions (see \cite[$\S$9]{Berczi2016}).

\begin{corollary} \thlabel{projgitforU}
    Suppose that a projective variety $X$ admits a well-adapted linearised action of a unipotent group $U$ which extends to a graded unipotent group $\hU = U \rtimes \GG_m$.   Then if \ref{Ucondproj} holds, there exists a quasi-projective geometric $U$-quotient $$ \xmino \to \xmino / U =: X \gitq U$$
    with projective completion $( X \times \PP^1) \gitq \hU$. This projective completion  is obtained by letting $\hU$ act linearly on $\PP^1 $ via the $\hU$-representation on $\mathbb{A}^2$ defined by $(u,t) \cdot (x,y) = (tx,y)$, and taking a tensor product of the linearisation on $X$ with a large tensor power of the linearisation on $\PP^1$. 
\end{corollary}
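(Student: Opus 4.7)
The plan is to apply the $\hU$-Theorem (\thref{uhatthm}) to the product $X \times \PP^1$ equipped with the product linearisation $\Lc_n := \Lc_X \boxtimes \Lc_{\PP^1}^{\otimes n}$ for $n \gg 1$, and then to recognise the $U$-quotient of $\xmino$ as an open subvariety of the resulting projective $\hU$-quotient $(X \times \PP^1) \gitq \hU$. The underlying idea is that although $U$ still acts only on the $X$-factor, the extra $\PP^1$-factor provides a free $\GG_m$-orbit on which to rigidify the grading, effectively stripping $\GG_m$ off $\hU$ to isolate a $U$-quotient.

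First I would verify the hypotheses of \thref{uhatthm} for the product. Since $U$ acts trivially on $\PP^1$ and the $\GG_m$-weights on $\mathbb{A}^2$ are $0$ and $1$, a direct computation gives $Z_{\min}(X \times \PP^1) = \zmin \times \{[0:1]\}$ and $(X \times \PP^1)^0_{\min} = \xmino \times \mathbb{A}^1$, where $\mathbb{A}^1 = \PP^1 \setminus \{[1:0]\}$. Triviality of the $U$-action on $\PP^1$ transfers \ref{Ucondproj} from $X$ to $X \times \PP^1$. Further, the $\GG_m$-weights on the Segre--Veronese representation $V \otimes \operatorname{Sym}^n \mathbb{A}^2$, with $V = H^0(X, \Lc_X)^\ast$, are the sums $\omega_i^X + j$ for $0 \leq j \leq n$, so the minimum weight is still $\omega_{\min}^X$, realised only on $V_{\min}^X \otimes \langle y^n \rangle$, and the next smallest weight is positive. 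Hence well-adaptedness of $\Lc_X$ passes to $\Lc_n$.

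Applying \thref{uhatthm} then yields
\[ (X \times \PP^1)^{\hU-ss}(\Lc_n) \,=\, \xmino \times \mathbb{A}^1 \,\setminus\, \big(U\zmin \times \{[0:1]\}\big) \]
together with a projective geometric $\hU$-quotient onto $(X \times \PP^1) \gitq \hU$. Now I would restrict to the open $\hU$-invariant subvariety $\xmino \times \GG_m$, where $\GG_m = \mathbb{A}^1 \setminus \{[0:1]\}$ is the complement of the minimum weight fixed point. On the second factor the grading $\GG_m$ acts with weight one by scaling, freely and transitively on each $\GG_m$-orbit, so the geometric $\GG_m$-quotient of $\xmino \times \GG_m$ is identified with $\xmino$ via $(x,a) \mapsto a^{-1}\cdot x$. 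Quotienting further by the $U$-action (which is free on $\xmino$ by the standard limiting argument from $\zmin$ to $\xmino$ using \ref{Ucondproj}) realises the desired quasi-projective geometric quotient $\xmino \to \xmino/U$ as an open subvariety of $(X \times \PP^1) \gitq \hU$. The complementary boundary is the closed image of $(\xmino \setminus U\zmin) \times \{[0:1]\}$, coinciding with $X \gitq \hU$ when that exists.

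I do not expect a major obstacle. The main points of care are verifying that well-adaptedness of $\Lc_X$ genuinely transfers to $\Lc_n$ (which it does because the minimum $\GG_m$-weight is unchanged) and confirming that the combined $\hU$-quotient of $\xmino \times \GG_m$ equals $\xmino/U$ rather than a further quotient, which follows from the free transitive $\GG_m$-action on the $\mathbb{A}^1 \setminus \{[0:1]\}$-factor combined with freeness of the residual $U$-action on $\xmino$.
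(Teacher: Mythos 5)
Your proposal follows essentially the same route as the paper: apply the $\hU$-Theorem to $X \times \PP^1$ with a suitably weighted product linearisation, identify $Z_{\min}(X \times \PP^1) = \zmin \times \{[0:1]\}$ and $(X \times \PP^1)^0_{\min} = \xmino \times \mathbb{A}^1$, and extract the $U$-quotient of $\xmino$ via the slice $x \mapsto (x,1)$ into the projective $\hU$-quotient. The set-up and computations (semistable locus, well-adaptedness of $\Lc_n$, the orbit correspondence under $(x,a) \mapsto a^{-1}\cdot x$) are all correct and match the paper.

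Two cautions on the final step. First, the framing ``take the geometric $\GG_m$-quotient of $\xmino \times \GG_m$ and then quotient further by $U$'' is not literally a staged quotient of $\hU$, since $\GG_m$ is not normal in $\hU$ — $U$ does not act on the $\GG_m$-quotient. What is true (and what you implicitly use) is that $\hU$-orbits on $\xmino \times \GG_m$ correspond bijectively to $U$-orbits on $\xmino$ under the slice $x \mapsto (x,1)$; this is also the paper's mechanism. Second, and more substantively, you then assert without argument that this correspondence ``realises the desired quasi-projective geometric quotient $\xmino \to \xmino/U$'' — but this is precisely the content of the corollary, and it is not automatic that a map whose fibres are $U$-orbits is a geometric quotient in the scheme-theoretic sense. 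The paper closes this gap with a local computation: since the $\hU$-quotient is locally given by invariants, one reduces to showing $A[x,x^{-1}]^{\hU} \cong A^U$ (where $U$ acts trivially and $\GG_m$ by scaling on $x$), from which the geometric-quotient property follows. Your proposal needs this verification (or an equivalent one) to be complete.
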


\begin{proof}
    If \ref{Ucondproj} holds for the action of $\hU$ on $X$, then it also holds for the action of $\hU$ on $X \times \PP^1$. Therefore by \thref{uhatthm} there exists a projective geometric $\hU$-quotient $ (X \times \PP^1) \gitq \hU$ of $$\justss{(X \times \PP^1)}{\hU} =  (X \times \PP^1)^0_{\min} \setminus U Z(X \times \PP^1)_{\min} = ( \xmino \times \mathbb{A}^1 \setminus \{0\}) \sqcup (\justss{X}{\hU} \times \{0\}),$$
    where $\mathbb{A}^{1} \setminus \{ 0 \} = \{ [x: 1 ] \in \mathbb{P}^1 : x \neq 0 \}$. Thus $ (X \times \PP^1) \gitq \hU  =  (\xmino \times \mathbb{A}^1 \setminus \{0\})/ \hU \sqcup (\justss{X}{\hU} \times \{0\}) / \hU$  contains a geometric $\hU$-quotient $ (\xmino \times \mathbb{A}^1 \setminus \{0\})/ \hU$ as an open subvariety, since geometric quotients restrict to geometric quotients on preimages of open subvarieties \cite[Prop 3.10]{Newstead1978}. 
    
    We claim that the composition $\phi: \xmino \to \xmino \times \mathbb{A}^1 \setminus \{0\} \to (\xmino \times \mathbb{A}^1 \setminus \{0\}) /\hU$ is a geometric $U$-quotient, where the first map is the inclusion $x \mapsto (x,1)$. As the fibres of $\phi$ are $U$-orbits and $\phi$ satisfies all the topological properties required to be a good quotient, it suffices to show that functions on  the target are $U$-invariant functions on $\xmino$. Since the linearisation is well-adapted, the quotient $(X \times \PP^1) \gitq \hU$ is locally given by the spectrum of invariants, and as the question is local we may assume that $\phi$ is of the form $\Spec A \to \Spec A[x,x^{-1}] \to \Spec A[x,x^{-1}]^{\hU}$. Then $A[x,x^{-1}]^{\hU} \cong A^U$, as $U$ acts trivially on $x$ and $\GG_m$ acts by scaling on $x$. This shows $\phi$ is a geometric $U$-quotient. \end{proof}

 We next consider the more general case of graded linear algebraic group actions and split the exposition into two cases, depending on whether the group is internally or externally graded.  

\subsection{Internally graded linear algebraic groups} \label{subsec:internallygraded}

Internally graded linear algebraic groups are a natural generalisation of graded unipotent groups.

\begin{definition}[Internally graded linear algebraic groups] \thlabel{def internally graded unipotent radical}
  A linear algebraic group $H = U \rtimes R$ with unipotent radical $U$ is \emph{internally graded} if it contains a (\emph{grading}) 1PS $\lambda: \GG_m \to Z(R)$ such that the adjoint action of $\lambda(\GG_m)$ on $\Lie U$ has strictly positive weights.   
\end{definition}

Throughout this section we fix an internally graded group $H = U \rtimes R$ with grading 1PS $\lambda$. We also choose $R' \subseteq R$ such that $R\cong R' \times \lambda(\Gm)$ up to a finite group. This splitting allows us to lift characters of $\lambda(\Gm)$ to characters of $R$ and $H$, by pulling back along the quotient maps $H \rar R \rar \lambda(\Gm)$.

Similarly to the case for $\hU$, we can directly define the invariant-theoretic and Hilbert-Mumford semistable loci for $H$ as in the reductive case. The quotienting semistable locus will be defined later, as the invariants may not be finitely generated and so the quotient is constructed differently.

\begin{definition} \thlabel{sslociforH}
        For an $H$-action on a projective variety $X$ with linearisation $\Lc$, 
    \begin{enumerate}[(i)]
    \item the \emph{invariant-theoretic semistable locus} is $$ \Iss{X}{H}(\Lc) : =  \{x \in X \ | \ \exists \  \sigma \in H^0(X,\Lc^{\otimes k})^H \text{ for some $k > 0$ such that $\sigma(x) \neq 0$} \};$$ 
    \item the \emph{Hilbert-Mumford semistable locus} is $$ \HMss{X}{H}(\Lc) := \bigcap_{h \in H} h \justss{X}{T}(\Lc)$$ where $T \subseteq R$ is any maximal torus.  
    \end{enumerate} 
    \end{definition}

\thref{projHthm} below extends \thref{uhatthm} from graded unipotent groups to internally graded groups. It produces an  explicit open subset of $X$ admitting a projective $H$-quotient, under certain assumptions on the linearisation and on stabiliser groups; see \cite[Thm 4.28]{Berczi2023} under the stronger assumption \ref{Ucondproj}, and \cite[Thm 2.29]{Hoskins2021} and \cite[Rk 2.7]{Berczi2016} under the weaker assumption \ref{Usscondproj}. This $H$-quotient is constructed in two stages: first one obtains a quasi-projective geometric $\hU$-quotient $q_{\hU}$ of the locus $\{x \in \xmino \setminus U \zmin \ | \ \Stab_U(p(x)) =\{e\}\}$ in $X$ (via a generalisation of the proof of \thref{uhatthm}), then one takes an $R/\lambda(\GG_m)$-quotient of a projective completion of the $\hU$-quotient. The \emph{quotienting semistable locus} is the preimage under $q_{\hU}$ of the $R/\lambda(\GG_m)$-semistable locus in this projective completion.  

\begin{notation}   Given an $H$-action on a projective variety $X$ with linearisation $\Lc$, we let $\Lc'$ denote the linearisation for the $R'$ action on $\zmin$ obtained by restricting linearisation $\Lc$. 
\end{notation}

\begin{theorem} \thlabel{projHthm}     Suppose that an internally graded group $H$ acts on a projective variety $X$ with well-adapted linearisation $\Lc$ such that the conditions \begin{equation} \Stab_U(z) = \{e\} \text{ for all } z \in \zmin^{R'-ss}(\Lc')  \label{Usscondproj} \tag*{$[U;{ss}]_{\mathrm{proj}}$}\end{equation}  and \begin{equation} 
    \zmin^{R'-ss}(\Lc') = \zmin^{R'-s}(\Lc'). \label{Rcondproj} \tag*{$[R]_{\mathrm{proj}}$}
    \end{equation} hold. 
    Then $\Qss{X}{H}(\Lc) = p^{-1}(\zmin^{R'-ss}(\Lc')) \setminus U \zmin$ and there exists a projective geometric quotient $$ \Qss{X}{H}(\Lc)  \to  X \gitq_{\hspace{-2pt} \Lc \hspace{2pt}} H $$ for the action of $H$ on $ \Qss{X}{H} (\Lc).$ Moreover, if $\Qss{X}{H}(\Lc)$ is non-empty, then for $n$ sufficiently large the algebra of invariants $\bigoplus_{k \geq 0} H^0(X, \Lc^{\otimes nk})^H$ is finitely generated and the geometric quotient above is the map induced by the inclusion of invariants, so that  $$ X \gitq_{\hspace{-2pt} \Lc \hspace{2pt}} H \cong \Proj \bigoplus_{k \geq 0} H^0(X,\Lc^{\otimes kn})^H.$$ 
\end{theorem}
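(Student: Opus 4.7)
My plan is to construct the quotient in two stages, first as a quasi-projective geometric $\hU$-quotient, then as a reductive GIT quotient by the residual reductive group $R' \cong R/\lambda(\GG_m)$ (up to a finite central quotient that plays no role). The claimed semistable locus $p^{-1}(\zmin^{R'-ss}(\Lc'))\setminus U\zmin$ is manifestly $\hU$-invariant and in fact $H$-invariant (as $H$ preserves $\xmino$ up to the $R$-action on $\zmin$), so both stages make sense on it.

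The first stage is an adaptation of \thref{uhatthm}. Since \ref{Usscondproj} only provides trivial unipotent stabilisers on $\zmin^{R'-ss}(\Lc')$ rather than on all of $\zmin$, one cannot invoke \thref{uhatthm} directly on $X$. However, the proof of \thref{uhatthm} is local on $\zmin$: the $U$-quotient of $\xmino$ is built by passing to an affine cover of $\zmin$ and using the grading $\GG_m$ to produce transverse slices on which $U$ acts freely, and this slice construction only requires trivial $U$-stabilisers at the chosen base points in $\zmin$. Restricting to the $\hU$-saturated open set $\Omega := p^{-1}(\zmin^{R'-ss}(\Lc')) \setminus U\zmin$, condition \ref{Usscondproj} therefore supplies everything needed, and the same argument yields a quasi-projective geometric $\hU$-quotient $q_{\hU} : \Omega \to \Omega/\hU$, together with a projective completion $\overline{\Omega/\hU}$ that carries an induced linearised action of $R'$.

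The second stage applies classical reductive GIT to the $R'$-action on $\overline{\Omega/\hU}$. The $R'$-linearisation is pulled back from $\Lc'$ on $\zmin$ via the retraction $p$ (composed with the $\hU$-quotient); in particular $q_{\hU}^{-1}(\overline{\Omega/\hU}^{R'-ss}) = \Omega$ by the Hilbert--Mumford criterion for $R'$ applied via $p$. Condition \ref{Rcondproj} gives $\zmin^{R'-ss}=\zmin^{R'-s}$, which transfers through $q_{\hU}$ and implies that $R'$-semistability equals $R'$-stability on $\overline{\Omega/\hU}$. Hence the reductive GIT quotient $\overline{\Omega/\hU} \gitq R'$ is projective and geometric on its semistable locus; composing with $q_{\hU}$ yields a projective geometric $H$-quotient of $\Omega$, and by construction $\Omega = \Qss{X}{H}(\Lc)$. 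For the final assertion, well-adaptedness ensures that (after replacing $\Lc$ by a suitable power) $\bigoplus_k H^0(X,\Lc^{\otimes nk})^{\hU}$ is finitely generated with $\Proj$ equal to $\overline{\Omega/\hU}$; then taking $R'$-invariants preserves finite generation since $R'$ is reductive, and identifies $\Proj$ of the $H$-invariants with $X \gitq_\Lc H$.

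The main obstacle is Step~1: the $\hU$-theorem in \thref{uhatthm} is stated under the stronger hypothesis \ref{Ucondproj} that $\Stab_U$ is trivial on all of $\zmin$, so a careful re-examination of its proof is required to confirm that triviality of $U$-stabilisers only on $\zmin^{R'-ss}(\Lc')$ is sufficient to produce a geometric $\hU$-quotient of $\Omega$, together with the appropriate projective completion; this weakening is precisely the content of \cite[Thm 2.29]{Hoskins2021} and \cite[Rk 2.7]{Berczi2016}, and verifying that their hypotheses are met in our well-adapted setting is the technical heart of the argument.
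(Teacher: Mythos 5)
Your two-stage plan (first a local-slice $\hU$-quotient under the weakened stabiliser hypothesis \ref{Usscondproj}, then a reductive GIT quotient by the residual $R'$ of a projective completion) is exactly the construction the paper describes in the paragraph preceding the theorem, which the paper attributes to \cite[Thm 4.28]{Berczi2023}, \cite[Thm 2.29]{Hoskins2021} and \cite[Rk 2.7]{Berczi2016} rather than reproves. The one point to sharpen is the assertion that the $R'$-linearisation on the projective completion of the $\hU$-quotient is ``pulled back from $\Lc'$ via $p$'' and that the Hilbert--Mumford criterion ``applied via $p$'' immediately yields $q_{\hU}^{-1}\bigl((\overline{\xmino/\hU})^{R'-ss}\bigr) = p^{-1}(\zmin^{R'-ss}(\Lc')) \setminus U\zmin$: the induced linearisation is the descent of $\Lc$ itself, and identifying its $R'$-semistable locus with this preimage is a substantive computation carried out in the cited sources, not a definitional consequence of the retraction $p$.
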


\begin{remark}[Restricting $H$-quotients to closed subvarieties] \thlabel{restrictingHquotients}
The fact that $\hU$-quotients restrict to $\hU$-quotients on closed subvarieties (\thref{proprestriction}) implies that the same is true for $H$-quotients. Indeed an $H$-quotient is obtained by first quotienting by $\hU$, then by $\overline{R} = R/ \lambda(\GG_m)$ which is reductive and hence also has the desired restriction property. 
\end{remark}

Under the conditions of \thref{projHthm}, the invariant-theoretic and Hilbert-Mumford semistable loci coincide with the quotienting semistable locus (see \cite[Thm 2.28]{Hoskins2021}).

\begin{corollary}[Equality of semistable loci] \thlabel{thm:hmcritforH} 
    Suppose that $H$ acts on a projective variety $X$ with well-adapted linearisation $\Lc$ such that \ref{Usscondproj} and \ref{Rcondproj} hold. Then we have: $$ \Qss{X}{H}(\Lc)  = \Iss{X}{H}(\Lc) = \HMss{X}{H}(\Lc) = p^{-1}(\zmin^{R'-ss}(\Lc')) \setminus U \zmin.$$
\end{corollary}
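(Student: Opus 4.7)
The plan is to chain the three loci through the equalities $\Qss{X}{H}(\Lc) = \Iss{X}{H}(\Lc) \subseteq \HMss{X}{H}(\Lc) \subseteq \Qss{X}{H}(\Lc)$, with the rightmost set to $p^{-1}(\zmin^{R'-ss}(\Lc')) \setminus U\zmin$ handed to us by \thref{projHthm}. The equality $\Qss{X}{H}(\Lc) = p^{-1}(\zmin^{R'-ss}(\Lc')) \setminus U\zmin$ is immediate from \thref{projHthm}, as is the finite generation of $\bigoplus_k H^0(X,\Lc^{\otimes nk})^H$ for $n \gg 1$ and the fact that $X\gitq H$ is the $\Proj$ of this algebra with quotient map induced by its inclusion into the section ring. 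Since the domain of that rational map is exactly the quotienting semistable locus and any non-vanishing $H$-invariant of weight $k$ may be raised to the $n$-th power to give one of weight $nk$, this identifies $\Qss{X}{H}(\Lc)$ with $\Iss{X}{H}(\Lc)$.

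Next, the inclusion $\Iss{X}{H}(\Lc) \subseteq \HMss{X}{H}(\Lc)$ is formal: if $\sigma$ is an $H$-invariant section not vanishing at $x$, then $\sigma$ is $T$-invariant for any maximal torus $T \subseteq R$, and since $\sigma(hx) = \sigma(x) \neq 0$ for all $h \in H$, Mumford's reductive Hilbert--Mumford criterion (\thref{projallequal}) gives $hx \in X^{T-ss}(\Lc)$, i.e.\ $x \in hX^{T-ss}(\Lc)$, for every $h \in H$.

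The substantive step is the reverse inclusion $\HMss{X}{H}(\Lc) \subseteq \Qss{X}{H}(\Lc)$, which is where I expect the real work to lie. The plan is to mirror the two-stage construction of the $H$-quotient by splitting the maximal torus as $T \cong \lambda(\GG_m) \times T'$ with $T' \subseteq R'$. First, the HM condition applied to the grading $\lambda(\GG_m)$ and well-adaptedness of $\Lc$ force each $u^{-1}x$ into $X^{\lambda(\GG_m)-ss}(\Lc) = \xmino \setminus \zmin$, yielding $x \in \xmino \setminus U\zmin$ exactly as in the proof of \thref{equalityofssloci}. Second, one must show the retraction $p(x)$ lies in $\zmin^{R'-ss}(\Lc')$. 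Given any 1PS $\mu$ of $T'$, commutativity of $\mu$ with $\lambda$ makes the iterated limits $\lim_{s\to 0}\lambda(s)\lim_{t\to 0}\mu(t)(u^{-1}x) = \lim_{t\to 0}\mu(t) p(u^{-1}x)$ coincide, so the HM weight of $\mu$ at $p(x)$ can in principle be read off from the joint $(\lambda,\mu)$-weight data of $x$. The main obstacle is that the $\mu$-weight of $\overline{p(x)}^\mu$ can strictly exceed that of $\overline{x}^\mu$ (the minimum $\mu$-weight in the $\omegamin$-slice need not be the global minimum), so one must vary $u \in U$ to let conjugation by $\hU$ move components in and out of the minimal $\lambda$-weight space and thereby realize the worst-case $\mu$-weight across $Ux$. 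Once the HM inequality $\langle \mu, \overline{p(x)}^\mu \rangle \leq 0$ is secured for all $\mu$ in one maximal torus of $R'$, Mumford's reduction to a maximal torus for the reductive group $R'$ upgrades this to $p(x) \in \zmin^{R'-ss}(\Lc')$, placing $x$ in $\Qss{X}{H}(\Lc)$ and closing the cycle.
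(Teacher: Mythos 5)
Your first two links in the chain are sound: $\Qss{X}{H}(\Lc) = \Iss{X}{H}(\Lc)$ follows from the finite generation statement in \thref{projHthm} together with the observation that raising a nonvanishing invariant to the $n$-th power lands it in the Veronese subalgebra, and $\Iss{X}{H}(\Lc) \subseteq \HMss{X}{H}(\Lc)$ follows since an $H$-invariant section nonvanishing at $x$ is nonvanishing along $Hx$ and is in particular $T$-invariant. The gap, which you flag yourself, is in $\HMss{X}{H}(\Lc) \subseteq \Qss{X}{H}(\Lc)$, and there are two problems with the sketch. First, the assertion that $\lim_{s\to0}\lambda(s)\lim_{t\to0}\mu(t)(u^{-1}x) = \lim_{t\to0}\mu(t)p(u^{-1}x)$ is false: commutativity of $\lambda$ and $\mu$ as group elements does not make the iterated Bialynicki--Birula limits commute. (Writing $v = \sum v_{ij}$ over joint $(\lambda,\mu)$-weights, the left side selects the minimal $i$ within the minimal-$j$ slice, the right side the minimal $j$ within the minimal-$i$ slice; these agree only in special cases.) Second, varying $u\in U$ cannot close the gap: the grading makes $\Lie U$ have strictly positive $\lambda$-weights, so $U$ only moves components to \emph{higher} $\lambda$-weight and can never push anything into $\vmin$; worse, the difficulty is already present when $U$ is trivial, so no amount of averaging over $U$ can resolve it.

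What actually carries this step is the \emph{quantitative} content of well-adaptedness, which your sketch never invokes. Fix $r'\in R'$; the HM condition with $h=r'$ gives $y:=r'^{-1}x\in \justss{X}{T}(\Lc)$, and the $\lambda(\GG_m)$-part of semistability places $y\in \xmino\setminus\zmin$. Writing $0 = \sum_k\alpha_k(i_k,j_k)$ as a convex combination of the $T$-weights supporting $y$, every weight off the $\omega_{\min}$-slice has $\lambda$-weight at least $\omega_1>0$, so the total mass $B$ off that slice satisfies $B \leq |\omega_{\min}|/\omega_1$. Hence the barycentre of the $T'$-weights supporting the $\omega_{\min}$-slice of $v$ is within $O(|\omega_{\min}|)$ of the origin; since there are only finitely many weight polytopes that can arise, taking $|\omega_{\min}|$ small enough (this is exactly what well-adaptedness buys) forces $0\in\conv_{T'}(p_{\min}(v))$, i.e.\ $p(y)\in \justss{\zmin}{T'}(\Lc')$. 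As $r'$ commutes with $\lambda$ we have $p(y)=r'^{-1}p(x)$; letting $r'$ range over $R'$ and applying the torus reduction for the reductive group $R'$ gives $p(x)\in\zmin^{R'-ss}(\Lc')$, hence $x\in\Qss{X}{H}(\Lc)$. For the record, the paper does not prove this corollary itself but cites it to \cite[Thm 2.28]{Hoskins2021}; what your argument is missing is precisely this quantitative use of well-adaptedness, not a manoeuvre with $u\in U$.
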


In light of \thref{thm:hmcritforH}, we introduce the following notation and terminology.  

 \begin{definition}[$H$-semistable locus]
     The \emph{semistable locus} for an action of $H$ on a projective variety $X$ with well-adapted linearisation $\Lc$ which satisfies \ref{Usscondproj} and \ref{Rcondproj} is $$X^{H-ss}(\Lc) : = p^{-1}(\zmin^{R'-ss}(\Lc')) \setminus U \zmin.$$
 \end{definition}

\subsection{Externally graded linear algebraic groups} \label{subsec:externallygraded}
If a linear algebraic group $H$ is not internally graded, but there is a multiplicative group $\GG_m$ such that $\widehat{H} = H \rtimes \GG_m$ is internally graded by $\GG_m$ and the action of $H$ extends to $\widehat{H}$, then the results of Section \ref{subsec:internallygraded} can be applied to construct a quotient for the action of $H$ on $X$ under suitable assumptions.

\begin{definition}[Externally graded linear algebraic groups]
    A linear algebraic group $H=U \rtimes R$ is \emph{externally graded by a multiplicative group $\GG_m$} if $\widehat{H}: =H \rtimes \GG_m$ is internally graded by this $\GG_m$, i.e.\ if the adjoint action of $\GG_m$ on $\Lie U$ has strictly positive weights. In this case $\GG_m$ is called the \emph{externally grading $\GG_m$}.  
\end{definition}

If $H$ is externally graded by $\GG_m$, and the linearised action of $H$ on a projective variety $X$ satisfying \ref{Usscondproj} and \ref{Rcondproj} extends to a linearised action of $\widehat{H} = H \rtimes \GG_m$ on $X$, then an $H$-quotient can be constructed from an $\widehat{H}$-quotient of $X \times \PP^1$, where the linearisation $\widetilde{\Lc}$ for the latter action is as given in \thref{projgitforU}, with $R$ acting trivially on the line bundle on $\PP^1$. That is, \thref{projHthm} can be used to obtain a quotient for the $\widehat{H}$-action on $\justss{(X \times \PP^1)}{\widehat{H}}(\widetilde{\Lc})$, which can then be restricted to $ \justss{X}{H}(\Lc) : = X \cap \justss{(X \times \PP^1)}{\widehat{H}}(\widetilde{\Lc})$, where the inclusion of $X$ in $X \times \PP^1$ is given by $x \mapsto (x,[1:1]).$

\begin{corollary}[Quotients by externally graded groups] \thlabel{externallygraded} 
    Suppose that a linear algebraic group $H$ acts on a projective variety $X$ with linearisation $\Lc$, extending to a linearised action of $\widehat{H} : = H \rtimes \GG_m$ on $X$, where $H$ is externally graded by $\GG_m$. If this linearisation is well-adapted and satisfies \ref{Usscondproj} and \ref{Rcondproj}, then $\justss{X}{H}(\Lc) = p^{-1}(Z_{\operatorname{min}}^{R'-ss}(\Lc'))$ and there exists a quasi-projective geometric $H$-quotient $$ \Qss{X}{H}(\Lc) \to \Qss{X}{H}(\Lc) / H,$$ with projective completion $ (X \times \PP^1) \gitq_{\hspace{-2pt} \widetilde{\Lc} \hspace{2pt}} \widehat{H}$. 
\end{corollary}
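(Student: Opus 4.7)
The plan is to apply \thref{projHthm} to the linearised action of $\widehat{H}$ on $X \times \PP^1$ with the linearisation $\widetilde{\Lc}$ described in \thref{projgitforU} (the tensor product of $\Lc$ with a large power of the $\PP^1$-linearisation), and then restrict the resulting projective geometric $\widehat{H}$-quotient to obtain a geometric $H$-quotient of $\justss{X}{H}(\Lc)$, in direct analogy with how \thref{projgitforU} is deduced from \thref{uhatthm}.

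First I would verify that the hypotheses of \thref{projHthm} transfer. Since $H$ is externally graded, $\widehat{H}=H\rtimes \GG_m$ is internally graded by the external $\GG_m$, and its reductive part $R\times \GG_m$ admits the natural complement $\widehat{R}'\cong R=R'$ to the grading $\GG_m$. With the convention that the grading $\GG_m$ acts on $\mathbb{A}^2$ by $(x,y)\mapsto (tx,y)$, the fixed point $[0:1]\in \PP^1$ carries the minimum $\GG_m$-weight, and for the tensor power $N$ on the $\PP^1$-factor sufficiently large
\[
Z(X\times \PP^1)_{\min}=\zmin\times\{[0:1]\}\cong \zmin,\qquad (X\times \PP^1)^0_{\min}=\xmino\times(\PP^1\setminus\{[1:0]\})=\xmino\times\mathbb{A}^1,
\]
the restriction $\widetilde{\Lc}'$ is identified with $\Lc'$, and $\widetilde{\Lc}$ may be assumed well-adapted for $\widehat{H}$ (possibly after twisting by a character of $\widehat{R}$). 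Since $H$ acts trivially on the $\PP^1$-factor and $[0:1]$ is a point, the conditions \ref{Usscondproj} and \ref{Rcondproj} for $\widehat{H}$ on $X\times\PP^1$ reduce to the same conditions for $H$ on $X$, which hold by assumption.

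Applying \thref{projHthm} (and \thref{thm:hmcritforH}), I obtain a projective geometric $\widehat{H}$-quotient of
\[
\justss{(X\times\PP^1)}{\widehat{H}}(\widetilde{\Lc})=\bigl(p^{-1}(\zmin^{R'-ss}(\Lc'))\times \mathbb{A}^1\bigr)\setminus\bigl(U\zmin\times\{[0:1]\}\bigr),
\]
using that $U$ acts trivially on the $\PP^1$-factor. Intersecting with $X=X\times\{[1:1]\}$ and using $[1:1]\neq[0:1]$ to see that the deleted locus is missed, I obtain $\justss{X}{H}(\Lc)=p^{-1}(\zmin^{R'-ss}(\Lc'))$. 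To extract the geometric $H$-quotient I would restrict the $\widehat{H}$-quotient to the $\widehat{H}$-saturated open subvariety
\[
W:=p^{-1}(\zmin^{R'-ss}(\Lc'))\times(\mathbb{A}^1\setminus\{0\})\subseteq \justss{(X\times\PP^1)}{\widehat{H}}(\widetilde{\Lc}),
\]
which is saturated because an $\widehat{H}$-orbit of a point with nonzero $\PP^1$-coordinate cannot reach $[0:1]$. Then, mimicking the final paragraph of the proof of \thref{projgitforU}, the composition $x\mapsto (x,[1:1])\mapsto [(x,[1:1])]$ is a geometric $H$-quotient of $\justss{X}{H}(\Lc)$: the fibres are $H$-orbits, since fixing $[1:1]$ forces the $\GG_m$-component of an element of $\widehat{H}=H\rtimes\GG_m$ to be trivial, and the local invariant identification $(A[t,t^{-1}])^{\widehat{H}}\cong A^H$ holds because $\widehat{H}$ acts on $t$ only through the external $\GG_m$-factor with weight one. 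The resulting quasi-projective $H$-quotient sits as an open subvariety of $(X\times\PP^1)\gitq_{\hspace{-2pt}\widetilde{\Lc}\hspace{2pt}}\widehat{H}$, yielding the desired projective completion.

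The main obstacle I anticipate is the bookkeeping required to pin down the identifications of $Z_{\min}$, of the open Bia\l ynicki-Birula stratum, and of the restriction $\widetilde{\Lc}'$ for the $\widehat{H}$-action on $X\times\PP^1$: this hinges on choosing the $\PP^1$-tensor power $N$ large enough relative to the $\GG_m$-weights appearing in $\Lc$, and may require an additional character twist to preserve well-adaptedness. Once these identifications are in place, the rest of the argument is an essentially verbatim adaptation of the proof of \thref{projgitforU}.
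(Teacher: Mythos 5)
Your proposal is correct and follows essentially the same route as the paper: apply \thref{projHthm} to the $\widehat{H}$-action on $X\times\PP^1$ (after checking the hypotheses transfer), restrict to the open saturated locus $p^{-1}(\zmin^{R'-ss}(\Lc'))\times(\mathbb{A}^1\setminus\{0\})$, and then restrict to $X$ embedded at $[1:1]$, reusing the invariant-theoretic/slice argument from the proof of \thref{projgitforU}. The paper simply compresses your careful identifications of $Z(X\times\PP^1)_{\min}$, $(X\times\PP^1)^0_{\min}$, and the restricted linearisation into a short citation of \thref{projgitforU}.
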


\begin{proof}
    The conditions on the linearised $\widehat{H}$-action on $X$ ensure that the analogous conditions hold for the linearised $\widehat{H}$-action on $X \times \PP^1$. Hence by \thref{projHthm} there exists a projective geometric $\widehat{H}$-quotient $q$ of $ \Qss{(X \times \PP^1)}{\widehat{H}} = p^{-1} ( Z(X \times \PP^1)_{\min}^{R'-ss} )  \setminus U Z(X \times \PP^1)_{\min}. $ The intersection of $\Qss{(X \times \PP^1)}{\widehat{H}}$ with $X \times \mathbb{A}^1 \setminus \{0\}$ is open in $\Qss{(X \times \PP^1)}{\widehat{H}}$ and given by $ p^{-1} ( \zmin^{R'-ss}(\Lc') ) \times \mathbb{A}^1 \setminus \{0\}.$ The restriction of $q$ to this open subvariety is a quasi-projective geometric $\widehat{H}$-quotient, open inside $(X \times \PP^1) \gitq \widehat{H}$.  By the same argument as in the proof of \thref{projgitforU}, restricting this quotient to $X$ gives a geometric $H$-quotient of $X \cap ( p^{-1} ( \zmin^{R'-ss}(\Lc') ) \times \mathbb{A}^1 \setminus \{0\})  = p^{-1} ( \zmin^{R'-ss}(\Lc') ).$
\end{proof}

\section{Affine GIT from projective GIT for 
graded unipotent groups} \label{sec:affineuhat}

In this section we establish an affine analogue of the results of Sections \ref{subsec:gradedunip} and \ref{subsec:unip}. Given a linear action of a graded unipotent group $\hU : = U \rtimes \GG_m$ on a vector space $V$, we construct a quotient for the $\hU$-action on an open subset of $V$ using \thref{uhatthm} applied to an induced linearised $\hU$-action on $X: = \PP(V \oplus k)$ or on a blow-up of $X$.

\subsection{Set-up} \label{subsubsec:setup}

Let $\omega_{\min}$ denote the minimal weight for the action of $\GG_m$ on $V$ and $V_{\min}$ the associated weight space. Let $p_{\min}: V \to V_{\min}$ denote the natural projection and define $$V^0_{\min} := \{v \in V \ | \ p_{\min} (v) \neq 0\}.$$ 

\begin{remark}[Comparing $V_{\operatorname{min}}$ and $V^0_{\operatorname{min}}$ to $\zmin$ and $\xmino$] While $V_{\operatorname{min}}$ and $V^0_{\operatorname{min}}$ should be thought of as affine analogues of $\zmin$ and $\xmino$, the maps $p_{\min}: V^0_{\operatorname{min}} \to V_{\operatorname{min}}$ and $p: \xmino \to \zmin$ do not arise in the same way: the former is the restriction of the projection $ V \to V_{\operatorname{min}}  $ while the latter takes the limit at $0$ under the grading 1PS. For $v \in V^0_{\operatorname{min}}$, we have $p_{\min}(v) = \lim_{t \to 0} t \cdot v$ if and only if $\omega_{\operatorname{min}} = 0$. If $\omegamin < 0 $, then $\lim_{t \to 0} t \cdot v$ does not exist, while if $\omegamin > 0$ then $\lim_{t \to 0} t \cdot v= 0.$ Nevertheless, we do have that $[p_{\min}(v)] = p([v])$ in $\PP(V)$ for $v \in V^0_{\min}$. 
\end{remark}

We wish to construct a quotient for the action of $\hU$ on $V$ from a quotient for the linearised action of $\hU$ on $X: = \PP(V \oplus k)$ induced  by the representation $\hU \to V$, where $\hU$ acts trivially on $k$. By \thref{uhatthm}, a quotient for the latter exists if the action of $\hU$ on $X$ satisfies \ref{Ucondproj}. Thus our aim is to identify a condition for the $\hU$-action on $V$ which ensures that \ref{Ucondproj} is satisfied for the action of $\hU$ on $X$. As we will see in \thref{relatingUconds},  such a condition exists as long as we are willing to work with the following blow-up of $X$ in the case where $\omega_{\min} \geq 0$. 

\begin{notation}[The variety $\hX$] \label{notation def of Xhat}
Let $\psi: \hX \to X$ be the blow-up of $X$ at $[0:1]$, which we identify as a subvariety of $X \times \PP(V)$ consisting of $([v : w],[v']) $ such that $[v'] = [v]$ if $v \neq 0$. We let $E \cong \PP(V)$ denote the exceptional divisor. Let $\widehat{L}$ be the linearisation on $\hX$ given by tensoring the pullback of a large power of the linearisation $L$ with the natural linearisation $\mathcal{O}_{\PP(V)}(1)$ on $E \cong \PP(V)$.
\end{notation}  

A special case of this blow-up is considered in \cite{Berczi2024}, where $V = \operatorname{Hom}(\CC^k, \CC^n)$ is the fibre over $y \in Y$ of the bundle $J_k(Y) \to Y$ of $k$-jets of germs of parametrised curves in a complex projective variety $Y$ of dimension $n$; in this example $\omega_{\min}$ is strictly positive.

\begin{proposition} \thlabel{relatingUconds}
 If the linear action of $\hU$ on $V$ satisfies \begin{equation} \operatorname{Stab}_U(v) = \{e\} \text{ for all $v \in V_{\operatorname{min}} \setminus \{0\}, $} \tag*{$[U]_{\mathrm{aff}}$} \label{Ucondaffine} \end{equation} then \ref{Ucondproj} holds for the linearised action of $\hU$ on $X$ if and only if $\omega_{\min} < 0$. If $\omega_{\min} \geq 0$, then \ref{Ucondproj} holds for the linearised action of $\hU$ on $\hX$. 
\end{proposition}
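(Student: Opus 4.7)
The approach is to identify the minimum-weight $\GG_m$-fixed component (denoted $\zmin$ on $X$, and its analogue on $\hX$) explicitly and then verify \ref{Ucondproj} there using \ref{Ucondaffine}. The key preliminary observation, used throughout, is that for any $v \in V \setminus \{0\}$ we have $\Stab_U([v]) = \Stab_U(v)$: any $\lambda \in k^\times$ with $u \cdot v = \lambda v$ is an eigenvalue of the unipotent element $u$, hence equals $1$. The same argument, using that $U$ acts trivially on $k$, yields $\Stab_U([v:w]) = \Stab_U(v)$ for $[v:w] \in X$ with $v \neq 0$.

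To handle the first assertion, I would decompose $V \oplus k$ into $\GG_m$-weight spaces: these are the weight spaces of $V$, augmented by an additional zero-weight piece from $k$. Consequently the minimum weight of $V \oplus k$ equals $\min(\omegamin, 0)$. If $\omegamin < 0$, the minimum-weight subspace of $V \oplus k$ equals $\vmin$, so $\zmin = \PP(\vmin) \subset \PP(V) \subset X$, and the preliminary observation translates \ref{Ucondaffine} immediately into \ref{Ucondproj}. If $\omegamin \geq 0$, the minimum-weight subspace contains the summand $k$, so $[0:1] \in \zmin$; but $\Stab_U([0:1]) = U$ is nontrivial (we tacitly assume $U \neq \{e\}$, as the statement is otherwise vacuous), so \ref{Ucondproj} fails.

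For the second assertion ($\omegamin \geq 0$), I would compute $Z(\hX)_{\min}$ by analysing the $\GG_m$-fixed locus of $\hX$ together with the $\widehat{L}$-weights there. Using the embedding $\hX \hookrightarrow X \times \PP(V)$, write $\widehat{L} = \psi^* L^{\otimes N} \otimes \mathrm{pr}_2^*\mathcal{O}_{\PP(V)}(1)|_{\hX}$; since the $\GG_m$-weight of $L$ at $[0:1]$ is zero, the $\widehat{L}$-weight at a fixed point of $E \cong \PP(V)$ lying in $\PP(V_{\omega'})$ is simply $\omega'$, while at a fixed point outside $E$ corresponding to a fixed point of $X \setminus \{[0:1]\}$ of $L$-weight $\omega$, the $\widehat{L}$-weight is $(N+1)\omega$. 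Moreover, in the case $\omegamin = 0$, the proper transform of $\PP(\vmin \oplus k) = \PP(V_0 \oplus k)$ is a further fixed component of uniform weight $0$, whose intersection with $E$ is $\PP(\vmin)$. For $N \geq 1$ the minimum $\widehat{L}$-weight is therefore $\omegamin$, attained on $\PP(\vmin) \subset E$ (and, when $\omegamin = 0$, also on this proper transform).

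It then suffices to check \ref{Ucondproj} on each piece of $Z(\hX)_{\min}$. At points of $\PP(\vmin) \subset E$, the stabiliser is $\Stab_U(v') = \{e\}$ by \ref{Ucondaffine}. In the case $\omegamin = 0$, the remaining points of $Z(\hX)_{\min}$ lie over $\PP(\vmin \oplus k) \setminus \{[0:1]\}$ and have the form $[v:w]$ with $v \in \vmin \setminus \{0\}$; the preliminary observation then gives $\Stab_U([v:w]) = \Stab_U(v) = \{e\}$. The main obstacle is the bookkeeping in the borderline case $\omegamin = 0$, where the proper transform of $\PP(\vmin \oplus k)$ produces an additional non-exceptional component of $Z(\hX)_{\min}$ that must be treated separately from the purely exceptional $\PP(\vmin) \subset E$.
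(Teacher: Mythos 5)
Your proof is correct and follows essentially the same route as the paper: the key observation that a unipotent element fixing $[v]$ or $[v:w]$ must fix $v$ (since unipotent elements have no nontrivial eigenvalues), combined with an explicit identification of $\zmin$ and $\widehat{Z}_{\min}$ and a case split on the sign of $\omegamin$. The only minor difference is that the paper computes $\widehat{Z}_{\min}$ by intersecting $\hX$ with $Z(X\times\PP(V))_{\min}$ in a separate lemma (Lemma~\ref{zminandxomin}), whereas you work directly from the $\GG_m$-fixed loci of $\hX$ and their $\widehat{L}$-weights; the resulting descriptions agree.
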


\begin{remark} \thlabel{independence}
    While \thref{relatingUconds} considers specific linearisations for the $\hU$-action on $X$ and on $\hX$, the result holds true for any linearisation of the $\hU$-actions on $X$ and on $\hX$,  since the quotienting semistable locus is independent of the linearisation by \thref{equalityofssloci}. 
    
\end{remark}

To prove \thref{relatingUconds}, we first give explicit descriptions of $\zmin$, $\xmino$ and their analogues for $\hX$, denoted $\widehat{Z}_{\operatorname{min}}$ and $\hX^0_{\operatorname{min}}$ respectively. We use the decomposition $X = V \sqcup \PP(V)$ where $V \subseteq X$ is given by $v \mapsto [v:1]$ and $\PP(V) \subseteq X$ is given by $ [v] \mapsto [v:0].$ As we will only consider the blow-up $\hX$ when $\omega_{\min} \geq 0$, we only include a description of $\widehat{Z}_{\operatorname{min}}$ and $\hX^0_{\operatorname{min}}$ in that case.

\begin{lemma}[Describing $\zmin$ and $\xmino$ and their analogues for $\hX$] \thlabel{zminandxomin}
Given the linear action of $\hU$ on $V$, the subvarieties $(V \oplus k)_{\min}$, $\xmino$ and $\zmin$ are as follows:   
\begin{center} \begin{tabular}{c c c c }
Case & $(V \oplus k)_{\min}$ &  $\zmin \subseteq V \sqcup \PP(V) $ & $\xmino \subseteq V \sqcup \PP(V)$  \\ 
\hline 
$\omega_{\operatorname{min}} < 0 $ & $V_{\min}\oplus 0$ & $ \emptyset \sqcup \PP(V_{\operatorname{min}})$ & $ V^0_{\operatorname{min}} \sqcup \PP(V^0_{\operatorname{min}})$    \\ 
$\omega_{\operatorname{min}} = 0 $ & $V_{\min}\oplus k$ & $V_{\operatorname{min}}  \sqcup \PP(V_{\operatorname{min}})$ & $ V \sqcup \PP(V^0_{\operatorname{min}}) $   \\
$\omega_{\operatorname{min}} > 0 $  & $0\oplus k$ & $ \{[0:1]\} \sqcup \emptyset$ & $V \sqcup \emptyset $ 
\end{tabular} 
\end{center}
The subvarieties $\widehat{Z}_{\operatorname{min}}$ and $\hX^0_{\operatorname{min}}$ are as follows: 
\begin{center}
\begin{tabular}{c  c  c}
     Case & $\widehat{Z}_{\operatorname{min}} \subseteq X \times \PP(V)$  & $\hX^0_{\operatorname{min}} \subseteq X \times \PP(V) $   \\ \hline 
    $\omega_{\operatorname{min}} = 0 $ &   $\{ ([v:w],[v']) \in \hX \  |  \ v' \in V_{\operatorname{min}}\}  $  & $\{ ([v:w],[v']) \in \hX  \ | \ v' \in V^0_{\min} \} $ \\ 
    $\omega_{\operatorname{min}}  > 0$ & $  \{[0:1]\} \times \PP(V_{\operatorname{min}}) $  &  $ \{([v:w],[v']) \in \hX \ | \ w \neq 0 \text{ and } v' \in V^0_{\operatorname{min}}\}$.
\end{tabular}
\end{center}
\end{lemma}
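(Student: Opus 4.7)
The plan is to verify each row of each table by a direct computation, using the fact (see \thref{independent}) that $\zmin$ and $\xmino$ depend only on the underlying $\GG_m$-action, together with the decomposition $X = V \sqcup \PP(V)$ via $v \mapsto [v:1]$ and $[v] \mapsto [v:0]$. The $\GG_m$-weights on $V \oplus k$ are those on $V$ together with weight $0$ on the $k$-summand, so the minimum weight of $V \oplus k$ is $\min(\omegamin, 0)$ and $(V \oplus k)_{\min}$ is as in the first column of the first table; the description of $\zmin = X \cap \PP((V \oplus k)_{\min})$ then follows by intersecting with each piece of $X = V \sqcup \PP(V)$.

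For $\xmino$, I would compute $\lim_{t \rar 0} t \cdot [v:w]$ directly. Writing $v = \sum_\omega v_\omega$ with $v_\omega \in V_\omega$, and letting $\omega^*$ be the smallest weight at which $(v,w)$ has a non-trivial component (recall $w$ has weight $0$), a rescaling by $t^{-\omega^*}$ shows that the limit equals $[v_{\omega^*}:w]$ if $\omega^* = 0$ and $[v_{\omega^*}:0]$ otherwise. Applying this in the three cases yields the claimed descriptions: when $\omegamin < 0$ the limit lies in $\zmin = \PP(\vmin)$ exactly when $v \in \vmino$; when $\omegamin = 0$ the limit $[v_0:w]$ lies in $\zmin = \vmin \sqcup \PP(\vmin)$ whenever $(v_0,w) \neq (0,0)$, which is automatic for $w \neq 0$ and requires $v \in \vmino$ otherwise; and when $\omegamin > 0$ the limit $[0:w]$ equals the unique point $[0:1] \in \zmin$ precisely when $w \neq 0$.

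For the second table I would use the description $\hX = (X \setminus \{[0:1]\}) \sqcup E$ with $E \cong \PP(V)$, where the induced $\GG_m$-action on $E$ is the projectivisation of the action on $V$. The $\GG_m$-fixed locus in $\hX$ consists of the strict transforms of the fixed components of $X$, together with $\bigsqcup_\omega \PP(V_\omega) \subseteq E$. For the linearisation $\widehat{L} = \psi^* L^{\otimes N} \otimes \mathcal{O}_{\hX}(-E)$ with $N$ sufficiently large, $\widehat{Z}_{\min}$ is the fixed component on which the weights of both tensor factors are simultaneously minimised: $\psi^* L^{\otimes N}$ achieves its minimum along (the strict transform of) $\zmin \subseteq X$, while $\mathcal{O}_{\hX}(-E)|_E = \mathcal{O}_E(1)$ achieves its minimum along $\PP(\vmin) \subseteq E$. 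When $\omegamin = 0$, the strict transform of $\zmin = \vmin \sqcup \PP(\vmin)$ meets $E$ along $\PP(\vmin)$, yielding $\widehat{Z}_{\min} = \{([v:w],[v']) \in \hX : v' \in \vmin\}$. When $\omegamin > 0$, $\zmin = \{[0:1]\}$ is the blow-up centre, so $\psi^* L^{\otimes N}$ is already minimised on all of $E$, and cutting down further by the $\mathcal{O}_E(1)$-minimum leaves $\widehat{Z}_{\min} = \{[0:1]\} \times \PP(\vmin)$.

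Finally, $\hX^0_{\min}$ is computed by tracing the Bialynicki-Birula flow in both coordinates: on the second factor, the limit of $[v']$ is $[v'_{\omega'}]$ where $\omega'$ is the smallest weight with $v'_{\omega'} \neq 0$, and this lies in $\PP(\vmin)$ if and only if $v' \in \vmino$. Combined with the first-factor computation already performed for $X$, this yields the stated descriptions: when $\omegamin = 0$ the condition simplifies to $v' \in \vmino$ (the first-factor limit automatically lies in $\zmin$ for every $[v:w]$), while for $\omegamin > 0$ one needs in addition $w \neq 0$ to force the first-factor limit into $\{[0:1]\}$ rather than into some higher-weight $\PP(V_\omega)$. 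I expect the main obstacle to be verifying that the strict transform of $\zmin$ really is the minimum-weight fixed-point component of $\widehat{L}$: this requires a careful accounting of $\GG_m$-weights across the two tensor factors of $\widehat{L}$, and also confirming that the pairs of Bialynicki-Birula limits in the two factors do land in the subvariety $\hX \subseteq X \times \PP(V)$.
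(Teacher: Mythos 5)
Your computation of the first table (direct limit-chasing) matches the paper's terse ``follows directly from definitions,'' and your final answers for $\widehat{Z}_{\min}$ and $\hX^0_{\min}$ agree with the paper in every case. The route you take to identify $\widehat{Z}_{\min}$ is, however, genuinely different from the paper's and worth comparing. The paper exploits the product embedding $\hX \subseteq X \times \PP(V)$ and the fact that the minimal weight space factors: by Notation \ref{xominandzmin}, $Z(X\times\PP(V))_{\min} = \zmin \times \PP(\vmin)$, so $\widehat{Z}_{\min} = \hX \cap (\zmin \times \PP(\vmin))$ essentially by definition. This sidesteps entirely the question of which fixed component of $\hX$ carries the extremal linearisation weight. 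You instead enumerate the $\GG_m$-fixed components of $\hX$ as strict transforms together with $\bigsqcup_{\omega}\PP(V_\omega)\subseteq E$, and then argue that the weights of the two tensor factors $\psi^*L^{\otimes N}$ and $\mathcal{O}(-E)$ are simultaneously extremised on the claimed component. This works and reaches the same answer, but it is more delicate than you make it sound: the step ``\,$\mathcal{O}(-E)|_E = \mathcal{O}_E(1)$ achieves its minimum along $\PP(\vmin)$\,'' is a sign trap, since the fibre-weight of $\mathcal{O}_{\PP(V)}(1)$ over $\PP(V_\omega)$ is $-\omega$ (maximal, not minimal, over $\PP(\vmin)$), and one must remember that ``$Z_{\min}$'' refers throughout to the minimal weight space of the representation $H^0(\cdot,L)^*$ -- equivalently the sink of the $t\to 0$ flow -- rather than to the locus of minimal weight on $L$ itself. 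You flag exactly this as the main obstacle, so you have correctly located the subtlety; the paper's route has the advantage of never encountering it. Two small imprecisions remain: your phrase ``the first-factor limit automatically lies in $\zmin$ for every $[v:w]$'' is false for arbitrary $[v:w]$ when $\omegamin = 0$ (since $\xmino = V \sqcup \PP(\vmino) \neq X$) and should be stated with the constraint $([v:w],[v'])\in\hX$, $v'\in\vmino$ in force, under which it becomes correct; and your description of the fixed locus of $\hX$ implicitly double-lists the component $\PP(\vmin)\subseteq E$, once inside the strict transform of $\zmin$ and once in $\bigsqcup_\omega\PP(V_\omega)$ -- harmless here but worth untangling if you were to write this out in full.
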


\begin{proof}
The descriptions in the first table follow directly from definitions, so we focus on those in the second table. We have that $\widehat{Z}_{\min} = \hX \cap Z(X \times \PP(V)) = \hX \cap (\zmin \times \PP(\vmin))$. If $\omegamin = 0$, then $\zmin = \vmin \sqcup \PP(\vmin)$ from the first table. Therefore $Z(X \times \PP(V))_{\min} = \{([v:w],[v']) \in X \times \PP(V) \ | \ v,v' \in \vmin\}$. If we intersect this with $\hX$, the condition that $v \in \vmin$ is redundant. If $([v:w],[v']) \in \hX$, then $\lim_{t \to 0} t \cdot ([v:w],[v'])$ lies in  $\widehat{Z}_{\min}$ if and only if either $p_{\min}(v) \neq 0$ (in which case $p_{\min}(v')$ is also non-zero), or $v = 0$ and $p_{\min}(v') \neq 0$. Equivalently, the limit lies in $\widehat{Z}_{\min}$ if and only if $p_{\min}(v') \neq 0$. If $\omegamin > 0$, then $\zmin = \{[0:1]\} \sqcup \emptyset$ from the first table. Therefore $Z(X \times \PP(V))_{\min} = \{[0:1]\} \times \PP(\vmin)$, which is contained in $\hX$. If $([v:w],[v']) \in \hX$, then $\lim_{t \to 0} t \cdot ([v:w],[v'])$ lies in $\widehat{Z}_{\min}$ if and only if $w \neq 0$ and $p_{\min}(v') \neq 0$.
\end{proof}

We can now prove \thref{relatingUconds}.

\begin{proof}[Proof of \thref{relatingUconds}]
 We start by showing that \ref{Ucondaffine} is equivalent to  \ref{Ucondproj} holding for the induced linear action of $\hU$ on $\PP(V)$.  As a first step, note that the projection $V \setminus \{0\} \rar \PP(V)$ is $U$-equivariant, and, because unipotent groups have no characters, induces an isomorphism on stabilisers $\Stab_U(v) \cong \Stab_U([v])$. Since $Z(\PP(V))_{\min} = \PP(\vmin)$, the desired equivalence follows.

Combining this equivalence with the case-by-case description of $\zmin$ given in the first table in \thref{zminandxomin}, we see that if \ref{Ucondaffine} holds for the $\hU$-action on $V$, then \ref{Ucondproj} holds for the $\hU$-action on $X$ if and only if $\omega_{\operatorname{min}} < 0$. Indeed, if $\omega_{\operatorname{min}} \geq 0$, then  the point $[0:1] \in \zmin$ is fixed by $U$. 

For $\omega_{\min} \geq 0$, it remains to show that \ref{Ucondproj} holds for the $\hU$-action on $\hX$ instead. If $\omega_{\operatorname{min}} = 0$ and $([v:w],[v']) \in \widehat{Z}_{\operatorname{min}}$, then $v' \in V_{\operatorname{min}}$ by the description of $\widehat{Z}_{\operatorname{min}}$ given in the table above. Therefore $\operatorname{Stab}_U(([v:w],[v'])) \subset \operatorname{Stab}_U([v']) = \Stab_U(v') = \{ e \}$, where the middle equality is the equivalence proved in the first paragraph and the final equality is by assumption. 
If $\omega_{\operatorname{min}} > 0$, the proof is exactly the same, as long as we replace the point $([v:w],[v])$ by $([0:1],[v])$.
\end{proof}

\subsection{Constructing the quotients} \label{subsec:constructionthequotients}

We construct quotients for the $\hU$-action on $V$ from quotients for the $\hU$-action on $X$ or on $\hX$, depending on the sign of $\omega_{\operatorname{min}}$. \thref{relatingUconds} shows that \ref{Ucondaffine} gives \ref{Ucondproj} for $X$ if $\omega_{\min} <0$ and for $\hX$ if $\omega_{\min} \geq 0$. Hence, provided we twist the linearisations for the $\hU$-actions on $X$ and on $\hX$ to make them well-adapted, we can make the following definition. 

\begin{definition}[$\hU$-quotienting semistable locus in $V$] \thlabel{QssforV} 
The \emph{quotienting semistable locus} for the linear action of $\hU$ on a vector space $V$ satisfying \ref{Ucondaffine} is 
$$ \Qss{V}{\hU} : = \begin{cases} V \cap \Qss{X}{\hU} & \text{ if $\omega_{\operatorname{min}} < 0$} \\
V \cap \psi(\Qss{\hX}{\hU} \setminus E) & \text{ if $\omega_{\operatorname{min}} \geq 0$.}
\end{cases} $$
\end{definition}

\begin{theorem}\thlabel{quotienting}
Suppose that a graded unipotent group $\hU = U \rtimes \GG_m$ acts linearly on a vector space $V$ such that \ref{Ucondaffine} holds. Then there is an explicit description of the quotienting semistable locus
\begin{equation} \label{explicitdescripeq}
    \Qss{V}{\hU} = \begin{cases} V^0_{\operatorname{min}} & \text{ if $\omega_{\operatorname{min}} \neq 0$}\\
V^0_{\operatorname{min}}  \setminus U V_{\operatorname{min}} & \text{ if $\omega_{\operatorname{min}} = 0$}.
\end{cases}
\end{equation} 
and a quasi-projective geometric $\hU$-quotient $$ \Qss{V}{\hU}  \to  V \gitq \hU : = \Qss{V}{\hU} / \hU$$  with an explicit projective completion given by $X \gitq \hU$ if $\omega_{\operatorname{min}} < 0$ and by $\hX \gitq \hU$ if  $\omega_{\operatorname{min}} \geq 0$.
\end{theorem}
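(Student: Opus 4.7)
The plan is to deduce the theorem from the projective $\hU$-Theorem (\thref{uhatthm}) applied to $X$ or to $\hX$, with \ref{Ucondproj} verified via \thref{relatingUconds}, the descriptions of $\zmin$, $\xmino$, $\widehat Z_{\min}$ and $\hX^0_{\min}$ taken from \thref{zminandxomin}, and the passage to $V$ via the standard fact that geometric quotients restrict to geometric quotients on open invariant subvarieties.

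After twisting the natural linearisation on $X$ (respectively $\hX$) by a character of $\GG_m$ to make it well-adapted, \thref{relatingUconds} gives \ref{Ucondproj}, and \thref{uhatthm} (with \thref{independence} absorbing the choice of linearisation) yields projective geometric $\hU$-quotients whose domains are $\Qss{X}{\hU} = \xmino \setminus U\zmin$ and $\Qss{\hX}{\hU} = \hX^0_{\min} \setminus U\widehat Z_{\min}$. I would then verify \eqref{explicitdescripeq} by a case analysis using \thref{zminandxomin}: when $\omega_{\min} < 0$, the locus $U\zmin$ lies in $\PP(V) = X \setminus V$, so $\Qss{V}{\hU} = V \cap \xmino = V^0_{\min}$; when $\omega_{\min} > 0$, the inclusion $\widehat Z_{\min} \subseteq E$ forces $U\widehat Z_{\min} \subseteq E$, so the subtraction is trivial after restricting off the exceptional divisor, and the identification $\psi|_{\hX \setminus E} \cong X \setminus \{[0:1]\}$ produces $\Qss{V}{\hU} = V^0_{\min}$; when $\omega_{\min} = 0$ the same argument gives $V \cap \psi(\hX^0_{\min} \setminus E) = V^0_{\min}$, but now $\widehat Z_{\min}$ meets $\hX \setminus E$, and the relation $u \cdot [v:1] = [u \cdot v:1]$ yields $\psi(U\widehat Z_{\min} \cap (\hX \setminus E)) \cap V = U(V_{\min} \setminus \{0\})$, so $\Qss{V}{\hU} = V^0_{\min} \setminus UV_{\min}$ after noting that $0 \notin V^0_{\min}$.

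For the quotient, $\Qss{V}{\hU}$ sits in each case as an open $\hU$-invariant subvariety of the corresponding projective quotienting semistable locus: $V = X \setminus \PP(V)$ is open and $\hU$-invariant in $X$ when $\omega_{\min} < 0$, while $\psi^{-1}(V) \cap (\hX \setminus E)$ is open and $\hU$-invariant in $\hX$ when $\omega_{\min} \geq 0$. Any open $\hU$-invariant subvariety of the source of a geometric quotient is automatically saturated (being a union of orbits), so the projective geometric quotient restricts to the claimed quasi-projective geometric quotient $\Qss{V}{\hU} \to V \gitq \hU$, with projective completion $X \gitq \hU$ or $\hX \gitq \hU$ as appropriate. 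The main technical step is the intersection computation in the $\omega_{\min} = 0$ case, where $\widehat Z_{\min}$ is not contained in $E$ and one must trace the subtraction by $U\widehat Z_{\min}$ through the blow-up chart to recover the correction term $UV_{\min}$ in $V$; the other two cases are cleaner because the relevant loci lie either entirely inside or entirely outside the exceptional divisor.
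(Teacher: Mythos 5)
Your proposal is correct and follows essentially the same route as the paper: apply \thref{uhatthm} to $X$ or $\hX$ with \ref{Ucondproj} supplied by \thref{relatingUconds}, restrict the projective geometric quotient to the open $\hU$-invariant locus corresponding to $V$, and verify \eqref{explicitdescripeq} via the case-by-case tables in \thref{zminandxomin}. The paper's own proof is terser (it leaves the $\omega_{\min}=0$ intersection computation implicit), whereas you spell out that $U\widehat{Z}_{\min}$ meets $\hX\setminus E$ precisely in $U(V_{\min}\setminus\{0\})$ and that this equals $UV_{\min}$ inside $V^0_{\min}$; this is a correct and helpful elaboration rather than a different argument.
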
 

\begin{proof} 
If $\omega_{\operatorname{min}} < 0$, then $\Qss{V}{\hU}  = V \cap \Qss{X}{\hU}$ is an open subset of $\Qss{V}{\hU}$, and so the projective geometric quotient $X^{ss} \to X \gitq \hU = \Qss{X}{\hU} / \hU$ restricts to give a quasi-projective geometric quotient $ \Qss{V}{\hU}  \to \Qss{V}{\hU} / \hU,$ with $X \gitq \hU$ as an explicit projective completion. If $\omega_{\operatorname{min}} \geq 0$, then $\Qss{V}{\hU} = V \cap \psi(\Qss{\hX}{\hU}  \setminus E)$ is isomorphic via $\psi$ to an open subset of $\Qss{\hX}{\hU}$, since $\psi$ is an isomorphism away from $E$. Restricting the projective geometric quotient $\Qss{\hX}{\hU} \to \hX \gitq \hU = \Qss{\hX}{\hU} / \hU$ to this open subset then gives a quasi-projective geometric quotient $ \Qss{V}{\hU}  \to \Qss{V}{\hU} / \hU$ with $\hX \gitq \hU$ as an explicit projective completion. 

It remains to prove the explicit description \eqref{explicitdescripeq} of the semistable locus. For $\omega_{\operatorname{min}} < 0$, as $\justss{X}{\hU} = \xmino \setminus U \zmin$, we can use \thref{zminandxomin} to see that the intersection with $V$ is as claimed. For $\omega_{\operatorname{min}} \geq 0$, we use $\justss{\hX}{\hU} =  \widehat{X}^0_{\min} \setminus U \widehat{Z}_{\min}$ and \thref{zminandxomin} to identify $V \cap \psi(\Qss{\hX}{\hU} \setminus E)$.
\end{proof}

\begin{remark}[Comparison with the reductive case] \thlabel{comparison} 
In the reductive case, the quotient $V \gitq G$ is always affine, by construction. This can also be seen from the perspective of Section \ref{subsec:affinefromproj}: the image $V \gitq G$ of the restriction to $V$ of the quotient $X^{ss} \to X \gitq G$ coincides with the non-vanishing locus of the function picking out the coordinate $w$ at infinity, which is $G$-invariant. This argument no longer works in the non-reductive case. Although the quotient $V \gitq \hU$ inside $X \gitq \hU$ is similarly given by the non-vanishing of $w$, this function is not $\hU$-invariant, because of the twist to the linearisation required to make it well-adapted. 
 In fact, the explicit description \eqref{explicitdescripeq} of $\Qss{V}{\hU}$ implies that  the quotient $V \gitq \hU$ cannot always be affine. Indeed, if it were affine then the domain $\Qss{V}{\hU}$ of the quotient map would need to be affine, because good quotients are affine morphisms. But $V^0_{\operatorname{min}}$ and $V^0_{\operatorname{min}} \setminus U V_{\operatorname{min}}$ are not necessarily affine. Section \ref{subsec:leftmult} gives a simple example of a non-affine quotient.
\end{remark}

\subsection{Boundary of the quotient} \label{subsec:describing}

We now describe the boundary  of the projective completion of $V \gitq \hU$ given in \thref{quotienting}, in terms of information coming only from $V$.

\begin{proposition}[Description of the boundary] \thlabel{boundaryprop} The boundary of the projective completion of $V \gitq \hU$ given in \thref{quotienting} is as follows:  
\begin{enumerate}[(i)]
\item if $\omega_{\operatorname{min}} < 0 $  then there is an isomorphism $ ( X \gitq \hU)  \setminus (V \gitq \hU )\cong \PP(V) \gitq \hU$ induced by the inclusion $\PP(V) \subseteq X$ given by $[v] \mapsto [v:0]$;
\item if $\omega_{\operatorname{min}} = 0$ then there is an isomorphism $( \hX \gitq \hU)  \setminus (V \gitq \hU ) \cong \PP(V) \gitq \hU \sqcup \PP(V) \gitq \hU$ induced by two inclusions $\PP(V) \hookrightarrow \hX$: the first given by $[v] \mapsto ([v:0],[v])$ and the second by $[v] \mapsto ([0:1],[v])$;
\item if $\omega_{\operatorname{min}} > 0$ then there is an isomorphism $( \hX \gitq \hU)  \setminus (V \gitq \hU ) \cong \PP(V) \gitq \hU$ induced by the inclusion $\PP(V) \hookrightarrow \hX$ given by $[v] \mapsto ([0:1],[v])$. 
\end{enumerate}
\end{proposition}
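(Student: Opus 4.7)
The strategy is to combine the explicit descriptions of $\Qss{X}{\hU}$ and $\Qss{\hX}{\hU}$ from \thref{uhatthm} and \thref{zminandxomin} with the restriction property of $\hU$-quotients from \thref{proprestriction}, decomposing the relevant semistable locus into a \emph{$V$-part} (with image $V \gitq \hU$) and one or two \emph{boundary pieces}, each $\hU$-equivariantly isomorphic to $\justss{\PP(V)}{\hU}$. A key preliminary observation is that \ref{Ucondaffine} implies \ref{Ucondproj} for the induced $\hU$-action on $\PP(V)$: for unipotent $U$ one has $\Stab_U(v) \cong \Stab_U([v])$, and $Z(\PP(V))_{\min} = \PP(\vmin)$, as already noted in the proof of \thref{relatingUconds}. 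Applying \thref{uhatthm} to $\PP(V)$ then yields a projective geometric $\hU$-quotient $\justss{\PP(V)}{\hU} = \PP(\vmino) \setminus U \PP(\vmin) \to \PP(V) \gitq \hU$.

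For case (i), where $\omegamin < 0$, \thref{zminandxomin} gives $\Qss{X}{\hU} = \vmino \sqcup \bigl(\PP(\vmino) \setminus U \PP(\vmin)\bigr)$ under the decomposition $X = V \sqcup \PP(V)$. The first summand is $\Qss{V}{\hU}$; the second is $\justss{\PP(V)}{\hU}$ sitting in $X$ as a closed $\hU$-invariant subvariety via $[v] \mapsto [v:0]$, so by \thref{proprestriction} and uniqueness of good quotients its image in $X \gitq \hU$ is $\PP(V) \gitq \hU$.

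For cases (ii) and (iii) I would work on the blow-up $\hX$, stratified as $(V \setminus \{0\}) \sqcup E \sqcup \psi^{-1}(\PP(V))$ with $E$ and the strict transform $\psi^{-1}(\PP(V))$ both closed. Using \thref{zminandxomin} to compute $\Qss{\hX}{\hU} = \hX^0_{\operatorname{min}} \setminus U \widehat{Z}_{\operatorname{min}}$ stratum by stratum, the $(V \setminus \{0\})$-stratum contributes $\Qss{V}{\hU}$ via $v \mapsto ([v:1],[v])$; the $E$-stratum contributes $\{[0:1]\} \times \justss{\PP(V)}{\hU}$, using that $U$ fixes $[0:1]$ and acts naturally on the second factor; and when $\omegamin = 0$ the strict transform stratum additionally contributes $\{([v:0],[v]) : [v] \in \justss{\PP(V)}{\hU}\}$, using that the condition $v \in U\vmin$ is homogeneous in $v$ and so descends to $\PP(V)$. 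For $\omegamin > 0$, the requirement $w \neq 0$ in the description of $\hX^0_{\operatorname{min}}$ kills the strict transform contribution, leaving only the $E$-stratum piece. In each case the boundary pieces are closed $\hU$-invariant subvarieties of $\Qss{\hX}{\hU}$ that are $\hU$-equivariantly isomorphic to $\justss{\PP(V)}{\hU}$ via the inclusions appearing in the statement, so \thref{proprestriction} together with uniqueness of good quotients identifies their images in $\hX \gitq \hU$ with $\PP(V) \gitq \hU$; in case (ii), disjointness of the two strata in $\hX$ forces the two resulting copies to be distinct.

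The main obstacle is the stratum-by-stratum $U$-orbit analysis determining $U\widehat{Z}_{\operatorname{min}}$, in particular verifying that in case (ii) the exceptional divisor and strict transform contributions are genuinely independent and each produce a full copy of $\justss{\PP(V)}{\hU}$; the rest is a bookkeeping exercise built on the restriction property of \thref{proprestriction}.
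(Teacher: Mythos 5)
Your proof is correct and follows essentially the same approach as the paper: both identify the relevant boundary piece(s) inside $\Qss{X}{\hU}$ or $\Qss{\hX}{\hU}$ using the descriptions of $\zmin$, $\xmino$ and their hatted analogues from \thref{zminandxomin}, show each piece is $\hU$-equivariantly identified with $\Qss{\PP(V)}{\hU}$, and then conclude via restriction of the good quotient and uniqueness of good quotients. The only difference is presentational: you organise the computation around an explicit stratification of $\hX$ and explicitly cite \thref{proprestriction}, whereas the paper computes the difference sets directly and leaves the restriction argument more implicit; note only that \thref{proprestriction} is applied to the closed $\hU$-invariant subvarieties $E$ and the strict transform of $\PP(V)$ inside $\hX$ (not of $\Qss{\hX}{\hU}$), with the boundary piece being the intersection of that closed subvariety with $\Qss{\hX}{\hU}$.
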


\begin{proof} 
Suppose that $\omega_{\operatorname{min}} < 0$. Then $$\Qss{X}{\hU} \setminus \Qss{V}{\hU} = \{[v:0] \in X \ | \ v \in V^0_{\operatorname{min}} \setminus U V_{\operatorname{min}}\} \cong \PP(V)^0_{\operatorname{min}} \setminus U \PP(V_{\operatorname{min}}).$$    If \ref{Ucondaffine} holds for the $\hU$-action on $V$, then \ref{Ucondproj} holds for the induced linearised $\hU$-action on $\PP(V)$ by the proof of \thref{relatingUconds}. Thus by \thref{uhatthm} the locus $\Qss{\PP(V)}{\hU}=\PP(V)^0_{\operatorname{min}} \setminus U \PP(V_{\operatorname{min}})$
admits a projective geometric quotient. The restriction of the quotient map $\Qss{X}{\hU} \to X \gitq \hU$ to $\Qss{X}{\hU}  \setminus V^{\operatorname{Qss}}$ must therefore coincide with this quotient. 

Suppose that $\omega_{\operatorname{min}}  = 0$. Using the description \eqref{explicitdescripeq} of $\Qss{V}{\hU}$, we obtain that $$ \psi^{-1}(\Qss{V}{\hU}) = \{([v:1],[v]) \in \hX \ | \ v \in V^0_{\operatorname{min}} \setminus U V_{\operatorname{min}}  \}. $$ By \thref{zminandxomin}, we can describe  $\Qss{\hX}{\hU} = \hX_{\min}^0 \setminus U \widehat{Z}_{\min}$, and it follows that $$   \Qss{\hX}{\hU} \setminus \psi^{-1}(\Qss{V}{\hU})  =  \{([v:0],[v]) \in \hX \ | \ v \in V^0_{\operatorname{min}} \setminus U V_{\operatorname{min}} \}    \sqcup \{ ([0:1],[v]) \in \hX   \ | \ v \in V^0_{\operatorname{min}} \setminus U V_{\operatorname{min}}\}. $$ Both factors of the disjoint union are isomorphic to $\PP(V)^0_{\operatorname{min}} \setminus U \PP(V_{\operatorname{min}})$, which by the same argument as in the above paragraph coincides with $\Qss{\PP(V)}{\hU}$ and admits a projective geometric quotient $\PP(V) \gitq \hU$. Hence the restriction of $\Qss{\hX}{\hU} \to X \gitq \hU$ to $\Qss{\hX}{\hU} \setminus \psi^{-1}( \Qss{V}{\hU})$ coincides on each factor of the disjoint union with the projective $\hU$-quotient $\Qss{\PP(V)}{\hU} \to \PP(V) \gitq \hU$.

Finally for $\omega_{\operatorname{min}} > 0$, we have that $  \psi^{-1}(\Qss{V}{\hU}) = \{([v:1],[v]) \in \hX \ | \ v \in V^0_{\operatorname{min}} \setminus U V_{\operatorname{min}}  \} $ by the description \eqref{explicitdescripeq} of $\Qss{V}{\hU}$. It follows then from  \thref{zminandxomin} that $$ \Qss{\hX}{\hU} \setminus \psi^{-1}(\Qss{V}{\hU}) = \{ ([0:1],[v]) \ | \ v \in V^0_{\operatorname{min}} \setminus U V_{\operatorname{min}} \} \cong \PP(V)^0_{\operatorname{min}} \setminus U \PP(V_{\operatorname{min}})$$
and we conclude the proof in the same way as in the previous two cases.
\end{proof} 

 \thref{quotienting} and \thref{boundaryprop} together complete the proof of \thref{firstresult}. In Section \ref{subsec:fromvstoaffine} we extend this result to linear $\hU$-actions on affine varieties rather than just vector spaces.

\subsection{Quotients of vector spaces by unipotent group actions}

Just as the projective $\hU$-theorem (\thref{uhatthm}) can be used to construct quotients for unipotent rather than graded unipotent actions on projective varieties (see \thref{projgitforU}), the affine $\hU$-theorem can be used to construct quotients for unipotent rather than graded unipotent actions.   

\begin{corollary} \thlabel{affinebyunipotent}
    Suppose that a unipotent group $U$ acts linearly on a vector space $V$ and that this action extends to the linear action of a graded unipotent group $\hU = U \rtimes \GG_m$ such that \ref{Ucondaffine} holds. Then there exists a quasi-projective geometric quotient $$\Qss{V}{U} : = \vmino \to \Qss{V}{U} / U =: V \gitq U$$ with a projective completion given by $ (X \times \PP^1 ) \gitq \hU$ if $\omega_{\min} < 0$ and by $(\hX \times \PP^1) \gitq \hU$ if $\omega_{\min} \geq 0$, where the linearisations are as given in \thref{projgitforU}. 
\end{corollary}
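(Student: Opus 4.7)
The plan is to mirror the proof of \thref{projgitforU} by applying that corollary to the projective variety $X$ when $\omega_{\min} < 0$, or to its blow-up $\hX$ when $\omega_{\min} \geq 0$, and then to restrict the resulting $U$-quotient to the open subvariety corresponding to $\vmino \subseteq V$.

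First, by \thref{relatingUconds}, the hypothesis \ref{Ucondaffine} on $V$ ensures that \ref{Ucondproj} holds for the induced linearised $\hU$-action on $X$ if $\omega_{\min} < 0$, and on $\hX$ if $\omega_{\min} \geq 0$. Applying \thref{projgitforU} then yields quasi-projective geometric $U$-quotients $\xmino \to \xmino/U$ and $\widehat{X}^0_{\min} \to \widehat{X}^0_{\min}/U$ with respective projective completions $(X \times \PP^1) \gitq \hU$ and $(\hX \times \PP^1) \gitq \hU$.

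The next step is to identify $\vmino$ with a $U$-invariant open subvariety of $\xmino$ or of $\widehat{X}^0_{\min}$, using the explicit tables in \thref{zminandxomin}: one checks that $\xmino \cap V = \vmino$ when $\omega_{\min} < 0$, and that $\psi$ restricts to an isomorphism $\widehat{X}^0_{\min} \cap (\psi^{-1}(V) \setminus E) \xrightarrow{\cong} \vmino$ when $\omega_{\min} \geq 0$. It then remains to verify that this open subvariety is $U$-saturated, so that by \cite[Prop 3.10]{Newstead1978} the $U$-quotient restricts to a geometric $U$-quotient of $\vmino$, with image open inside the stated projective completion.

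The main subtlety, which I expect to be the primary technical point, is establishing saturation. The key fact is that in characteristic zero any $u \in U$ can be written as $\exp(X)$ for some $X \in \Lie U$; since $\Lie U$ has strictly positive $\GG_m$-weights under the adjoint action, the induced endomorphism of $V$ strictly raises the $\GG_m$-grading. It follows that $p_{\min}(u \cdot v) = p_{\min}(v)$ for every $u \in U$ and $v \in V$, so that $U$ preserves both $\vmino$ and its complement in $V$, which yields the required saturation and hence the geometric $U$-quotient $\vmino \to \vmino / U$ with the claimed projective completion.
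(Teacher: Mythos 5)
Your proof is correct and follows essentially the same route as the paper: apply \thref{relatingUconds} to obtain \ref{Ucondproj}, invoke \thref{projgitforU} on $X$ or $\hX$, and then restrict the geometric $U$-quotient to the open subset corresponding to $\vmino$, identified via \thref{zminandxomin}. Your explicit weight-raising argument for $U$-invariance of $\vmino$ is valid, but the paper treats this as immediate (and indeed $V \cap \xmino$, resp.\ the analogous locus in $\hX$, is an intersection of $U$-invariant sets, so $U$-invariance--and hence saturation for a geometric quotient--is automatic).
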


\begin{proof}
\thref{relatingUconds} shows \ref{Ucondproj} holds for the  $\hU$-action on $X$ if $\omega_{\min} < 0$, and on $\hX$ if $\omega_{\min} \geq 0$. Thus we can apply \thref{projgitforU} to obtain a quasi-projective geometric $U$-quotient of $\xmino$ if $\omega_{\min} < 0$, and of $\widehat{X}^0_{\min}$ if $\omega_{\min}  \geq 0$ with projective completions $(X \times \PP^1) \gitq \hU$ and $(\hX \times \PP^1) \gitq \hU$ respectively.   If $\omega_{\min} < 0$, then restricticting to $V \hookrightarrow X$ via $v \mapsto [v:1]$, we obtain a quasi-projective geometric quotient for the $U$-action on $V \cap \xmino = \vmino$. If $\omega_{\min} \geq 0$, then we instead obtain a quasi-projective geometric quotient for the $U$-action on $V \cap \psi(\widehat{X}^0_{\min} \setminus E) = \vmino$.  
\end{proof}

\subsection{Twisted affine GIT for graded unipotent group actions}  \label{subsec:kingss} 
So far in Section \ref{sec:affineuhat} we have developed affine GIT for linear actions of graded unipotent groups on vector spaces. It is reasonable to ask whether a `twisted' version of this theory can be developed, analogously to the reductive case. We provide such a theory in this section. To this end, let $\rho$ denote a character of the graded unipotent group $\hU = U \rtimes \GG_m$, fixed for the remainder of this section.

Suppose that $\hU$ acts linearly on a vector space $V$. In the reductive case, the $\rho$-twisted quotienting semistable locus is defined as the locus of definition of the rational map from $V$ to the projective spectrum of the semi-invariants. This definition is not suitable in the non-reductive case, as the semi-invariants may not be finitely generated. However, we can instead use the equality $$ \Qss{V}{G}(\rho) = V \cap \Qss{X}{G}(\mathcal{O}(d)_{\rho})$$ from \thref{equalityonV}, valid for $d \gg 1$, to \emph{define} the $\rho$-twisted quotienting semistable locus in $V$ for the action of $\hU$. The issue with this approach in the non-reductive case is that $\Qss{X}{\hU}(\mathcal{O}(d)_{\rho})$ is only well-defined (and admits a quotient) if the linearisation is well-adapted and \ref{Ucondproj} holds. Regarding well-adaptedness, the linearisations $\Lc : = \mathcal{O}(d)_{\rho}$ and $\widehat{\Lc} : = \psi^{\ast} \mathcal{O}(d)_{\rho}^{\otimes N } \otimes \mathcal{O}(-E)$ with $N \gg 1$ for the $\hU$-actions on $X$ and on $\hX$ can be twisted by a character of $\hU$ to make them well-adapted. Call these well-adapted linearisation $\Lc'$ and $\widehat{\Lc'}$.  Regarding the condition \ref{Ucondproj}, by \thref{equalityofssloci} and \thref{independence} we know that if \ref{Ucondaffine} holds for the linear action of $\hU$ on $V$, then \ref{Ucondproj} holds for any linearised action of $\hU$ on $X$ if $\omega_{\min} < 0$, and on $\hX$ if $\omega_{\min } \geq 0$. Thus we may define $\Qss{V}{\hU}(\rho)$ analogously to $\Qss{V}{\hU}$ in \thref{QssforV}, using the new linearisations $\Lc'$ and $\widehat{\Lc'}$: $$ \Qss{V}{\hU}(\rho)  := \begin{cases}
V \cap \Qss{X}{\hU}(\Lc') & \text{ if $\omega_{\min} < 0$}\\
V \cap \psi( \Qss{\hX}{\hU}(\widehat{\Lc}') \setminus E) & \text{ if $\omega_{\min} \geq 0$}.
\end{cases}.$$   As we have seen in \thref{equalityofssloci}, the quotienting semistable loci in $X$ and $\hX$ are independent of the choice of well-adapted linearisation. Therefore $\Qss{V}{\hU}(\rho) = \Qss{V}{\hU}$ for any $\rho$ and so we do not get a new quotienting semistable locus by twisting by a character $\rho$ of $\hU$. This is not surprising, since achieving well-adapatedness of the linearisations on $X$ and $\hX$ requries twisting by a character of $\hU$, which cancels out the effect of any initial twist by $\rho$.

However, we can get new Hilbert-Mumford and invariant-theoretic semistable loci by twisting by $\rho$; these can be defined in analogy with the the reductive case.

\begin{definition} \thlabel{Qssforuhattwisted}
    Given the linear action of a graded unipotent group $\hU = U \rtimes \GG_m$ on a vector space $V$, and a character $\rho$ of $\hU$, 
    \begin{enumerate}[(i)]

\item the \emph{$\rho$-twisted Hilbert-Mumford semistable locus} is 
    $$  \HMss{V}{\hU}(\rho) := \bigcap_{u \in U} u \justss{V}{\GG_m}(\rho);$$

\item the \emph{$\rho$-twisted invariant-theoretic semistable locus} is $$\Iss{V}{G}(\rho) : = \{v \in V \ | \ \text{there exists } \sigma \in k[V]^{\hU}_{\rho^n} \text{ for some $n > 0$ such that $\sigma(v) \neq 0$} \},$$ where $k[V]^{\hU}_{\rho^n}$ denotes the ring of weight $n$ semi-invariants, i.e.\ the ring of functions $f \in k[V]$ such that $f ( h \cdot v) = \rho(h)^n f(v)$ for each $v \in V$ and $h \in \hU$. 
    \end{enumerate}
\end{definition}

\thref{equalityuhat} below provides conditions under which the $\rho$-twisted quotienting, invariant-theoretic and Hilbert-Mumford loci coincide.  

\begin{proposition} \thlabel{equalityuhat}
    Suppose that $\hU$ acts linearly on $V$ such that \ref{Ucondaffine} holds, and let $\rho$ denote a character of $\hU$, identified with an integer so it is given by $t \rightarrow t^\rho$ on $\GG_m$. Then there are equalities $$ \Qss{V}{\hU} = \HMss{V}{\hU}(\rho)  = \Iss{V}{\hU}(\rho) = \justss{V} {\GG_m}(\rho)$$ if and only if $\rho < 0$, $\omega_{\min}< 0$ and $\omega_{\operatorname{next}} \geq 0$, where $\omega_{\operatorname{next}}$ denotes the next minimal weight for the $\GG_m$ action on $V$ after $\omega_{\min}$. 
\end{proposition}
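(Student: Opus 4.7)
My plan is to combine the explicit description of $\Qss{V}{\hU}$ from \thref{quotienting} with the Hilbert-Mumford criterion for the one-dimensional torus $\GG_m$ from \thref{hmlemma} in order to pin down the three conditions, and then to extend the equality to the other semistable loci using the $U$-invariance of the minimum-weight projection $p_{\operatorname{min}}$.

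First I would compute $\justss{V}{\GG_m}(\rho)$. Since $\GG_m$ is one-dimensional, the weight polytope of $v \in V$ is simply the convex hull in $\RR$ of the weights $\omega$ for which the $V_{\omega}$-component of $v$ is non-zero, and \thref{hmlemma} therefore reduces to the statement that $v \in \justss{V}{\GG_m}(\rho)$ if and only if some weight in the support of $v$ has the same sign as $\rho$. Combining this with $\Qss{V}{\hU} = \vmino$ (when $\omega_{\operatorname{min}} \neq 0$) or $\Qss{V}{\hU} = \vmino \setminus U\vmin$ (when $\omega_{\operatorname{min}} = 0$) from \thref{quotienting}, a case analysis on the signs of $\rho$ and $\omega_{\operatorname{min}}$ shows that the equality $\Qss{V}{\hU} = \justss{V}{\GG_m}(\rho)$ can hold only when $\rho < 0$, $\omega_{\operatorname{min}} < 0$ and $\omega_{\operatorname{next}} \geq 0$. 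Indeed, under these three conditions both sides coincide with $V \setminus V_{\geq 0}$, which equals $\vmino$ precisely because all weights other than $\omega_{\operatorname{min}}$ are non-negative; in every other case the two descriptions visibly disagree (for instance, if $\omega_{\operatorname{min}} \geq 0$ or $\rho > 0$, one side contains vectors supported entirely on positive weights that the other does not). This simultaneously gives the necessity of the three conditions and the right-most equality under them.

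Assuming the three conditions, I would then prove $\HMss{V}{\hU}(\rho) = \vmino$. The key point is that $\vmino$ is $U$-invariant: because $\GG_m$ acts on $\Lie U$ with strictly positive weights, every $\xi \in \Lie U$ strictly raises $\GG_m$-weights on $V$, so $u \cdot v - v \in \bigoplus_{\omega > \omega_{\operatorname{min}}} V_{\omega}$ for all $u \in U$ and $v \in V$, and therefore $p_{\operatorname{min}}(u \cdot v) = p_{\operatorname{min}}(v)$. Hence $\HMss{V}{\hU}(\rho) = \bigcap_{u \in U} u\,\justss{V}{\GG_m}(\rho) = \vmino$.

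Finally, for $\Iss{V}{\hU}(\rho) = \vmino$, I would argue each inclusion separately. If $v \notin \vmino$, all non-zero components of $v$ lie in weight spaces $V_\omega$ with $\omega \geq \omega_{\operatorname{next}} \geq 0$, so $\lim_{t \to 0} t \cdot v$ exists in $V$; but for any $f \in k[V]^{\hU}_{\rho^n}$ with $n > 0$ we have $f(t \cdot v) = t^{n\rho} f(v)$, which blows up as $t \to 0$ (since $n\rho < 0$) unless $f(v) = 0$, contradicting continuity of the polynomial $f$. For the reverse inclusion, the $U$-invariance of $p_{\operatorname{min}}$ already established above lets me construct explicit semi-invariants: for any $\ell \in \vmin^*$, the function $f_\ell := \ell \circ p_{\operatorname{min}} \in k[V]^U$ is $\GG_m$-homogeneous of weight $\omega_{\operatorname{min}}$, and since both $\omega_{\operatorname{min}}$ and $\rho$ are negative integers, the positive power $f_\ell^{|\rho|}$ is a $\hU$-semi-invariant of weight $\rho^{|\omega_{\operatorname{min}}|}$. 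Choosing $\ell$ with $\ell(p_{\operatorname{min}}(v)) \neq 0$ for a given $v \in \vmino$ yields a non-vanishing such semi-invariant, proving $v \in \Iss{V}{\hU}(\rho)$. The main obstacle I anticipate is this last $\supseteq$ step in the invariant-theoretic equality, where one must actually produce semi-invariants rather than merely compare loci; the $U$-invariance of $p_{\operatorname{min}}$ forced by the positive grading on $\Lie U$ is precisely what makes the construction tractable.
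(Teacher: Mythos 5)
Your proposal is correct and follows the paper's overall strategy: a sign analysis comparing the explicit description of $\Qss{V}{\hU}$ from \thref{quotienting} against the $\GG_m$-semistable locus, together with the observation that the graded $U$-action raises $\GG_m$-weights (so that $p_{\min}$ is $U$-invariant) to pass to the $\hU$-loci. Where you genuinely diverge is in proving $\Iss{V}{\hU}(\rho)=\vmino$. The paper reduces this to the assertion that every $\GG_m$-semi-invariant with respect to $\rho$ is automatically $U$-invariant; you instead exhibit explicit $\hU$-semi-invariants $(\ell\circ p_{\min})^{|\rho|}$ for $\ell\in\vmin^*$ and argue separately that no point outside $\vmino$ can admit a positive-weight semi-invariant. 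Your constructive route is tighter and in fact sidesteps a real issue: the paper's blanket claim can fail even under the hypotheses of the proposition — take $V=V_{-2}\oplus V_1$ with $\GG_a$ raising weights by $3$ and $\rho=-1$: the product of the two dual coordinate functions is a $\GG_m$-semi-invariant of weight $1$ that is not $U$-invariant — whereas the weaker statement you prove, that there are \emph{enough} $U$-invariant semi-invariants to cut out $\vmino$, is exactly what the conclusion requires. Two small fixes: the phrase ``a $\hU$-semi-invariant of weight $\rho^{|\omega_{\min}|}$'' is garbled (you mean $f_\ell^{|\rho|}$ is a $\hU$-semi-invariant of weight $|\omega_{\min}|$ with respect to $\rho$), and the $\subseteq$ step should be cast algebraically rather than via continuity: the identity $f(t\cdot v)=t^{n\rho}f(v)$ with $n\rho<0$ forces $f(v)=0$ because the left-hand side extends to a regular function of $t\in\mathbb{A}^1$ when $v\in V_{\geq 0}$, while the right-hand side has a pole at $t=0$ unless $f(v)=0$.
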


To prove \thref{equalityuhat}, we first give an explicit description of $\HMss{V}{\hU}(\rho)$.

\begin{notation}
    Let $V = \bigoplus_{i \in \ZZ} V_i$ denote the $\GG_m$-weight space decomposition for the linear action of $\GG_m$ on $V$. Let $V_{\geq 0} = \bigoplus_{i \geq 0} V_i$ and $V_{\leq 0} = \bigoplus_{i \leq 0} V_i.$
\end{notation}

\begin{lemma}[Description of the Hilbert-Mumford $\rho$-semistable locus] \thlabel{describingHMrhoss}
    Suppose that a graded unipotent group $\hU$ acts linearly on a vector space $V$. Then we have: $$ \HMss{V}{\hU}(\rho)  = \begin{cases} V \setminus  V_{\geq 0} & \text{ if $\rho < 0$} \\
    V & \text{ if $\rho = 0$} \\
    V \setminus U V_{\leq 0} & \text{ if $\rho > 0$.}
    \end{cases}$$ 
\end{lemma}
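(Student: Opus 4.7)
The plan is to first compute the $\rho$-semistable locus $\justss{V}{\GG_m}(\rho)$ for the torus action, and then intersect over $U$-translates, using the fact that the grading structure of $\hU$ forces $U$ to preserve $V_{\geq 0}$ (but not $V_{\leq 0}$).

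First, I would describe $\justss{V}{\GG_m}(\rho)$ using the weight polytope criterion of \thref{hmlemma}. Decompose $v = \sum_{i \in \ZZ} v_i$ with $v_i \in V_i$; then $\conv_{\GG_m}(v) = [\omega_{\min}(v), \omega_{\max}(v)]$ where $\omega_{\min}(v)$ (resp.\ $\omega_{\max}(v)$) is the smallest (resp.\ largest) $i$ with $v_i \neq 0$. Identifying $\rho$ with an integer, the positive ray through $\rho$ is $\RR_{>0}$, $\{0\}$ or $\RR_{<0}$ according as $\rho > 0$, $\rho = 0$ or $\rho < 0$. Hence:
\begin{align*}
\justss{V}{\GG_m}(\rho) &= V \setminus V_{\leq 0} \quad \text{if } \rho > 0, \\
\justss{V}{\GG_m}(\rho) &= V \quad \text{if } \rho = 0, \\
\justss{V}{\GG_m}(\rho) &= V \setminus V_{\geq 0} \quad \text{if } \rho < 0.
\end{align*}
(In the $\rho = 0$ case the constant function $1$ witnesses semistability of every point.)

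Next, I would show that $U V_{\geq 0} = V_{\geq 0}$, using the grading hypothesis on $\hU$. Since $\hU = U \rtimes \GG_m$ is positively graded, every weight of $\GG_m$ on $\Lie U$ is strictly positive, so if $y \in \Lie U$ is a $\GG_m$-weight vector of weight $w > 0$ and $v \in V_i$, then $y \cdot v \in V_{i + w}$. Iterating and exponentiating, for any $u \in U$ and $i \in \ZZ$ we obtain $u \cdot V_i \subseteq \bigoplus_{j \geq i} V_j$. In particular $u \cdot V_{\geq 0} \subseteq V_{\geq 0}$, and since $U$ is a group this is an equality. By contrast $V_{\leq 0}$ is generally not preserved, since $U$ moves weight vectors strictly upwards.

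Finally I would assemble the result by intersecting over $u \in U$. For $\rho = 0$ the intersection is $V$ trivially. For $\rho < 0$, each $u \in U$ satisfies $u(V \setminus V_{\geq 0}) = V \setminus u V_{\geq 0} = V \setminus V_{\geq 0}$ by the preceding step, so the intersection is $V \setminus V_{\geq 0}$. For $\rho > 0$, we get
\[
\HMss{V}{\hU}(\rho) = \bigcap_{u \in U} u(V \setminus V_{\leq 0}) = V \setminus \bigcup_{u \in U} u V_{\leq 0} = V \setminus U V_{\leq 0},
\]
and no further simplification is possible since $U$ does not preserve $V_{\leq 0}$. There is no real obstacle here; the only point requiring care is the elementary but essential observation on the compatibility of the grading with the $U$-action on $V$.
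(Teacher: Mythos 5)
Your proof is correct and follows essentially the same strategy as the paper's: first compute $\justss{V}{\GG_m}(\rho)$ for each sign of $\rho$, then intersect over $U$-translates, using the positive grading to show $V_{\geq 0}$ is $U$-invariant while $V_{\leq 0}$ need not be. The only cosmetic difference is that the paper computes the $\GG_m$-semistable locus directly from the Hilbert--Mumford pairings $\langle\rho,\lambda_n\rangle$ with $\lambda_n(t)=t^n$, whereas you invoke the weight-polytope characterisation of \thref{hmlemma} (and you correctly flag the degenerate case $\rho=0$, which must be handled by hand since the ``positive ray through $0$'' in \thref{hmlemma} has no non-zero points).
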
 

\begin{proof}
Suppose first that $\rho < 0$. Define $\lambda_n(t) := t^n$. If $v \in V \setminus V_{\geq 0}$, then $\lim_{t \rightarrow 0}\lambda_n(t) \cdot (v)$ exists if and only if $n \leq 0$. In this case we have that $\langle \rho, \lambda_n \rangle \geq 0$, therefore $v \in \HMss{V}{\GG_m}(\rho)$, see \thref{semistableloci}. If $v \in V_{\geq 0}$, then $\lim_{t \rightarrow 0}\lambda_n(t) \cdot (v)$ exists if and only if $n \geq 0$. But if $n >0$ then $\langle \rho, \lambda_n \rangle < 0$, so $v \notin \HMss{V}{\GG_m}(\rho)$.  Therefore $\Qss{V}{\GG_m}(\rho) = V \setminus V_{\geq 0}.$  By the grading property of $\hU$, we have that $U V_i \subseteq \bigoplus_{j > i} V_j$, so that $V_{\geq 0}$ is $U$-invariant. Therefore $$\Qss{V}{\hU}(\rho) = \bigcap_{u \in U} u \Qss{V}{\GG_m}(\rho) = V \setminus U V_{\geq 0} = V \setminus V_{\geq 0}.$$

 Suppose that $\rho > 0$, then one similarly shows $\Qss{V}{\GG_m}(\rho) = V \setminus V_{\leq 0}$.  Hence $$\Qss{V}{\hU}(\rho) = \bigcap_{u \in U} u \Qss{V}{\GG_m}(\rho) = \bigcap_{u \in U} V \setminus V_{\leq 0} = V \setminus U V_{\leq 0}. $$ 
  
  Finally if $\rho = 0$, then $\Qss{V}{\GG_m}(\rho) = \justss{V}{\GG_m} = V$, so that $\Qss{V}{\hU}(\rho) = \bigcap_{u \in U} u V = V.$\end{proof}

We can now prove \thref{equalityuhat}.
\begin{proof}[Proof of \thref{equalityuhat}]
        First suppose that the equalities of semistable loci in \thref{equalityuhat} hold for $\rho$. 
    If $\rho=0$, then $\HMss{V}{\hU}(\rho) = V$ by \thref{describingHMrhoss}, yet the explicit description  \eqref{explicitdescripeq} shows $\Qss{V}{\hU}$ is never equal to $V$, so $\rho$ cannot be trivial.   If $\rho > 0$, then $\HMss{V}{\hU}(\rho) = V \setminus U V_{\leq 0}$. If $\omega_{\operatorname{min}} < 0$, then $\Qss{V}{\hU} = V^0_{\operatorname{min}}$, which can never equal $V \setminus U V_{\leq 0}$ as $U ( V_{\operatorname{min}} \setminus \{0\} ) \subseteq V^0_{\operatorname{min}} $ while $U ( V_{\operatorname{min}} \setminus \{0\} ) \nsubseteq V \setminus U V_{\leq 0}$. If $\omega_{\operatorname{min}} = 0$, then $\Qss{V}{\hU} = V^0_{\operatorname{min}}  \setminus U V_{\operatorname{min}}$, while $\HMss{V}{\hU}(\rho) = V \setminus U V_{\operatorname{min}}$ so the two are not equal. Finally if $\omega_{\operatorname{min}} > 0$ then $\Qss{V}{\hU} = V^0_{\operatorname{min}}$ while $\HMss{V}{\hU,\rho} = V$, so again the two are not equal. Thus we have $\rho < 0$ and $\HMss{V}{\hU}(\rho) = V \setminus V_{\geq 0}$, which coincides with $\Qss{V}{\hU}(\rho)$ if and only if $V \setminus V_{\geq 0}=V^0_{\operatorname{min}}$. This happens if and only if $\omega_{\operatorname{min}} < 0 $ and $\omega_{\operatorname{next}}  \geq 0$. 

    Conversely, suppose that $\rho < 0$, $\omega_{\min} < 0$ and $\omega_{\operatorname{next}} \geq 0$.  In this case the proof of \thref{describingHMrhoss} shows that $V^0_{\operatorname{min}}  = \justss{V}{\GG_m}(\rho) $. By the discussion proceeding \thref{Qssforuhattwisted}, we have that $\Qss{V}{\hU}(\rho)= \Qss{V}{\hU} = \vmino$. To show that  $\Iss{V}{\hU}(\rho) = V^0_{\min}$, it is enough to show that any $\Gm$ semi-invariant with respect to $\rho$ is $U$-invariant. This can be seen by the fact that the infinitesimal $U$-action on the dual representation $V^\ast$ must move the $-\omega_i$ weight space to a strictly higher weight space, because $U$ is graded by the $\Gm$. We then use the fact that $(V_{\min})^\ast = (V^\ast)_{\max}$. 
\end{proof}

\section{Simple examples with two-by-two matrices}  \label{sec:2by2ex}

In this section we illustrate the results of Section \ref{sec:affineuhat} through two examples involving two-by-two matrices. We consider the Borel subgroup $$ \hU = \GG_a \rtimes \GG_m = \left\{ \left. \begin{pmatrix} t & u \\
0 & t^{-1} \end{pmatrix} \  \right|  \ t \in k^{\ast}, u \in k \right\} \subseteq \SL_2(k),$$ which is graded by the 1PS consisting of diagonal matrices $\operatorname{diag}(t, t^{-1})$ for $t \in k^{\ast}$, acting on $V = \operatorname{Mat}_{2 \times 2}(k)$ by conjugation in Section \ref{subsec:2by2}, and by left multiplication in Section \ref{subsec:leftmult}.  In Table \ref{examples1} we summarise the semistable loci and quotients obtained from applying the results of Section \ref{sec:affineuhat}.

\begin{table}[h!]
\begin{tabular}{|c |   c |  c|}
\hline 
$\hU \to \GL(V)$ & Conjugation & Left multiplication   \\
\hline 
$\omega_{\min}$ & -2  & -1 \\ \hline 
$\Qss{V}{\hU} \to V \gitq \hU $  &  $\{M \in V \ | \ M_{21} \neq 0\} \longrightarrow \mathbb{A}^2$ \quad  &  $\{M \in V  \ | \ M_{21} \text{ or } M_{22} \neq 0\} \longrightarrow \mathbb{A}^1 \times \mathbb{P}^1$ \\  
 & \quad \quad \quad $ M \mapsto (\tr M, \det M) $  & \quad \quad \quad \quad $M \mapsto (\det M, [M_{21}: M_{22}])$  \\ \hline 
$X \gitq \hU$ & $\PP(1,1,2)$ & $\PP(2,1) \times \PP^1$  \\ \hline 
$X \gitq  \hU \setminus V \gitq \hU$ & $\PP(1,2)$   & $\PP^1$ \\ \hline 
$\Qss{V}{U} $ & $\{M \in V \ | \ M_{21} \neq 0\}$  &  $\{M \in V  \ | \ M_{21} \text{ or } M_{22} \neq 0\}$ \\ \hline 
$V \gitq U$ & $\mathbb{A}^1 \setminus \{0\} \times \mathbb{A}^2 $  & $\mathbb{A}^3 \setminus \{(x,y,z) \ | \ x=y=0\}$ \\ \hline
$\Qss{V}{U} \to V \gitq U $ & $M \mapsto (M_{21},\tr M, \det M) $  &  $M \mapsto (M_{21},M_{22},\det M)$ \\ \hline
 \end{tabular}
 \vspace{0.3cm}
 \caption{Explicit description of semistable loci and quotients for linear actions of the Borel subgroup $\hU$ in $\SL_2(k)$ and of its unipotent radical $U$ on $V = \operatorname{Mat}_{2 \times 2}(k)$. Given a matrix $M$, the $(i,j)$-th entry is denoted by $M_{ij}$.}
 \label{examples1}
\end{table}

\subsection{Conjugation action}  \label{subsec:2by2} 
Let $V = \operatorname{Mat}_{2 \times 2}(k)$ and consider the conjugation action of $\hU$ on $V$. The $\GG_m$-weights on $V$ are $\omega_{\min} = -2, \omega_{\operatorname{next}} = 0$ and $\omega_{\max} = 2$. Moreover, we have: $$ V_{\min} = \left\{   \begin{pmatrix} 0 & 0 \\
c & 0  \end{pmatrix} \in V \right\} \text{ and } V^0_{\min} = \left\{ \left.  \begin{pmatrix} a & b \\
c & d \end{pmatrix} \in V \ \right| \ c \neq 0   \right\}.$$ 
A simple calculation shows that \ref{Ucondaffine} holds for the $\hU$-action on $V$. Hence \thref{quotienting} provides a quasi-projective $\hU$-geometric quotient of $\vmino$, and a projective completion given by the projective geometric quotient $\Qss{X}{\hU} \to X \gitq \hU$ where $X = \PP(V \oplus k)$. 
By \thref{boundaryprop}, the boundary of the projective completion coincides with the projective geometric quotient $ \Qss{\PP(V)}{\hU} \to \PP(V) \gitq \hU$,  with the inclusion $\Qss{\PP(V)}{\hU} \hookrightarrow \Qss{X}{\hU}$ given by $[M] \mapsto [M:0]$. By  \thref{affinebyunipotent} there exists a quasi-projective geometric quotient for the action of $U = \GG_a$ on $\vmino$, with $(X \times \PP^1) \gitq \hU$ as a projective completion. The above semistable loci and quotients can each be described explicitly.

\begin{proposition}[Quotients for conjugation action on two-by-two matrices] \thlabel{rk2matrices} 
We have: 
\begin{enumerate}[(i)]
    \item \label{V} the geometric quotient for the action of $\hU$ on $V^0_{\operatorname{min}}$ is, up to isomorphism, the map $$ V^0_{\operatorname{min}}  = \left\{ \left.  M \in V \ \right| \ M_{21} \neq 0   \right\}    \to V \gitq \hU \cong \mathbb{A}^2, \quad M   \mapsto (\operatorname{tr} M, \operatorname{det} M);$$ 
   \item \label{PC} the projective geometric quotient for the action of $\hU$ on $\Qss{X}{\hU}$ is, up to isomorphism, the map $$ \Qss{X}{\hU}  = \left\{ \left.  \left[ M : w \right]  \in X \ \right| \ M_{21} \neq 0 \text{ and if } w = 0 \text{ then }  (\operatorname{det} M, \operatorname{tr} M) \neq 0  \right\}   \to X \gitq \hU \cong \PP(1,1,2) $$ given by $[M:w]  \mapsto [w:\operatorname{tr} M : \operatorname{det} M ];$
    \item \label{boundary} the projective geometric quotient for the action of $\hU$ on $\Qss{\PP(V)}{\hU}$  is, up to isomorphism, the map $$  \Qss{\PP(V)}{\hU} =   \left\{ \left.  [M] \in \PP(V)  \ \right| \ M_{21} \neq 0  \text{ and } \operatorname{det} M, \operatorname{tr} M \neq 0 \right\}    \to \PP(1,2) \cong \PP(V) \gitq \hU $$ given by $    [M]  \mapsto [\operatorname{tr} M : \operatorname{det} M ];$
    \item the quasi-projective geometric quotient for the action of $U$ on $\Qss{V}{U} = \vmino$ is, up to isomorphism, the map $$ \vmino \to \mathbb{A}^1 \setminus \{0\} \times \mathbb{A}^2, \quad M \mapsto (M_{21}, \tr M, \det M)$$ where $M_{21}$ denotes the bottom left entry of $M$. 
\end{enumerate}

\end{proposition}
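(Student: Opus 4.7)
The strategy is to apply the general results of Section \ref{sec:affineuhat} to the present action and then identify the resulting quotients explicitly by means of an $\hU$-normal form. I would first verify \ref{Ucondaffine}: a non-zero element of $V_{\min}$ has the form $\begin{pmatrix} 0 & 0 \\ c & 0 \end{pmatrix}$ with $c \neq 0$, and conjugation by $\begin{pmatrix} 1 & u \\ 0 & 1 \end{pmatrix}$ sends this to $\begin{pmatrix} uc & -u^2 c \\ c & -uc \end{pmatrix}$, so its $U$-stabiliser is trivial. Since $\omega_{\min} = -2 < 0$, \thref{quotienting,boundaryprop} then produce the quasi-projective geometric $\hU$-quotient of $\vmino$ with projective completion $X \gitq \hU$ and boundary $\PP(V) \gitq \hU$, while \thref{affinebyunipotent} produces the $U$-quotient of part (iv). The explicit descriptions of the semistable loci in (i)--(iv) are then read off from these theorems, using the descriptions of $V_{\min}$, $V^0_{\min}$, $\PP(V_{\min})$ and $\PP(V)^0_{\min}$ given in the paragraph following \thref{zminandxomin}.

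The key computation is the normal form. Given $M = \begin{pmatrix} a & b \\ c & d \end{pmatrix} \in \vmino$, conjugation by $\begin{pmatrix} 1 & -a/c \\ 0 & 1 \end{pmatrix}$ clears the $(1,1)$-entry, after which $\GG_m$-conjugation by $\operatorname{diag}(\sqrt{c}, 1/\sqrt{c})$ sets the $(2,1)$-entry to $1$, yielding the canonical representative $\begin{pmatrix} 0 & -\det M \\ 1 & \tr M \end{pmatrix}$. Hence two elements of $\vmino$ are $\hU$-conjugate if and only if they share the same trace and determinant, and every $(s, p) \in \mathbb{A}^2$ is realised by $\begin{pmatrix} 0 & -p \\ 1 & s \end{pmatrix}$. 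Since $\tr$ and $\det$ are $\hU$-invariant, the map $M \mapsto (\tr M, \det M) : \vmino \to \mathbb{A}^2$ is an $\hU$-invariant surjection whose fibres are the $\hU$-orbits, hence is the geometric quotient by uniqueness, proving (i). Part (iv) follows similarly: omitting the $\GG_m$-step from the normal form leaves the $U$-normal form $\begin{pmatrix} 0 & -\det M/c \\ c & \tr M \end{pmatrix}$, and each $U$-orbit in $\vmino$ is uniquely labelled by $(M_{21}, \tr M, \det M) \in (\mathbb{A}^1 \setminus \{0\}) \times \mathbb{A}^2$.

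For (iii), a direct computation shows that $U \cdot \PP(V_{\min}) = \{[u:-u^2:1:-u] : u \in k\} \subset \PP(V)$ is precisely the locus in $\PP(V)^0_{\min}$ where trace and determinant both vanish, matching the explicit description of $\Qss{\PP(V)}{\hU}$. Projectivising the normal form -- with the additional rescaling under which $\tr$ and $\det$ have weights $1$ and $2$ -- shows that the $\hU$-orbit of $[M] \in \Qss{\PP(V)}{\hU}$ is uniquely labelled by $[\tr M : \det M] \in \PP(1, 2)$, giving (iii). Part (ii) is then obtained by gluing: the weighted projective space $\PP(1, 1, 2)$ decomposes as the affine chart $\{w \neq 0\} \cong \mathbb{A}^2$ together with the subvariety $\{w = 0\} \cong \PP(1, 2)$, and the map $[M:w] \mapsto [w : \tr M : \det M]$ restricts on these two pieces to the quotient maps of (i) and (iii), so the identification of the boundary in \thref{boundaryprop} yields (ii). The main subtle point throughout is upgrading the bijective orbit-labelling morphisms to isomorphisms of varieties; this will follow from the universal property of the geometric quotients produced by \thref{quotienting,affinebyunipotent} combined with the normality of the claimed targets $\mathbb{A}^2$, $\PP(1,2)$, $\PP(1,1,2)$ and $(\mathbb{A}^1 \setminus \{0\}) \times \mathbb{A}^2$.
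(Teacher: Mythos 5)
Your proposal is correct and takes essentially the same approach as the paper: exhibit each claimed map as a surjective $\hU$-invariant morphism whose fibres are $\hU$-orbits, then promote it to a geometric quotient via normality of the target (the paper cites Tauvel--Yu, Cor.~25.3.4, to the same effect). Your explicit companion-matrix normal form simply spells out the ``direct calculation'' that the paper leaves to the reader, and your gluing argument for part (ii) is a harmless cosmetic variant of the paper's parallel direct argument.
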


\begin{proof}
For \eqref{V}, it follows from a simple calculation that the map is surjective and $\hU$-invariant. If $M$ and $M'$ both have the same trace and determinant, then it can be shown by direct calculation that there is an element of $\hU$ which conjugates $M$ to $M'$. Therefore the fibres of the map correspond to $\hU$-orbits. Since $V^0_{\operatorname{min}}$ is irreducible and $\mathbb{A}^2$ is normal, by \cite[Cor 25.3.4]{Tauvel2005} the given map is a geometric quotient. The result then follows from uniqueness of geometric quotients. For \eqref{PC}, the description of $\Qss{X}{\hU}$ follows from the observation that $[M:w]$ lies in $U \zmin$ if and only if $w=0$ and $\det M = \tr M = 0$. Since $\PP(1,1,2)$ is normal and $\Qss{X}{\hU}$ irreducible, to show that the given map is a geometric quotient, again it suffices to show that it is surjective and has fibres corresponding to $\hU$-orbits. But this follows from the exact same argument as in the previous case, as $\hU$ acts trivially on the coordinate $w$. The proofs of the last two items are analogous. 
\end{proof}  

We conclude this example with three remarks. 

\begin{remark}[Comparison with affine GIT quotient of $V$ by $\SL_2(k)$]
The affine GIT quotient $V \gitq \SL_2(k)$ for the conjugation action of $\SL_2(k)$ on $V = \Mat_{2 \times 2}(k)$ is the map $\pi: V \to k^2$ defined by $ M \mapsto (\operatorname{tr} M, \operatorname{det} M).$ This map is a good $\SL_2(k)$-quotient but not a geometric $\SL_2(k)$-quotient, nor is it a geometric $\SL_2(k)$-quotient when restricted to $V^0_{\operatorname{min}}$. However, by \thref{rk2matrices} \eqref{V} this restriction is a geometric $\hU$-quotient. In particular the fibres of $\pi|_{V^0_{\operatorname{min}}}$ are precisely the $\hU$-orbits. 
\end{remark}

\begin{remark}[Invariants do not necessarily extend] \thlabel{invtsnotextending}
    We can use the conjugation action of $\hU$ on $V$ to produce an example of invariants on a closed subvariey not extending to the ambient variety.     Consider the subvariety $V'$ of $V$ consisting of upper triangular matrices. The two functions that pick out the diagonal entries of a matrix are invariant for the $\hU$-action on $V'$, but do not extend to invariants on $V$. 
\end{remark}

\begin{remark}[Link with Section \ref{subsec:kingss}]
   Since $\omega_{\min} < 0 $ and $\omega_{\operatorname{next}} \geq 0$ in this example, by \thref{equalityuhat} for any character $\rho < 0$ of $\hU$ we have $\Qss{V}{\hU}(\rho) = \HMss{V}{\hU}(\rho) = \Iss{V}{\hU}(\rho)$. Since $\Qss{V}{\hU}(\rho) = \Qss{V}{\hU} = \vmino$, it follows that $\Iss{V}{\hU}(\rho) = \vmino$. We can see this equality directly, since the only way for a function on $V$ to be a semi-invariant of negative weight is if it is divisible by the function which picks out the bottom-left coordinate. 
\end{remark}

\subsection{Left multiplication} \label{subsec:leftmult}

Let $V = \Mat_{2 \times 2}(k)$ and consider $\hU$ acting on $V$ by left multiplication. Then $\omegamin = -1$ and $\vmin$ consists of matrices with zero top row, so that $\vmino$ consists of matrices with non-zero bottom row. Again a simple calculation shows that \ref{Ucondaffine} holds, so we can apply \thref{quotienting,affinebyunipotent}. In this example, we can also  construct these quotients by hand.

More precisely, using the same argument as in the proof of \thref{rk2matrices}, we can show that $$ \vmino \to \mathbb{A}^1 \times \PP^1, \quad M \mapsto (\det M, [M_{21}:M_{22}])$$ is a geometric $\hU$-quotient with projective completion is given by the $\hU$-geometric quotient $$\Qss{X}{\hU}  \to \PP(1,2) \times \PP^1, \quad [M:w] \mapsto ([ w: \det M ],[M_{21}:M_{22}]),$$ where $ \Qss{X}{\hU} = \{[M:w] \in X \ | \ M_{21} \text{ or } M_{22} \neq 0 \text{ and if $w=0$ then $\det M \neq 0$}\}.$ The boundary $X \gitq \hU \setminus V \gitq \hU = \PP(V) \gitq \hU$ is given by the geometric $\hU$-quotient $$ \Qss{\PP(V)}{\hU} \to \PP^1, \quad [M] \mapsto [M_{21}:M_{22}],$$ where $\Qss{\PP(V)}{\hU} = \{[M] \in \PP(V)  \ | \ \text{$\det M \neq 0$}\} = \PGL_2(k).$  By \thref{uhatthm}, the boundary $X \gitq \hU  \setminus V \gitq \hU$ is isomorphic to $\PP(V) \gitq \hU = \PGL_2(k) / \hU$. Finally, the geometric $U$-quotient for the action of $U$ on $\vmino$ is given by $$ \vmino \to D(x) \cup D(y) \subseteq \mathbb{A}^3, \quad M \mapsto (M_{21}, M_{22}, \det M)$$ where $D(x)$ and $D(y)$ denote the non-vanishing loci of the first and second coordinates on $\mathbb{A}^3$. 

\begin{remark}[Link with Section \ref{subsec:kingss}]
   As in the previous example, applying \thref{equalityuhat} we obtain that for any character $\rho < 0$ of $\hU$, there are equalities $\Qss{V}{\hU}(\rho) = \HMss{V}{\hU}(\rho) = \Iss{V}{\hU}(\rho)$. Since $\Qss{V}{\hU}(\rho) = \Qss{V}{\hU} = \vmino$, it follows that $\Iss{V}{\hU}(\rho) = \vmino$. We can see this equality directly, since the semi-invariants of negative weights are polynomials in the functions picking out the bottom left and bottom right entries respectively. 
\end{remark}

In both this example and the previous, the $\hU$-invariants on $V$ are finitely generated (generated by the trace and determinant in the case of conjugation, and by the determinant in the case of left multiplication), so that the untwisted quotient $\Spec k[V]^{\hU}$ is well-defined as a variety. In both examples we can see that there is a projective morphism $V \gitq \hU \to \Spec k[V]^{\hU}$: the identity map in the conjugation example, and the projection $\mathbb{A}^1 \times \PP^1 \to \mathbb{A}^1$ in the left multiplication example. It is natural to ask whether in the non-reductive setting, whenever $\omega_{\min} < 0$, $\omega_{\operatorname{next}} \geq 0$ and  $k[V]^{\hU}$ is finitely generated, the quotient $V \gitq  \hU$  (which can be thought of as a $\rho$-twisted quotient for any $\rho < 0$ by \thref{equalityuhat}) admits a projective morphism to the untwisted quotient $\Spec k[V]^{\hU}$, as in the reductive case. This question is more easily addressed from the point of view of what we will call relative non-reductive GIT, a perspective that we will explore in forthcoming work.

\section{Affine GIT from projective GIT for graded linear algebraic groups}  \label{sec:affinegitfromprojgitforH}

In this section we explain how to construct quotients for linear actions of internally or externally graded linear algebraic group on affine varieties twisted by a character.

\subsection{Internally graded linear algebraic group actions} \label{subsec:affineinternal}
Suppose that an internally graded linear algebraic group $H = U \rtimes R$ acts linearly on a vector space $V$. Throughout this section we assume that $R$ is of the form $R' \times \lambda(\GG_m)$ -- this is true up to a finite group up so the assumption will not alter any of the semistable or stable loci.  

Our aim is to define a quotienting semistable locus for the action of $H$ on $V$ twisted by a character $\rho$ of $H$, which we can show admits a good $H$-quotient under certain assumptions, by constructing a quotient of the $H$-action on $X:= \mathbb{P}(V \oplus k)$, or on the blow-up $\hX$ of $X$ as defined in Notation \ref{notation def of Xhat}. Since $\Qss{V}{H}(\rho)$ is independent of $\rho$ when $H = \hU$ is a graded unipotent group (see Section \ref{subsec:kingss}), 
 we only consider characters of $\rho$ that are trivial on the grading group $\lambda(\GG_m)$, which we identify with characters of $R'$.

\subsubsection{The $\rho$-twisted $H$-semistable locus in $V$}

For a reductive group $G$, the $\rho$-twisted semistable locus $\justss{V}{G}(\rho)$ is the domain of definition of the map $V \dashrightarrow \Proj k[V]^{G, \rho}$ in \eqref{rationalmap}. We will not use an analogous definition for $H$, as $k[V]^{H, \rho}$ may not be finitely generated. Instead, recalling from \thref{equalityonV} the equality for $G$ reductive   $$ \justss{V}{G}(\rho) = V \cap \justss{X}{G}(\mathcal{O}(d)_{\rho}) \quad\quad  \text{for $d \gg 1$},$$ our approach is to take this equality as the definition of the $\rho$-twisted semistable locus for $H$. It will be convenient for us to work with fractional linearisations, and so we will instead work with the linearisation $\mathcal{O}(1)_{\frac{\rho}{d}}$ for $d \gg 1$, or equivalently $\mathcal{O}(1)_{\varepsilon \rho}$ for $0 < \varepsilon \ll 1$. This will not affect the semistable loci or quotients.

To specify how small to take $\varepsilon$, we  consider the linear $R'$-action on $\vmin$. From here on we will denote by $\mathcal{N}$ the null cone for this action, i.e.\ $\mathcal{N} = \pi^{-1}(\overline{0})$ where $\pi: \vmin \to \vmin \gitq R'$ is the affine GIT quotient for the $R'$-action on $\vmin$ and $\overline{0} = \pi(0)$.
Let $\rho$ denote the restriction to $R'$ of $\rho$. By \thref{equalityonV,equalityonPV} applied to the $R$-action on $V \subseteq X:= \PP(V \oplus k)$ and to the $R'$-action on $\vmin \subseteq \PP(\vmin \oplus k)$, for all $0<\varepsilon \ll 1$ the following equalities are satisfied: 
\begin{align}
    \vmin^{R'-(s)s}(\rho) & = \vmin \cap \justssors{\PP(\vmin \oplus k)}{R'}(\mathcal{O}_{\PP(\vmin \oplus k)}(1)_{\varepsilon\rho}) \label{equality2}  \\
    \PP ( \vmin^{R'-(s)s}(\rho) \setminus \mathcal{N})  & = \PP(\vmin) \cap \PP(\vmin \oplus k)^{R'-(s)s}(\mathcal{O}_{\PP(\vmin \oplus k)}(1)_{\varepsilon\rho}). \label{equality3} \end{align}

We will define a $\rho$-twisted $H$-semistable locus in $V$ by intersecting $V$ with the $H$-semistable locus in $X = \PP( V \oplus k)$ (respectively in its blow-up $\hX$) if $\omega_{\min} < 0$ (respectively if $\omega_{\min} \geq 0$). Describing the latter requires introducing notation for various linearisations. 

\begin{notation}[Linearisations]
   Choose $N \gg 1$ such that $\psi^*\mathcal{O}_X(N) \otimes \mathcal{O}(-E)$ is ample on $\hX$. Then choose $d\gg0 $ such that \eqref{equality2} and \eqref{equality3} are satisfied with $\varepsilon = N/d$.   \begin{enumerate}[(i)] 
    \item  Consider the linearisation $\Lc:=\mathcal{O}_X(1) _{\frac{\rho}{d}}$ for the $H$-action on $X$ and the linearisation $\widehat{\Lc} := \psi^*\mathcal{L}^{\otimes N} \otimes \mathcal{O}(-E)$ for the $H$-action on $\hX$.
    \item Let $\Lc_+$ (respectively \ $\widehat{\mathcal{L}}_+$) denote the well-adapted $H$-linearisation on $X$ (respectively\ $\hX$) obtained by twisting the $H$-linearisation $\mathcal{L}$ (respectively\ $\widehat{\mathcal{L}}$) by a suitable rational character of $\lambda(\GG_m)$.
    \item Let $\Lc'$ (respectively \  $\widehat{\Lc'}$) denote the $R'$-linearisation  on $\zmin$ (respectively \ $\widehat{Z}_{\min}$) obtained by restricting the $R$-linearisation $\Lc$ on $X$ (respectively \ $\widehat{\Lc}$ on $\hX$).
    \end{enumerate}
\end{notation}

\begin{definition}[$\rho$-twisted semistable locus for the action of $H$ on $V$ twisted by $\rho$]\thlabel{definition:rhotwistedsemistablelocus}
   For an internally graded unipotent group $H = U \rtimes R$ acting linearly on a vector space $V$ and a character $\rho$ of $H$ that is trivial on the grading group $\lambda(\GG_m)$, the \emph{$\rho$-twisted semistable locus} for the $H$-action on $V$ is $$ \justss{V}{H}(\rho) : = \begin{cases} V \cap \justss{X}{H}(\Lc_+) & \text{if $\omega_{\min} < 0$} \\
    V \cap \psi(\justss{\hX}{H}(\widehat{\Lc}_+) \setminus E) & \text{if $\omega_{\min} \geq 0$}, 
    \end{cases}$$ 
    assuming \ref{Usscondproj} and \ref{Rcondproj} hold for the action on $X$ if $\omega_{\min} < 0$ and on $\hX$ if $\omega_{\min} \geq 0$. 
\end{definition}

Our aim is to impose conditions on the $H$-action on $V$ such that $\Qss{V}{H}(\rho)$ is well-defined, i.e.\ such that \ref{Usscondproj} and \ref{Rcondproj} hold for the action on $X$ if $\omega_{\min} < 0$ and on $\hX$ if $\omega_{\min} \geq 0$, since by doing so we can obtain a quotient of $V$ by restricting a quotient of $X$ or $\hX$. Once we have identified such conditions, we will see that $\justss{V}{H}(\rho)$ is independent of $d$ for $d \gg 1$, even if this is not obvious a priori from the definition (see \thref{thmQsslocusforH}). As both \ref{Usscondproj} and \ref{Rcondproj} relate to the $R'$-semistable points in $\zmin$ (if $\omega_{\min} < 0$) and in $\widehat{Z}_{\min}$ (if $\omega_{\min} \geq 0$), we first describe these loci in terms of points in $V$. 

\subsubsection{Describing $\zmin^{R'-ss}(\Lc')$ and $\widehat{Z}_{\min}^{R'-ss}(\widehat{\Lc}')$ in terms of $V$} \label{subsubsec:zminss}

\begin{proposition} \thlabel{zminssvminss}
    Suppose that an internally graded group $H =  U \rtimes R$ acts linearly on a vector space $V$. Let $\rho$ denote a character of $H$ that is trivial on the grading multiplicative group $\lambda(\GG_m)$ of $H$. Then we have: \begin{enumerate}[(i)]
    \item if $\omega_{\min} < 0$, then $ \zmin^{R'-(s)s}(\Lc') = \PP ( \vmin^{R'-(s)s}(\rho) \setminus \mathcal{N} ) \subseteq \PP(\vmin),$ where $\PP(\vmin) \subseteq X$ consists of the points at infinity; \label{case1} 
    \item if $\omega_{\min} > 0$, then $ \widehat{Z}_{\min}^{R'-(s)s}(\widehat{\mathcal{L}}') = \PP (\vmin^{R'-(s)s}(\rho) \setminus \mathcal{N} ) \subseteq \PP(\vmin) ,$ where $\PP(\vmin)$ is identified as a subvariety of $E$; 
    \label{case2}
    \item if $\omega_{\min} = 0$ and $\rho$ is trivial, then $ \widehat{Z}_{\min}^{R'-(s)s}(\widehat{\mathcal{L}}') = \{ ([v:w],[v']) \in \widehat{Z}_{\min} \ | \ v' \in \vmin^{R'-(s)s}(0) \setminus \mathcal{N}\},$ where $\vmin^{R'-ss}(0) = \vmin;$  \label{case4} 
    \item \label{case3} if $\omega_{\min} = 0$ and $\rho$ is non-trivial, then $$ \widehat{Z}_{\min}^{R'-(s)s}(\widehat{\mathcal{L}}') = \{ ([v:w],[v']) \in \widehat{Z}_{\min} \ | \ v' \in \vmin^{R'-(s)s}(\rho),\text{ and } v' \notin \mathcal{N} \text{ if $w=0$} \}.$$  
    \end{enumerate} 
\end{proposition}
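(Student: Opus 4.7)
The plan is a case analysis on the sign of $\omegamin$. Throughout, \thref{zminandxomin} provides an explicit description of $\zmin$ or $\widehat{Z}_{\min}$, and the reductive results of Section~\ref{subsec:twistedaffinefromproj} --- in particular the equalities~\eqref{equality2} and~\eqref{equality3}, which repackage \thref{equalityonV,equalityonPV} --- translate between $R'$-semistability on these loci and $R'$-semistability on $\vmin$ twisted by $\rho$.

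For case~(i), with $\omegamin < 0$, \thref{zminandxomin} identifies $\zmin = \PP(\vmin)$ sitting at infinity of $X$, and the restricted linearisation is $\Lc' = \mathcal{O}_{\PP(\vmin)}(1)_{\rho/d}$. Since $R'$-semistability for a reductive action restricts from an ambient projective variety to any closed invariant subvariety, one has $\zmin^{R'-(s)s}(\Lc') = \PP(\vmin) \cap \PP(\vmin \oplus k)^{R'-(s)s}(\mathcal{O}(1)_{\rho/d})$, and by~\eqref{equality3} applied with $\varepsilon = 1/d$ (which is $\ll 1$ for $d$ large) this equals $\PP(\vmin^{R'-(s)s}(\rho) \setminus \mathcal{N})$. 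Case~(ii), with $\omegamin > 0$, follows the same strategy: \thref{zminandxomin} gives $\widehat{Z}_{\min} = \{[0:1]\} \times \PP(\vmin)$, and the restricted linearisation is computed as $\widehat{\Lc}' \cong \mathcal{O}_{\PP(\vmin)}(1)_{\varepsilon\rho}$ by noting that $\psi^*\Lc^N|_{\widehat{Z}_{\min}}$ is a trivial line bundle (because $\psi$ is constant on $\widehat{Z}_{\min}$) whose $R'$-action on the fibre is by the character $\varepsilon\rho = N\rho/d$, while $\mathcal{O}(-E)|_E \cong \mathcal{O}_{\PP(V)}(1)$ restricts on $\PP(\vmin) \subseteq E$ to $\mathcal{O}_{\PP(\vmin)}(1)$ with the standard $R'$-linearisation inherited from $\vmin$. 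A further application of~\eqref{equality3} then yields the claim.

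For cases~(iii) and~(iv), with $\omegamin = 0$, the restriction $\widetilde{\psi} := \psi|_{\widehat{Z}_{\min}}$ identifies $\widehat{Z}_{\min}$ with the blow-up of $\zmin = \PP(\vmin \oplus k)$ at the point $[0:1]$, and $\widehat{\Lc}'$ becomes $\widetilde{\psi}^* \mathcal{O}_{\PP(\vmin \oplus k)}(N)_{N\rho/d} \otimes \mathcal{O}(-E_0)$, where $E_0 := E \cap \widehat{Z}_{\min} \cong \PP(\vmin)$. Guided by \thref{zminandxomin}, I would stratify $\widehat{Z}_{\min}$ into three pieces: the interior stratum (both $v$ and $w$ nonzero), the strict transform of $\PP(\vmin) \subseteq \zmin$ (where $w = 0$), and the exceptional divisor $E_0$ (where $v = 0$). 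On the interior, $\mathcal{O}(-E_0)$ is trivial and the linearisation pulls back from $\zmin$; the Hilbert-Mumford criterion combined with~\eqref{equality2} then gives the condition $v' \in \vmin^{R'-(s)s}(\rho)$. On the strict transform and on the exceptional divisor, the $\mathcal{O}(-E_0)$ twist becomes relevant, and the combination of~\eqref{equality3} with a direct weight computation at the $R'$-fixed points dictates when $v' \in \mathcal{N}$ must be excluded: in case~(iii) trivial $\rho$ forces a uniform nullcone condition across all strata, while in case~(iv) the nontriviality of $\rho$ produces the stratum-dependent condition in the statement.

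The main obstacle is the stratum-by-stratum Hilbert-Mumford analysis in cases~(iii) and~(iv), particularly tracking how the $\rho$-twist carried by $\psi^*\Lc^N$ and the twist carried by $\mathcal{O}(-E_0)$ combine on the strict transform and on the exceptional divisor, where both contribute nontrivially to the fibre weight at any $R'$-fixed point. Once these weight contributions are tabulated and matched against~\eqref{equality2} and~\eqref{equality3}, the explicit descriptions claimed in all four cases follow.
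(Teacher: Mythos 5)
Cases (i) and (ii) of your proposal follow essentially the same route as the paper: identify $\zmin$ (respectively $\widehat{Z}_{\min}$) as a projective subspace, match the restricted linearisation with $\mathcal{O}_{\PP(\vmin)}(1)_{\varepsilon\rho}$, and invoke \eqref{equality2} and \eqref{equality3}; those parts are fine.

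For cases (iii) and (iv) your proposed route is genuinely different from the paper's, but it is unfinished and contains a flaw. Your claim that ``on the interior, $\mathcal{O}(-E_0)$ is trivial and the linearisation pulls back from $\zmin$; the Hilbert--Mumford criterion combined with~\eqref{equality2} then gives the condition $v' \in \vmin^{R'-(s)s}(\rho)$'' does not hold: $\mathcal{O}(-E_0)$ restricted to $\widehat{Z}_{\min} \setminus E_0$ is indeed equivariantly trivial, but the Hilbert--Mumford weight at an interior point is the weight at the \emph{limit} under a one-parameter subgroup, and this limit can lie on $E_0$, where the $\mathcal{O}(-E_0)$-twist contributes non-trivially. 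This exceptional-divisor contribution cannot be localised stratum-by-stratum, and it is exactly what separates cases (iii) and (iv). The paper avoids the direct weight computation altogether by invoking Reichstein's result on (semi)stability under equivariant blow-ups, stated as \thref{Reichsteinprop}; its decisive ingredient is the saturation $\widetilde{C} = \pi^{-1}(\pi(C))$ of the blow-up centre $C=\{[0:1]\}$ under the GIT quotient of $\zmin$. When $\rho$ is trivial, $[0:1]$ is semistable and $\widetilde{C}$ turns out to be the null cone, which gives the uniform $\mathcal{N}$-condition of case (iii); when $\rho$ is non-trivial, $[0:1]$ is unstable, $\widetilde{C}$ is empty, and $\widehat{Z}_{\min}^{R'-ss}(\widehat{\Lc}')$ is simply $\phi^{-1}(\zmin^{R'-ss}(\Lc'))$, yielding case (iv). Identifying and exploiting this saturation is the idea missing from your sketch; without it, or without actually carrying out the full weight computation on the blow-up (including limits that cross strata and the reduction to a maximal torus of $R'$), your argument for cases (iii) and (iv) does not close.
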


\begin{proof}[Proof of \thref{zminssvminss} \eqref{case1}]
      Writing $X$ as $V \sqcup \PP(V)$, we can identify $\zmin$ with $\PP(\vmin) \subseteq \PP(V)$ as in \thref{zminandxomin}. Under this identification, we first show that \begin{equation} Z_{\operatorname{min}}^{R'-(s)s}(\Lc') = \PP(\vmin)^{R'-(s)s}(\mathcal{O}_{\PP(\vmin)}(1)_{\frac{\rho}{d}}) \label{startingequality}\end{equation}  holds.  To see this, first note that the line bundle underlying $\mathcal{L}'$ is $\mathcal{O}_{X}(1)|_{\zmin}$, while the line bundle on the right-hand side is $\mathcal{O}_{\PP(\vmin)}(1)$, so these coincide as $\zmin = \PP(\vmin)$. 
      It remains to show that the $R'$-equivariant structures are the same. On the left-hand side, the $R$-action is twisted by the character $\rho$ and then restricted to $R'$. This is equivalent to first restricting to $R'$ and twisting by $\rho$, which is the $R'$-equivariant structure on the right-hand side. Therefore \eqref{startingequality} holds.

Since $\mathbb{P}(V_{\min}) \subset  \mathbb{P}(V_{\min} \oplus k)$ is closed and $R'$-invariant, we have \begin{equation} \PP(\vmin)^{R'-(s)s}(\mathcal{O}_{\PP(\vmin)}(1)_{\frac{\rho}{d}}) = \PP( \vmin) \cap \PP(\vmin \oplus k)^{R'-(s)s}(\mathcal{O}_{\PP(\vmin \oplus k)}(1)_{\frac{\rho}{d}}).\label{intersection} \end{equation}  The proof for both the semistable and stable loci then follows from  \eqref{equality3}.
\end{proof}

We now turn to the case where $\omega_{\min} > 0$. 

\begin{proof}[Proof of \thref{zminssvminss} \eqref{case2}] 
    By \thref{zminandxomin}, we have that $\widehat{Z}_{\min} = \{ [0:1] \} \times \mathbb{P}(\vmin)$ is contained in the exceptional divisor $E \cong \mathbb{P}(V)$ for the blow-up $\psi : \hX \rightarrow X$ of $X$ at $[0:1]$. By definition $\widehat{\Lc}'$ is obtained by restricting $\widehat{\mathcal{L}}:=\psi^{\ast} \Lc'^{\otimes N} \otimes \mathcal{O}(-E)$ to the $R'$-action on $\widehat{Z}_{\min}$. As line bundles, we have $\psi^{\ast} \Lc'^{\otimes N}|_{\widehat{Z}_{\min}} \cong \mathcal{O}_{\mathbb{P}(\vmin)}$  and $\mathcal{O}(-E)|_{\widehat{Z}_{\min}} \cong 
    \mathcal{O}_{\mathbb{P}(\vmin)}(1)$, so that $\widehat{\mathcal{L}}' \cong  \mathcal{O}_{\mathbb{P}(\vmin)}(1)$. The $R'$-equivariant structure on $\psi^{\ast} \Lc'^{\otimes N}|_{\widehat{Z}_{\min}}$ is given by twisting the natural linearisation by $\frac{N}{d} {\rho}$ and the $R'$-equivariant on $\mathcal{O}(-E)|_{\widehat{Z}_{\min}}$ is induced by the linear action. Thus $\widehat{\mathcal{L}}' \cong  \mathcal{O}_{\mathbb{P}(\vmin)}(1)_{\frac{N}{d} \rho}$ as $R'$-linearisations.     
It follows then from the same argument used to prove \thref{zminssvminss} \eqref{case1} that $$ \widehat{Z}_{\operatorname{min}}^{R'-(s)s}(\widehat{\Lc}') = \PP(\vmin)^{R'-(s)s}(\mathcal{O}_{\PP(\vmin)}(1)_{\frac{N}{d} \rho}).$$ The desired descriptions of $\widehat{Z}_{\min}^{R'-(s)s}(\widehat{\Lc}')$ then follow in the same way as for \thref{zminssvminss} \eqref{case1}.    
\end{proof}

To prove \thref{zminssvminss} \eqref{case4} and \eqref{case3}, we will make use of the following result by Reichstein describing (semi)stability for reductive group actions under equivariant blow-ups \cite{Reichstein1989}. 

\begin{proposition}[Reichstein]\thlabel{Reichsteinprop} 
    Suppose that a reductive group $G$ acts on a projective variety $X$ with linearisation $\Lc$, and let $\phi: Y \to X$ denote the blow-up of $X$ along a $G$-invariant closed subvariety $C$ of $X$, with exceptional divisor $E$. For $N \in \mathbb{N}_{> 0}$, consider the linearisation $$\widehat{\Lc} : =  \phi^{\ast} \Lc^{\otimes N} \otimes \mathcal{O}(-E)$$ for the action of $G$ on $Y$. Then for $N$ sufficiently large, the linearisation $\widehat{\Lc}$ is ample, and for any $y \in Y$ and $x = \phi(y)$ we have: \begin{enumerate}[(i)]
        \item if $y$ is semistable, then $x$ is semistable; \label{unstable}
        \item if $x$ is stable, then $y$ is stable;  \label{stable}
        \item if $y$ is stable and $x \notin C$, then $x$ is stable; \label{stableawayfromC}
        \item \label{semistable} if $x$ is semistable and $x \notin C$, then $y$ is semistable if and only if $x \notin \widetilde{C}$, where $\widetilde{C}:= \pi^{-1}(\pi(C))$ denotes the saturation of $C$ with respect to the GIT quotient $\pi: X \dashrightarrow X \gitq G$.
        \setcounter{smallroman}{\value{enumi}}
    \end{enumerate}
    If in addition we assume that $X$ and $C$ are smooth at every point of $\widetilde{C}$, then we also have: \begin{enumerate}[(i)]
    \setcounter{enumi}{\value{smallroman}}
    \item   \label{semistable2} if $x$ is semistable and $x \in C$, then $y$ is semistable if and only if $y$ does not lie in the proper transform of $\widetilde{C}$, i.e.\ $y \notin \overline{\phi^{-1}(\widetilde{C} \setminus C) }$;
    \item  if $x$ is semistable and $x \in C$, then $y$ is not stable if and only if $y$ lies in the proper transform of  $X^{ss} \setminus X^s$.\label{strictlyss} 
    \end{enumerate}  
    Hence under this assumption $ \justss{Y}{G}(\widehat{L}) = \phi^{-1}(\justss{X}{G}(\Lc)) \setminus \overline{\phi^{-1}(\widetilde{C} \setminus C)}.$
\end{proposition}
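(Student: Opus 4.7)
The approach is to apply the Hilbert--Mumford numerical criterion systematically, exploiting the linearity of $\mu^{\bullet}(y, \lambda)$ in the line bundle. For any 1-PS $\lambda$ of $G$ and $y \in Y$ whose limit under $\lambda$ exists, write $x = \phi(y)$; since $\phi$ is $G$-equivariant, the fibre of $\phi^{\ast}\Lc$ over $\lim_{t \to 0} \lambda(t) \cdot y$ agrees with the fibre of $\Lc$ over $\lim_{t \to 0} \lambda(t) \cdot x$, yielding the key identity
$$\mu^{\widehat{\Lc}}(y, \lambda) = N \mu^{\Lc}(x, \lambda) + \mu^{\mathcal{O}(-E)}(y, \lambda).$$
Ampleness of $\widehat{\Lc}$ for $N \gg 0$ is a standard fact about blow-ups of projective varieties. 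The term $\mu^{\mathcal{O}(-E)}(y, \lambda)$ is uniformly bounded as $y$ and suitably normalised $\lambda$ vary, so for $N$ sufficiently large the sign of $\mu^{\widehat{\Lc}}(y, \lambda)$ is controlled by that of $\mu^{\Lc}(x, \lambda)$ whenever the latter is non-zero.

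From this, items (i)--(iii) follow quickly. If $x$ is stable, then $\mu^{\Lc}(x, \lambda) < 0$ for all admissible $\lambda$, so $\mu^{\widehat{\Lc}}(y, \lambda) < 0$ for $N \gg 0$, giving (ii); if $y$ is semistable then $\mu^{\widehat{\Lc}}(y,\lambda) \leq 0$, which would contradict any strictly positive $\mu^{\Lc}(x,\lambda)$ for $N \gg 0$, yielding (i). For (iii), if $x \notin C$ then $y \notin E$, and in a neighbourhood of $y$ the line bundle $\mathcal{O}(-E)$ is equivariantly trivial, so $\mu^{\mathcal{O}(-E)}(y,\lambda) = 0$ and stability of $y$ transfers directly to stability of $x$.

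For item (iv), I would switch to the invariant-theoretic viewpoint: global sections of $\widehat{\Lc}^{\otimes k}$ correspond via $\phi^{\ast}$ to sections of $\Lc^{\otimes Nk}$ vanishing to order at least $k$ along $C$, with $G$-invariance preserved. Semistability of $y$ (assuming $x \notin C$) thus amounts to the existence of an invariant section of some $\Lc^{\otimes Nk}$ non-zero at $x$ and vanishing to order at least $k$ along $C$. If $x \notin \widetilde{C}$, there is an invariant section of some power of $\Lc$ separating $x$ from $C$; raising it to a sufficiently high power boosts its vanishing order along $C$ above the required threshold. Conversely, if $x \in \widetilde{C}$ then $\pi(x) \in \pi(C)$, so any $G$-invariant section vanishing on $C$ must also vanish at $x$, which rules out semistability of $y$.

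The main obstacle lies in (v) and (vi), which use the smoothness hypothesis in an essential way to describe the proper transform geometrically. Here I would apply a local equivariant slice to the $G$-action near a semistable point $x \in C$, reducing the analysis to a linear model on the normal bundle $N_{C/X,x}$. In this model, $\phi$ becomes the projectivisation of the normal bundle, and points of the proper transform of $\widetilde{C}$ correspond to normal directions tangent to $\widetilde{C}$ at $x$. Combining this local picture with the Hilbert--Mumford computation of the previous paragraphs distinguishes semistable from non-semistable points of $E$ lying over $x$, giving (v); a parallel analysis distinguishing strictly semistable from stable points in the linear model yields (vi). The final identity $\justss{Y}{G}(\widehat{L}) = \phi^{-1}(\justss{X}{G}(\Lc)) \setminus \overline{\phi^{-1}(\widetilde{C} \setminus C)}$ then collates (iv) and (v).
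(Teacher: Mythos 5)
The paper does not prove this proposition: it cites it directly from Reichstein's paper \cite{Reichstein1989} and only uses it as a black box in the proofs of \thref{zminssvminss}(\ref{case4}) and (\ref{case3}). So there is no proof of the authors' to compare against, and you are in effect reconstructing Reichstein's argument from scratch.

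Your general strategy (Hilbert--Mumford for (i)--(iii), invariant theory for (iv), local models for (v)--(vi)) has the right flavour, and your treatment of (i), (ii), and the forward direction of (iv) is essentially sound modulo some standard uniformity and saturation bookkeeping. However, there is a genuine error in (iii). You write that for $x \notin C$ the bundle $\mathcal{O}(-E)$ is equivariantly trivial near $y$, hence $\mu^{\mathcal{O}(-E)}(y,\lambda)=0$. But the Hilbert--Mumford number is the $\lambda$-weight on the fibre over the \emph{limit} $\overline{y}=\lim_{t\to 0}\lambda(t)\cdot y$, not over $y$. Triviality of $\mathcal{O}(-E)$ near $y$ is irrelevant once $\overline{y} \in E$, which can certainly happen with $y \notin E$ (it happens exactly when $\lim_{t\to 0}\lambda(t)\cdot x \in C$). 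In fact, using the $G$-invariant canonical section of $\mathcal{O}(E)$, one checks that when $y \notin E$ and $\overline{y} \in E$ one has $\mu^{\mathcal{O}(-E)}(y,\lambda) < 0$ in the paper's sign convention, so the identity $\mu^{\widehat{\Lc}}(y,\lambda) = N\mu^{\Lc}(x,\lambda) + \mu^{\mathcal{O}(-E)}(y,\lambda)$ only yields a \emph{lower} bound for $N\mu^{\Lc}(x,\lambda)$, which is useless for stability of $x$. The case $\mu^{\Lc}(x,\lambda) = 0$ therefore cannot be excluded by this computation. A correct route to (iii) goes through (iv): from (i) one gets $x$ semistable with finite stabiliser, and if $Gx$ were not closed in $X^{ss}$ with closed-orbit limit $Gz$, one tracks a preimage $w \in \overline{Gy}$ of $z$; if $z\in C$ then $x \in \widetilde{C}$ contradicts semistability of $y$ via (iv), while if $z\notin C$ then one shows $z\notin\widetilde{C}$ and deduces $w\in Y^{ss}$ from (iv), contradicting stability of $y$.

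Two further points. In the converse direction of (iv), the step ``any $G$-invariant section vanishing on $C$ must also vanish at $x$ when $x\in\widetilde{C}$'' is not automatic: one needs that the open set $X_\sigma$ is \emph{saturated} for $\pi$ (so that the closed orbit in $\pi^{-1}(\pi(x))$ lies in $X_\sigma$, while it also lies inside $C$ and hence in $V(\sigma)$, giving the contradiction); this saturation is the key invariant-theoretic input and should be made explicit. Finally, the arguments for (v) and (vi) as written are only a strategy sketch: reducing to a linear model via an equivariant slice is a reasonable plan, but you have not carried out the actual distinction between points on the proper transform of $\widetilde{C}$ and the rest of $E$, nor between the proper transform of $X^{ss}\setminus X^s$ and the rest. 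Without that, the final displayed equality is not established. Since this is Reichstein's theorem, the cleanest fix is simply to cite \cite{Reichstein1989} rather than reprove it, as the paper does.
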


We can now prove the desired results in the case where $\omega_{\min} = 0$ and $\rho$ is trivial. 

\begin{proof}[Proof of \thref{zminssvminss} \eqref{case4}]
     By \thref{zminandxomin} we have $\widehat{Z}_{\min} = \{([v:w],[v']) \in \widehat{X}  \ | \ [v'] \in \PP(\vmin) \},$ which is isomorphic to the blow-up of $\zmin=\PP(\vmin \oplus k)$ at $[0:1]$. Let $\phi: \widehat{Z}_{\min} \to \zmin$ denote the blow-down map; note that this is the restriction of $\psi$ to $\widehat{Z}_{\min}$. Applying \thref{Reichsteinprop}, since $X$ and $C:= [0:1]$ are both smooth, we have that \begin{equation} \widehat{Z}_{\min}^{R'-ss}(\widehat{\Lc}') = \phi^{-1} \left(  Z_{\min}^{R'-ss}(\Lc') \right)  \setminus \overline{\phi^{-1}(\widetilde{C} \setminus C )},  \label{Reichstein}
    \end{equation} where $\widetilde{C}$ denotes the saturation of $C$ with respect to the quotient map $\pi_{\rho}: \zmin \dashrightarrow \zmin \gitq_{\hspace{-2pt} \Lc' \hspace{2pt}} R'.$ Since $\rho$ is trivial, $0 \in \vmin^{R'-ss}(\rho) = V$ and so $[0:1] \in \zmin^{R'-ss}(\Lc')$ by the equality \eqref{equality2}. Therefore we have $ \widetilde{C} : = \pi_{\rho}^{-1}( \pi_{\rho} ([0:1])).$  Using the inclusion $V \hookrightarrow X$ given by $v \mapsto [v: 1]$, we obtain that $\pi_{\rho}^{-1}(\pi_{\rho}([0:1])) = \pi^{-1}(\pi(0)) = \mathcal{N}.$ It follows then from \eqref{Reichstein} that \begin{equation}  
    \widehat{Z}_{\min}^{R'-ss}(\widehat{\Lc'}) = \phi^{-1} \left(  Z_{\min}^{R'-ss}(\Lc') \right)  \setminus \overline{\phi^{-1}(\mathcal{N} \setminus \{[0:1]\})}. \label{descriptionofzminhatss} \end{equation}
We have that $\overline{\phi^{-1}(\mathcal{N} \setminus \{[0:1]\})} = \{([v:w],[v']) \in \widehat{Z}_{\min} \ | \ v' \in \mathcal{N} \}$ and also $\zmin = \mathbb{P}(\vmin \oplus k)$ by \thref{zminandxomin}.  As $\rho$ is trivial, we have $\zmin^{R'-ss}(\Lc') =  \vmin \sqcup \PP(\vmin \setminus \mathcal{N})$ by  \thref{equalityonV,equalityonPV} and our assumptions on $d$.  Hence $\phi^{-1} (  Z_{\min}^{R'-ss}(\Lc') ) = \{([v:w],[v']) \ | \ v' \notin  \mathcal{N} \text{ if $w=0$}\}.$  As a result we obtain the desired description $$\widehat{Z}_{\min}^{R'-ss}(\widehat{\Lc'}) =  \left\{ ([v:w],[v']) \in \widehat{Z}_{\min} \ | \ v' \notin \mathcal{N} \right\}.$$ 
    
    To prove the statement about stability, suppose that $\widehat{z} = ([v:w],[v']) \in \widehat{Z}_{\min}^{R'-ss}(\widehat{\Lc'}).$ It follows from \thref{Reichsteinprop} \eqref{unstable} that $z = \phi(\widehat{z})  = [v:w]$ must lie in $\zmin^{R'-ss}(\Lc')$. Suppose first that $z$ lies in the centre of the blow-up, i.e.\ that $[v:w]=[0:1]$. Then by \thref{Reichsteinprop} \eqref{strictlyss}  we have that $\widehat{z}$ is stable if and only if it does not lie in the proper transform of $\zmin^{R'-ss}(\Lc') \setminus \zmin^{R'-s}(\Lc')$. Using the fact that $\zmin^{R'-ss}(\Lc') = \vmin \sqcup \PP( \vmin \setminus \mathcal{N})$ and that $\zmin^{R'-s}(\Lc') = \vmin^{R'-s}(\rho) \sqcup \PP(\vmin^{R'-s}(\rho) \setminus \mathcal{N}),$ since $v=0$ by assumption it follows that $\widehat{z}$ is stable if and only if $v' \in \vmin^{R'-s}(\rho)$.  
    Now suppose that $z$ does not lie in the centre of the blow-up. Then by \thref{Reichsteinprop} and the fact that $z$ is semistable we have that $\widehat{z}$ is stable if and only if $z$ is stable. From the description of $\zmin^{R'-s}(\Lc')$ given in the above paragraph, we have that $z$ is stable if and only if $v' \in \vmin^{R'-s}(\rho)$.  The desired description of $\widehat{Z}_{\min}^{R'-s}(\widehat{\mathcal{L}}')$ follows. 
    \end{proof}

The final case to prove is when $\omega_{\min} = 0$ and $\rho$ is non-trivial. 

\begin{proof}[Proof of \thref{zminssvminss} \eqref{case3}] 
     Since $\rho$ is non-trivial, using the equality \eqref{equality2} and the fact that $\zmin = \PP( \vmin \oplus k)$ we can see that $[0:1] \notin \zmin^{R'-ss}(\Lc')$. Hence the saturation $\widetilde{C}$ of the center $C= \{[0:1]\}$ of the blow-up $\hX \to X$ is empty. Therefore by \thref{Reichsteinprop} we have \begin{equation} \widehat{Z}_{\min}^{R'-ss}(\widehat{\Lc'}) =   \phi^{-1} \left(  Z_{\min}^{R'-ss}(\Lc') \right). \label{secondcase} \end{equation} By \thref{zminandxomin} and \thref{equalityonV,equalityonPV} we have that $ \zmin^{R'-ss}(\Lc') = \vmin^{R'-ss}(\rho) \sqcup \PP(\vmin^{R'-ss}(\rho) \setminus \mathcal{N}).$ The desired description of $\widehat{Z}_{\min}^{R'-ss}(\widehat{\Lc}')$ follows easily.

     For stability, consider $\widehat{z} = ([v:w],[v']) \in \widehat{Z}_{\min}^{R'-ss}(\rho)$ and let $z := [v:w] = \phi(\widehat{z})$. The point $z$ must lie in $\zmin^{R'-ss}(\rho)$ since otherwise $\widehat{z}$ would be unstable by \thref{Reichsteinprop} \eqref{semistable}. Since $\rho$ is non-trivial, we know that $[0:1] \notin \zmin^{R'-ss}(\Lc')$ therefore $z$ does lie in the centre of the blow-up. Applying \thref{Reichsteinprop} we obtain that $\widehat{z}$ is stable if and only if $z$ lies in  $Z_{\min}^{R'-s}(\Lc')$, and the latter stable locus coincides with $\vmin^{R'-s}(\rho) \sqcup \PP( \vmin^{R'-s}(\rho) \setminus \mathcal{N})$ by \thref{zminandxomin} and \thref{equalityonV,equalityonPV}. The description of $\widehat{Z}_{\min}^{R'-s}(\widehat{\Lc}')$ follows. 
\end{proof}

This completes the proof of \thref{zminssvminss}.

\subsubsection{Achieving the conditions \ref{Usscondproj} and \ref{Rcondproj}} 

\thref{zminssvminss} yields conditions for the $H$-action on $V$ which ensure that \ref{Usscondproj} and \ref{Rcondproj} hold for the linearised $H$-action on $X$ and on $\hX$.

\begin{corollary} \thlabel{stabcondfromv} For a character $\rho$ of $R'$, if the linear $H$-action on $V$ satisfies  \begin{equation}  \Stab_U(v) = \{e\}  \begin{cases} 
   \text{ $\forall \ v \in \vmin^{R'-ss}(\rho) \setminus \mathcal{N}$}  & \text{if $\omega_{\min} \neq 0$, or if $\omega_{\min} = 0$ and $\rho$ is trivial}\\
   \text{ $\forall \ v \in \vmin^{R'-ss}(\rho)$} & \text{if $\omega_{\min} = 0$ \text{ and } $\rho$ is non-trivial,}
   \end{cases}
\tag*{$[U;ss]_{\mathrm{aff}}$} \label{Usscondaffine} \end{equation} then \ref{Usscondproj} holds for the  $H$-action on $X$ if $\omega_{\min} < 0$, and for $\hX$ if $\omega_{\min} \geq 0$.
\end{corollary}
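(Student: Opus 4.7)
The plan is to reduce each case directly to the explicit description of the $R'$-semistable loci in $\zmin$ and $\widehat{Z}_{\min}$ provided by \thref{zminssvminss}. The key elementary observation, which I would record once at the start of the argument, is that since $U$ has no non-trivial characters the projection $V_{\min} \setminus \{0\} \to \mathbb{P}(V_{\min})$ is $U$-equivariant and induces an isomorphism $\Stab_U(v') \cong \Stab_U([v'])$ on $U$-stabilisers. This lets me translate the stabiliser hypothesis \ref{Usscondaffine} phrased on $\vmin$ into a stabiliser statement on $\PP(\vmin)$.

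With that in hand, I would split into the three cases according to the sign of $\omega_{\min}$, mirroring the cases of \thref{zminssvminss}.

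\textbf{Case $\omega_{\min} < 0$.} By \thref{zminssvminss}\eqref{case1}, every $z \in \zmin^{R'-ss}(\Lc')$ is of the form $z = [v']$ for some $v' \in \vmin^{R'-ss}(\rho) \setminus \mathcal{N}$. Then
\[
\Stab_U(z) = \Stab_U([v']) = \Stab_U(v') = \{e\}
\]
by \ref{Usscondaffine}, which is \ref{Usscondproj} for the $H$-action on $X$.

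\textbf{Case $\omega_{\min} > 0$.} By \thref{zminssvminss}\eqref{case2}, every $\widehat{z} \in \widehat{Z}_{\min}^{R'-ss}(\widehat{\Lc}')$ has the form $\widehat{z} = ([0:1],[v'])$ with $v' \in \vmin^{R'-ss}(\rho) \setminus \mathcal{N}$. Since $U$ fixes $[0:1]$ (acting trivially on the $k$-summand), $\Stab_U(\widehat{z}) = \Stab_U([v'])$, which is trivial as above.

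\textbf{Case $\omega_{\min} = 0$.} By \thref{zminssvminss}\eqref{case4} and \eqref{case3}, every $\widehat{z} \in \widehat{Z}_{\min}^{R'-ss}(\widehat{\Lc}')$ is of the form $\widehat{z} = ([v:w],[v'])$ with $v' \in V_{\min}$ and, depending on whether $\rho$ is trivial, $v'$ lying either in $V_{\min} \setminus \mathcal{N}$ or in $\vmin^{R'-ss}(\rho)$ (subject to a further condition at $w = 0$ when $\rho$ is trivial). In every instance $v'$ lies in the set appearing in the corresponding clause of \ref{Usscondaffine}. Projecting onto the second factor gives a $U$-equivariant map $\widehat{Z}_{\min} \to \PP(\vmin)$, so $\Stab_U(\widehat{z}) \subseteq \Stab_U([v']) = \Stab_U(v') = \{e\}$.

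There is no substantial obstacle here: all the real content has been packaged into \thref{zminssvminss}, and this corollary is a direct bookkeeping exercise. The only subtlety I would be careful about is that the case $\omega_{\min} = 0$, $\rho$ trivial requires the weaker hypothesis (with $\mathcal{N}$ removed from $\vmin^{R'-ss}(\rho) = \vmin$), which matches precisely the locus appearing in \thref{zminssvminss}\eqref{case4}; the splitting of \ref{Usscondaffine} into two sub-cases reflects exactly this distinction in \thref{zminssvminss}.
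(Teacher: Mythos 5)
Your proof is correct and follows essentially the same approach as the paper's (one-line) proof: both rely on \thref{zminssvminss} for the description of the $R'$-semistable loci in $\zmin$ or $\widehat{Z}_{\min}$, together with the observation that since $U$ has no non-trivial characters, $\Stab_U(v') \cong \Stab_U([v'])$. You have merely spelled out the case-by-case bookkeeping that the paper leaves implicit.
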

\begin{proof} This follows directly from \thref{zminssvminss} and the fact that since unipotent groups have no non-trivial characters, the scalar equivalence class of a point $v$ is fixed by $U$ if and only if $v$ itself is fixed by $U$. 
\end{proof}
We omit the proof of the analogous result for the condition \ref{Rcondproj}, as it is straighforward.
\begin{corollary} 
    \thlabel{ss=sforzmin} 
 For a character $\rho$ of $R'$, if the linear $H$-action on $V$ satisfies 
    \begin{equation} \tag*{$[R]_{\mathrm{aff}}$} \label{Rcondaffine}  \begin{cases} 
   \vmin^{R'-ss}(\rho) \setminus \mathcal{N} \subseteq \vmin^{R'-s}(\rho)  & \text{if $\omega_{\min} \neq 0$, or if $\omega_{\min} = 0$ and $\rho$ is trivial}\\
   \vmin^{R'-ss}(\rho) = \vmin^{R'-s}(\rho) & \text{if $\omega_{\min} = 0$ \text{ and } $\rho$ is non-trivial,} 
   \end{cases} \end{equation}  
   Then \ref{Rcondproj} holds for the linearised $H$-action on $X$ if $\omega_{\min} < 0$ and on $\widehat{X}$ if $\omega_{\min} \geq 0$.   
\end{corollary}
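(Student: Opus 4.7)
The plan is to obtain \ref{Rcondproj} by translating the desired equality $\zmin^{R'-ss}(\Lc') = \zmin^{R'-s}(\Lc')$ (respectively $\widehat{Z}_{\min}^{R'-ss}(\widehat{\Lc}') = \widehat{Z}_{\min}^{R'-s}(\widehat{\Lc}')$) into a statement about $\vmin$ using the explicit descriptions in \thref{zminssvminss}, and then checking that \ref{Rcondaffine} is exactly what is needed in each of the four cases.

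First I would split into the four cases appearing in \thref{zminssvminss}. If $\omega_{\min} < 0$, then \thref{zminssvminss}\eqref{case1} gives $\zmin^{R'-(s)s}(\Lc') = \PP\!\left(\vmin^{R'-(s)s}(\rho) \setminus \mathcal{N}\right)$, so the equality of $R'$-(semi)stable loci in $\zmin$ is equivalent to $\vmin^{R'-ss}(\rho) \setminus \mathcal{N} = \vmin^{R'-s}(\rho) \setminus \mathcal{N}$; combined with $\vmin^{R'-s}(\rho) \subseteq \vmin^{R'-ss}(\rho)$, this is exactly the first bullet of \ref{Rcondaffine}. The case $\omega_{\min} > 0$ is identical, using \thref{zminssvminss}\eqref{case2} applied to $\widehat{Z}_{\min}$ in place of $\zmin$.

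Next I would deal with $\omega_{\min} = 0$ and $\rho$ trivial. In this case $\vmin^{R'-ss}(\rho) = \vmin$, and \thref{zminssvminss}\eqref{case4} identifies $\widehat{Z}_{\min}^{R'-(s)s}(\widehat{\Lc}')$ with the points $([v:w],[v']) \in \widehat{Z}_{\min}$ for which $v' \in \vmin^{R'-(s)s}(0) \setminus \mathcal{N}$. As above, the equality of the $R'$-semistable and stable loci in $\widehat{Z}_{\min}$ is then equivalent to $\vmin \setminus \mathcal{N} \subseteq \vmin^{R'-s}(0)$, which matches the first bullet of \ref{Rcondaffine} since $\vmin^{R'-ss}(\rho) = \vmin$ here.

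Finally, for $\omega_{\min} = 0$ with $\rho$ non-trivial, \thref{zminssvminss}\eqref{case3} gives the description of $\widehat{Z}_{\min}^{R'-(s)s}(\widehat{\Lc}')$ in which the condition on $v'$ depends on whether $w = 0$: namely $v' \in \vmin^{R'-(s)s}(\rho)$, with the additional requirement $v' \notin \mathcal{N}$ when $w = 0$. For points with $w \neq 0$ there is no null-cone constraint, so forcing semistable to coincide with stable there requires the full equality $\vmin^{R'-ss}(\rho) = \vmin^{R'-s}(\rho)$, which is precisely the second bullet of \ref{Rcondaffine}; and this equality then automatically handles the $w=0$ case too. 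The only minor subtlety to track carefully is this dependence on $w$ in the fourth case, but once one unpacks \thref{zminssvminss}\eqref{case3} it falls out immediately, so no real obstacle arises.
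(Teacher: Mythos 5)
The paper deliberately omits the proof of this corollary, stating only that it is straightforward; your argument is correct and supplies exactly the expected reasoning, unpacking each of the four cases of \thref{zminssvminss} and observing that the resulting condition on $\vmin$ is precisely \ref{Rcondaffine}. In particular you correctly notice the one place requiring care: in case \eqref{case3} the presence of points in $\widehat{Z}_{\min}$ with $w\neq 0$ (where no null-cone constraint appears) is what forces the stronger hypothesis $\vmin^{R'-ss}(\rho) = \vmin^{R'-s}(\rho)$ rather than the weaker containment away from $\mathcal{N}$, exactly matching the second bullet of \ref{Rcondaffine}.
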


\subsubsection{Quotient of the $\rho$-twisted $H$-semistable locus in $V$}

We can now state and prove the main result of this section: for an internally graded group $H$ acting linearly on a vector space $V$ satisfying \ref{Usscondaffine} and \ref{Rcondaffine}, we construct a quotient of the $\rho$-twisted semistable locus  (see \thref{definition:rhotwistedsemistablelocus}).

\begin{theorem}[Quotient of $\justss{V}{H}(\rho)$] \thlabel{Hquotient}
    Let $H$ act linearly on a vector space $V$ and let $\rho$ denote a character of $H$ that is trivial on $\lambda(\GG_m)$. If \ref{Usscondaffine} and \ref{Rcondaffine} hold for the $\hU$-action on $V$, then there exists a quasi-projective geometric $H$-quotient $\justss{V}{H}(\rho) \to V \gitq_{\hspace{-2pt} \rho \hspace{1pt}} H := \Qss{V}{H}(\rho) / H$, with  projective completion $X \gitq_{\hspace{-2pt} \Lc_+ \hspace{2pt}} H$ if $\omega_{\min} < 0$ and $\hX \gitq_{\hspace{-2pt} \widehat{\Lc}_+ } H$ if $\omega_{\min} \geq 0$. 
\end{theorem}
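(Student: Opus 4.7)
The plan is to deduce \thref{Hquotient} by applying the projective $H$-theorem \thref{projHthm} to the linearised $H$-action on $X$ (if $\omega_{\min} < 0$) or on $\widehat{X}$ (if $\omega_{\min} \geq 0$), and then to restrict the resulting projective geometric $H$-quotient to the open $H$-invariant subvariety corresponding to $\justss{V}{H}(\rho)$. This mirrors the strategy used for $\hU$ in the proof of \thref{quotienting}, but now with $H$ in place of $\hU$ and with a twist by the character $\rho$.

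First, I would combine \thref{stabcondfromv} and \thref{ss=sforzmin} to deduce that the affine hypotheses \ref{Usscondaffine} and \ref{Rcondaffine} on the linear $H$-action on $V$ imply that the projective conditions \ref{Usscondproj} and \ref{Rcondproj} hold for the induced $H$-action on $X$ if $\omega_{\min} < 0$, and on $\widehat{X}$ if $\omega_{\min} \geq 0$. By construction the linearisations $\Lc_+$ and $\widehat{\Lc}_+$ are well-adapted, so the hypotheses of \thref{projHthm} are satisfied and we obtain a projective geometric $H$-quotient
\[ \justss{X}{H}(\Lc_+) \to X \gitq_{\Lc_+} H \quad\text{or}\quad \justss{\widehat{X}}{H}(\widehat{\Lc}_+) \to \widehat{X} \gitq_{\widehat{\Lc}_+} H \]
depending on the sign of $\omega_{\min}$.

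Next I would identify $\justss{V}{H}(\rho)$ as an open $H$-invariant subvariety of the appropriate projective semistable locus. In the case $\omega_{\min} < 0$, by \thref{definition:rhotwistedsemistablelocus} we have $\justss{V}{H}(\rho) = V \cap \justss{X}{H}(\Lc_+)$, which is the intersection of the open $H$-invariant subvariety $V \hookrightarrow X$ (given by $v \mapsto [v:1]$) with the semistable locus. In the case $\omega_{\min} \geq 0$, the blow-down map $\psi$ restricts to an isomorphism $\widehat{X} \setminus E \xrightarrow{\sim} X \setminus \{[0:1]\}$, which identifies $\justss{V}{H}(\rho)$ with $\psi^{-1}(V) \cap (\justss{\widehat{X}}{H}(\widehat{\Lc}_+) \setminus E)$, an open $H$-invariant subvariety of $\justss{\widehat{X}}{H}(\widehat{\Lc}_+)$.

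To conclude, I would invoke the fact that a geometric quotient restricts to a geometric quotient on the preimage of any open subvariety of the quotient, i.e.\ on any open $H$-invariant (equivalently, saturated) subvariety of the source, by \cite[Prop 3.10]{Newstead1978}. This yields the claimed quasi-projective geometric $H$-quotient $\justss{V}{H}(\rho) \to V \gitq_{\rho} H$ with the stated projective completion. The main obstacle is essentially bookkeeping: correctly invoking the affine-to-projective translations \thref{stabcondfromv} and \thref{ss=sforzmin}, and confirming that, in the case $\omega_{\min} \geq 0$, the exceptional divisor $E$ plays no role since we have deleted it from the outset in the definition of $\justss{V}{H}(\rho)$.
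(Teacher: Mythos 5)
Your proposal follows essentially the same route as the paper's proof: use \thref{stabcondfromv} and \thref{ss=sforzmin} to pass from the affine conditions \ref{Usscondaffine}, \ref{Rcondaffine} to the projective conditions \ref{Usscondproj}, \ref{Rcondproj}; apply \thref{projHthm} with the well-adapted linearisation $\Lc_+$ or $\widehat{\Lc}_+$ to obtain a projective geometric $H$-quotient of $X$ or $\hX$; and then restrict to the open saturated subset $\justss{V}{H}(\rho)$ (using that, for a geometric quotient, open $H$-invariant and open saturated are the same). This matches the paper's argument, including the identification via $\psi^{-1}$ when $\omega_{\min} \geq 0$.
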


\begin{proof}
By \thref{stabcondfromv,ss=sforzmin},  the conditions \ref{Usscondaffine} and \ref{Rcondaffine} ensure that \ref{Usscondproj} and \ref{Rcondproj} hold for  the linear action of $H$ on $X$ if $\omega_{\min} <0$ and on $\hX$ if $\omega_{\min} \geq 0$. Thus when $\omega_{\min} <0$ (respectively $\omega_{\min} \geq 0$), there is a projective geometric quotient $\Qss{X}{H}(\Lc_+) \rightarrow X \gitq_{\hspace{-2pt} \Lc_+ } H$ (respectively $\Qss{\hX}{H}(\widehat{\Lc}_+) \rightarrow \hX \gitq_{\hspace{-2pt} \widehat{\Lc}_+ } H$). For $\omega_{\min} <0$, the locus $\justss{V}{H}(\rho):=V \cap \Qss{X}{H}(\Lc_+)$ is an open saturated subset of $\Qss{X}{H}(\Lc_+)$, and similarly for $\omega_{\min} \geq 0$, the locus $\justss{V}{H}(\rho):=V \cap \psi(\Qss{\hX}{H}(\widehat{\Lc}_+) \setminus E)$ can be identified via $\psi^{-1}$ as an open saturated subset of $\Qss{\hX}{H}(\widehat{\Lc}_+)$. By restricting the geometric quotients of $\Qss{X}{H}(\Lc_+)$ or $\Qss{\hX}{H}(\widehat{\Lc}_+)$ according to the sign of $\omega_{\min}$, we obtain a geometric $H$-quotient of $\Qss{V}{H}(\rho)$, with the desired projective completions.     
\end{proof}

\subsubsection{Explicit description of the $\rho$-twisted $H$-semistable locus in $V$}
We conclude Section \ref{subsec:affineinternal} with an explicit description of $\justss{V}{H}(\rho)$,  assuming that \ref{Rcondaffine} and \ref{Usscondaffine} are satisfied. Recall that $\mathcal{N}$ is the null cone for the action of $R'$ on $\vmin$.

\begin{theorem}[Description of $\justss{V}{H}(\rho)$] \thlabel{thmQsslocusforH}
    Let $H$ act linearly on a vector space $V$ and let $\rho$ be a character of $H$ that is trivial on the grading 1PS. If \ref{Usscondaffine} and \ref{Rcondaffine} hold, then we have $$ \justss{V}{H}(\rho) = \begin{cases} p_{\min}^{-1} \left( \vmin^{R'-ss}(\rho) \setminus \mathcal{N} \right)  & \text{if $\omega_{\min} \neq 0$} \\
    p_{\min}^{-1} \left( \vmin^{R'-ss}(\rho) \right) \setminus U \vmin  & \text{if $\omega_{\min} = 0$ and $\rho$ is non-trivial} \\
    p_{\min}^{-1} \left( \vmin \setminus \mathcal{N} \right) \setminus U \vmin  & \text{if $\omega_{\min} = 0$ and $\rho$ is trivial.}  
    \end{cases} $$
\end{theorem}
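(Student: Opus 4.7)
The strategy is to unwind the definition of $\justss{V}{H}(\rho)$ using, on the projective side, the explicit description of the $H$-semistable locus given by \thref{thm:hmcritforH}, and, on the affine side, the explicit descriptions of $\zmin$, $\xmino$, $\widehat{Z}_{\min}$, $\hX^0_{\min}$ from \thref{zminandxomin} together with the description of $\zmin^{R'-ss}(\Lc')$ and $\widehat{Z}_{\min}^{R'-ss}(\widehat{\Lc}')$ from \thref{zminssvminss}. Corollaries \ref{stabcondfromv} and \ref{ss=sforzmin} guarantee that the conditions \ref{Usscondproj} and \ref{Rcondproj} hold in the relevant projective setting, so that \thref{thm:hmcritforH} applies and gives
\[
\Qss{X}{H}(\Lc_+) = p^{-1}(\zmin^{R'-ss}(\Lc'_+)) \setminus U\zmin
\quad \text{and} \quad
\Qss{\hX}{H}(\widehat{\Lc}_+) = \hat p^{-1}(\widehat{Z}_{\min}^{R'-ss}(\widehat{\Lc}'_+)) \setminus U\widehat{Z}_{\min}.
\]

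I would then treat each case of the trichotomy in turn. For $\omega_{\min} < 0$, \thref{zminandxomin} gives $\zmin = \PP(\vmin) \subseteq \PP(V)$, so $V$ and $\zmin$ are disjoint inside $X$; since $V$ is $U$-invariant, this forces $V \cap U\zmin = \emptyset$. A direct computation of the limit under the grading $\GG_m$ shows that the retraction $p : \xmino \to \zmin$ restricts along $v \mapsto [v:1]$ to $v \mapsto [p_{\min}(v)]$. Combining with \thref{zminssvminss}(i) then yields $\justss{V}{H}(\rho) = p_{\min}^{-1}(\vmin^{R'-ss}(\rho) \setminus \mathcal{N})$. For $\omega_{\min} > 0$, I would lift $v \in V \setminus \{0\}$ to the point $([v:1],[v]) \in \hX \setminus E$, verify membership in $\hX^0_{\min}$ iff $p_{\min}(v) \neq 0$ using \thref{zminandxomin}, and compute the limit under $\GG_m$ to be $([0:1],[p_{\min}(v)])$; then \thref{zminssvminss}(ii) gives the semistability criterion $p_{\min}(v) \in \vmin^{R'-ss}(\rho) \setminus \mathcal{N}$. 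To subtract $U\widehat{Z}_{\min}$, observe that since $u \cdot v = 0 \Leftrightarrow v = 0$ for any $u \in U$, the lift of $V \setminus \{0\}$ meets $U \widehat{Z}_{\min}$ trivially in this case.

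For $\omega_{\min} = 0$, the same lift $v \mapsto ([v:1],[v])$ is used, but the limit computation gives $([p_{\min}(v):1],[p_{\min}(v)])$, whose first coordinate has $w = 1 \neq 0$, so the ``if $w=0$'' clause in \thref{zminssvminss}(iii)-(iv) is vacuous. Applying \thref{zminssvminss}(iii) in the trivial-$\rho$ case and (iv) in the non-trivial-$\rho$ case gives the conditions $p_{\min}(v) \notin \mathcal{N}$ and $p_{\min}(v) \in \vmin^{R'-ss}(\rho)$ respectively. The key new ingredient here is the boundary contribution $U\widehat{Z}_{\min}$: for the lift $([v:1],[v])$ of $v \neq 0$ to lie in $U\widehat{Z}_{\min}$, one needs $u\cdot v \in \vmin$ for some $u \in U$, i.e. $v \in U\vmin$. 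This produces the $\setminus U\vmin$ appearing in both $\omega_{\min}=0$ formulae.

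I expect the main technical obstacle to be the bookkeeping for the blow-up cases, particularly verifying carefully how the inclusion $V \hookrightarrow X$ lifts to $\hX$, identifying the limit points under the grading $\GG_m$ on the lifted copy, and correctly matching the ``if $w = 0$'' clause in \thref{zminssvminss}(iv) against which copy of $\PP(\vmin)$ in $\widehat{Z}_{\min}$ we land in. Once these identifications are made cleanly, the computation of $V \cap U\widehat{Z}_{\min}$ (empty if $\omega_{\min}>0$, equal to $U\vmin$ if $\omega_{\min}=0$) and the description of the preimage of the semistable locus in $\widehat{Z}_{\min}$ both reduce to straightforward manipulations with the weight decomposition on $V$.
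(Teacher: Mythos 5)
Your proposal is correct and follows essentially the same route as the paper's own proof: the same three-way case split on the sign of $\omega_{\min}$, the same use of \thref{zminandxomin} and \thref{zminssvminss}, the same lift $v \mapsto ([v:1],[v])$ and limit computations, and the same identification of the contribution of $U\zmin$ or $U\widehat{Z}_{\min}$ in each case. The only cosmetic differences are that you argue disjointness of $V$ from $U\zmin$ via $U$-invariance of $V$, whereas the paper observes $U\zmin \subseteq \PP(V)$, and for $\omega_{\min}>0$ you verify directly that the lifted points avoid $U\widehat{Z}_{\min}$ rather than invoking $U\widehat{Z}_{\min} \subseteq E$; both are equivalent and equally immediate.
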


\begin{proof} 
   For $\omega_{\min} < 0$, we have  $\justss{V}{H}(\rho) := V \cap \Qss{X}{H}(\Lc_+) = V \cap p^{-1} ( \zmin^{R'-ss}(\Lc' )) \setminus U \zmin.$  
    By \thref{zminandxomin} we know that $U \zmin \subseteq \PP(V)$ as $\omega_{\min} < 0$, so that $U \zmin \cap V = \emptyset$. Therefore $\justss{V}{H}(\rho) = V \cap p^{-1}  ( Z_{\operatorname{min}}^{R'-ss}(\Lc') ).$ For $v \in V$, we have $p([v : 1]) = [p_{\min}(v) :0]$ as $\omega_{\min} < 0$. Since $\zmin^{R'-ss}(\Lc'_+) = \PP(\vmin^{R'-ss}(\rho) \setminus \mathcal{N})$ by \thref{zminssvminss} \eqref{case1}, we obtain that $p([v:1])$ lies in $\zmin^{R'-ss}(\Lc')$ if and only if $p_{\min}(v)$ lies in $\vmin^{R'-ss}(\rho) \setminus \mathcal{N}$.  
    
    Suppose now that $\omega_{\min} \geq 0$; then $\justss{V}{H}(\rho) := V \cap \psi (\Qss{\hX}{H}(\widehat{\Lc}_+) \setminus E)$ and $\Qss{\hX}{H}(\widehat{\Lc}_+) = \widehat{p}^{-1} ( \widehat{Z}_{\min}^{R'-ss}(\widehat{\Lc'} )) \setminus U \widehat{Z}_{\min} $.   If $\omega_{\min} > 0$, then $U \widehat{Z}_{\min}$ is contained in the exceptional divisor $E$ by \thref{zminandxomin}. Therefore $ \Qss{V}{H}(\rho) = V \cap \psi ( p^{-1} ( \widehat{Z}_{\min}^{R'-ss}(\widehat{\Lc'} ) ) \setminus  E ). $ By \thref{zminssvminss} \eqref{case2} we have that $\widehat{Z}_{\min}^{R'-ss}(\widehat{\Lc}_+) =  \PP (\vmin^{R'-ss}(\rho) \setminus \mathcal{N} ) \subseteq \PP(\vmin) \subseteq \PP(V)\cong E$.  For $v \in V$, suppose that $([v:1],[v]) \in \Qss{\hX}{H}(\widehat{\Lc}_+) \setminus E$. Then $\widehat{p}([v:1],[v]) =  ([0:1],[p_{\min}(v)])$ lies in $ \widehat{Z}_{\min}^{R'-ss}(\widehat{\Lc}')$ if and only if $p_{\min}(v)$ lies in $\vmin^{R'-ss}(\rho) \setminus \mathcal{N}$. Noting that if $p_{\min}(v) \notin \mathcal{N}$, then $([p_{\min}(v):1],[p_{\min}(v)]) \notin E$, the desired description of $\Qss{V}{H}(\rho)$ follows.

     If $\omega_{\min} = 0$, then $U \widehat{Z}_{\min}$ is no longer contained in $E$, so \begin{equation} \justss{V}{H}(\rho) = V \cap \psi \left( \widehat{p}^{\hspace{0.2em}-1} ( \widehat{Z}_{\min}^{R'-ss}(\widehat{\Lc'} ) ) \setminus ( U \widehat{Z}_{\min} \cup E)  \right). \label{Qssvh} \end{equation}  
     Hence $v \in \justss{V}{H}(\rho)$ if and only if $v \notin U \vmin$, $v \neq 0$ and $([v:1],[v]) \in \widehat{p}^{\hspace{-0.2em} -1} (\widehat{Z}_{\min}^{R'-ss}(\widehat{\Lc'}))$. We identify non-zero $v \in V$ with $([v:1],[v]) \in \hX \setminus E$ and determine whether $\widehat{p}([v:1],[v]) = ([p_{\min}(v):1],[p_{\min}(v)])$ lies in $\widehat{Z}_{\min}^{R'-ss}(\widehat{\Lc'})$ using  \thref{zminssvminss}. If $\rho$ is trivial, then $\widehat{p}([v:1],[v]) \in \widehat{Z}_{\min}^{R'-ss}(\widehat{\Lc}')= \{ ([v:w],[v']) \in \widehat{Z}_{\min} \ | \ v' \notin \mathcal{N}\}$ if and only if $p_{\min}(v) \notin \mathcal{N}$. Moreover, $\vmin = \vmin^{R'-ss}(\rho)$, which completes the description for $\rho= 0$. If $\rho$ is non-trivial, then $\widehat{p}([v:1],[v]) \in \widehat{Z}_{\min}^{R'-ss}(\widehat{\Lc}')= \{ ([v:w],[v']) \in \widehat{Z}_{\min} \ | \ v' \in \vmin^{R'-ss}(\rho), \text{ and } v' \notin \mathcal{N} \text{ if $w=0$} \} $ if and only if $p_{\min}(v) \in \vmin^{R'-ss}(\rho)$, which completes the description for $\rho\neq  0$.
    \end{proof}

\subsection{Externally graded linear algebraic group actions}
\label{subsec:externallygradedaffine}
In this section we explain how to construct a quotient for the linear action of a linear algebraic group $H$ on a vector space $V$ when $H$ does not contain a grading multiplicative group. We fix throughout this section a linear algebraic group $H = U \rtimes R$, which we do \emph{not} assume is internally graded.  The assumption required for the construction to work is that the action of $H$ extends to the linear action of a group $\widehat{H} = H \rtimes \GG_m$ where $\widehat{H}$ is internally graded by $\GG_m$. Under this assumption, a $\rho$-twisted $H$-semistable locus in $V$ can be defined analogously to \thref{definition:rhotwistedsemistablelocus}.

\begin{definition}[$\rho$-twisted semistable locus for the action of $H$ on $V$]
   Suppose that $H = U \rtimes R' $ acts linearly on a vector space $V$, and that the action extends to the linear action of $\widehat{H} = H \rtimes \GG_m$ where $\widehat{H}$ is internally graded by $\GG_m$. Fix a character $\rho$ of $H$ and suppose that \ref{Usscondaffine} and \ref{Rcondaffine} hold for the action of $H$ on $V$. The \emph{$\rho$-twisted semistable locus for the action of $H$ on $V$} is $$ \justss{V}{H}(\rho) : = \begin{cases} V \cap \justss{X}{H}(\Lc_+) & \text{if $\omega_{\min} < 0$} \\
    V \cap \psi(\justss{\hX}{H}(\widehat{\Lc}_+) \setminus E) & \text{if $\omega_{\min} \geq 0$}. 
    \end{cases}$$ 
\end{definition}

Note that $\justss{V}{H}(\rho)$ depends on the choice of extension of the action from $H$ to $\widehat{H}$.

\begin{corollary} \thlabel{Hexternallygraded}
   Suppose that $H = U \rtimes R'$ acts linearly on a vector space $V$, and that the action extends to the linear action of $\widehat{H} = H \rtimes \GG_m$ where $\widehat{H}$ is internally graded by $\GG_m$. Fix a character $\rho$ of $H$ and suppose that \ref{Usscondaffine} and \ref{Rcondaffine} hold for the action of $H$ on $V$. Then $$ \justss{V}{H}(\rho)  = \begin{cases} p_{\min}^{-1}(\vmin^{R'-ss}(\rho) \setminus \mathcal{N})  & \text{if $\omega_{\min} \neq 0$, or if $\omega_{\min} =0$ and $\rho$ is trivial} \\
    p_{\min}^{-1}(\vmin^{R'-ss}(\rho) )  & \text{if $\omega_{\min} = 0$ and $\rho$ is non-trivial,}
    \end{cases} $$ and there exists a quasi-projective geometric $H$-quotient $ \justss{V}{H}(\rho) \to V \gitq_{\hspace{-2pt} \rho \hspace{2pt} } H,$ with projective completion $(X \times \PP^1) \gitq \widehat{H}$ if $\omega_{\min} < 0$ and $(\hX \times \PP^1) \gitq \widehat{H}$ if $\omega_{\min} \geq 0$. 
\end{corollary}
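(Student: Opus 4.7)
The approach combines the techniques of Section \ref{subsec:affineinternal} with Corollary \thref{externallygraded}. I would extend the $\widehat{H}$-action on $V$ to $X = \mathbb{P}(V \oplus k)$ (or to $\hX$ if $\omega_{\min} \geq 0$) with $\widehat{H}$ acting trivially on the extra coordinate, then apply Corollary \thref{externallygraded} to obtain an $H$-quotient of the resulting $H$-semistable locus, and finally restrict this $H$-quotient to $V$.

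To apply Corollary \thref{externallygraded}, one needs to verify that the $\widehat{H}$-linearisations $\Lc_+$ on $X$ and $\widehat{\Lc}_+$ on $\hX$ are well-adapted and satisfy \ref{Usscondproj} and \ref{Rcondproj}. Well-adaptedness is arranged by construction via the usual twist by a character of the grading $\GG_m$. Since the unipotent radical of $\widehat{H}$ is $U$, the reductive quotient of $\widehat{H}$ modulo the grading is $R'$, and $\vmin$ is defined with respect to the externally grading $\GG_m$, the proofs of Corollaries \thref{stabcondfromv} and \thref{ss=sforzmin} carry over verbatim to show that \ref{Usscondaffine} and \ref{Rcondaffine} imply \ref{Usscondproj} and \ref{Rcondproj} in this setting.

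Corollary \thref{externallygraded} then provides a quasi-projective geometric $H$-quotient of $\justss{X}{H}(\Lc_+) = p^{-1}(\zmin^{R'-ss}(\Lc'))$ if $\omega_{\min} < 0$, or of $\justss{\hX}{H}(\widehat{\Lc}_+) = \widehat{p}^{\hspace{0.2em}-1}(\widehat{Z}_{\min}^{R'-ss}(\widehat{\Lc}'))$ if $\omega_{\min} \geq 0$, with the claimed projective completions $(X \times \PP^1) \gitq \widehat{H}$ and $(\hX \times \PP^1) \gitq \widehat{H}$. Restricting to the open subvariety $V$ (via $v \mapsto [v:1]$ for $\omega_{\min} < 0$, or via $v \mapsto ([v:1],[v])$ into $\hX \setminus E$ for $\omega_{\min} \geq 0$ and $v \neq 0$) yields the desired quasi-projective geometric $H$-quotient of $\justss{V}{H}(\rho)$. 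The explicit description of $\justss{V}{H}(\rho)$ then follows from the descriptions of $\zmin^{R'-ss}(\Lc')$ and $\widehat{Z}_{\min}^{R'-ss}(\widehat{\Lc}')$ in Proposition \thref{zminssvminss}, via the same analysis of the retraction maps $p$ and $\widehat{p}$ used in the proof of Theorem \thref{thmQsslocusforH}.

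The main point requiring care, and the only substantive difference from the internally graded case, is the absence of the $\setminus U \vmin$ piece in the $\omega_{\min} = 0$ formula. This stems from Corollary \thref{externallygraded} providing an $H$-semistable locus without a $\setminus U\widehat{Z}_{\min}$ subtraction, unlike Theorem \thref{projHthm}: in the externally graded setting, the product-with-$\PP^1$ construction concentrates the orbits that would be subtracted away at the point $[0:1] \in \PP^1$, which lies outside the affine chart used to recover $V$ from $X \times \PP^1$. Verifying this explicitly across the three subcases ($\omega_{\min} \neq 0$, and $\omega_{\min} = 0$ with or without $\rho$ trivial) completes the proof.
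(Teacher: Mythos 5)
Your proposal is correct and follows essentially the same route as the paper: verify \ref{Usscondproj} and \ref{Rcondproj} for the $\widehat{H}$-action on $X$ or $\hX$ via \thref{stabcondfromv} and \thref{ss=sforzmin}, invoke \thref{externallygraded}, restrict to $V$, and read off the explicit description from \thref{zminssvminss} as in \thref{thmQsslocusforH}. Your explanation of why the $\setminus U \vmin$ term disappears — the subtracted locus $U Z(X\times\PP^1)_{\min}$ sits over $[0:1]\in\PP^1$, while $V$ is recovered through the chart at $[1:1]$ — is exactly the right mechanism and is left implicit in the paper.
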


\begin{proof}
  The same argument used to prove \thref{Hquotient} shows that if $\omega_{\min} < 0$ (respectively if $\omega_{\min} \geq 0)$, then the linearised action of $\widehat{H}$ on $X$ (respectively on $\hX$) satisfies \ref{Usscondproj} and \ref{Rcondproj}. Thus we can apply \thref{externallygraded} to obtain a quasi-projective geometric quotient for the $H$-action on $\Qss{X}{H}(\Lc)$ if $\omega_{\min} < 0$, and for $H$-action on $\Qss{\hX}{H}(\widehat{\Lc})$ if $\omega_{\min}  \geq 0$.  These quotients have projective completions $(X \times \PP^1) \gitq \widehat{H}$ and $(\hX \times \PP^1) \gitq \widehat{H}$ respectively. 
  
    If $\omega_{\min} < 0$, then using the inclusion $V \hookrightarrow X$ given by $v \mapsto [v:1]$, by restricting to $V$ we obtain a quasi-projective geometric quotient for the $H$-action on $V \cap \Qss{X}{H}(\rho)$ with projective completion $(X \times \PP^1 ) \gitq \widehat{H}$.  If $\omega_{\min} \geq 0$, then we can instead obtain a quasi-projective geometric quotient for the $H$-action on $V \cap \psi(\Qss{X}{H}(\widehat{\Lc}) \setminus E)$, with projective completion $(\hX \times \PP^1) \gitq \widehat{H}$. The description of both intersections follows analogously to the proof of \thref{thmQsslocusforH}. We prove below the case where $\omega_{\min} = 0$ and $\rho$ is non-trivial; the other cases follow in a similar way. 

    We wish to show that $V \cap \psi(\justss{\hX}{H}(\widehat{\Lc}_+) \setminus E) =  p_{\min}^{-1}(\vmin^{R'-ss}(\rho) ).$ By \thref{externallygraded} we have that $\justss{\hX}{H}(\widehat{\Lc}_+) = \widehat{p}^{\hspace{0.2em} -1}(\justss{\widehat{Z}_{\min}}{R'}(\widehat{\Lc}'))$, and by \thref{zminssvminss} \eqref{case3} that $\justss{\widehat{Z}_{\min}}{R'}(\widehat{\Lc}') =\{ ([v:w],[v']) \in \widehat{Z}_{\min} \ | \ v' \in \vmin^{R'-ss}(\rho),\text{ and } v' \notin \mathcal{N} \text{ if $w=0$} \}. $  A point $([v:w],[v']) \in \hX$ does not lie in $E$ if and only if $v \neq 0$ (in which case $[v'] = [v]$), and its image under $\psi$ lies in $V$ if and only if $w \neq 0$. Moreover, by the description of  $\justss{\widehat{Z}_{\min}}{R'}(\widehat{\Lc}')$ we have that $\widehat{p}_{\min}([v:1],[v]) = ([p_{\min}(v):1],[p_{\min}(v)])$ lies in $  \justss{\widehat{Z}_{\min}}{R'}(\widehat{\Lc}') $ if and only if $p_{\min}(v)$ lies in $\vmin^{R'-(s)s}(\rho)$. This gives the desired equality. 
\end{proof}

\subsection{From quotients of vector spaces to quotients of affine varieties} \label{subsec:fromvstoaffine} 
 We have seen how to construct quotients for the linear actions of graded unipotent groups, internally graded and externally graded groups respectively on vector spaces. 
 In this section we generalise these results to actions on affine varieties. For simplicity we treat only the internally graded case, of which the graded unipotent group case is a special case. The externally graded case follows in the same way.   

Suppose that an internally graded linear algebraic group $H= U \rtimes R$ with grading multiplicative group $\lambda$ acts on an affine variety $Y$. Then by \cite[Lem 1.1]{Kempf1978} there is an equivariant embedding $Y\subseteq V$ into an $H$-representation $V$. Let $\rho$ denote a character of $H$, which is trivial on $\lambda(\GG_m)$. Then provided the action of $H$ on $V$ satisfies \ref{Usscondaffine} and \ref{Rcondaffine}, we can construct a quotient for the $H$-action on an explicit open subset of $Y$, by restricting the $\rho$-twisted $H$-quotient of $V$ to $Y$.

\begin{proposition}  \thlabel{quotientaffinev}
    Suppose that an internally graded group $H$ acts linearly on an affine variety $Y$ with respect to an embedding $Y \subseteq V$ into a vector space $V$ such that the conditions \ref{Usscondaffine} and \ref{Rcondaffine} hold for the linear action of $H$ on $V$. Then the restriction of the quasi-projective geometric quotient $\justss{V}{H}(\rho) \to V \gitq_{\hspace{-2pt} \rho \hspace{2pt} } H$ to $Y$ is a quasi-projective geometric quotient for the action of $H$ on $$ \justss{Y}{H}(\rho) : = Y \cap \justss{V}{H}(\rho).$$ 
    If $\omegamin < 0$, this quotient has a projective completion given by the restriction to the closure $\overline{Y}$ of $Y$ in $X=\PP(V \oplus k)$ of the quotient $\Qss{X}{H}(\mathcal{L}_+) \to X \gitq_{\hspace{-2pt} \mathcal{L}_+ } H$, which is a geometric quotient for the action of $H$ on $\overline{Y} \cap \Qss{X}{H}(\mathcal{L}_+).$ If $\omegamin \geq 0$, this quotient has a projective completion given by the restriction to the closure $\overline{Y}$ of $\psi^{-1}(Y) \subset \hX$ of the quotient $\Qss{\hX}{H}(\widehat{\mathcal{L}}_+) \to \hX \gitq_{\hspace{-2pt} \widehat{\mathcal{L}}_+ } H$.
\end{proposition}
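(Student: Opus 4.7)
The plan is to obtain the affine quotient by restricting the ambient $H$-quotient in two stages: first restrict the relevant projective $H$-quotient to a closed invariant subvariety (a projective completion of $Y$), and then restrict further to the open affine piece. The key ingredients are Proposition~\ref{proprestriction} and Remark~\ref{restrictingHquotients} (which tell us that $H$-quotients behave well with respect to closed invariant subvarieties), together with the standard fact that geometric quotients restrict to geometric quotients on open subvarieties.

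First I would set up the closures. In the case $\omegamin < 0$, let $\overline{Y} \subseteq X = \PP(V \oplus k)$ denote the closure of $Y$ under the embedding $v \mapsto [v:1]$; in the case $\omegamin \geq 0$, let $\overline{Y} \subseteq \hX$ denote the closure of $\psi^{-1}(Y)$. Since the $H$-action on $V$ extends to $X$ with trivial action on the extra coordinate (and lifts to $\hX$), the subvariety $\overline{Y}$ is closed and $H$-invariant. Moreover, because $\psi$ restricts to an isomorphism over $V \subseteq X \setminus \{[0:1]\}$, in both cases one has $\overline{Y} \cap V = Y$ (identifying $V$ with its image under $\psi^{-1}$ in the second case).

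Next I would invoke the projective restriction principle. The hypotheses \ref{Usscondaffine} and \ref{Rcondaffine} on the linear $H$-action on $V$ yield, by Corollaries~\ref{stabcondfromv} and~\ref{ss=sforzmin}, the projective conditions \ref{Usscondproj} and \ref{Rcondproj} for the linearised $H$-action on $X$ (if $\omegamin<0$) or on $\hX$ (if $\omegamin\geq 0$). Hence Theorem~\ref{projHthm} provides a projective geometric $H$-quotient $q$ on $\Qss{X}{H}(\mathcal{L}_+)$ or $\Qss{\hX}{H}(\widehat{\mathcal{L}}_+)$. By Proposition~\ref{proprestriction} and Remark~\ref{restrictingHquotients}, the restriction of $q$ to the closed $H$-invariant subvariety $\overline{Y} \cap \justss{X}{H}(\mathcal{L}_+)$ (resp.\ $\overline{Y} \cap \justss{\hX}{H}(\widehat{\mathcal{L}}_+)$) is a good $H$-quotient onto $q(\overline{Y})$. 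This restriction is in fact geometric: since $\overline{Y}$ is $H$-invariant, each $H$-orbit of $q$ lies either entirely in $\overline{Y}$ or entirely in its complement, so the non-empty fibres of the restricted map are single $H$-orbits, which combined with the good quotient property forces it to be a geometric quotient. This yields the claimed projective completion.

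Finally, I would restrict to the open affine piece. By construction, $\justss{V}{H}(\rho)$ is a saturated open subvariety of $\justss{X}{H}(\mathcal{L}_+)$ (identified via $\psi^{-1}$ when $\omegamin\geq 0$), with geometric quotient $V \gitq_{\hspace{-2pt}\rho\hspace{2pt}} H$ by Theorem~\ref{Hquotient}. Intersecting the saturated open inclusion $\justss{V}{H}(\rho) \hookrightarrow \justss{X}{H}(\mathcal{L}_+)$ with the closed invariant subvariety $\overline{Y} \cap \justss{X}{H}(\mathcal{L}_+)$ gives a saturated open subvariety whose underlying set is precisely $\justss{Y}{H}(\rho):= Y \cap \justss{V}{H}(\rho)$, using $\overline{Y} \cap V = Y$. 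Restricting the projective geometric $H$-quotient on $\overline{Y}$ to this saturated open piece yields the desired quasi-projective geometric $H$-quotient of $\justss{Y}{H}(\rho)$, with projective completion $q(\overline{Y})$.

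The main obstacle is the verification in the third paragraph that the restriction of $q$ to the closed subvariety preserves the geometric (rather than merely good) quotient property; this hinges on Remark~\ref{restrictingHquotients} and the orbit-preserving nature of closed $H$-invariant embeddings, after which the two-step restriction (closed then open) proceeds formally.
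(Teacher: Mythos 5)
Your proof is correct and follows essentially the same strategy as the paper: restrict the ambient projective $H$-quotient to the closed $H$-invariant projective completion $\overline{Y}$ via \thref{proprestriction}/\thref{restrictingHquotients}, and then restrict further to the open saturated piece $\justss{Y}{H}(\rho)$. Your extra justification that the closed restriction is geometric (not merely good), using that $H$-invariance of $\overline{Y}$ forces fibres to be whole orbits, fills in a step the paper states without comment, and you carry out both sign cases of $\omegamin$ where the paper treats $\omegamin<0$ and declares the other case analogous.
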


\begin{proof}
    We prove the result when $\omegamin < 0$, as the other case is analogous but instead uses the blow-up $\hX$. By construction, the geometric quotient $\justss{V}{H}(\rho) \to V \gitq_{\hspace{-2pt} \rho \hspace{2pt} } H $ is the restriction to $V \cap \Qss{X}{H}(\mathcal{L}_+)$ of the geometric quotient $q: \Qss{X}{H}(\mathcal{L}_+) \to X \gitq_{\hspace{-2pt} \mathcal{L}_+  } H.$ Hence we wish to show that the restriction of $q$ to $Y \cap \Qss{X}{H}(\mathcal{L}_+) = Y \cap \justss{V}{H}(\rho) = \justss{Y}{H}(\rho)$ is a geometric quotient. Now by \thref{restrictingHquotients}, since $\overline{Y} \subseteq X$ is a closed $H$-invariant subvariety, the restriction of $q$ to $\overline{Y} \cap \Qss{X}{H}(\mathcal{L}_+)$ is a geometric $H$-quotient. This is a projective completion of the image of $\justss{Y}{H}$ under $q$. Moreover, since $Y \cap \Qss{X}{H}(\mathcal{L}_+) \subseteq \overline{Y} \cap \Qss{X}{H}(\mathcal{L}_+)$ is an open saturated $H$-invariant subset, it follows that further restricting to $\justss{Y}{H}(\rho)$ also gives a geometric quotient.   
\end{proof}

We conclude this section by noting that the condition \ref{Usscondaffine}, which applies to the action on the ambient affine space, cannot easily be translated to conditions on unipotent stabilisers on $Y$, even if $V$ is the linear span of $Y$ in an affine space, as the following example shows.

\begin{example}
Suppose that $\hU = \mathbb{G}_a \rtimes \mathbb{G}_m $ acts  on $ Y = \mathbb{V}(yz) \subseteq V = \mathbb{A}^3$ via $(u,t) \cdot (x,y,z) = (tx + u(y+z), t^{-1} y, t^{-1} z)$. Since $R = \lambda(\GG_m)$, condition \ref{Usscondaffine} is equivalent to \ref{Ucondaffine}.  Then $\vmin = \mathbb{V}(x)$ and all points in $Y \cap (\vmin \setminus \{0\})$ have trivial unipotent stabiliser group. Yet this is not true for all points in $\vmin \setminus \{0\}$.  
\end{example}

\section{Application to representations of quivers with multiplicities}  \label{sec:repqwm} 

In this section we apply the results of Section \ref{sec:affinegitfromprojgitforH} to the classification problem for representations of quivers with multiplicities. Classical representations of a quiver over a field $k$ are representations of the quiver in the category of (finite-dimensional) $k$-vector spaces. Representation of quivers with multiplicities are a generalisation: they are representations of the quiver in the category of free finite rank modules over a truncated polynomial ring determined by the multiplicity \cite{Yamakawa2010,Ringel2011,Geiss2014,Wyss2017,Hausel2018,Vernet2023}.

\subsection{Definitions} 

A quiver $Q= (Q_0,Q_1,s,t)$ is a finite, connected, directed graph consisting of a set $Q_0$ of vertices, a set $Q_1$ of arrows, and source and target maps $s,t: Q_1 \to Q_0$ for the arrows. A multiplicity is a natural number $m \in \mathbb{N}$,  which determines a truncated polynomial ring
\[ A_m := k[\epsilon]/(\epsilon^{m +1}).\]

\begin{definition} 
A \emph{representation of $(Q, m)$} is a representation of $Q$ in the category of finitely generated $A_m$-modules, that is, $\Phi = (\Phi_v, v \in Q_0; \Phi(a): \Phi_{s(a)} \to \Phi_{t(a)}, a \in Q_1)$ consisting of finitely generated $A_m$-modules $\Phi_v$ for each vertex $v$ and $A_m$-module homomorphisms $\Phi(a)$ for each arrow $a \in A$. A representation $\Phi$ of $(Q,m)$ is \emph{locally free} if $\phi_v$ is a free finitely generated $A_m$-module for each $v \in Q_0$. The \emph{rank} of $\Phi$ is the vector $d =(d_v)_{v \in Q_0}$ such that $\Phi_v$ is a $A_m$-module of rank $d_v$ for all $v \in Q_0$. For simplicity we say that $\Phi$ is a representation of $(Q,m)$ of rank $k$. The \emph{trivial representation} is the unique representation with rank vector zero.
\end{definition} 

The special case of locally free representations where $m = 0$ and $A_0 = k$ recovers the usual notion of representations of a quiver in the category of $k$-vector spaces, in which case the rank vector $d$ is the dimension vector. The case $m = 1$ was first considered in \cite{Ringel2011}, while the general case was first considered in \cite{Fan2010}, and later on in \cite{Wyss2017,Hausel2018,Vernet2023}. Yamakawa \cite{Yamakawa2010} considers varying multiplicities at each vertex, but only considers representations over $k$ rather than representations over truncated polynomial rings.  

Representations of $(Q,m)$ form an abelian category $\mathcal{R}ep(Q,m)$, with subobjects and homomorphisms defined analogously to the classical case (see \cite{King1994}). Locally free representations form a subcategory. For $m' < m$, there is a ring homomorphism  $\tau^m_{m'}: A_m \twoheadrightarrow A_{m'}$ given by truncating which induces an $m'$-truncation functor $ \mathcal{R}ep(Q,m) \rightarrow \mathcal{R}ep(Q,m')$. There is also a ring homomorphism $k \rightarrow A_m$ inducing an extension of scalars functor $ \mathcal{R}ep(Q):=\mathcal{R}ep(Q,0) \rightarrow \mathcal{R}ep(Q,m)$. Observe that the composition of extending and then truncating is the identity.  Both the truncation to trivial multiplicity $m' = 0$ and its extension to a representation of $(Q,m)$ play a prominent role in defining stability for representations of quivers with multiplicities, so we give them a name. 

\begin{definition}[Classical and extended classical representations for a representation of $(Q,m)$] \thlabel{underlyingclassicalrep}
    The \emph{classical truncation} of a $(Q,{m})$-representation $\Phi$ of rank $d$,  denoted by $\phi$, is the representation of $Q$ of dimension $d$ corresponding to the $0$-truncation of $\Phi$. The \emph{extended classical truncation} of $\Phi$, denoted $\Phi^0$, is the representation of $(Q,m)$ of rank $d$ obtained by viewing $\phi$ as a representation of $(Q,m)$ via extension of scalars. 
\end{definition}

The following result will be needed in Section \ref{subsec:comparisonrudakov}.

\begin{lemma}\thlabel{lem truncsubrep}
    If $\Phi$ is a locally free $(Q,m)$-representation then the classical truncation of any subrepresentation of $\Phi$ is a subrepresentation of $\Phi^0$. 
\end{lemma}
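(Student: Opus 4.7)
The plan is to interpret the classical truncation of a subrepresentation $\Psi \subseteq \Phi$ as the representation $\psi := \Psi \otimes_{A_m} k$ of $Q$ (well-defined under the implicit assumption that $\Psi$ is itself locally free, so that its rank vector is meaningful), show that it naturally embeds into $\phi = \Phi \otimes_{A_m} k$, and then pass to $\psi^0 \hookrightarrow \phi^0 = \Phi^0$ as a subrepresentation of $(Q,m)$ via exactness of extension of scalars from $k$ to $A_m$.

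The crux is a local fact about free modules over $A_m = k[\epsilon]/(\epsilon^{m+1})$: a homomorphism $f : F \to G$ between finitely generated free $A_m$-modules is injective if and only if $f \otimes_{A_m} k$ is injective. The nontrivial direction --- that $f$ injective implies $f \otimes_{A_m} k$ injective --- is argued as follows: if $\overline v \in F \otimes_{A_m} k$ is a nonzero class with $f \otimes_{A_m} k\,(\overline v) = 0$, lift $\overline v$ to $v \in F \setminus \epsilon F$, so $f(v) \in \epsilon G$. Then $f(\epsilon^m v) = \epsilon^m f(v) \in \epsilon^{m+1} G = 0$, while $\epsilon^m v \neq 0$ because $F$ is free and $v \notin \epsilon F$, contradicting injectivity of $f$.

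Applying this vertex-by-vertex to the inclusions $\Psi_v \hookrightarrow \Phi_v$ between free $A_m$-modules gives injections $\psi_v \hookrightarrow \phi_v$, with compatibility of the arrow maps automatic from functoriality of $\otimes_{A_m} k$; so $\psi$ is a subrepresentation of $\phi$ in $\mathcal{R}ep(Q)$. The extension-of-scalars functor $(-) \otimes_k A_m : \mathcal{R}ep(Q) \to \mathcal{R}ep(Q,m)$ is exact because $A_m$ is a free $k$-module, so $\psi^0 \hookrightarrow \phi^0 = \Phi^0$ is a subrepresentation of $(Q,m)$, as required.

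The main obstacle is the failure of exactness of $(-) \otimes_{A_m} k$ on general $(Q,m)$-representations: for instance, the inclusion $(\epsilon) \hookrightarrow A_m$ becomes the zero map after tensoring with $k$. This is precisely why local freeness of both $\Phi$ and $\Psi$ is essential --- without it, the natural map $\Psi_v \otimes_{A_m} k \to \Phi_v \otimes_{A_m} k$ can have nontrivial kernel.
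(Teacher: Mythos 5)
Your proof is correct and follows essentially the same route as the paper: apply $-\otimes_{A_m}k$ to the inclusion $\Psi\subseteq\Phi$ and verify injectivity vertex-by-vertex via the $\epsilon^m$-trick (lift a kernel class $\overline v$, observe $\epsilon^m v$ lands in the zero kernel of $\Psi_v\hookrightarrow\Phi_v$, and use freeness of $\Psi_v$ to conclude $\overline v=0$). You are slightly more careful than the paper in two respects — you flag explicitly that $\Psi$ must itself be locally free for the argument to run, and you add the extension-of-scalars step to land precisely in $\Phi^0$ as a $(Q,m)$-representation rather than stopping at the classical truncation $\phi$ — but these are refinements of the same argument, not a different one.
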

\begin{proof}
The $0$-truncation functor is obtained by applying the functor $-\otimes_{A_m}A_m/(\epsilon)$ to the diagram in the category of $A_m$-modules that forms the representation. Applying this to the commutative diagram that exhibits  $\Psi\subset  \Phi$ as a subrepresentation, we obtain a commutative diagram of $k$-vector spaces. Moreover, if $x \in \ker(\Psi^0_v \rar \Phi^0_v)$ then $\epsilon^mx\in \ker(\Psi_v \rar \Phi_v)$, so that the truncated maps must also be injective for each $v \in Q_v$. 
\end{proof}

The fact that there is not a natural ring homomorphism $A_{m'} \to A_{m}$ for $m ' < m$ means it is not obvious how to define quiver representations with varying multiplicities at each vertex: at each vertex we have modules over different truncated polynomial rings and we want module homomorphisms for each arrow, but we can only relate $A_m$-modules for different values of $m$ using the truncation homomorphisms, which imposes certain inequalities on our multiplicities depending on the arrows.  If we only ask for $k$-linear morphisms for each arrow, rather than module morphisms, one can study representations over $k$ of $Q$ with varying multiplicities as in \cite{Yamakawa2010}.

\subsection{Parameter space and group action} 
In this section, we describe a parameter space for locally free representations of $(Q,{m})$ of rank ${d}$ admitting a group action whose orbits correspond to isomorphism classes of representations analogously to the classical construction.

For any locally free representation $\Phi$ of $(Q,{m})$ of rank ${d}$, by choosing a basis of the free $A_m$-modules $\Phi_v$ at each vertex, we can identify the morphisms $\Phi(a): \Phi_{s(a)} \to \Phi_{t(a)}$ with $d_{t(a)} \times d_{s(a)}$-matrices $\Phi_a$ with entries in $A_{m}$. Therefore $$ \Rep(Q,m,d) := \prod_{a \in Q_1} \Mat_{d_{t(a)} \times d_{s(a)}}(A_{m})$$ parametrises locally free rank $d$ representations of $(Q,{m})$. We let $\Rep(Q,d) : = \Rep(Q,d,0)$, which parameterises dimension $d$ representations of $Q$. For any tuple of matrices ${\Phi} = (\Phi_a)_{a \in Q_1} \in \Rep(Q,m,d)$, we also let $\Phi$ denote the associated locally free representation of $(Q,m)$.
The redundancy coming from the choice of basis vectors is captured by the action of the product group $$ G_{m,d} := \prod_{v \in Q_0} \GL_{d_v}( A_m ).$$ Given ${g} = (g_v)_{v \in Q_0} \in G_{m,d},$ and ${\Phi} = (\Phi_a)_{a \in Q_1} \in \Rep(Q,m,d)$, the action is given by \begin{equation} {g} \cdot {\Phi}  = \left( g_{t(a)} \Phi_a g_{s(a)}^{-1} \right)_{a \in Q_1}. \label{action}
\end{equation}

\begin{remark}[Link with jet spaces]
    The varieties $\Rep(Q,m,d)$ and $G_{m,d}$ are the $(m+1)$-th jet spaces of $\Rep(Q,d)$ and $G_d : = G_{0,d}$ respectively, and the action of $G_{m,d}$ on $\Rep(Q,m,d)$ is the natural extension to jet spaces of the classical action of $G_d$ on $\Rep(Q,d)$, see \cite[Rk 2.1.1]{Vernet2023} and \cite[Ch 3]{Chambert2018}. 
\end{remark}

Exactly as in the classical case, we obtain the following result.

\begin{proposition} \thlabel{setup}
Isomorphism classes of locally free representations of $(Q,m)$ of rank $d$ are in bijection with $G_{m,d}$-orbits in $\Rep(Q,m,d)$. Moreover, for ${\Phi} \in \Rep(Q,m,d)$, we have an isomorphism $\Aut_{(Q,m)}( {\Phi} )\cong \Stab_{G_{m,d}}({\Phi}).$
\end{proposition}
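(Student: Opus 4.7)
The plan is to mimic the classical argument for representations of $Q$, replacing the base field $k$ throughout by the truncated polynomial ring $A_m$. The key observations are that a locally free $(Q,m)$-representation of rank $d$ admits, at each vertex $v\in Q_0$, an $A_m$-basis for $\Phi_v$ (since $\Phi_v\cong A_m^{d_v}$), and that any $A_m$-module homomorphism $A_m^{d_{s(a)}}\to A_m^{d_{t(a)}}$ is uniquely represented as left-multiplication by a matrix in $\Mat_{d_{t(a)}\times d_{s(a)}}(A_m)$. Thus choosing bases identifies a locally free representation with an element of $\Rep(Q,m,d)$.

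For the first statement, I would define a map $\Psi:\Rep(Q,m,d)\to\{\text{iso classes of locally free rank }d\text{ representations of }(Q,m)\}$ sending a tuple $\Phi=(\Phi_a)_{a\in Q_1}$ to the isomorphism class of the representation whose module at vertex $v$ is $A_m^{d_v}$ and whose arrow maps are given by the matrices $\Phi_a$. Surjectivity follows from the basis choice above. For the orbit/iso class correspondence, given $\Phi,\Phi'\in\Rep(Q,m,d)$ whose associated representations are isomorphic via $A_m$-module homomorphisms $f_v:A_m^{d_v}\to A_m^{d_v}$, the isomorphism condition $f_{t(a)}\Phi_a=\Phi'_a f_{s(a)}$ is equivalent to the $f_v$'s being invertible at each vertex (hence $g=(f_v)_{v}\in G_{m,d}$) and $\Phi'=g\cdot\Phi$ in the notation of \eqref{action}. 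Conversely any $g\in G_{m,d}$ with $g\cdot\Phi=\Phi'$ manifestly provides such an isomorphism. This shows the fibres of $\Psi$ are exactly the $G_{m,d}$-orbits, yielding the claimed bijection.

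The automorphism/stabiliser isomorphism follows by specialising the argument above to $\Phi=\Phi'$: an automorphism of the representation $\Phi$ is a tuple $(f_v)_{v\in Q_0}$ of $A_m$-linear automorphisms of $A_m^{d_v}$ satisfying $f_{t(a)}\Phi_a=\Phi_a f_{s(a)}$ for each arrow, and this is precisely the data of an element $g\in G_{m,d}$ with $g\cdot\Phi=\Phi$; the map $(f_v)_v\mapsto g$ is a group homomorphism by construction, with inverse given by the same assignment.

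There is no real obstacle to this proof: the $A_m$-module theory behaves identically to the $k$-vector space theory for the purposes of this argument, since we only use that locally free finite rank $A_m$-modules are free (so bases exist) and that $\GL_{d_v}(A_m)$ is precisely the group of invertible $A_m$-module endomorphisms of $A_m^{d_v}$. The only point worth highlighting carefully is the local freeness hypothesis, which ensures the identification of $\Hom_{A_m}$-spaces with matrix spaces goes through; without it the parameter space $\Rep(Q,m,d)$ would miss isomorphism classes whose modules have torsion.
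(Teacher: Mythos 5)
Your argument is correct and is exactly what the paper has in mind: the paper gives no proof at all, simply asserting the result follows ``exactly as in the classical case,'' and your write-up supplies the standard details (choice of $A_m$-bases at each vertex, identification of $\Hom_{A_m}$ with matrix spaces, isomorphisms giving $G_{m,d}$-conjugation, and specialising to $\Phi = \Phi'$ for the stabiliser statement). Your final remark on why local freeness is the essential hypothesis is a useful clarification that the paper leaves implicit.
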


Henceforth all representations will be assumed to be locally free.

\subsection{Matrices with coefficients in truncated polynomial rings}

To construct a quotient for the $G_{m,d}$-action on $\Rep(Q,m,d)$, 
we first describe how matrices with entries in $A_m = k[\epsilon]/(\epsilon^{m+1})$ can be embedded in larger matrices with entries in $k$ in order to describe the structure of $G_{m,d}$.

For each $l,n \in \mathbb{N}$, we have an embedding 
\begin{equation}\label{embedd matrices truncated poly coeffs} \begin{array}{rcc} \Mat_{l \times n}(A_m) & \hookrightarrow &\Mat_{(m+1) l \times (m+1) n}(k) \\ 
& & \\
\Phi = \Phi^0 + \Phi^1 \epsilon + \cdots + \Phi^{m} \epsilon^{m} & \mapsto  &\begin{pmatrix}
 \Phi^0 & \Phi^1 & \cdots &  \Phi^{m} \\
   0 &  \Phi^0 &  \cdots &  \Phi^{m-1}  \\
  \vdots & \ddots  & \ddots  & \vdots \\

 0 &   \cdots & 0 &   \Phi^0
\end{pmatrix}
\end{array}
\end{equation}
 where each $\Phi^i \in \Mat_{l \times n}(k)$. Moreover, these embeddings are compatible with matrix multiplication. In particular, we have a group homomorphism 
\begin{equation}\label{embedding quiver with mult gp} \GL_n(A_m) \hookrightarrow \GL_{(m+1)n}(k),\end{equation}
thus $g = g^0 + g^1 \epsilon + \cdots + g^{m} \epsilon^{m} \in \Mat_{n \times n}(A_m)$ is invertible if and only if $g^0$ is invertible. Moreover, we see that $\GL_n(A_m)$ is non-reductive as soon as $m >0$. Its unipotent radical consists of elements of the form $\operatorname{Id} + u^1 \epsilon + \cdots u^{m} \epsilon^{m}$ with $u^i \in \Mat_{n \times n}(k)$, while a Levi subgroup is given by elements of the form $g^0$ for $g^0 \in \GL_n(k)$.

The group $\GL_n(A_m)$ does not contain a grading multiplicative subgroup, but we can use the embedding $\GL_n(A_m) \hookrightarrow \GL_{(m+1)n}(k)$ to find an external grading as follows.  Let $\lambda_{m,n} : \GG_{m}(k) \to \GL_{(m+1)n}(k)$ be the 1PS given by 
\begin{equation}\label{lambda grading for quivers with mult}
\lambda_{m,n}(t) : = \begin{pmatrix} t^{m} \operatorname{Id} & 0 & \cdots  & 0 \\
0 & t^{m-1} \operatorname{Id} &  & 0 \\
\vdots &  &  \ddots &   \\
0 & 0 &     & t^0 \operatorname{Id}
\end{pmatrix},
\end{equation}
where $\operatorname{Id}$ is the $n \times n$ identity matrix with entries in $k$. Then $\lambda_{m,n}$ acts with strictly positive weights on the Lie algebra of the unipotent radical of $\GL_n(A_m)$ via conjugation, and moreover commutes with the centre of the Levi subgroup of $\GL_n(A_m)$. Hence $\GL_n(A_m) \rtimes\lambda_{m,n}(\GG_m)$ is graded by $\lambda_{m,n}$.

\subsection{Constructing the quotient} \label{subsec:constructingquotient} 

In this section we describe how to construct a quotient of the $G_{m,d}$-action on $\Rep(Q,m,d)$. As in the classical case, there is a diagonal subgroup acting trivially. Let $\Delta_m$ be the image of the diagonal embedding
\[ \begin{array}{rcl} \GL_1(A_m) 
\cong \GG_m \times \GG_a^m & \hookrightarrow &G_{m,d} = \prod_{v \in Q_0} \GL_{d_v}(A_m) \\ g^0 + g^1\epsilon + \dots + g^m \epsilon^m & \mapsto & (g^0 I_{d_v} + g^1 I_{d_v} \epsilon + \cdots g^m I_{d_v} \epsilon^m)_{v \in Q_0}. \end{array} \]
Then $\Delta_m \subseteq G_{m,d}$ fixes any representation in $\Rep(Q,m)$. Consequently, one can equivalently construct a quotient for the action of $H :=G_{m,d}/\Delta_m $ on $\Rep(Q,m,d)$.

Taking a product of embeddings of the form \eqref{embedding quiver with mult gp}, we obtain an injective group homomorphism
\begin{equation}\label{embedd quiver group}
G_{m,d} \hookrightarrow \prod_{v \in Q_0} \GL_{(m+1) d_v}(k).
\end{equation}
Write $H = U \rtimes R$ where $U$ is the unipotent radical and $R = \prod_{v \in Q_0} \GL_{d_v}(k) /\Delta_0$ is the Levi factor. We use the 1PSs $\lambda_{m,d_v}(t) $ of $ \GL_{(m+1) d_v}(k)$ defined in  \eqref{lambda grading for quivers with mult} to externally grade $H$ as follows.

\begin{proposition}[Externally grading 1PS for $H$]
The 1PS $\lambda$ of $\prod_{v \in Q_0} \GL_{(m+1) d_v}(k)$ defined by $$ \lambda(t) : = (\lambda_{m,d_v}(t))_{v \in Q_0}$$ positively grades the unipotent radical $U$ of $H$, so that $\widehat{H} : = H \rtimes \lambda(\GG_m)$ is internally graded by $\lambda$. 
\end{proposition}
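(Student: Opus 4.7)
My plan is to reduce the proposition to a single explicit computation: under the matrix embedding \eqref{embedd matrices truncated poly coeffs}, conjugation by $\lambda_{m,n}(t)$ on the image of $\GL_n(A_m)$ in $\GL_{(m+1)n}(k)$ agrees with the $A_m$-algebra automorphism $\epsilon \mapsto t\epsilon$. I would verify this directly: writing $\lambda_{m,n}(t) = \mathrm{diag}(t^m I, t^{m-1} I, \ldots, I)$, for any $(m+1) \times (m+1)$ block matrix its $(i,j)$-block is rescaled by $t^{j-i}$ under conjugation by $\lambda_{m,n}(t)$. Since the $(i,j)$-block of the image of $\Phi = \sum_{i=0}^m \Phi^i \epsilon^i$ under \eqref{embedd matrices truncated poly coeffs} is $\Phi^{j-i}$ for $j \geq i$ and zero otherwise, conjugation returns the image of $\sum_i \Phi^i (t\epsilon)^i$, as claimed.

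Everything else is essentially formal. Taking products over $v \in Q_0$, this identification shows that $\lambda$ normalizes the image of $G_{m,d}$ in $\prod_v \GL_{(m+1)d_v}(k)$ and preserves $\Delta_m$, hence induces an action on $H = G_{m,d}/\Delta_m$ by group automorphisms, so the abstract semi-direct product $\widehat{H} := H \rtimes \lambda(\GG_m)$ is well-defined. Moreover, an element $g^0 \in \prod_v \GL_{d_v}(k)$ of the Levi of $G_{m,d}$ embeds as the block diagonal matrix $\mathrm{diag}(g^0, g^0, \ldots, g^0)$, which manifestly commutes with $\lambda(t)$; thus $\lambda$ lies in the center of the Levi factor of $\widehat{H}$, as required by \thref{def internally graded unipotent radical}.

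To verify the positive grading of $\Lie U$, the Lie algebra of the unipotent radical of $\GL_{d_v}(A_m)$ decomposes as $\bigoplus_{i=1}^m \epsilon^i \Mat_{d_v}(k)$, and the automorphism $\epsilon \mapsto t\epsilon$ acts on the $i$-th summand through the character $t \mapsto t^i$. Taking the product over $v$ and quotienting by the Lie algebra of $\Delta_m$ (which is concentrated in these same positive weights), the weights of $\lambda$ on $\Lie U$ are all contained in $\{1, 2, \ldots, m\}$, and in particular strictly positive. The main obstacle I anticipate is simply the combinatorial bookkeeping of the block embedding; once the key identification of conjugation by $\lambda_{m,n}(t)$ with the $A_m$-automorphism $\epsilon \mapsto t\epsilon$ is made, no serious difficulty remains.
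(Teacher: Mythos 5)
Your proof is correct and takes essentially the same approach as the paper, which states the proposition without a formal proof as an immediate consequence of the preceding remark that $\lambda_{m,n}$ acts with strictly positive weights on the Lie algebra of the unipotent radical of $\GL_n(A_m)$ via conjugation. Your framing of the conjugation action as the $A_m$-algebra automorphism $\epsilon \mapsto t\epsilon$ is a clean way to organise the computation, and your remarks on why $\lambda$ normalises $\Delta_m$ (hence descends to $H$) and why the positivity of weights survives the quotient by $\Lie\Delta_m^u$ fill in details the paper leaves implicit.
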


To be able to apply the results of Section \ref{subsec:externallygradedaffine} to the action of $H = G_{m,d}/ \Delta_m $ on $\Rep(Q,m,d)$, we must also show that the action of $H$ on $\Rep(Q,m,d)$ extends to an action of $\widehat{H}$. For this, we note that via the embeddings \eqref{embedd matrices truncated poly coeffs} we obtain an embedding
\[ \Rep(Q,m,d) \hookrightarrow W: = \prod_{a \in Q_1} \Mat_{(m+1) d_{t(a)} \times (m+1) d_{s(a)}}(k)\]
which is equivariant with respect to the homomorphism \eqref{embedd quiver group} and the natural conjugation actions. 
Moreover, $\Rep(Q,m,d) \subset W$ is preserved by the conjugation action of $\lambda(\GG_m)$. Indeed, writing $\Phi_a = \Phi_a^0 + \cdots + \Phi_a^m$ for $a \in Q_1$, a direct calculation gives \begin{equation} \lambda(t) \cdot {\Phi} = (\Phi_a^0 + t \Phi^1_a  \epsilon + t^2 \Phi^2_a \epsilon^2 + \cdots + t^{m} \Phi^m_a \epsilon^{m})_{a \in Q_1}. \label{gradingGMaction}
\end{equation} In summary, we deduce the following result.

\begin{proposition}
    The linear action of $H$ on $\Rep(Q,m,d)$ extends to a linear action of $\widehat{H}= H \rtimes \lambda(\GG_m)$, with $$(h, \lambda(t)) \cdot {\Phi} := {g} \cdot (\lambda(t) \cdot {\Phi})$$ for any lift  ${g} \in G_{m,d}$ of $h \in H = G_{m,d} / \Delta_m$.  
\end{proposition}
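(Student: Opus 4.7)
The approach is to show that everything takes place inside the conjugation action of the ambient group $\widetilde{G}:=\prod_{v \in Q_0}\GL_{(m+1)d_v}(k)$ on the ambient vector space $W$, where both $G_{m,d}$ (via the embedding \eqref{embedd quiver group}) and $\lambda(\GG_m)$ sit as subgroups, and $\Rep(Q,m,d) \subseteq W$ is preserved. From this vantage point the desired action is obtained by restriction and descent.

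First, I would verify that conjugation by $\lambda(\GG_m)$ in $\widetilde{G}$ normalises the image of $G_{m,d}$. This is a direct block-matrix check: conjugation by the block-diagonal matrix $\operatorname{diag}(t^m I, t^{m-1} I,\dots,I)$ sends a matrix whose $(i,j)$-block is $A^{j-i}$ (for $j \geq i$, zero otherwise) to the matrix whose $(i,j)$-block is $t^{j-i} A^{j-i}$. This preserves the upper-triangular Toeplitz form characterising the image of $\GL_{d_v}(A_m)$ under \eqref{embedding quiver with mult gp}, so $\lambda(\GG_m)$ normalises $G_{m,d}$ inside $\widetilde{G}$, and the subgroup $\langle G_{m,d},\lambda(\GG_m)\rangle$ of $\widetilde{G}$ is $G_{m,d}\rtimes \lambda(\GG_m)$.

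Second, the same block-matrix computation applied to the matrices $\Phi_a$ (for $a \in Q_1$) shows that conjugation by $\lambda(t)$ preserves the Toeplitz structure defining $\Rep(Q,m,d) \subset W$, and reproduces formula \eqref{gradingGMaction}. Thus conjugation by $\widetilde{G}$ on $W$ restricts to an algebraic action of $G_{m,d} \rtimes \lambda(\GG_m)$ on $\Rep(Q,m,d)$, which is linear since it comes from a linear representation of $\widetilde{G}$ on $W$.

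Third, I would pass to the quotient by $\Delta_m$. The same block-matrix computation applied to scalar elements shows that conjugation by $\lambda(t)$ preserves $\Delta_m$, so $\Delta_m$ is a normal subgroup of $G_{m,d}\rtimes \lambda(\GG_m)$ and the quotient is naturally $\widehat{H} = H \rtimes \lambda(\GG_m)$. Since $\Delta_m$ acts trivially on any representation of $(Q,m)$ (as recalled before the proposition), the restricted action of $G_{m,d}\rtimes \lambda(\GG_m)$ on $\Rep(Q,m,d)$ factors through $\widehat{H}$, and the induced formula reads exactly $(h,\lambda(t))\cdot\Phi = g\cdot(\lambda(t)\cdot\Phi)$ for any lift $g \in G_{m,d}$ of $h$, with independence of the lift witnessed by triviality of the $\Delta_m$-action. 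The action axioms for $\widehat{H}$ are automatic from its identification as a subquotient of the ambient $\widetilde{G}$-action.

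The main (very mild) obstacle is not a conceptual one but a bookkeeping one: $\lambda(\GG_m)$ does not live inside $G_{m,d}$, so one must work in the ambient $\widetilde{G}$ and carefully check both that the Toeplitz structures defining $G_{m,d} \hookrightarrow \widetilde{G}$ and $\Rep(Q,m,d)\hookrightarrow W$ are preserved by conjugation by $\lambda(\GG_m)$, and that $\Delta_m$ is normal in the resulting semi-direct product so that the quotient makes sense. Once these checks are in place, the statement is a direct consequence of restricting and descending the ambient conjugation action.
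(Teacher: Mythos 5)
Your proof is correct and takes essentially the same approach the paper takes implicitly: the paper gives no formal proof but simply says ``In summary, we deduce the following result,'' and the preceding paragraphs --- the embedding of $G_{m,d}$ and $\Rep(Q,m,d)$ into the ambient matrix group and vector space, and the observation that conjugation by $\lambda(\GG_m)$ preserves the block upper-triangular Toeplitz structure, hence normalises the image of $G_{m,d}$ and preserves $\Rep(Q,m,d)$, yielding formula \eqref{gradingGMaction} --- are precisely the checks you spell out. Your explicit verification that $\Delta_m$ is normal in $G_{m,d}\rtimes\lambda(\GG_m)$ and acts trivially, so the formula is independent of the lift, is a useful detail the paper leaves implicit.
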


We can thus apply \thref{Hexternallygraded}  to obtain the following result.

\begin{proposition} \thlabel{quotient} 
    Fix a character $\rho$ of the Levi $R$ of $H$. If the $\widehat{H}$-action on $V= \Rep(Q,m,d)$ has the property that \ref{Rcondaffine} and \ref{Usscondaffine} hold, then there exists a quasi-projective geometric quotient for the action of $H$ on $$ \Qss{V}{H}(\rho)  = \begin{cases} 
    p_{\min}^{-1} \left( \vmin \setminus \mathcal{N} \right)  & \text{if $\rho$ is trivial} \\
    p_{\min}^{-1} \left( \vmin^{R-ss}(\rho) \right)   & \text{if $\rho$ is non-trivial.}
    \end{cases} $$
\end{proposition}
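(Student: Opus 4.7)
The plan is to deduce this directly from \thref{Hexternallygraded} applied to the externally graded $\widehat{H} = H \rtimes \lambda(\GG_m)$-action on $V = \Rep(Q,m,d)$, once the weight data for $\lambda$ on $V$ is identified. All the non-trivial analytic work has already been carried out in Section \ref{sec:affinegitfromprojgitforH}: the remaining task is to unwind definitions in this particular representation-theoretic set-up.

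First, I would read off the $\lambda(\GG_m)$-weight decomposition of $V$ from the explicit formula \eqref{gradingGMaction}: writing $\Phi_a = \Phi_a^0 + \Phi_a^1\epsilon + \cdots + \Phi_a^m \epsilon^m$, the component $\Phi_a^i \epsilon^i$ has $\lambda$-weight $i$. Hence the weights occurring on $V$ are exactly $0,1,\ldots, m$, all non-negative, so $\omega_{\min} = 0$, and the minimal weight space is
\[ V_{\min} = \prod_{a \in Q_1}\Mat_{d_{t(a)}\times d_{s(a)}}(k) = \Rep(Q,d), \]
with $p_{\min}: V \to V_{\min}$ being the projection sending $\Phi = (\Phi_a)$ to its classical truncation $\phi = (\Phi_a^0)$. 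In particular $V_{\min}$ is exactly the space of classical representations of $Q$ of dimension $d$, and the residual action of $R = \prod_{v \in Q_0} \GL_{d_v}(k)/\Delta_0$ on $V_{\min}$ is the classical action considered in \cite{King1994}.

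Given that \ref{Rcondaffine} and \ref{Usscondaffine} are assumed to hold, I can then invoke \thref{Hexternallygraded} with $\omega_{\min} = 0$. This yields a quasi-projective geometric $H$-quotient of $\justss{V}{H}(\rho)$ with an explicit projective completion $(\hX \times \PP^1)\gitq \widehat{H}$, and it gives the description
\[ \justss{V}{H}(\rho) = \begin{cases} p_{\min}^{-1}\bigl(V_{\min}^{R-ss}(\rho) \setminus \mathcal{N}\bigr) & \text{if } \rho \text{ is trivial,}\\ p_{\min}^{-1}\bigl(V_{\min}^{R-ss}(\rho)\bigr) & \text{if } \rho \text{ is non-trivial.}\end{cases} \]
In the trivial case, $V_{\min}^{R-ss}(0) = V_{\min}$, so the first line simplifies to $p_{\min}^{-1}(V_{\min}\setminus \mathcal{N})$, matching the statement.

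There is essentially no obstacle here beyond bookkeeping: the proof is a direct specialisation of the general affine NRGIT machinery to the externally graded group $\widehat{H}$ and its action on $\Rep(Q,m,d)$. The only point requiring any care is to confirm that the projection $p_{\min}$ coming from \thref{Hexternallygraded} coincides with the classical truncation map $\Phi \mapsto \phi$, which is immediate from \eqref{gradingGMaction}. The substantive input — namely the verification that \ref{Rcondaffine} and \ref{Usscondaffine} can be translated into representation-theoretic conditions on $(Q,m,d)$ — is deferred to \thref{thirdresult} and is not required here.
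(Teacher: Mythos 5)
Your proposal is correct and follows exactly the paper's route: the paper's proof is the one-liner that this follows from \thref{Hexternallygraded} once one observes $\omega_{\min}=0$ from \eqref{gradingGMaction}. Your additional bookkeeping (identifying $V_{\min}\cong\Rep(Q,d)$, $p_{\min}$ as classical truncation, and simplifying $V_{\min}^{R-ss}(0)=V_{\min}$) is a correct expansion of what the paper leaves implicit.
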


\begin{proof}
    This follows from \thref{Hexternallygraded} as we see $\omega_{\min} = 0$ from the $\lambda(\GG_m)$-action described at \eqref{gradingGMaction}.
\end{proof}

\subsection{Stability and moduli space} \label{subsec:stabandms}

\thref{quotient} is only useful if we can interpret in a moduli-theoretic way, i.e.\ in terms of properties intrinsic to representations of quivers with multiplicities, the conditions \ref{Rcondaffine} and \ref{Usscondaffine}, as well as the locus $\Qss{V}{H}(\rho).$ As a first step, we describe $p_{\min}: \vmino \to \vmin$. 

\begin{lemma}[Moduli-theoretic interpretation of $p_{\min}: \vmino \to \vmin$] \thlabel{modinterppmin}
   For ${\Phi} \in V= \Rep(Q,m,d)$, the retraction $p_{\min}(\Phi)$ coincides with the extended classical truncation ${\Phi}^0$ of $\Phi$. Hence ${\Phi}$ lies in $\vmino$ if and only ${\Phi}^0$ is non-zero. Moreover, there is an isomorphism \begin{equation} \vmin \cong \Rep(Q,d). \label{vmin}
    \end{equation}  
\end{lemma}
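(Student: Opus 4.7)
The plan is to simply unwind the definitions and read off the result from the explicit description of the $\lambda(\GG_m)$-action in \eqref{gradingGMaction}. There is no real obstacle here; the content is essentially bookkeeping, matching the $\GG_m$-weight decomposition against the extension-of-scalars functor $\mathcal{R}ep(Q) \to \mathcal{R}ep(Q,m)$.

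First I would use \eqref{gradingGMaction} to identify the $\lambda(\GG_m)$-weight decomposition of $V$. Writing an arbitrary ${\Phi} \in V$ as $\Phi_a = \sum_{i=0}^{m} \Phi_a^i \epsilon^i$, the formula $\lambda(t) \cdot {\Phi} = (\sum_i t^i \Phi_a^i \epsilon^i)_{a \in Q_1}$ shows that the weight-$i$ subspace is the set of tuples whose only non-zero component is in degree $i$. In particular the minimal weight is $\weight_{\min} = 0$, with weight space
\[ \vmin = \left\{ \left({\Phi}_a^0\right)_{a \in Q_1} \;\middle|\; \Phi_a^0 \in \Mat_{d_{t(a)} \times d_{s(a)}}(k) \right\} \cong \Rep(Q,d),\]
giving the isomorphism \eqref{vmin}. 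The projection $p_{\min}: V \to \vmin$ is then the map sending $\Phi = (\sum_i \Phi_a^i \epsilon^i)_{a \in Q_1}$ to $(\Phi_a^0)_{a \in Q_1}$.

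Next I would compare this to the extended classical truncation $\Phi^0$ from \thref{underlyingclassicalrep}. The $0$-truncation functor $\mathcal{R}ep(Q,m) \to \mathcal{R}ep(Q)$ is induced by $A_m \to A_m/(\epsilon) = k$, which on a matrix $\sum_i \Phi_a^i \epsilon^i$ simply returns $\Phi_a^0$; composing with extension of scalars $\mathcal{R}ep(Q) \to \mathcal{R}ep(Q,m)$ re-embeds this into $\Rep(Q,m,d)$ as the tuple with $\Phi_a^0$ in degree $0$ and zeros elsewhere. Under the identification $\vmin \hookrightarrow V$, this is exactly $p_{\min}({\Phi})$. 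Hence $p_{\min}({\Phi}) = {\Phi}^0$.

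Finally, by \thref{xominandzmin} we have $\vmino = \{{\Phi} \in V \mid p_{\min}({\Phi}) \neq 0\}$, which under the identification above is the locus where $(\Phi_a^0)_{a \in Q_1} \neq 0$, i.e.\ where the classical truncation (equivalently the extended classical truncation) is non-zero. This completes the proof.
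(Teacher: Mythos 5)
Your proof is correct and takes essentially the same route as the paper: read off the $\lambda(\GG_m)$-weight decomposition from \eqref{gradingGMaction}, identify $\vmin$ with $\Rep(Q,d)$ as the weight-zero subspace, and observe that $p_{\min}$ is the degree-zero truncation, hence agrees with the extended classical truncation. The only difference is a level of detail — you spell out the full weight-space decomposition whereas the paper goes straight to $\vmin$ — but the argument is the same.
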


\begin{proof} 
By equation \eqref{gradingGMaction}, the minimal weight for the $\lambda(\GG_m)$-action on $V$ is $\omega_{\min} =0$ and thus $$\vmin = \left\{\Phi \in V \ | \ \Phi_a^i = 0 \text{ for all $i > 0$ and all $a \in Q_1$} \right\} = \{ \Phi \in V \ | \ \Phi = \Phi^0 \} \cong \Rep(Q,d).$$ From this description we see that the truncation map $V = \Rep(Q,m,d) \to \Rep(Q,d)$ coincides with $p_{\min}$ and $ \vmino = \left\{ \Phi \in V \ | \ \Phi_a^0 \neq 0 \text{ for some $a \in Q_1$} \right\}$, which completes the proof.
\end{proof}

We recall some standard definitions for representations of $Q$ (without multiplicities). A \emph{stability parameter} for $d$-dimensional representations of $Q$ is a tuple $\rho = (\rho_v)_{v \in Q_0}$ of integers satisfying $\sum_{v \in Q_0} \rho_v d_v = 0$; this defines a character $\rho$ of the fixed Levi subgroup $R$ of $H$ with $\rho(\Delta_1) = 1$ as in \cite{King1994}. Moreover King defines a $d$-dimensional representation $\phi$ of $Q$ to be $\rho$-\emph{semistable} (respectively $\rho$-\emph{stable}) if $\rho(\phi'):= \sum_{v \in Q_0} \rho_v \dim \phi'_v \geq 0$ (respectively \ $\rho(\phi') >0$) for all proper subrepresentations $\phi' \subseteq \phi$. A representation of $Q$ of dimension $d$ is \emph{nilpotent} if its restriction to every cycle in $Q$ is a nilpotent endomorphism. Equivalently, $\phi$ is nilpotent if there is a natural number $N$ such that $\phi$ restricted to any length $N$ path is zero. In particular, if $Q$ is acyclic (without oriented cycles), all representations are nilpotent. Recall that given a representation $\Phi$ of $(Q,m)$, we denote by $\phi$ its classical truncation (see \thref{underlyingclassicalrep}). 

\begin{lemma}[Moduli-theoretic interpretation of \ref{Rcondaffine}] \thlabel{modinterpR}
    For ${\Phi} \in \vmin \cong \Rep(Q,d)$, we have: \begin{enumerate}[(i)]
    \item ${\Phi} \in\vmin^{R-(s)s}(\rho)$ if and only if ${\phi}$ is $\rho$-(semi)stable; \label{part1}
    \item ${\Phi} \in \mathcal{N}$ if and only if ${\phi}$ is nilpotent; \label{part2} 
    \item if $\rho= {0}$ then ${\Phi}$ is $\rho$-stable if and only if ${\phi}$ is irreducible (has no proper subrepresentations).  \label{part3}
    \end{enumerate} As a result, the condition \ref{Rcondaffine} holds for the $\widehat{H}$-action on $V$ twisted by $\rho$ if and only if: \begin{enumerate}
    \item[\emph{Case}] $\rho \neq 0$: $\rho$-semistability coincides with $\rho$-stability for representations of $Q$ of dimension $d$,
    \item[\emph{Case}] $\rho = 0$: any non-simple representation of $Q$ of dimension $d$ is nilpotent. 
    \end{enumerate} 
\end{lemma}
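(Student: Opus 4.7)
The plan is to reduce all three parts to classical facts about representations of $Q$ by identifying $V_{\min}$ with $\Rep(Q,d)$ and the residual $R$-action on it with the standard conjugation action. The consequence about \ref{Rcondaffine} then follows by plugging (i)--(iii) into its definition, using that $\omega_{\min}=0$ as noted in \thref{modinterppmin}.

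The first step is the identification of the action on $V_{\min}$. By \thref{modinterppmin}, $V_{\min} \cong \Rep(Q,d)$ with $p_{\min}(\Phi)$ corresponding to the classical truncation $\phi$; since $V_{\min}$ is the $0$-weight space for the externally grading $\lambda(\GG_m)$, this subgroup acts trivially on $V_{\min}$, so the residual action of $\widehat{H}$ factors through $R = G_d/\Delta_0$ and coincides with the classical conjugation action (the diagonal scalars $\Delta_0$ act trivially on $\Rep(Q,d)$ in any case). Under the constraint $\sum_v \rho_v d_v = 0$, the stability parameter descends to a genuine character of $R$. Part (i) is then an immediate application of King's theorem \cite{King1994}.

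For (ii), I would use that the null cone for $R$ on $V_{\min}$ agrees with the null cone for $G_d$ on $\Rep(Q,d)$, since the respective invariant rings coincide. The classical description of this null cone as the nilpotent locus follows either from Le Bruyn--Procesi (the $G_d$-invariants are generated by traces along oriented cycles, whose vanishing characterises nilpotence) or directly via the Hilbert--Mumford criterion (a destabilising $1$-PS corresponds to a $\ZZ$-grading on the vertex spaces along which arrows strictly increase the grading, forcing nilpotence along cycles). For (iii) with $\rho = 0$, every representation is trivially $\rho$-semistable, and $\rho$-stability amounts to a closed $R$-orbit with finite stabiliser; the closed orbit representatives are the semisimples $\phi \cong \bigoplus_i S_i^{n_i}$, whose $G_d$-stabilisers $\prod_i \GL_{n_i}(k)$ become finite modulo $\Delta_0$ exactly when $\phi$ is a single simple.

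Finally, to deduce the characterisation of \ref{Rcondaffine}: since $\omega_{\min}=0$, the condition demands the full equality $V_{\min}^{R-ss}(\rho) = V_{\min}^{R-s}(\rho)$ when $\rho \neq 0$, and the inclusion $V_{\min}^{R-ss}(0) \setminus \mathcal{N} \subseteq V_{\min}^{R-s}(0)$ when $\rho = 0$. By (i), the former translates to the coincidence of $\rho$-semistability and $\rho$-stability for classical $d$-dimensional representations of $Q$; by (ii) and (iii), the latter translates to the statement that every non-simple classical representation is nilpotent. The only subtleties to verify are that the induced $R$-action on $V_{\min}$ is indeed the classical one and that King's criterion applies to $R$ rather than $G_d$; both reduce to the triviality of the $\Delta_0$-action, which should be routine.
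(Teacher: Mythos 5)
Your proposal is correct and follows essentially the same route as the paper: identify $V_{\min}\cong\Rep(Q,d)$ with its residual $R$-action, cite King's Proposition 3.1 for (i), invoke the Le Bruyn--Procesi description of the invariant ring (traces along oriented cycles) for (ii), and substitute into $[R]_{\mathrm{aff}}$ using $\omega_{\min}=0$. The only minor deviation is in (iii), where you argue via closed $R$-orbits, semisimple decomposition and stabiliser dimensions, whereas the paper simply observes that for $\rho=0$ King's inequality $\rho(\phi')>0$ fails for every proper nonzero subrepresentation, so $\rho$-stability is precisely the absence of such subrepresentations; both arguments are valid and equally short.
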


\begin{proof}
   Using the isomorphism \eqref{vmin} and the fact that $R = \prod_{v \in Q_0} \GL_{d_v}(k)/\Delta_0$, \eqref{part1} is \cite[Prop 3.1]{King1994}. For \eqref{part2}, recall that if $\pi: \vmin \to \vmin \gitq R$ is the affine GIT quotient, the null cone $\mathcal{N}:= \pi^{-1}(\overline{0})$ is the locus of points where all non-constant invariants vanish. Since the $R$-invariants on $\vmin \cong \Rep(Q,d)$ are generated by traces along oriented cycles  \cite{Lebruyn1990}, the null cone precisely consists of nilpotent quiver representations. Statement \eqref{part3} follows immediately from the definition of $\rho$-stability. The final statement follows immediately from these claims.
\end{proof}

We now give a moduli-theoretic interpretation of the assumption \ref{Usscondaffine} in \thref{quotient}. 

\begin{lemma}[Moduli-theoretic interpretation of \ref{Usscondaffine}] \thlabel{modinterpUss}
    The condition \ref{Usscondaffine} holds for the action of $\widehat{H}$ on $V$ twisted by $\rho$ if and only if every rank $d$ representation ${\Phi}$ of $(Q,m)$ for which $\phi$ is $\rho$-semistable has the property that $\Aut_{(Q,m)}(\Phi^0) / \Delta_m$ is reductive.  
\end{lemma}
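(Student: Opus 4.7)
The plan is to reduce the question to a reductivity criterion for the stabiliser of $\Phi^0$ inside $H$. By \thref{setup}, $\Aut_{(Q,m)}(\Phi^0)=\Stab_{G_{m,d}}(\Phi^0)$, and since $\Delta_m$ lies in every stabiliser, we obtain an identification $\Aut_{(Q,m)}(\Phi^0)/\Delta_m=\Stab_H(\Phi^0)$, where $H=G_{m,d}/\Delta_m=U\rtimes R$.

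First I would show that $\Stab_U(\Phi^0)=\Stab_H(\Phi^0)\cap U$ is precisely the unipotent radical of $\Stab_H(\Phi^0)$. It is normal and unipotent in $\Stab_H(\Phi^0)$: normal because $U$ is normal in $H$, and unipotent because it is a subgroup of $U$. Conversely, $\Stab_H(\Phi^0)/\Stab_U(\Phi^0)$ embeds into $H/U=R$, which is reductive, so the quotient is reductive and $\Stab_U(\Phi^0)$ therefore contains the unipotent radical of $\Stab_H(\Phi^0)$. Consequently, $\Aut_{(Q,m)}(\Phi^0)/\Delta_m$ is reductive if and only if $\Stab_U(\Phi^0)=\{e\}$.

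Next I would translate this into the statement of the lemma. From the formula \eqref{gradingGMaction} we have $\omega_{\min}=0$, so the condition \ref{Usscondaffine} requires $\Stab_U(v)=\{e\}$ for all $v\in V_{\min}^{R-ss}(\rho)$ when $\rho$ is non-trivial, and for all $v\in V_{\min}^{R-ss}(\rho)\setminus\mathcal{N}$ when $\rho$ is trivial. Under the identification $V_{\min}\cong \Rep(Q,d)$ from \thref{modinterppmin}, points of $V_{\min}$ coincide with their extended classical truncations $\Phi^0$, and by \thref{modinterpR} these subsets correspond respectively to $\rho$-semistable, and to $\rho$-semistable non-nilpotent, classical representations. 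Since $\Aut_{(Q,m)}(\Phi^0)$ depends only on the classical truncation $\phi$, quantifying over all rank $d$ representations $\Phi$ with $\phi$ $\rho$-semistable is equivalent to quantifying over the relevant subset of $V_{\min}$, yielding the desired equivalence.

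The most delicate point I expect is identifying $\Stab_U(\Phi^0)$ with the entire unipotent radical of $\Stab_H(\Phi^0)$, but this follows cleanly from the Levi decomposition argument above. A minor subtlety arises in the trivial $\rho$ case, where \ref{Usscondaffine} is only imposed on the non-nilpotent locus; this is reconciled via the companion assumption \ref{Rcondaffine}, which forces non-simple representations to be nilpotent when $\rho=0$, so that $\rho$-semistable non-nilpotent representations are exactly the simple ones.
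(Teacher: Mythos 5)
Your setup — identifying $\Aut_{(Q,m)}(\Phi^0)/\Delta_m$ with $\Stab_H(\Phi^0)$ via \thref{setup} and translating $[U;ss]_{\mathrm{aff}}$ through $\vmin \cong \Rep(Q,d)$ — is sound and matches the paper's framing. The difficulty is the central step, where you assert that $\Stab_U(\Phi^0)$ is the entire unipotent radical of $\Stab_H(\Phi^0)$.

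Your justification is that $\Stab_H(\Phi^0)/\Stab_U(\Phi^0)$ embeds into $H/U = R$, which is reductive, ``so the quotient is reductive.'' This inference is false: a subgroup of a reductive group need not be reductive. The unipotent subgroup $\GG_a \subset \SL_2$ is the basic counterexample, and in the present setting the image of $\Stab_H(\Phi^0)$ in $R = \prod_v \GL_{d_v}(k)/\Delta_0$ is $\Aut_Q(\phi)/\GG_m$, which fails to be reductive for a non-Schur representation $\phi$. What your argument actually shows is only that $\Stab_U(\Phi^0)$ is a normal unipotent subgroup of $\Stab_H(\Phi^0)$, hence contained in its unipotent radical; this yields one implication of the biconditional (``$\Aut_{(Q,m)}(\Phi^0)/\Delta_m$ reductive $\Rightarrow \Stab_U(\Phi^0)=\{e\}$'') but not the converse.

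The converse is exactly where the paper's proof brings in a nontrivial ingredient: it cites \cite[Prop 2.23]{Hoskins2021}, which identifies the unipotent radical of $\Aut_{(Q,m)}(\Phi^0)$ with $\Stab_{U_{m,d}}(\Phi^0)$ under the isomorphism of \thref{setup}. That result exploits the grading 1PS fixing $\Phi^0 \in \vmin$ and is not a formal consequence of the Levi decomposition of $H$. If you wish to avoid the citation, a direct computation also closes the gap: since $\Phi^0$ has vanishing higher $\epsilon$-components, one has $\Stab_{G_{m,d}}(\Phi^0) \cong \Aut_Q(\phi) \ltimes \bigl(\operatorname{End}_Q(\phi)\bigr)^m$ with $\Stab_{U_{m,d}}(\Phi^0) \cong \bigl(\operatorname{End}_Q(\phi)\bigr)^m$, so for $m\geq 1$ the condition $\Stab_U(\Phi^0)=\{e\}$ forces $\operatorname{End}_Q(\phi)$ to be the scalars, whence $\Aut_Q(\phi)=\GG_m$, $\Stab_{G_{m,d}}(\Phi^0)=\Delta_m$, and $\Stab_H(\Phi^0)$ is trivial (in particular reductive). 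Without one of these inputs your proof does not close.

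A smaller point: to reconcile the quantifier ranges in the trivial-$\rho$ case you appeal to $[R]_{\mathrm{aff}}$, but that condition is not a hypothesis of the lemma and so cannot be used in its proof.
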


\begin{proof} 
This follows from \thref{setup} and the fact that the unipotent radical of $\Aut_{(Q,m)}(\Phi^0)$ is isomorphic to $\Stab_{U_{m,d}}(\Phi^0)$ under the isomorphism of \thref{setup}, by \cite[Prop 2.23]{Hoskins2021}. 
\end{proof} 

The trivial representation $\Phi$ of $(Q,m)$ satisfies $\Phi=\Phi^0$ and $\phi = 0$. If $\rho =0$  then $\phi$ is $\rho$-semistable as a representation of $Q$. However the automorphism group of $\Phi^0$ is all of $G_{d,m} / \Delta_m$, which is not reductive if $m > 0$. Therefore \ref{Usscondaffine} can never hold when $\rho =0$ and $m >0$. For this reason we assume from here on that $\rho \neq 0$. 

Given the above moduli-theoretic descriptions of the conditions in \thref{quotient}, we formulate the following notion of $\rho$-stability for representations of quivers with multiplicities coming from the NRGIT notion of stability. We compare it to a Rudakov notion of stability in Section \ref{subsec:comparisonrudakov}.

\begin{definition}[Stability for representations of quivers with multiplicities] \thlabel{stabwithmult}
  Fix a quiver with multiplicity $(Q,m)$, rank vector $d$, and a non-zero stability parameter $\rho = (\rho_v)_{v \in Q_0}$. Then a representation ${\Phi}$ of $(Q, m)$ of rank $d$ is \emph{NRGIT $\rho$-stable} if: \begin{enumerate}[(i)]
  \item its classical truncation ${\phi}$ is $\rho$-stable; \label{firstcond}
  \item its \emph{non-trivial automorphism group} $\Aut_{(Q,m)}({\Phi}^0) / \Delta_m$ is reductive.  \label{secondcond} 
  \end{enumerate}
\end{definition}

Based on the above, \thref{quotient} can be reformulated in the following way.

\begin{theorem} \thlabel{maintheoremquivers}
   Fix a quiver with multiplicity $(Q,m)$, rank vector $d$, and a non-zero stability parameter $\rho = (\rho_v)_{v \in Q_0}$, and suppose that:  \begin{enumerate}[(i)]
       \item  any $\rho$-semistable $d$-dimensional representation of $Q$ is $\rho$-stable;
       \label{assumption1}
       \item any $\rho$-semistable $d$-dimensional representation of $Q$, viewed as a representation of $(Q,m)$ via extension of scalars, has a reductive non-trivial automorphism group. \label{assumption2}
    \end{enumerate} 
    Then there exists a quasi-projective coarse moduli space $\mathcal{M}^{\rho-s}(Q,m,d)$ for NRGIT $\rho$-stable representations of $(Q,m)$ of rank $d$, which moreover comes with an explicit projective completion. 
\end{theorem}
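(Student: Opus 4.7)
The plan is to translate the moduli-theoretic hypotheses \eqref{assumption1thm} and \eqref{assumption2thm} into the NRGIT stabiliser conditions \ref{Rcondaffine} and \ref{Usscondaffine} for the linear $\widehat{H}$-action on $V = \Rep(Q,m,d)$ (where $H = G_{m,d}/\Delta_m$ and $\widehat{H} = H \rtimes \lambda(\GG_m)$), and then to invoke \thref{quotient} to produce the desired quotient. By \thref{modinterpR}, since $\rho$ is non-zero, condition \ref{Rcondaffine} is equivalent to the statement that $\rho$-semistability and $\rho$-stability coincide for $d$-dimensional representations of $Q$, which is precisely \eqref{assumption1thm}. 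By \thref{modinterpUss}, condition \ref{Usscondaffine} holds if and only if every rank $d$ representation $\Phi$ of $(Q,m)$ whose classical truncation $\phi$ is $\rho$-semistable satisfies that $\Aut_{(Q,m)}(\Phi^0)/\Delta_m$ is reductive. Because $\Aut_{(Q,m)}(\Phi^0)$ depends only on $\Phi^0$, hence only on $\phi$, this condition depends only on $\phi$, and under \eqref{assumption1thm} it is precisely \eqref{assumption2thm}.

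With both conditions in place, \thref{quotient} yields a quasi-projective geometric quotient of $\Qss{V}{H}(\rho) = p_{\min}^{-1}(\vmin^{R-ss}(\rho))$ by $H$, with an explicit projective completion coming from \thref{Hexternallygraded} (i.e.\ the projective NRGIT quotient of $\widehat{X} \times \PP^1$ by $\widehat{H}$ since $\omega_{\min} = 0$). Combining \thref{modinterppmin} and \thref{modinterpR} \eqref{part1}, this semistable locus coincides with the set of $\Phi \in \Rep(Q,m,d)$ whose classical truncation $\phi$ is $\rho$-semistable, and by \eqref{assumption1thm} this in turn equals those $\Phi$ with $\phi$ being $\rho$-stable. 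Under \eqref{assumption2thm}, every such $\Phi$ automatically satisfies condition (ii) of \thref{stabwithmult}, so $\Qss{V}{H}(\rho)$ is exactly the locus of NRGIT $\rho$-stable representations. By \thref{setup}, $G_{m,d}$-orbits (equivalently $H$-orbits, since $\Delta_m$ acts trivially) correspond to isomorphism classes of representations of $(Q,m)$, so the geometric quotient is a coarse moduli space $\mathcal{M}^s_\rho(Q,m,d)$, and its projective completion is inherited from the NRGIT construction.

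For the final assertion about the toric case $d=(1,\dots,1)$ with $\rho$ generic, I will verify \eqref{assumption1thm} and \eqref{assumption2thm} directly. For \eqref{assumption1thm}, genericity means $\rho$ lies outside the finite collection of walls in the space of stability conditions determined by dimension subvectors $d' \lneq d$ with $\sum \rho_v d'_v = 0$; on these chambers King's numerical criterion forces $\rho$-semistability and $\rho$-stability to coincide for dimension-$d$ representations of $Q$. For \eqref{assumption2thm}, if $\phi$ is $\rho$-stable then $\Aut_Q(\phi) = k^\ast = \Delta_0$, because a stable object in an abelian category has scalar endomorphisms. Extending $\phi$ to $\Phi^0$ over $(Q,m)$, one computes that an automorphism of $\Phi^0$ in $\prod_v \GL_1(A_m) = \prod_v A_m^\ast$ must commute with every arrow via scalar multiplication; because the underlying classical automorphisms are forced to be scalar and the higher-order parts of an arrow vanish in $\Phi^0$, a direct calculation shows $\Aut_{(Q,m)}(\Phi^0) = \Delta_m$, so that $\Aut_{(Q,m)}(\Phi^0)/\Delta_m$ is trivial and in particular reductive.

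The main obstacle I expect is the verification in the toric case that $\Aut_{(Q,m)}(\Phi^0)/\Delta_m$ is trivial (or at least reductive): one must carefully analyse how the higher-order $\epsilon$-components of an automorphism tuple $g = (g_v) \in \prod_v A_m^\ast$ interact with the arrows $\Phi^0_a \in \Mat_{1\times 1}(A_m)$ (which have only a degree-zero part). The condition $g_{t(a)} \Phi^0_a = \Phi^0_a g_{s(a)}$ in $A_m$ reduces, upon expanding in powers of $\epsilon$, to a system of linear equalities on the higher components $g^i_v$ along paths of $Q$. Using that $\phi$ is $\rho$-stable, and hence irreducible as a representation of $Q$ in the toric case once one quotients by scalars, one deduces that every such higher component is forced to be a single scalar across all vertices, i.e.\ lies in $\Delta_m$; this is the technical heart of the argument and requires a careful inductive analysis on the order in $\epsilon$.
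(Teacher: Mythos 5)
Your proof of \thref{maintheoremquivers} (the first two paragraphs) is correct and follows the paper's argument exactly: translate hypotheses (i) and (ii) into \ref{Rcondaffine} and \ref{Usscondaffine} via \thref{modinterpR,modinterpUss}, apply \thref{quotient} (with the projective completion coming from \thref{Hexternallygraded}, since $\omega_{\min}=0$), identify $\Qss{V}{H}(\rho)=p_{\min}^{-1}(\vmin^{R-ss}(\rho))$ with the locus of NRGIT $\rho$-stable representations using \thref{modinterppmin,modinterpR}, and conclude via \thref{setup}. Your final two paragraphs treat the toric case, which is not part of the statement of \thref{maintheoremquivers} (it is the separate \thref{toric1}), so they are superfluous here.
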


\begin{proof}
    By \thref{modinterpR,modinterpUss}, assumptions \eqref{assumption1} and \eqref{assumption2} are equivalent to \ref{Rcondaffine} and \ref{Usscondaffine} respectively. Thus by \thref{quotient} there exists a quasi-projective geometric quotient for the $H$-action on $\Qss{V}{H}(\rho)$ with projective completion $(\hX \times \PP^1) \gitq \widehat{H}$. Since $\rho \neq 0$, we have $\Qss{V}{H}(\rho) = p_{\min}^{-1}(\vmin^{R-ss}(\rho))$ by \thref{quotient}. Then combining \thref{modinterppmin,modinterpR,modinterpUss} we see that ${\Phi}$ lies in $\Qss{V}{H}(\rho)$ if and only if ${\phi}$ is $\rho$-stable. It follows then from \thref{setup} that the geometric quotient $\Qss{V}{H}(\rho)$ is a coarse moduli space for $\rho$-stable rank $d$ representations of $(Q,m)$. 
\end{proof}

It is natural to ask when the assumptions of \thref{maintheoremquivers} are satisfied; Section \ref{subsec:toric} will show that these assumptions are satisfied for toric representations when the stability condition is generic. Before turning to the toric case, we first compare the notion of NRGIT $\rho$-stability from \thref{stabwithmult} to another natural definition of $\rho$-stability.

\subsection{Comparison with Rudakov $\rho$-stability} \label{subsec:comparisonrudakov} For representations of quivers with multiplicities, there is another natural definition of stability, closer to King's definition. This definition is obtained from Rudakov's general formulation of stability in an abelian category, involving verifying an inequality for all subrepresentations.

\begin{definition}[Rudakov $\rho$-stability]
Fix a stability condition $\rho$. A representation $\Phi$ of $(Q,m)$ is \emph{Rudakov $\rho$-stable} if for all proper subrepresentations $\Phi'$ of $\Phi$, the inequality $\rho(\Phi') >0$ holds.
\end{definition} 

We now compare Rudakov $\rho$-stability to NRGIT $\rho$-stability.

\begin{proposition}[NRGIT $\rho$-stability implies Rudakov $\rho$-stability] \thlabel{lem comparing two stab defs for quivers}
   If a representation $\Phi$ of $(Q,m)$ has the property that its classical truncation $\phi$ is $\rho$-stable, then $\Phi$ is Rudakov $\rho$-stable. In particular, if $\Phi$ is NRGIT $\rho$-stable then $\Phi$ is Rudakov $\rho$-stable. 
\end{proposition}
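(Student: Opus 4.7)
The plan is to apply \thref{lem truncsubrep} directly. Let $\Phi' \subseteq \Phi$ be a proper non-zero subrepresentation, and write $\phi'$ for its classical truncation. By \thref{lem truncsubrep}, $\phi'$ is a subrepresentation of $\phi$ via the map of $Q$-representations obtained by applying $-\otimes_{A_m}k$ to the inclusion $\Phi' \hookrightarrow \Phi$.

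I would then check that $\phi' \subseteq \phi$ is itself proper and non-zero. Non-triviality is Nakayama's lemma applied to the non-zero finitely generated $A_m$-module $\Phi'$: the quotient $\phi' = \Phi'/\epsilon\Phi'$ cannot vanish. For properness, suppose for contradiction that $\phi' = \phi$; then the composition $\Phi' \to \phi$ is surjective, so $\Phi' + \epsilon\Phi = \Phi$, and Nakayama applied to $\Phi/\Phi'$ (which is finitely generated over the local ring $A_m$) forces $\Phi' = \Phi$, contradicting properness.

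With $\phi'$ a proper non-zero subrepresentation of $\phi$, the assumed $\rho$-stability of $\phi$ yields $\rho(\phi')>0$. Using the natural extension of $\rho$ to $(Q,m)$-representations $\Psi$ via $\rho(\Psi):=\sum_{v\in Q_0}\rho_v\dim_k(\Psi_v\otimes_{A_m}k)$, which reduces to the classical King definition on the truncation, we obtain $\rho(\Phi')=\rho(\phi')>0$, establishing Rudakov $\rho$-stability. The ``In particular'' clause is then immediate, since NRGIT $\rho$-stability of $\Phi$ already includes $\rho$-stability of $\phi$ as its first defining condition in \thref{stabwithmult}. The only conceptual input is \thref{lem truncsubrep}; the remainder is routine Nakayama bookkeeping together with a careful matching of conventions for $\rho$ on $(Q,m)$-representations, which is the one point I would be most vigilant about but expect to be straightforward once spelled out.
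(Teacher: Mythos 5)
Your proof is correct and takes essentially the same approach as the paper, which gives a terse contrapositive argument from \thref{lem truncsubrep}. You additionally spell out, via Nakayama's lemma over the local ring $A_m$, why non-triviality and properness of the subrepresentation pass to the classical truncation, and you make explicit the convention $\rho(\Phi')=\sum_v\rho_v\dim_k(\Phi'_v\otimes_{A_m}k)$ for possibly non-locally-free $(Q,m)$-subrepresentations — points the paper leaves implicit.
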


\begin{proof}
By \thref{lem truncsubrep}, if $\Phi$ is not Rudakov $\rho$-stable and destabilised by $\Psi \subseteq \Phi$, then the classical truncation $\Psi^0 \subseteq \Phi^0$ is a subrepresentation destabilising $\Phi^0$.  
\end{proof}

The converse implication is not necessarily true, we will see a counterexample  in Section \ref{subsubsec:workedexample}. This same example can also be used to show that although the $\rho$-stable representations admit a coarse mdoduli space, the Rudakov $\rho$-stable representations may not. Thus NRGIT should be viewed as providing a stronger stability condition than that of Rudakov $\rho$-stability, having the property that a coarse moduli space can be constructed for the smaller stable locus.

\subsection{Toric representations} \label{subsec:toric} 
A \emph{toric representation (with multiplicities)} is a locally free representation of $(Q,m)$ with rank vector ${d}=(1,\hdots, 1)$, as studied in \cite{Wyss2017,Hausel2018,Vernet2023}. 
In this section we show that if $\rho$ is generic (see \thref{genericstab}), then the assumptions of \thref{quotient} hold for the action of $\widehat{H}$ on $V = \Rep(Q,m,d)$ twisted by $\rho$, giving a coarse moduli space for $\rho$-stable  representations of $(Q,m)$ of rank $d$. We also include a simple worked example. A natural question is whether  

\subsubsection{Characterising $\rho$-stability and constructing a moduli space for toric representations}

To describe when $\Aut_{(Q,m)}({\Phi^0})/ \Delta_m$ is reductive for representations $\Phi = \Phi^0$ (the only case under which we will interpret this assumption), we introduce the support quiver of a representation (defined as a subquiver of $Q$), which is closely related to the invariant introduced in \cite[$\S$7.1]{Hausel2018}. 

\begin{definition}[Graph of a representation]
    The \emph{support quiver} of a representation ${\Phi}$ of $(Q,m)$ is the subquiver $\Gamma({\Phi})$ of $Q$ with vertices $Q_0$ and arrows given by those $a \in Q_1$ such that the classical truncation $\phi_a$ is non-zero. 
\end{definition}

Note that $\Gamma(\Phi)  = \Gamma(\Phi^0) = \Gamma(\phi) $. 

\begin{proposition} \thlabel{reinterpretation}
    The support quiver of a toric representation ${\Phi}$ of $(Q, m)$ is connected if and only if  $\Aut_{(Q,m)} ( {\Phi^0}) / \Delta_m$ is reductive.
\end{proposition}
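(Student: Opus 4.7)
The plan is to compute $\Aut_{(Q,m)}(\Phi^0)$ explicitly in the toric setting, identify the image of $\Delta_m$, and then read off the unipotent radical of the quotient.

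First, by \thref{setup}, $\Aut_{(Q,m)}(\Phi^0) \cong \Stab_{G_{m,d}}(\Phi^0)$. Since $d=(1,\dots,1)$, an element of $G_{m,d}$ is a tuple $(g_v)_{v \in Q_0} \in \prod_{v\in Q_0} A_m^\times$, and since $\Phi^0$ is the extension of scalars of $\phi$, for each arrow $a \in Q_1$ the matrix $\Phi^0_a$ is simply the scalar $\phi_a \in k \subseteq A_m$. Because $A_m$ is commutative, the fixing condition $g_{t(a)}\Phi^0_a g_{s(a)}^{-1} = \Phi^0_a$ becomes
\[
\phi_a\bigl(g_{t(a)} - g_{s(a)}\bigr) = 0 \quad \text{for all } a \in Q_1.
\]
If $a \notin \Gamma(\Phi)$ then $\phi_a = 0$ and this is automatic; if $a \in \Gamma(\Phi)$ then $\phi_a \in k^\times \subset A_m^\times$ is a unit, hence not a zero-divisor, forcing $g_{t(a)} = g_{s(a)}$. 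Therefore automorphisms of $\Phi^0$ correspond to functions $Q_0 \to A_m^\times$ that are constant on connected components of $\Gamma(\Phi)$. Writing $r$ for the number of such components yields
\[
\Aut_{(Q,m)}(\Phi^0) \cong (A_m^\times)^r.
\]

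Next, I would identify $\Delta_m$ inside this product. By its defining diagonal embedding, $\Delta_m \subseteq G_{m,d}$ sends $g \in A_m^\times$ to $(g,\dots,g)$, and under the identification above this maps onto the diagonal copy of $A_m^\times$ inside $(A_m^\times)^r$. Because $A_m^\times$ is abelian, choosing any component $C_i$ and dividing coordinates by the value on $C_i$ gives a (non-canonical) isomorphism
\[
\Aut_{(Q,m)}(\Phi^0)/\Delta_m \;\cong\; (A_m^\times)^{r-1}.
\]

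Finally, I would identify the unipotent radical. The short exact sequence $1 \to 1+\epsilon A_m \to A_m^\times \to \GG_m \to 1$ splits via the inclusion $k^\times \hookrightarrow A_m^\times$, and since the conjugation action is trivial (commutativity), this yields a direct product decomposition $A_m^\times \cong \GG_m \times (1+\epsilon A_m)$, where $1 + \epsilon A_m$ is a connected unipotent group of dimension $m$ (filtered by the subgroups $U_i := 1 + \epsilon^i A_m$ with successive quotients $\GG_a$). Consequently, the unipotent radical of $(A_m^\times)^{r-1}$ is $(1+\epsilon A_m)^{r-1}$, of dimension $(r-1)m$. Under the standing assumption $m \geq 1$ for representations of quivers with nontrivial multiplicities, this radical is trivial if and only if $r = 1$, i.e.\ if and only if $\Gamma(\Phi)$ is connected, proving the proposition.

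The main obstacle is the bookkeeping in Step~1: one must verify that the only constraints on $(g_v)$ come from arrows in the support quiver, which uses the commutativity of $A_m$ together with the crucial fact that $\phi_a \in k^\times$ is not a zero divisor in $A_m$ (this would fail if one instead considered $\Phi$ rather than $\Phi^0$, where $\Phi_a$ could have zero divisors as coefficients of higher powers of $\epsilon$). Once this is established, the remaining steps are routine structural facts about $A_m^\times$.
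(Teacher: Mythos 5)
Your proof is correct, and it takes a somewhat more explicit structural route than the paper's. The paper invokes a result from \cite{Hoskins2021} to identify the unipotent radical of $\Aut_{(Q,m)}(\Phi^0)$ with $\Stab_{U_{m,d}}(\Phi^0)$, then argues in two directions: if $\Gamma(\Phi)$ is connected, arrows in the support force $u_v$ to be globally constant, hence $\Stab_{U_{m,d}}(\Phi^0) \subseteq \Delta_m$; if $\Gamma(\Phi)$ is disconnected, it constructs an explicit element of $\Stab_{U_{m,d}}(\Phi^0)$ outside $\Delta_m$. You instead compute the full stabilizer $\Stab_{G_{m,d}}(\Phi^0) \cong (A_m^\times)^r$ (where $r$ is the number of components), identify $\Delta_m$ as the diagonal, and read off the unipotent radical of the quotient $(A_m^\times)^{r-1}$ from the decomposition $A_m^\times \cong \GG_m \times (1+\epsilon A_m)$. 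The key observation -- that for arrows in the support quiver the fixing condition forces $g_{t(a)} = g_{s(a)}$, because $\phi_a \in k^\times$ is a unit in $A_m$ -- is the same in both. What your version buys is a single uniform computation that handles both directions at once and makes the group structure of the quotient explicit; what the paper's version buys is not needing to compute the full reductive part, only the unipotent stabilizers, which is the natural framing when connecting to the NRGIT condition \ref{Usscondaffine}. Both arguments implicitly require $m \geq 1$ for the ``only if'' direction (you flag this; the paper's converse also tacitly needs two distinct elements in the unipotent radical of $\GL_1(A_m)$, which fails for $m=0$), so this is not a defect specific to your proof.
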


\begin{proof} As seen in the proof of \thref{modinterpUss}, the unipotent radical of $\Aut_{(Q,m)}(\Phi^0)$ is isomorphic to $\Stab_{U_{m,d}}(\Phi^0)$. Therefore $\Aut_{(Q,m)} ( {\Phi^0}) / \Delta_m$ is reductive if and only if $\Stab_{U_{m,d}}(\Phi^0) \subseteq \Delta_m$.      Suppose first that $\Gamma({\Phi})$ is connected, and that $ {u} \cdot {\Phi^0} = {\Phi^0}$ for ${u} \in U_{m,d}$. Then for each arrow $a$ in $\Gamma({\Phi}) = \Gamma(\Phi^0)$ we have an equality of $1 \times 1$ matrices (with coefficients in $A_m$) given by $$ u_{t(a)} \Phi^0_a u_{s(a)}^{-1}  = \Phi^0_a.$$ Since $\Phi^0_a \neq 0$ as $a$ is an edge of $\Gamma({\Phi^0})$, it follows that $u_{t(a)}  = u_{s(a)}$ for each edge of $\Gamma({\Phi^0})$. As the quiver $\Gamma(\Phi^0)$ is connected, we obtain that $u_v = u_{v'}$ for all $v,v' \in Q_0$, so that ${u} \in \Delta_m$ as required.

    To prove the converse, suppose that $\Gamma(\Phi)$ is not connected. Let $\Gamma_1$ be a connected component of $\Gamma(\Phi)$, and $\Gamma_2$ be the union of the remaining connected components, which by assumption is nonempty. Choose two distinct elements $u_1,u_2$ of the unipotent radical of $\GL_1(A_m)$, and let $u =(u_v)_{v \in Q_0} \in U_{m,d}$ be given by \[ u_v = \begin{cases}  u_i \quad\text{if $v$ is a vertex of some $\Gamma_i$ }\\  0  \quad \text{ else}.\end{cases} \] Note that since $u_1 \neq u_2$, we have $u \notin \Delta_m$. Now for each edge $a \in Q_1$ with both $s(a), t(a)$ lying in the same $\Gamma_i$, the equation \[u_{t(a)} \Phi_a^0 u_{s(a)}^{-1} =  \Phi_a^0 \] holds since $u_{t(a)} = u_{s(a)} = u_i$. For all other edges $a \in Q_1$, we have $a \notin \Gamma(\Phi)$ so $\Phi_a^0 =0$. Therefore the above equation also holds holds for such edges. Hence $u \in \Stab_{U_{m,d}}(\Phi^0)$, which gives $\Stab_{U_{m,d}}(\Phi^0) \nsubseteq \Delta_m$. 
\end{proof}

From \thref{reinterpretation} and \thref{stabwithmult} we obtain the following corollary. 
\begin{corollary}[$\rho$-stability for toric representations]
    A toric representation $\Phi$ of $(Q,m)$ is $\rho$-stable if and only if $\phi$ is $\rho$-stable and $\Gamma(\Phi)$ is connected. 
\end{corollary}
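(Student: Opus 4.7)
The plan is to derive the corollary as a direct consequence of the two results it cites. By \thref{stabwithmult}, a representation $\Phi$ of $(Q,m)$ of rank $d$ is NRGIT $\rho$-stable precisely when (i) its classical truncation $\phi$ is $\rho$-stable, and (ii) the quotient $\Aut_{(Q,m)}(\Phi^0)/\Delta_m$ is reductive. The rank vector for toric representations is $d=(1,\dots,1)$, so \thref{reinterpretation} applies and replaces condition (ii) with the combinatorial statement that the support quiver $\Gamma(\Phi^0)$ is connected. Since the support quiver is defined via non-vanishing of the classical truncation $\phi$, we have $\Gamma(\Phi)=\Gamma(\Phi^0)=\Gamma(\phi)$ (as observed immediately after the definition of $\Gamma$). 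Substituting this identification gives the stated equivalence.

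No calculation or case analysis is required, since the two ingredients have already been established. The proof should simply state the two equivalences and combine them in one or two sentences. There is no significant obstacle; the only point worth flagging explicitly is the identification $\Gamma(\Phi)=\Gamma(\Phi^0)$, which ensures that formulating condition (ii) in terms of $\Phi^0$ (as in \thref{stabwithmult}) matches the formulation in terms of $\Phi$ appearing in the corollary.
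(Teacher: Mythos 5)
Your proof is correct and matches the paper's reasoning exactly: the paper states this corollary follows immediately from \thref{reinterpretation} and the definition of NRGIT $\rho$-stability in \thref{stabwithmult}, which is precisely the combination you spell out. The observation that $\Gamma(\Phi)=\Gamma(\Phi^0)=\Gamma(\phi)$ is the right detail to flag, and it is already noted in the paper just after the definition of the support quiver.
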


We now introduce the notion of genericity of a stability parameter for a toric dimension vector, for which the notion of $\rho$-stability is even simpler.

\begin{definition}[Generic toric stability parameter] \thlabel{genericstab}
    A stability parameter $\rho = (\rho_v)_{v \in Q_0}$ is \emph{generic} if $\sum_{v \in Q_0} \rho_v = 0$ but $\sum_{v \in Q_0'} \rho_v \neq 0$ all non-empty subsets $Q_0' \subsetneq Q_0$. 
\end{definition}

 For a generic stability parameter, semistability and stability coincide for toric representations. For an arbitrary dimension vector $d$, there is a similar notion guaranteeing semistability and stability coincide for all $d$-dimensional representations of $Q$ (\cite[Definition 2.1.2]{Crawley2004}).

\begin{proposition}[$\rho$-stability for generic $\rho$] \thlabel{example}
Suppose that $d$ is toric and $\rho$ is generic. Then any $\rho$-semistable representation of $Q$ of dimension $d$ is $\rho$-stable, and moreover its support quiver is connected. Therefore a representation of $(Q,m)$ of rank $d$ is $\rho$-stable if and only if its classical truncation is $\rho$-stable as a representation of $Q$. 
\end{proposition}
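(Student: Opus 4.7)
The plan is to break the proposition into three parts: first show $\rho$-semistability coincides with $\rho$-stability for toric $d$ and generic $\rho$; second show the support quiver of any such representation is connected; third combine these with the earlier results (\thref{reinterpretation}, \thref{stabwithmult}, \thref{modinterpR}) to conclude. Throughout I will exploit the key combinatorial feature of the toric case: a subrepresentation $\phi' \subseteq \phi$ has $\dim \phi'_v \in \{0,1\}$, so it is determined by its vertex support $Q_0(\phi') := \{v \in Q_0 : \phi'_v = k\}$, and $\rho(\phi') = \sum_{v \in Q_0(\phi')} \rho_v$.

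For the first part, I will argue by contradiction: if $\phi$ is $\rho$-semistable but not $\rho$-stable, then there is a non-trivial proper subrepresentation $\phi'$ with $\rho(\phi') = 0$. Its support $Q_0(\phi')$ is a non-empty proper subset of $Q_0$ since $\phi' \neq 0, \phi$, so $\sum_{v \in Q_0(\phi')} \rho_v = 0$ directly contradicts genericity.

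For the second part, I will suppose for contradiction that $\Gamma(\phi)$ is disconnected, and split $Q_0 = Q_0^{(1)} \sqcup Q_0^{(2)}$ into a union of connected components on each side, both non-empty. The key observation is that defining $\phi'$ by $\phi'_v = \phi_v$ for $v \in Q_0^{(1)}$ and $\phi'_v = 0$ otherwise yields an honest subrepresentation of $\phi$: any arrow $a$ joining the two parts has $\phi_a = 0$ by definition of $\Gamma(\phi)$. Symmetrically $\phi''$ supported on $Q_0^{(2)}$ is a subrepresentation, and $\rho(\phi')+\rho(\phi'') = \sum_{v \in Q_0}\rho_v = 0$ because $d=(1,\dots,1)$. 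Semistability forces $\rho(\phi'), \rho(\phi'') \geq 0$, hence both vanish, again contradicting genericity.

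The third part is then essentially a compilation step: assuming $\phi$ is $\rho$-stable, part (2) gives that $\Gamma(\Phi) = \Gamma(\phi)$ is connected, which by \thref{reinterpretation} is equivalent to $\Aut_{(Q,m)}(\Phi^0)/\Delta_m$ being reductive. Combined with the fact that $\phi$ is $\rho$-stable, both conditions of \thref{stabwithmult} are satisfied, so $\Phi$ is NRGIT $\rho$-stable. Conversely NRGIT $\rho$-stability of $\Phi$ directly requires $\phi$ to be $\rho$-stable. I expect no real obstacle: the argument is essentially bookkeeping, with the main subtlety being to correctly check that the vertex-supported subspaces really are subrepresentations (which is immediate in the toric case once one observes that the non-support arrows are exactly those where $\phi_a$ vanishes).
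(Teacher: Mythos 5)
Your proof is correct and follows essentially the same strategy as the paper's. The one genuine difference worth noting is that you explicitly verify the first claim (that $\rho$-semistability implies $\rho$-stability for toric $d$ and generic $\rho$) via the observation that a toric subrepresentation is determined by its vertex support, so $\rho(\phi')$ is a partial sum of the $\rho_v$, which genericity forbids from vanishing on any non-empty proper subset; the paper's proof skips this step as standard and proceeds directly to the connectedness argument. For connectedness, the paper picks a single connected component $\Gamma_i$ with $\sum_j \rho_{v_j^i}<0$ (which exists because the partial sums are non-zero and total to zero) and exhibits it as a destabilising subrepresentation, directly contradicting semistability, whereas you split $Q_0$ into two non-empty unions of components, apply semistability to both sides to force both $\rho$-values to vanish, and then invoke genericity. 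These are two presentations of the same idea; yours needs one extra use of genericity, the paper's needs a small sign argument, and both rely on the same key verification — which you correctly spell out — that a subset of vertices that is a union of connected components of $\Gamma(\phi)$ supports an honest subrepresentation (because non-support arrows have $\phi_a=0$).
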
 

\begin{proof}
    Suppose $\phi$ is a $\rho$-semistable representation of $Q$ of dimension $d$ and for a contradiction suppose that $\Gamma({\phi})$ has several connected components $\Gamma_1, \hdots, \Gamma_n$.
    Let $\{v_1^i, \hdots, v_{m_i}^1\}$ denote the vertices of each $\Gamma_i$. Then $\{v_1^1, \hdots, v_{m_1}^1, v_1^2, \hdots, v_{m_n}^n\} = Q_0$. As $d$ is toric and by assumption we have $\sum_{i,j} \rho_{v_j^i} = 0$, genericity of $\rho$ means there can be no $i \in \{1, \hdots , n\}$ such that $\sum_{j=1}^{m_i} \rho_{v_j^i} = 0$. Therefore there exists an $i$ such that $\sum_{j=1}^{m_i} \rho_{v_j^i} < 0$. If ${\phi}_i$ is the subprepresentation of ${\phi}$ supported on $\Gamma_i$ (that is obtained by setting all homomorphisms except those indexed by edges in $\Gamma_i$ to zero), then $\rho(\phi_i)= \sum_{j=1}^{m_i} \rho_{v_j^i}<0$ contradicts $\rho$-semistability of $\phi$. Hence $\Gamma(\phi)$ must be connected.   
    The final claim follow by definition of $\rho$-stability for representations of $(Q,m)$, where the second condition in \thref{stabwithmult} is equivalent to the support quiver of the representation being connected by \thref{reinterpretation}, which we have just seen is automatic for $\rho$-semistable representations of $Q$.
\end{proof}

\begin{theorem}[$\rho$-stability and coarse moduli spaces for toric representations] \thlabel{toric1}
    Fix a quiver with multiplicity $(Q,m)$, a toric rank vector ${d} = (1,\hdots, 1)$ and a generic stability parameter $\rho$. Then there exists a quasi-projective coarse moduli space for toric representations of $(Q,m)$ whose classical truncation is $\rho$-stable. Moreover, this moduli space admits an explicit projective completion. 
\end{theorem}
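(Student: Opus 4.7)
The plan is to derive this theorem from \thref{maintheoremquivers}, whose two hypotheses are that every $\rho$-semistable $d$-dimensional representation of $Q$ is $\rho$-stable, and that every such representation, viewed as a representation of $(Q,m)$ via extension of scalars, has reductive non-trivial automorphism group. Both are handled by \thref{example} in conjunction with \thref{reinterpretation}, so the proof amounts to assembling those results.

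For the first hypothesis I would directly invoke the first half of \thref{example}: in the toric case, any destabilising subrepresentation would correspond to a proper non-empty subset of vertices whose $\rho$-values sum to zero, contradicting genericity of $\rho$. For the second hypothesis I would combine the second half of \thref{example}, which shows via the same genericity argument applied to each connected component that the support quiver $\Gamma(\phi)$ of any $\rho$-semistable toric $\phi$ is connected, with \thref{reinterpretation}, which asserts that connectedness of $\Gamma(\Phi) = \Gamma(\phi)$ is equivalent to the non-trivial automorphism group $\Aut_{(Q,m)}(\Phi^0)/\Delta_m$ being reductive for any representation $\Phi$ of $(Q,m)$ with classical truncation $\phi$.

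With both hypotheses established, \thref{maintheoremquivers} yields a quasi-projective coarse moduli space for NRGIT $\rho$-stable representations of $(Q,m)$ of rank $d$, together with the explicit projective completion arising as the projective NRGIT quotient $(\widehat{X} \times \PP^1) \gitq \widehat{H}$ from the proof of \thref{quotient}. The final sentence of \thref{example} then identifies the NRGIT $\rho$-stable representations of $(Q,m)$ in this generic toric setting with precisely those whose classical truncation is $\rho$-stable as a representation of $Q$, which is the moduli problem in the statement. There is no substantive obstacle beyond this synthesis: the essential content has already been isolated in \thref{example} and \thref{reinterpretation}, and this final theorem records their joint application to toric representations of quivers with multiplicities.
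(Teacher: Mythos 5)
Your proposal is correct and takes essentially the same route as the paper: the paper's proof is a one-liner that invokes genericity, \thref{example}, and \thref{maintheoremquivers}, and your argument spells out exactly the same chain, merely unpacking how \thref{example} and \thref{reinterpretation} supply the two hypotheses of \thref{maintheoremquivers} and how the final sentence of \thref{example} reconciles NRGIT $\rho$-stability with $\rho$-stability of the classical truncation.
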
 

\begin{proof} 
By genericity of $\rho$ and \thref{example}, both conditions of \thref{maintheoremquivers} are satisfied, so we obtain a a quasi-projective coarse moduli space for toric representations of $(Q,m)$ which are $\rho$-stable, or equivalently their classical trunction is $\rho$-stable (or $\rho$-semistable) by \thref{example}.
 \end{proof} 

\begin{remark}[Link with \cite{Wyss2017,Hausel2018,Vernet2023} and future work]
Toric representations were first studied by Wyss \cite{Wyss2017} and Hausel, Letellier and Rodriguez-Villegas \cite{Hausel2018}, showing that the count $A_{Q,d,m}(q)$ when $d = (1,\hdots, 1)$ of isomorphism classes of absolutely indecomposable toric representations of $(Q,m)$ over the finite field $\mathbb{F}_q$ behaves polynomially in $q$ and conjecturing non-negativity of the coefficients, which was recently proved by Vernet \cite{Vernet2023}. This positivity result for toric representations of a quiver with multiplicities generalises the corresponding result for classical representations and arbitrary dimension vector, conjectured by Kac \cite{Kac80} and proved by Crawley-Boevey and Van den Bergh \cite{Crawley2004} for indivisible dimension vectors and by Hausel, Lettelier and Rodriguez-Villegas \cite{Hausel2013} in general. Both proofs interpret the Kac polynomial in terms of the cohomology of an associated quiver variety (parametrising representations of the preprojective algebra of the doubled quiver). Davison \cite{Davison2023} reproved this result using the cohomology of an associated preprojective stack instead. Vernet's proof in the toric case with multiplicities also uses preprojective stacks. In future work with Gergely B\'{e}rczi we will address the question of whether this result can also be proved analogously to the original proofs \cite{Crawley2004,Hausel2013}, using the moduli spaces for toric representations with multiplicities, which we have constructed in this section.     
\end{remark}

\subsubsection{A complete worked example} \label{subsubsec:workedexample}

Consider the quiver \[Q = 
\begin{tikzcd}
v_1  \arrow[r,bend right,"a_1"]   & v_2 \arrow[l,bend right,"a_2"]
\end{tikzcd}
\] with dimension vector $d= (1,1)$ and multiplicity vector $m  = (1,1)$. Let $\rho = (n, -n)$ with $n >0$, so that $\rho$ is generic. By \thref{toric1} there is a quasi-projective coarse moduli space $\mathcal{M}^{\rho-s}(Q, m,d)$ for toric representations of $(Q, m)$ whose classical truncation is $\rho$-stable. A representation $(\alpha_1, \alpha_2)$ of $Q$ with dimension vector $d$ is $\rho$-stable if and only if $\alpha_2 \neq 0$. Therefore a representation $(\Phi_1,\Phi_2)$ of $(Q,m)$ of rank $d$ is $\rho$-semistable if and only if its classical truncation $(\alpha_1,\alpha_2)$ satisfies $\alpha_2 \neq 0$.

We can construct the geometric quotient  $\mathcal{M}^{\rho-s}(Q, m,d)=\Qss{V}{H}(\rho)/H$ by hand in this example. We view $V = \Mat_{1 \times 1}(k[\epsilon]/\epsilon^2) \times \Mat_{1 \times 1}(k[\epsilon]/\epsilon^2)$ as the vector space of pairs of matrices $$ (\Phi_1,\Phi_2) = \left( \begin{pmatrix} \alpha_1 & \beta_1 \\ 0 & \alpha_1 \end{pmatrix}, \begin{pmatrix} \alpha_2 & \beta_2 \\ 0 & \alpha_2 \end{pmatrix} \right) $$ where $\alpha_1,\beta_1,\alpha_2,\beta_2 \in k$. The group $H$ can be identified with the subgroup of $\GL_2(k)$ consisting of matrices of the form $$ \begin{pmatrix} 
t & u \\
0 & t
\end{pmatrix},$$ with $h \in H$ acting on $(\Phi_1,\Phi_2) \in V$ via $ h \cdot (\Phi_1,\Phi_2) = (h \Phi_1, h^{-1} \Phi_2 ).$  Consider the map $$q: \Qss{V}{H}(\rho) \to \mathbb{A}^2, \quad \left( \begin{pmatrix} \alpha_1 & \beta_1 \\
0 & \alpha_1 \end{pmatrix}, \begin{pmatrix} \alpha_2 & \beta_2 \\
0 & \alpha_2 \end{pmatrix} \right) \mapsto (\alpha_1 \alpha_2, \alpha_2 \beta_1 +\alpha_1 \beta_2).$$ By direct calculation one can show that the map $q$ is $H$-invariant, surjective and has fibres given by $H$-orbits. Since $\Qss{V}{H}(\rho)$ is irreducible and $\mathbb{A}^2$ is normal, it follows from \cite[Cor 25.3.4]{Tauvel2005} that $q$ is a geometric quotient for the action of $H$ on $\Qss{V}{H}(\rho)$. 

This example can also be used to show that the converse to \thref{lem comparing two stab defs for quivers} may not hold. In this example we have that $\Phi= (\Phi_1,\Phi_2)$ is Rudakov $\rho$-stable if and only if $\Phi_2 \neq 0$. Thus  the locus of Rudakov $\rho$-stable representations in $V$ is strictly larger than the locus of $\rho$-stable representations. We may ask whether this larger locus admits a geometric $H$-quotient. The quotient map $q: \Qss{V}{H}(\rho) \to \mathbb{A}^2$ constructed in Section \ref{subsubsec:workedexample}  extends to a well-defined $H$-invariant morphism from the larger locus of Rudakov $\rho$-stable representations, but it is not a geometric quotient on this larger locus. One way to see this is that while $q$ is invariant under switching the two matrices, if $\Phi_1 \neq \Phi_2$ then $(\Phi_1,\Phi_2)$ and $(\Phi_1,\Phi_2)$ only lie in the same $H$-orbit if $(\Phi_1,\Phi_2) \in \Qss{V}{H}$. In fact the locus of Rudakov $\rho$-stable representations does not admit a geometric quotient for the action of $H$. This can be seen from the fact that these representations do not all have the same stabiliser dimension, and therefore a geometric quotient cannot exist \cite[\S 25.3.5]{Tauvel2005}.

\bibliographystyle{alpha}

\begin{thebibliography}{CBVdB04}

\bibitem[BBS91]{Birula1991}
A.~Bialynicki-Birula and J.~Swiecicka.
\newblock A reduction theorem for existence of good quotients.
\newblock {\em Amer. J. Math.}, 113(2):189--201, 1991.

\bibitem[BDHK16]{Berczi2016}
G.~Bérczi, B.~Doran, T.~Hawes, and F.~Kirwan.
\newblock Projective completions of graded unipotent quotients, 2016.
\newblock arXiv: 1607.04181.

\bibitem[BDHK18]{Berczi2018}
G.~B\'{e}rczi, B.~Doran, T.~Hawes, and F.~Kirwan.
\newblock Geometric invariant theory for graded unipotent groups and
  applications.
\newblock {\em J. Topol.}, 11(3):826--855, 2018.

\bibitem[BK24a]{Berczi2023}
G.~B\'{e}rczi and F.~Kirwan.
\newblock Moment maps and cohomology of non-reductive quotients.
\newblock {\em Invent. Math.}, 235(1):1--79, 2024.

\bibitem[BK24b]{Berczi2024}
G.~B\'{e}rczi and F.~Kirwan.
\newblock Non-reductive geometric invariant theory and hyperbolicity.
\newblock {\em Invent. Math.}, 235(1):81--127, 2024.

\bibitem[Bun19]{Bunnett2021}
D.~Bunnett.
\newblock On the moduli of hypersurfaces in toric orbifolds, 2019.
\newblock arXiv: 1906.00272.

\bibitem[CBVdB04]{Crawley2004}
W.~Crawley-Boevey and M.~Van~den Bergh.
\newblock Absolutely indecomposable representations and {K}ac-{M}oody {L}ie
  algebras.
\newblock {\em Invent. Math.}, 155(3):537--559, 2004.
\newblock With an appendix by H. Nakajima.

\bibitem[CLNS18]{Chambert2018}
A.~Chambert-Loir, J.~Nicaise, and J.~Sebag.
\newblock {\em Motivic Integration}.
\newblock Progress in Mathematics. Springer New York, 2018.

\bibitem[Coo23]{Cooper2023}
G.~Cooper.
\newblock {Moduli Spaces of Hyperplanar Admissible Flags in Projective Space},
  2023.
\newblock arXiv: 2304.02453.

\bibitem[Dav23]{Davison2023}
B.~Davison.
\newblock The integrality conjecture and the cohomology of preprojective
  stacks.
\newblock {\em Journal für die reine und angewandte Mathematik (Crelles
  Journal)}, 2023(804):105--154, 2023.

\bibitem[Dol03]{Dolgachev2003}
I.~Dolgachev.
\newblock {\em Lectures on invariant theory}, volume 296 of {\em London
  Mathematical Society Lecture Note Series}.
\newblock Cambridge University Press, Cambridge, 2003.

\bibitem[Fan10]{Fan2010}
Z.~Fan.
\newblock Geometric approach to {H}all algebra of representations of quivers
  over local ring, 2010.
\newblock arXiv: 1012.5257.

\bibitem[Fre17]{Freudenburg2017}
G.~Freudenburg.
\newblock {\em Algebraic theory of locally nilpotent derivations}, volume 136
  of {\em Encyclopaedia of Mathematical Sciences}.
\newblock Springer-Verlag, Berlin, second edition, 2017.
\newblock Invariant Theory and Algebraic Transformation Groups, VII.

\bibitem[GLS17]{Geiss2014}
C.~Geiss, B.~Leclerc, and J.~Schr\"{o}er.
\newblock Quivers with relations for symmetrizable {C}artan matrices {I}:
  {F}oundations.
\newblock {\em Invent. Math.}, 209(1):61--158, 2017.

\bibitem[Hal04]{Halic2004}
M.~Halic.
\newblock Quotients of affine spaces for actions of reductive groups, 2004.
\newblock arXiv: 0412278.

\bibitem[Hal10]{Halic2010}
M.~Halic.
\newblock Cohomological properties of invariant quotients of affine spaces.
\newblock {\em J. Lond. Math. Soc. (2)}, 82(2):376--394, 2010.

\bibitem[HHJ24]{HHJ}
E.~Hamilton, V.~Hoskins, and J.~Jackson.
\newblock Relative {N}on-{R}eductive {G}eometric {I}nvariant {T}heory, 2024.
\newblock In preparation.

\bibitem[HJ21]{Hoskins2021}
V.~Hoskins and J.~Jackson.
\newblock Quotients by parabolic groups and moduli spaces of unstable objects,
  2021.
\newblock arXiv: 2111.07429, to appear in \emph{J. Algebra}.

\bibitem[HLRV13]{Hausel2013}
T.~Hausel, E.~Letellier, and F.~Rodriguez-Villegas.
\newblock Positivity for kac polynomials and dt-invariants of quivers.
\newblock {\em Annals of Mathematics}, 177(3):1147--1168, 2013.

\bibitem[HLRV24]{Hausel2018}
T.~Hausel, E.~Letellier, and F.~Rodriguez~Villegas.
\newblock Locally free representations of quivers over commutative {F}robenius
  algebras.
\newblock {\em Selecta Math. (N.S.)}, 30(2):Paper No. 20, 50, 2024.

\bibitem[Hos14]{Hoskins2014}
V.~Hoskins.
\newblock Stratifications associated to reductive group actions on affine
  spaces.
\newblock {\em Q. J. Math.}, 65(3):1011--1047, 2014.

\bibitem[Jac21]{Jackson2021}
J.~Jackson.
\newblock Moduli spaces of unstable objects: sheaves of {H}arder--{N}arasimhan
  length 2, 2021.
\newblock arXiv: 2111.07428.

\bibitem[Kac80]{Kac80}
V.~G. Kac.
\newblock Infinite root systems, representations of graphs and invariant
  theory.
\newblock {\em Invent. Math.}, 56(1):57--92, 1980.

\bibitem[Kem78]{Kempf1978}
G.~R. Kempf.
\newblock Instability in invariant theory.
\newblock {\em Ann. of Math. (2)}, 108(2):299--316, 1978.

\bibitem[Kin94]{King1994}
A.~D. King.
\newblock Moduli of representations of finite-dimensional algebras.
\newblock {\em Q. J. Math.}, 45(180):515--530, 1994.

\bibitem[Kir85]{Kirwan1985}
F.~C. Kirwan.
\newblock Partial desingularisations of quotients of nonsingular varieties and
  their {B}etti numbers.
\newblock {\em Ann. of Math. (2)}, 122(1):41--85, 1985.

\bibitem[LBP90]{Lebruyn1990}
L.~Le~Bruyn and C.~Procesi.
\newblock Semisimple representations of quivers.
\newblock {\em Trans. Amer. Math. Soc.}, 317(2):585--598, 1990.

\bibitem[MFK94]{Mumford1994}
D.~Mumford, J.~Fogarty, and F.~Kirwan.
\newblock {\em Geometric invariant theory}, volume~34 of {\em Ergebnisse der
  Mathematik und ihrer Grenzgebiete (2) [Results in Mathematics and Related
  Areas (2)]}.
\newblock Springer-Verlag, Berlin, third edition, 1994.

\bibitem[New78]{Newstead1978}
P.~E. Newstead.
\newblock {\em Introduction to moduli problems and orbit spaces}, volume~51 of
  {\em Tata Institute of Fundamental Research Lectures on Mathematics and
  Physics}.
\newblock Tata Institute of Fundamental Research, Bombay; Narosa Publishing
  House, New Delhi, 1978.

\bibitem[Qia22a]{Qiao2022b}
Y.~Qiao.
\newblock {A GIT construction of moduli spaces of sheaves of length 2}, 2022.
\newblock arXiv: 2212.08303.

\bibitem[Qia22b]{Qiao2022}
Y.~Qiao.
\newblock {GIT for $\hU$-actions on algebraic $\mathbb{C}$-schemes}, 2022.
\newblock arXiv: 2204.13884.

\bibitem[Rei89]{Reichstein1989}
Z.~Reichstein.
\newblock Stability and equivariant maps.
\newblock {\em Invent. Math.}, 96(2):349--383, 1989.

\bibitem[RZ11]{Ringel2011}
M.~Ringel and P.~Zhang.
\newblock Representations of quivers over the algebra of dual numbers.
\newblock {\em J. Algebra}, 475:327--360, 2011.

\bibitem[Sch98]{Schmitt1998}
A.~Schmitt.
\newblock Projective moduli for {H}itchin pairs.
\newblock {\em Internat. J. Math.}, 9(1):107--118, 1998.

\bibitem[TY05]{Tauvel2005}
P.~Tauvel and R.~W.~T. Yu.
\newblock {\em Lie algebras and algebraic groups}.
\newblock Springer Monographs in Mathematics. Springer-Verlag, Berlin, 2005.

\bibitem[Ver23]{Vernet2023}
T.~Vernet.
\newblock Positivity for toric {K}ac polynomials in higher depth, 2023.
\newblock arXiv: 2310.02912.

\bibitem[Wys17]{Wyss2017}
D.~Wyss.
\newblock {\em Motivic and p-adic {L}ocalization {P}henomena}.
\newblock PhD thesis, University of Lausanne, 2017.

\bibitem[Yam10]{Yamakawa2010}
D.~Yamakawa.
\newblock Quiver varieties with multiplicities, {W}eyl groups of non-symmetric
  {K}ac-{M}oody algebras, and {P}ainlev\'{e} equations.
\newblock {\em SIGMA Symmetry Integrability Geom. Methods Appl.}, 6:Paper 087,
  43, 2010.

\end{thebibliography}

\end{document}